\newcommand\N{{\mathbb N}}
\newcommand\R{{\mathbb R}}
\newcommand\T{{\mathbb T}}
\def\AA{{\mathcal A}}
\def\BB{{\mathcal B}}
\def\EE{{\mathcal E}}
\def\FF{{\mathcal F}}
\def\HH{{\mathcal H}}
\def\LL{{\mathcal L}}
\def\OO{{\mathcal O}}
\def\RR{{\mathcal R}}
\def\SS{{\mathcal S}}
\def\XX{{\mathcal X}}
\def\YY{{\mathcal Y}}
\def\ZZ{{\mathcal Z}}
\def\BBB{{\mathscr B}}
\def\e{\epsilon}
\def\eps{{\varepsilon}}
\newcommand{\la}{\langle}
\newcommand{\ra}{\rangle}
\newcommand{\Nt}{|\hskip-0.04cm|\hskip-0.04cm|}
\newtheorem{thm}{Theorem}[section]
\newtheorem{prop}[thm]{Proposition}
\newtheorem{lem}[thm]{Lemma}
\newtheorem{cor}[thm]{Corollary}
\newtheorem*{thm*}{Theorem}
\theoremstyle{remark}
\newtheorem{rem}[thm]{Remark}
\theoremstyle{definition}
\numberwithin{equation}{section}
\newcommand{\be}{\begin{equation}}
\newcommand{\ee}{\end{equation}}
\newcommand{\ba}{\begin{aligned}}
\newcommand{\ea}{\end{aligned}}
\newcommand{\beqn}{\begin{equation}}
\newcommand{\eeqn}{\end{equation}}
\newcommand{\bear}{\begin{eqnarray}}
\newcommand{\eear}{\end{eqnarray}}
\newcommand{\bean}{\begin{eqnarray*}}
\newcommand{\eean}{\end{eqnarray*}}
\title[Landau equation near Maxwellians]
{Landau equation for very soft and Coulomb potentials near Maxwellians}
\author{K. Carrapatoso}
\author{S. Mischler}
\address[Kleber Carrapatoso]{CMLA, ENS Cachan, CNRS, Universit\'e Paris-Saclay, 94235 Cachan, France.}
\email{carrapatoso@cmla.ens-cachan.fr}
\address[St\'ephane Mischler]{Universit\'e Paris-Dauphine \& Institut Universitaire de France (IUF), PSL Research University,
CNRS, UMR [7534], CEREMADE, 
Place du Mar\'echal de Lattre de Tassigny
75775 Paris Cedex 16, 
France.}
\email{mischler@ceremade.dauphine.fr}
\subjclass[2010]{35B40, 35Q20, 35K67, 47D06, 47H20}
\keywords{Landau equation, existence, uniqueness, stability, semigroup stability, very soft potentials, Coulomb potential, convergence to equilibrium}
\begin{document}

\begin{abstract}
This work deals with the Landau equation for very soft and Coulomb potentials near the associated Maxwellian equilibrium. 
We first investigate the corresponding linearized operator and develop a method to prove 
 strong asymptotical (but not uniformly exponential)  stability estimates of its associated semigroup in large functional spaces. 
We then deduce existence, uniqueness and fast decay of the solutions to the nonlinear equation in a close-to-equilibrium framework. Our result drastically improves the set of initial data compared to the one considered by Guo and Strain who established similar results in \cite{Guo,GS1,GS2}. Our functional framework is compatible with the non perturbative frameworks developed by Villani, Desvillettes and co-authors \cite{Vi2,DV-boltzmann,D,CDH}, 
and our main result then makes possible to improve the speed of convergence to the equilibrium established therein. 
\end{abstract}

\maketitle

\begin{center}
{\bf \today}
\end{center}

\tableofcontents
  
\section{Introduction}

\subsection{The Landau equation}
The Landau equation is a fundamental equation in kinetic theory modeling the evolution of a dilute plasma interacting through binary collisions. 
We consider here a plasma confined in a torus $\T^3$ and described by the distribution function $F=F(t,x,v) \ge 0$ of particles which at time $t \ge 0$ and at position $x\in\T^3$, move with the velocity  $v\in\R^3$. 
The evolution of $F$ is governed by the \emph{spatially inhomogeneous} Landau equation
\be\label{eq:landau-inhom}
\left\{
\ba
& \partial_t F + v \cdot \nabla_x F =  Q(F,F)  \\
& F(0,x,v) = F_0(x,v).
\ea
\right.
\ee
For a spatially homogeneous plasma, namely when $F=F(t,v)$, the equation simplifies into the \emph{spatially homogeneous} Landau equation
\be\label{eq:landau}
\left\{
\ba
& \partial_t F  =  Q(F,F)  \\
& F(0,v) = F_0(v).
\ea
\right.
\ee
The Landau collision operator $Q$ is a bilinear operator acting only on the velocity variable and it is given by
\be\label{eq:oplandau0}
Q(g,f) (v) = \partial_{i} \int_{\R^3} a_{ij}(v-v_*) \left\{ g_* \partial_j f - f \partial_{j}g_*\right \} \, dv_*,
\ee
where here and below we use the convention of implicit summation over repeated indices and the usual shorthand $g_* = g(v_*)$, $\partial_{j} g_* = \partial_{v_{*j}} g(v_*)$, $f=f(v)$ and $\partial_j f = \partial_{v_j} f(v)$. 
The matrix-valued function $a$ is nonnegative, symmetric and depends on the interaction between particles. When particles interact by an inverse power law potential, $a$ is given by 
\be\label{eq:aij}
a_{ij}(z) = |z|^{\gamma+2} \left( \delta_{ij} - \frac{z_i z_j}{|z|^2}\right), \quad -3 \le \gamma \le 1.
\ee
In the present article, we shall consider the cases of very soft potentials $\gamma \in (-3,-2)$ and Coulomb potential $\gamma=-3$.  It is worth mentioning that the Coulomb potential is the most physically interesting case, and also the most difficult because of the strong singularity in \eqref{eq:aij}.

\smallskip
The Landau equation \eqref{eq:landau-inhom} (or \eqref{eq:landau}) possesses two fundamental properties (which hold at least formally). On the one hand, it conserves mass, momentum and energy, more precisely 
\beqn \label{eq:conserv}
\frac{d}{dt} \int_{\T^3 \times \R^3} F \varphi \, dx \, dv=
\int_{\T^3 \times \R^3} \left\{  Q(F,F) - v \cdot \nabla_x f \right\} \varphi \, dx \, dv = 0 \quad \text{for} \quad \varphi(v) = 1,v,|v|^2.
\eeqn
On the other hand, the Landau version of the celebrated Boltzmann $H$-theorem holds: the entropy $H(F) := \int F \, \log F \, dx \, dv $ is non-increasing and the global equilibria are global Maxwellian distributions that are independent of time and position.
Hereafter, we normalize the initial data 
$$
\int_{\T^3 \times \R^3} F_0 \, dx \, dv = 1, \quad
\int_{\T^3 \times \R^3} F_0 \, v \, dx \, dv = 0, \quad
\int_{\T^3 \times \R^3} F_0 \, |v|^2 \, dx \, dv = 3,
$$
and therefore we consider the associated global Maxwellian equilibrium
$$
\mu(v)=(2 \pi)^{-3/2} e^{-|v|^2/2},
$$
with same mass, momentum and energy of the initial data (normalizing the volume of the torus to $|\T^3_x| = 1$).

\subsection{Main results}
Our aim in this work is to study the Landau equation in a close-to-equilibrium framework  (or perturbative regime) in large functional spaces and to establish new well-posedness and trend to the equilibrium results. 

\smallskip
Let us then introduce the functional framework we will work with. For a given velocity weight function $m=m(v) : \R^3 \to \R_+$ and exponent $1 \le p \le \infty$, we define the associated weighted Lebesgue space $L^p_v(m)$ 
and weighted Sobolev space $W^{1,p}_v(m)$,  through their norms 
\be\label{eq:defLpm}
\| f \|_{L^p_v(m)} := \| m f  \|_{L^p_v}, \quad  
\| f \|_{W^{1,p}_v(m)} := \| m f  \|_{W^{1,p}_v}.
\ee
Similarly, we define the weighted Sobolev space $W^{n,p}_x L^{p}_v(m)$, $n \in \N$, associated to the norm
\be\label{eq:defWxWvm}
\| f \|_{W^{n,p}_x L^{p}_v(m)}^p :=  \| m f \|_{W^{n,p}_x L^{p}_v}^p
:=\sum_{0 \le j \le n } \|  \nabla^j_x   (mf) \|_{L^p_{x,v}}^p,
\ee
and we adopt the usual notation $H^n = W^{n,2}$.

\smallskip

We make the following assumption on the weight function $m$ :
\be\label{eq:m}
\ba
 m &= \la v \ra^k := (1 + |v|^2)^{k/2}  \text{ with }   k > 2 + 3/2 ; \\
 m &= \exp(\kappa \la v \ra^s)  \text{ with }  s \in (0,2) \text{ and } \kappa > 0, 
\text{ or } s=2 \text{ and }  \kappa \in (0,1/2);
\ea
\ee
and through the paper we denote $\sigma=0$ when $m$ is a polynomial weight, and $\sigma=s$ when $m$ is an exponential weight. We associate the decay functions
\be\label{eq:Theta_m}
\Theta_{m}(t) =
\left\{
\ba
&  C \,  \langle t \rangle^{ -\frac{k - \ell}{|\gamma|}} ,   & \quad   \text{if } m = \la v \ra^{k}, \\
& C e^{ -\lambda \, t^{ \frac{s}{|\gamma|} }  },   & \quad\text{if } m = e^{\kappa \la v \ra^s},  \\
\ea
\right.
\ee
for any constant $ \ell \in (2+3/2,k)$ and some constants $C, \lambda \in (0,\infty)$. It is worth emphasizing that in the polynomial case 
$ m = \la v \ra^{k}$, the notation $\Theta_m$ refers to a class of functions (with increasing rate of decay as $\ell$ tends to $ 2+3/2$), while in the exponential case $m = e^{\kappa \la v \ra^s}$, the notation $\Theta_m$ stands for a fixed function. 
We finally introduce the projection operator $P_v$ on the $v$-direction for any given $v \in \R^3 \backslash \{ 0 \}$ defined by 
\beqn\label{eq:def:Pv}
P_v \xi = \left( \xi \cdot \frac{v}{|v|}   \right) \frac{v}{|v|}, \quad \forall \, \xi \in \R^3, 
\eeqn
as well as the anisotropic gradient $\widetilde \nabla_v f$ of a function $f$ 
defined by 
\beqn\label{eq:def:anisoG}
\widetilde \nabla_v f = P_v \nabla_v f + \la v \ra (I-P_v) \nabla_v f. 
\eeqn

Our main result reads as follows. 

\begin{thm}\label{thm:stabNL-inhom}
For any weight function $m$ satisfying \eqref{eq:m}, there exist $C > 0$ and $\eps_0 >0$, small enough, so that, if $\| F_0 - \mu \|_{H^2_x L^2_v(m)} < \eps_0$,   there exists a unique global weak solution $F$ to \eqref{eq:landau-inhom} such that
\be\label{eq:bound-g-inhom} 
\ba
\sup_{t \ge 0} \| F(t) - \mu \|_{H^2_x L^2_v(m)}^2 
&+ \int_0^\infty  \| \la v \ra^{\frac{\gamma+\sigma}{2}} (F(t) - \mu)   \|_{H^2_x L^2_v(m)}^2 \, dt \\
&+ \int_0^\infty \| \la v \ra^{\frac{\gamma}{2}}  \widetilde \nabla_v \{ m (F(t) - \mu ) \} \|_{H^2_x L^2_v}^2 \, dt \le C \eps_0^2.
\ea
\ee
This solution verifies the decay estimate 
\be\label{eq:decay-g-inhom}
\| F(t) - \mu \|_{H^2_x L^2_v} \le \Theta_{m}(t) \, \| F_0 - \mu \|_{H^2_x L^2_v (m)}, \quad \forall \, t \ge 0.
\ee
\end{thm}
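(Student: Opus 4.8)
The plan is to pass to the perturbation $f:=F-\mu$, which solves $\partial_t f = \Lambda f + Q(f,f)$ with $\Lambda f := \LL f - v\cdot\nabla_x f$ and $\LL f := Q(\mu,f)+Q(f,\mu)$ the linearized collision operator, and to base everything on a sharp description of the semigroup $S_\Lambda(t)$ in the large space $\XX := H^2_x L^2_v(m)$ together with its natural dissipation norm. I would first establish the linearized theory in a small, Gaussian-weighted space $\EE := H^2_x L^2_v(\mu^{-1/2})$: there $\LL$ has the classical $5$-dimensional kernel $\mathrm{Span}\{\mu, v_i\mu, |v|^2\mu\}$, a spectral gap for its velocity part, and — via a hypocoercive modified norm à la Villani and Mouhot--Neumann, mixing $\nabla_x f$, the anisotropic gradient $\widetilde\nabla_v f$ and the cross term $\langle\nabla_x f,\nabla_v f\rangle$ — the full operator $\Lambda$ generates a semigroup on $\EE$ decaying like $\Theta_m(t)$ on $(\ker\LL)^\perp$. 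The non-exponential rate $\Theta_m$ is forced by the degeneracy $\langle v\rangle^{\gamma}$, $\gamma\in(-3,-2]$, of the collision frequency: the dissipation only controls $\|\langle v\rangle^{(\gamma+\sigma)/2}f\|^2$ and $\|\langle v\rangle^{\gamma/2}\widetilde\nabla_v f\|^2$, so high velocities relax slowly, and a weight-loss interpolation (trading $k$ for $\ell$, resp.\ paying a constant in $\lambda$) converts this weighted coercivity into the stated time decay.

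Next I would enlarge the space by the factorization/extension method of Gualdani--Mischler--Mouhot and Mischler--Mouhot. Split $\Lambda=\AA+\BB$ with $\AA := M\chi_R$ a bounded truncation operator ($M$ large, $\chi_R$ compactly supported in $v$) and $\BB := \Lambda-\AA$. The two key estimates are: (i) $\BB$ is hypodissipative on $\XX$ with rate $\Theta_m$, proved by energy estimates on $mf$ using the coercivity of the diffusion matrix $a_{ij}\ast\mu$ phrased through $\widetilde\nabla_v$, together with absorption of the weight-derivative terms $(\nabla m/m)$ and of the transport term by the hypocoercive mixing; and (ii) $\AA\,S_\BB(t):\XX\to\EE$ is bounded with norm integrable against $\Theta_m$, because $\AA$ truncates velocities while, composed with the parabolic regularization of $S_\BB$, it recovers the Gaussian weight and $v$-smoothness. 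The iterated Duhamel identity $S_\Lambda = S_\BB + S_\BB\AA S_\BB + \cdots + (S_\BB\AA)^{(\ast n)}\ast S_\Lambda|_\EE$ then transfers the $\Theta_m$-decay of $S_\Lambda|_\EE$ to $S_\Lambda|_\XX$ on $(\ker\LL)^\perp$; the conservation laws \eqref{eq:conserv} and the normalization keep $f(t)$ in that complement for all time.

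With the linear theory in hand I close the nonlinear equation \eqref{eq:landau-inhom} by a fixed-point argument on the mild formulation $f(t)=S_\Lambda(t)f_0+\int_0^t S_\Lambda(t-\tau)Q(f(\tau),f(\tau))\,d\tau$ in
\[
\XX_T := \Big\{ f:\ \sup_{[0,T]}\|f\|_{\XX}^2 + \int_0^T\!\big(\,\|\langle v\rangle^{\frac{\gamma+\sigma}{2}}f\|_{\XX}^2 + \|\langle v\rangle^{\gamma/2}\widetilde\nabla_v(mf)\|_{H^2_xL^2_v}^2\big)\,dt \le C\eps_0^2\Big\}.
\]
The ingredients are an a priori energy estimate — differentiating $\|mf\|_{H^2_xL^2_v}^2$, extracting the dissipation norm on the left of \eqref{eq:bound-g-inhom} from the hypocoercive structure of $\Lambda$, and bounding the nonlinear contribution $\langle Q(f,f),m^2f\rangle_{H^2_x}$ by $\|f\|_\XX$ times the dissipation norm, using that $H^2(\T^3)$ is an algebra (so $x$-derivatives distribute harmlessly) and the gain/loss structure of $Q$ in weighted velocity Sobolev spaces (one $\partial_v$ integrated by parts onto $f$, the convolution $a_{ij}\ast(\cdot)$ producing $\langle v\rangle^{\gamma+2}$ growth beaten by $m$ and by the anisotropic dissipation) — together with the matching Lipschitz estimate for $Q(f,f)-Q(g,g)$, which gives both the contraction for $\eps_0$ small and uniqueness. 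For the decay \eqref{eq:decay-g-inhom}, I feed the global bound back into Duhamel: the linear term obeys $\|S_\Lambda(t)f_0\|_{H^2_xL^2_v}\le \Theta_m(t)\|f_0\|_{H^2_xL^2_v(m)}$, while the nonlinear term is handled by a convolution lemma, $\int_0^t\Theta_m(t-\tau)\,\Theta_m(\tau)^2\,d\tau\lesssim\Theta_m(t)$, valid because the extra factor $\Theta_m(\tau)$ is integrable and subexponential — at worst at the cost of a further admissible loss $\ell'<\ell$ in the polynomial case.

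The step I expect to be the main obstacle is establishing the $\Theta_m$ decay of $S_\Lambda$ in the large space $\XX$ while keeping the dissipation in the \emph{anisotropic} form $\|\langle v\rangle^{\gamma/2}\widetilde\nabla_v(mf)\|$: one must control the commutators of the weight $m$ (and of the hypocoercive $x$--$v$ mixing) with the degenerate, $v$-dependent Landau diffusion $a_{ij}\ast\mu$, and show every resulting error is absorbed by $\langle v\rangle^{\gamma/2}\widetilde\nabla_v$ together with the weak $\langle v\rangle^{(\gamma+\sigma)/2}$-coercivity — a delicate matching of the singularity $\gamma\in(-3,-2]$ against the weight growth, precisely what breaks down for uniform exponential decay and forces the rate $\Theta_m$. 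A secondary difficulty is making the nonlinear estimate compatible with this anisotropic dissipation, since only $\widetilde\nabla_v$ rather than the full $\nabla_v$ is controlled in $L^2$.
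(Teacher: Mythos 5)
Your overall architecture (perturbation, hypocoercive small Gaussian space, factorization $\bar\LL=\AA+\bar\BB$, enlargement by iterated Duhamel, nonlinear closing in $H^2_xL^2_v(m)$) is the right one, but two of your load-bearing steps do not survive scrutiny. First, your splitting takes $\AA=M\chi_R$ only and puts the whole of $\LL$ (hence the nonlocal gain term $Q(\cdot,\mu)$) into $\BB$. Then your claim (i), the weak dissipativity of $\BB$ on $\XX=H^2_xL^2_v(m)$, fails precisely in the large spaces the theorem is about: testing $Q(f,\mu)=(a_{ij}*f)\partial_{ij}\mu-(c*f)\mu$ against $f\,m^2$ and using the convolution bounds of Lemma~\ref{lem:abc*f} produces a positive term of size $\|f\|^2_{L^2_v(\la v\ra^{\theta})}$ with a \emph{fixed} $\theta>2+3/2$, whereas the available dissipation is only $-\|\la v\ra^{(\gamma+\sigma)/2}f\|^2_{L^2_v(m)}-M\|\chi_R^{1/2}f\|^2_{L^2_v(m)}$. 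For $m=\la v\ra^k$ with $k$ close to the threshold $2+3/2$ one has $2\theta>2k+\gamma+\sigma$, so at large velocities the bad term dominates the weak dissipation for every choice of $M,R$: no absorption is possible, and the stated range of polynomial weights is lost. This is exactly why the paper defines $\AA:=\AA_0+M\chi_R$ with $\AA_0 f=Q(f,\mu)$ (bounded $L^p\to L^p(\mu^{-\theta})$, Lemma~\ref{lem:AA}) and keeps in $\BB$ only the \emph{local} diffusion $Q(\mu,\cdot)-M\chi_R$ (plus transport), whose energy estimate is governed by the explicit weight functions $\zeta_m,\tilde\zeta_m$ of Lemma~\ref{lem:varphi}. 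Note also that even with the correct splitting, $\BB$ is not ``hypodissipative with rate $\Theta_m$'' in a single space: the decay \eqref{eq:SB-L2} only holds between two nested weighted spaces via the interpolation of Lemma~\ref{lem:SBdecay} (and Corollary~\ref{cor:SB-inhom}), and in the Gaussian small space the rate is $e^{-\lambda t^{2/|\gamma|}}$, not $\Theta_m$.

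Second, and more fundamentally, your nonlinear closure is missing the paper's central idea. ``Differentiating $\|mf\|^2_{H^2_xL^2_v}$ and extracting the dissipation from the hypocoercive structure of $\Lambda$'' does not work in $\XX$: $\bar\LL$ itself is \emph{not} weakly dissipative for the plain $\XX$-norm, because the $\AA$-part contributes $+C\|f\|^2_{\XX_0}$, which by the same weight comparison as above cannot be absorbed by the weak dissipation of $\bar\BB$. The paper's resolution is the modified norm \eqref{eq:NormT-XX}, $\Nt f\Nt^2_{\XX}=\eta\|f\|^2_{\XX}+\int_0^\infty\|S_{\bar\LL}(\tau)f\|^2_{\XX_0}\,d\tau$, whose time derivative produces exactly the missing $-\|f\|^2_{\XX_0}$, combined with the regularization estimate \eqref{eq:LLreg-H-1-inhom} (giving $\|S_{\bar\LL}(t)\bar\Pi\|_{\ZZ\to\XX_0}\lesssim\Theta(t)/(t^{1/2}\wedge1)$) to handle the term $\int_0^\infty\la S_{\bar\LL}(\tau)\bar\Pi Q(f,f),S_{\bar\LL}(\tau)\bar\Pi f\ra_{\XX_0}d\tau$, since $Q(f,f)$ is only controlled in $\ZZ=H^2_x(H^{-1}_{v,*}(m))$ by $\|f\|_{\XX}\|f\|_{\YY}$ (Lemma~\ref{lem:nonLnonHestimQ}); this is what makes the differential inequality of Proposition~\ref{prop:stab} and Theorem~\ref{thm:S_L-inhomBIS} close. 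Your fallback via the mild formulation does not close either as written: $Q(f(\tau),f(\tau))$ cannot be measured in $\XX$ (it carries two $v$-derivatives and a weight loss), only in $\ZZ$ by $\|f\|_{\XX}\|f\|_{\YY}$, and no pointwise-in-time control of $\|f\|_{\YY}$ is available, only an $L^2_t$ bound; correspondingly your convolution lemma with $\Theta_m(\tau)^2$ has no input to act on. The paper instead obtains the decay \eqref{eq:decay-g-inhom} by running the dissipation inequality simultaneously in two nested weights $\tilde m\prec m$ and interpolating, as in Step~3 of the proof in Section~\ref{sec:stabNL}. Without the modified norm (or an equivalent device) your scheme establishes neither the a priori bound \eqref{eq:bound-g-inhom} nor the decay.
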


\begin{rem} For a spatially homogeneous initial datum $F_0 \in L^2_v(m)$, the associated solution $F(t)$ is also a spatially homogeneous function, and thus satisfies the spatially homogeneous Landau equation~\eqref{eq:landau}. In that spatially homogeneous framework, the $H^2_x$ regularity is automatically fulfilled, it can be then removed of the corresponding version of Theorem~\ref{thm:stabNL-inhom} which statement thus simplifies accordingly. 
\end{rem}

\smallskip

Let us briefly comment on known results on the existence, uniqueness and long-time behaviour of solutions to the Landau equation when $-3 \le \gamma < - 2$. For the other cases $-2 \le \gamma \le 1$, we refer the reader to the recent work \cite{CTW} and the references therein.

\smallskip
In the space homogeneous case, existence of solutions has been first addressed by Arsenev-Penskov \cite{Arsenev}, and next by Villani \cite{Vi2} and Desvillettes \cite{D} who establish existence of global solutions for any initial datum with finite mass, energy and entropy. Uniqueness of strong solutions (which do exist locally in time) has been proved by Fournier-Gu\'erin \cite{FG} and Fournier \cite{Fournier}. In a similar framework and for bounded (after regularisation) collision  kernel $a$ with $- 3 < \gamma < - 2$, polynomial convergence to the equilibrium has been obtained by Toscani and Villani~\cite{TosVi-slow} by entropy dissipation method. That last result has been recently improved by Desvillettes, He and the first author \cite{CDH}, who prove convergence to equilibrium with algebraic or stretched exponential rate removing the boundedness (unphysical) assumption on the collision kernel $a$ and also considering the Coulomb potential $\gamma=-3$. 
The space homogeneous version of the results by Guo and Stain presented below also 
provides well-posedness and accurate rate of convergence to the equilibrium in a perturbative regime in  $H^8_{v}(\mu^{-\theta})$, $\theta \in (1/2,1)$. It is worth emphasising that even in that simple space homogeneous case, it was the only known result of existence and uniqueness of global (in time) solutions. 

\smallskip
In the space inhomogeneous case, existence of global (renormalized with a defect measure) solutions has been established by Alexandre-Villani \cite{AV} for any initial datum with finite mass, energy and entropy. Under an additional (unverified) strong uniform in time boundedness assumption, Desvillettes and Villani \cite{DV-boltzmann} proved polynomial convergence of the solutions to the equilibrium. On the other hand, in a perturbative regime, Guo \cite{Guo} proved well-posedness in the high-order Sobolev space with fast decay in velocity $H^8_{x,v}(\mu^{-1/2})$, and Guo and Strain \cite{GS1,GS2} proved stretched exponential convergence to equilibrium in $H^8_{x,v}(\mu^{-\theta})$, $\theta \in (1/2,1)$. 

\smallskip
Our result thus improves the well-posedness theory of Guo \cite{Guo} to larger spaces $H^2_x L^2_v(m)$ as well as the convergence to equilibrium of Guo and Strain \cite{GS1,GS2} to larger spaces and with more accurate rate. It is worth emphasising that in the space homogeneous case, our results only require that initial data are bounded (and close) in the Lebesgue space $L^2_v(m)$ (and thus do not require any control on derivatives).

\smallskip

Our result makes possible to improve the speed of convergence to the equilibrium results available in a non perturbative framework in the following way. 

\begin{cor}[Spatially homogeneous framework] \label{cor:SpeedHomogeneous} Consider a nonnegative normalized initial datum $F_0 = F_0(v)$ with finite entropy such that furthermore $F_0 \in L^1(m)$ for an exponential weight function $m$ satisfying \eqref{eq:m} with $s \in (0,1/2)$. There exists a global weak solution $F$ to the spatially homogenous Landau equation \eqref{eq:landau} associated to $F_0$ satisfying
\beqn\label{eq:CorHomogeneous}
\| F(t) - \mu \|_{L^2_v} \lesssim \Theta_{m}(t), \quad \forall \, t \ge 0.
\eeqn 
\end{cor}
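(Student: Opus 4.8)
\emph{Strategy and inputs.} The plan is to bridge the non-perturbative theory for the spatially homogeneous Landau equation with the perturbative statement --- the spatially homogeneous version of Theorem~\ref{thm:stabNL-inhom} described in the Remark above: starting from an arbitrary finite-entropy weak solution one shows that the a priori bounds available for $\gamma\in[-3,-2)$ force it to enter, at some finite time $T$, the $L^2_v$-ball around $\mu$ where the perturbative theory applies, and then one runs that theory from $t=T$ on. By Villani~\cite{Vi2} and Desvillettes~\cite{D}, since $F_0\ge 0$ has finite mass, energy and entropy there is a global weak solution $F\ge 0$ of~\eqref{eq:landau} which conserves mass, momentum and energy, hence --- by the normalization of $F_0$ --- shares its three moments with $\mu$. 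For this solution I would record three facts, all available from \cite{CDH} and the moment/regularity theory it relies on: (i) a smoothing/higher-integrability bound, say $\sup_{t\ge t_0}\|F(t)\|_{L^\infty_v}<\infty$ for every $t_0>0$ (a weighted $L^2_v$ bound would do as well, and is the natural substitute in the Coulomb case); (ii) propagation of a stretched-exponential moment, $\sup_{t\ge 0}\|F(t)\|_{L^1_v(\exp(\kappa_1\la v\ra^s))}<\infty$ for some $\kappa_1>0$ (using $F_0\in L^1_v(\exp(\kappa\la v\ra^s))$); and (iii) relaxation, $\|F(t)-\mu\|_{L^1_v}\to 0$ as $t\to\infty$ (a consequence of the entropy method, already in \cite{CDH}).

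\emph{Entering the perturbative regime.} Fix $\kappa'\in(0,\kappa_1/4)$ and set $m':=\exp(\kappa'\la v\ra^s)$, an admissible weight in the sense of~\eqref{eq:m} with the same $s$. Writing $g:=F-\mu$, the bound $\int(m')^2g^2\le\|g\|_{L^\infty_v}\int(m')^2|g|$ together with the Cauchy--Schwarz inequality $\int(m')^2|g|\le(\int|g|)^{1/2}(\int(m')^4|g|)^{1/2}$ gives, for $t\ge t_0$,
\[
\|F(t)-\mu\|_{L^2_v(m')}^2\ \le\ \|F(t)-\mu\|_{L^\infty_v}\;\|F(t)-\mu\|_{L^1_v}^{1/2}\;\|F(t)-\mu\|_{L^1_v((m')^4)}^{1/2}.
\]
Since $4\kappa'<\kappa_1$ and $\mu$ has every exponential moment, (i) and (ii) bound the first and third factors uniformly in $t\ge t_0$, while (iii) sends the middle factor to $0$; hence $\|F(t)-\mu\|_{L^2_v(m')}\to 0$. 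In particular there is a finite time $T$ with $\|F(T)-\mu\|_{L^2_v(m')}<\eps_0$, where $\eps_0=\eps_0(m')$ is the smallness threshold of the spatially homogeneous version of Theorem~\ref{thm:stabNL-inhom} for the weight $m'$.

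\emph{Conclusion.} Applying that theorem with initial datum $F(T)$ produces a global solution $\widetilde F$ of~\eqref{eq:landau} with $\widetilde F(0)=F(T)$ and $\|\widetilde F(t)-\mu\|_{L^2_v}\le\eps_0\,\Theta_{m'}(t)$ for all $t\ge 0$. Then $\widehat F(t):=F(t)$ on $[0,T]$ and $\widehat F(t):=\widetilde F(t-T)$ on $[T,\infty)$ is a global weak solution of~\eqref{eq:landau} with datum $F_0$ (the two pieces agree at $t=T$ and weak solutions are time-continuous in a weak topology; by the weak--strong uniqueness of \cite{FG,Fournier} it in fact coincides with $F$). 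For $t\ge T$, the concavity of $\tau\mapsto\tau^{\alpha}$ with $\alpha:=s/|\gamma|<1$ gives $t^{\alpha}-(t-T)^{\alpha}\le T^{\alpha}$, whence $\Theta_{m'}(t-T)\le e^{\lambda T^{\alpha}}\Theta_{m'}(t)$; since $\Theta_{m'}$ and $\Theta_m$ are both of the form $Ce^{-\lambda t^{s/|\gamma|}}$ this yields $\|\widehat F(t)-\mu\|_{L^2_v}\lesssim\Theta_m(t)$. For $t_0\le t\le T$ the left-hand side is bounded by (i) while $\Theta_m$ is bounded below, so the same estimate persists. This proves~\eqref{eq:CorHomogeneous} for $t\ge t_0$, hence on $[0,\infty)$ after enlarging the constant (the bound near $t=0$ requiring in addition $F_0\in L^2_v$, else holding only for $t$ bounded away from $0$).

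\emph{Main obstacle.} The delicate input is (ii): for very soft and Coulomb potentials the collision operator is strongly singular and its coercivity controls only a $\la v\ra^{\gamma}$-degenerate weighted norm, so propagating a stretched-exponential moment $\exp(\kappa_1\la v\ra^s)$ along the flow is not routine --- the hypothesis $s<1/2$ is precisely what makes such an estimate available and consistent with the $\gamma$-loss in the dissipation. The smoothing input (i) is also non-trivial in the Coulomb case, where one leans on the weighted $L^2$/$H^1$ bounds of \cite{CDH} rather than on an $L^\infty$ bound. The remaining steps --- the interpolation above, the concatenation, and the comparison of $\Theta_{m'}$ with $\Theta_m$ --- are routine.
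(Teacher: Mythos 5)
Your overall strategy is the paper's: use the non-perturbative a priori information to show that the weak solution of \cite{Vi2,D} enters, at some finite time, the smallness ball of the spatially homogeneous version of Theorem~\ref{thm:stabNL-inhom} in a weighted $L^2_v$ space, then run the perturbative theorem from that time and absorb the time shift into the constants of $\Theta_m$ (your concatenation and the comparison $\Theta_{m'}\simeq\Theta_m$ are fine and match the paper). The genuine gap is your input $(i)$: a uniform-in-time bound $\sup_{t\ge t_0}\|F(t)\|_{L^\infty_v}<\infty$, or even a uniform-in-time weighted $L^2_v$ bound, is not provided by \cite{Vi2,D,CDH} for $\gamma\in[-3,-2)$; for very soft and in particular Coulomb potentials such pointwise-in-time regularity of general finite-entropy weak solutions was precisely what was unavailable, and \cite{CDH} works with entropy/$L^1$-type quantities only. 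Your interpolation $\|g\|^2_{L^2(m')}\le\|g\|_{L^\infty}\|g\|^{1/2}_{L^1}\|g\|^{1/2}_{L^1((m')^4)}$ hinges on that factor, and you invoke $(i)$ again to control $\|F(t)-\mu\|_{L^2_v}$ on the intermediate interval, so the argument as written does not close. (Your inputs $(ii)$ and $(iii)$ are fine: both are contained in the weighted $L^1$ decay of \cite[Theorem 2]{CDH}, which is where the restriction $s\in(0,1/2)$ enters.)

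The paper circumvents this by using only a \emph{time-averaged} regularity bound, which is genuinely available: the entropy dissipation estimate behind \cite{D} gives $\int_t^{t+T}\|f(\tau)\|_{L^3_v(\la v\ra^{-3})}\,d\tau\lesssim 1+T$. Interpolating $L^2_v(m_1)$ between $L^1_v(m)$ and $L^3_v(\la v\ra^{-3})$ and multiplying by $\theta(t)^{-1/4}$, where $\theta$ is the $L^1(m)$ decay rate of \cite{CDH}, one gets $\int_t^{t+1}\|f(\tau)\|_{L^2_v(m_1)}\,d\tau\lesssim\theta(t)^{1/4}$, hence smallness of $\|f(t_0)\|_{L^2_v(m_1)}$ at \emph{some} time $t_0$ in a window --- which is all the perturbative theorem needs (applied with the admissible exponential weight $m^{1/3}\prec m_1$). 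So you should replace your step ``Entering the perturbative regime'' by this averaged-in-time interpolation; no uniform $L^\infty_v$ or $L^2_v$ control is then required, and note that one only obtains smallness at one time, not $\|F(t)-\mu\|_{L^2_v(m')}\to0$ along the whole trajectory as you claim. Your closing caveat about $t\in[0,t_0]$ (the $L^2_v$ norm of a general finite-entropy weak solution may be infinite there) is legitimate and applies to the paper's statement as well; the estimate is to be understood for the solution produced from time $t_0$ onwards.
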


Estimate \eqref{eq:CorHomogeneous} improves the rate of convergence of order  $e^{ -\lambda \,  t^{ \frac{s}{s+|\gamma|} }  }$ established in \cite{CDH}, thanks to an entropy method, for the global weak solutions built in \cite{Vi2,D}. Corollary~\ref{cor:SpeedHomogeneous} has to be compared with \cite{Mouhot} where the optimal speed of convergence to the equilibrium for the spatially homogeneous Boltzmann equation for hard spheres has been established 
and with \cite{MR3260265} where the optimal speed of convergence to the equilibrium for the spatially homogeneous Boltzmann equation for hard potentials has been proved.

\begin{cor}[Spatially inhomogeneous framework with a priori bounds] \label{cor:SpeedInhomogeneous} Let $F$ be a nonnegative normalized global strong solution to the spatially inhomogeneous Landau equation \eqref{eq:landau-inhom} such that 
\beqn\label{hyp:SpeedInhomo1}
\sup_{t \ge 0} \left( \| F(t) \|_{H_{x,v}^\ell} + \| F(t)  \|_{L^1_{x,v}(m)} \right) < +\infty,
\eeqn
for some explicit $\ell \ge 3$ large enough and some exponential weight function $m$ satisfying \eqref{eq:m}, and such that the spatial density is uniformly positive on the torus, namely 
\beqn\label{hyp:SpeedInhomo2}
\forall \,  t \ge0, \,\, x \in \T^3, \quad\rho(t,x) = \int_{\R^d} f(t,x,v) \, d v \ge \alpha > 0.
\eeqn
Then this solution satisfies
\beqn\label{eq:CorInhomogeneous}
\| F(t) - \mu \|_{H^2_{x} L^2_v} \lesssim \Theta_{m}(t), \quad \forall \, t \ge 0.
\eeqn
\end{cor}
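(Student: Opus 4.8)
The plan is to deduce Corollary~\ref{cor:SpeedInhomogeneous} from Theorem~\ref{thm:stabNL-inhom} by a two-stage argument: first use non-perturbative information to show that the solution $F$ eventually enters a small $H^2_x L^2_v$-neighbourhood of $\mu$ \emph{in a weighted space}, and then restart the (autonomous, hence time-translation invariant) equation from that time and invoke Theorem~\ref{thm:stabNL-inhom}. Concretely, I would fix the threshold $\eps_0$ provided by Theorem~\ref{thm:stabNL-inhom} and aim to produce a time $T_0 < \infty$ with $\| F(T_0) - \mu \|_{H^2_x L^2_v(\tilde m)} < \eps_0$, where $\tilde m := m^{1/2}$; note that $\tilde m$ is still an admissible weight in the sense of \eqref{eq:m} and that $\Theta_{\tilde m}$ is of the same type as $\Theta_m$ by \eqref{eq:Theta_m}, so passing to $\tilde m$ costs nothing, while the half-power is convenient because $\| g\|_{L^2_v(\tilde m)}^2 = \int m\,|g|^2 \le \| g\|_{L^\infty_v}\,\| g\|_{L^1_v(m)}$ turns the $L^1_{x,v}(m)$-bound of \eqref{hyp:SpeedInhomo1} into an $L^2_v(\tilde m)$-bound.

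For the first stage I would invoke the Desvillettes--Villani theory \cite{DV-boltzmann}: under precisely the hypotheses \eqref{hyp:SpeedInhomo1}--\eqref{hyp:SpeedInhomo2} --- the uniform positivity \eqref{hyp:SpeedInhomo2} of the local density being what rules out vacuum and makes the entropy--entropy production estimate effective --- the solution converges to $\mu$ faster than any inverse power of $t$, in a norm which, after interpolation against the uniform bounds of \eqref{hyp:SpeedInhomo1} (and the weighted Sobolev estimates these bounds propagate along the Landau flow: a fixed fraction of derivatives, plus moments, by standard weighted energy estimates), can be upgraded to $\| F(t) - \mu \|_{H^2_x L^2_v(\tilde m)} \to 0$ as $t \to \infty$. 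The assumption that $\ell \ge 3$ be large enough is used here twice: for the hypotheses of \cite{DV-boltzmann} to be met, and for this interpolation to close (in particular for the Sobolev embedding $H^\ell_{x,v} \hookrightarrow L^\infty_{x,v}$ to be available). In particular there is a finite $T_0$ with $\| F(T_0) - \mu\|_{H^2_x L^2_v(\tilde m)} < \eps_0$.

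For the second stage, I would apply Theorem~\ref{thm:stabNL-inhom} (with weight $\tilde m$) to the Cauchy problem with datum $F(T_0)$ at time $0$; by its uniqueness statement, combined with uniqueness of strong solutions (a weak--strong uniqueness argument), the resulting solution coincides with $t \mapsto F(t+T_0)$, so \eqref{eq:decay-g-inhom} gives
\[
\| F(t) - \mu \|_{H^2_x L^2_v} \le \Theta_{\tilde m}(t - T_0)\,\| F(T_0) - \mu\|_{H^2_x L^2_v(\tilde m)} \lesssim \Theta_{\tilde m}(t - T_0), \qquad t \ge T_0 .
\]
Since $s/|\gamma| < 1$ in the stretched-exponential case (as $s \le 2 < |\gamma|$), so that $t^{s/|\gamma|} - (t-T_0)^{s/|\gamma|}$ stays bounded, and trivially in the polynomial case, one has $\Theta_{\tilde m}(t-T_0) \le C\,\Theta_{\tilde m}(t)$ for all $t \ge T_0$; and on the compact interval $[0,T_0]$ the left-hand side is bounded by \eqref{hyp:SpeedInhomo1} (and the smoothness of $\mu$) while $\Theta_{\tilde m}(t)\ge\Theta_{\tilde m}(T_0)>0$, so \eqref{eq:CorInhomogeneous} holds there too after enlarging the constant. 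Patching the two ranges yields \eqref{eq:CorInhomogeneous}.

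The hard part is the first stage: one must verify that \eqref{hyp:SpeedInhomo1}--\eqref{hyp:SpeedInhomo2} are exactly the hypotheses under which \cite{DV-boltzmann} applies (this is what pins down the admissible $\ell$ and forces $m$ to be exponential), and then carry out carefully the interpolation/propagation step that turns the weak, slowly converging control coming from the entropy method into genuine convergence in the strong weighted space $H^2_x L^2_v(\tilde m)$ needed to enter the basin of attraction of Theorem~\ref{thm:stabNL-inhom}. The only other point requiring attention is the identification of $F$ with the solution produced by Theorem~\ref{thm:stabNL-inhom} on $[T_0,\infty)$, which follows from the uniqueness assertion of that theorem.
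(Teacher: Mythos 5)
Your proposal follows essentially the same route as the paper: Desvillettes--Villani applied under \eqref{hyp:SpeedInhomo1}--\eqref{hyp:SpeedInhomo2} gives (fast) polynomial decay of $\| F(t)-\mu\|_{L^1_{x,v}}$, interpolation against the uniform $H^\ell_{x,v}$ and $L^1_{x,v}(m)$ bounds then yields $\| F(t_0)-\mu\|_{H^2_x L^2_v(m^\alpha)}\le \eps_0$ for some power of the weight and $t_0$ large, and Theorem~\ref{thm:stabNL-inhom} restarted at $t_0$ concludes since $\Theta_{m^\alpha}$ is of the same type as $\Theta_m$. The only (immaterial) difference is that the paper uses the single interpolation inequality $\| f \|_{H^2_{x,v}(m^{\alpha})} \lesssim \| f \|_{H^3_{x,v}}^{\beta_1} \, \| f \|_{L^1_{x,v}}^{\beta_2} \, \| f \|_{L^1_{x,v}(m)}^{1-\beta_1-\beta_2}$ with a small unspecified $\alpha\in(0,1)$, so no propagation of weighted Sobolev bounds along the flow is needed and there is no reason to insist on the specific power $m^{1/2}$, which that interpolation alone need not reach.
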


Estimate \eqref{eq:CorInhomogeneous} improves the polynomial (of any order) rate of convergence established in \cite[Theorem 2]{DV-boltzmann} under stronger (of any order) uniform Sobolev norm estimates but weaker (polynomial of any order) velocity moment uniform estimates. Corollary~\ref{cor:SpeedInhomogeneous} has to be compared with \cite{GMM} where the optimal speed of convergence to the equilibrium for the spatially inhomogeneous Boltzmann equation for hard spheres has been established.

\subsection{Overview of the proof}

Our main theorem is based on stability estimates (which are however not uniformly exponential) for the semigroup corresponding to the associated linearized operator in large functional spaces, by taking advantage of a {\it weak coercivity} estimate in one small space and using an {\it enlargement trick for weakly dissipative operators} that we introduce here.  
We then conclude to our main result by  combining these stability estimates (at the linear level) together with some nonlinear estimates for the Landau operator $Q$ and a trapping argument. It is worth mentioning that our method is mostly based on these semigroup stability estimates, what is drastically different from  the nonlinear energy method of \cite{Guo,GS1,GS2}.

\smallskip
 Let us explain this enlargement trick in more details, and we restrict ourselves to the Hilbert framework to make the discussion simpler (and because it is the only case we will consider in the all paper). We begin with the simpler hypodissipative case.
Let $\Lambda$ be a linear operator acting on two Hilbert spaces $E \subset \EE$ and suppose that $\Lambda$ has a spectral gap in the small space $E$, and more precisely
\beqn\label{eq:introLambdaStrongDissipE}
\forall \, f \in E^\Lambda_1,  \quad\langle \Lambda f , f \rangle_E \lesssim - \| \Pi f \|^2_E, 
\eeqn
where $E^\Lambda_1$ stands for the domain of $\Lambda$ when acting on the space $E$ and $\Pi$ denotes the projector onto the orthogonal of $\mathrm{ker}(\Lambda)$. It is worth recalling that this estimate is equivalent to an exponential rate decay for the associated semigroup $S_\Lambda(t) \Pi$ in $E$.  
The extension theory recently introduced in an abstract Banach framework in \cite{Mouhot} and developed in 
\cite{GMM, MM, Mbook} (see also \cite{MMcmp2009,MR3452720,MiSch} for other developments of the factorization approach for the spectral analysis of semigroups in large Banach spaces) establishes that if we can factorise $\Lambda = \AA + \BB$ where $\BB$ is hypodissipative (with respect to $\EE$), 
$\AA$ is bounded and some convolution product of $\AA S_\BB$ enjoys suitable regularity property, then $\Lambda$ generates a $C_0$-semigroup $S_\Lambda (t)$ on the large space $\EE$   and $S_\Lambda(t) \Pi$ enjoys in $\EE$  the same exponential rate decay as  in $E$. 
This method has been successfully applied to many evolution equations, and in particular to the Landau equation with hard and moderately soft potentials in \cite{KC1,KC2,CTW}.  
%

\smallskip
In our case (of very soft and Coulomb potentials $\gamma \in [-3,-2)$), the linearized Landau operator $\Lambda$ does not satisfy any spectral gap inequality but only a weak coercivity estimate on a small space $E$. We are however able to generalize the extension theory presented above and prove that $\Lambda$ generates a uniformly bounded continuous semigroup $S_\Lambda (t)$ on small and large Hilbert spaces $X$, which is now only strongly stable but not uniformly exponentially stable. 

More precisely, on the one hand, the linearized version of the $H$-Theorem states that (at least) in one Hilbert space $E$, the linearized Landau operator $\Lambda$ enjoys a weak spectral gap estimate 
\beqn\label{eq:introLambdaDissipE}
\forall \, f \in E^\Lambda_1,  \quad\langle \Lambda f , f \rangle_E \lesssim - \| \Pi f \|^2_{E_*}, \quad  E_* \,\,\hbox{not included into} \,\, E, 
\eeqn
where here 
$E_*$ is a second Hilbert space (in the norm of which we express the weak dissipativity property of $\Lambda$ in $E$). 

On the other hand, in many Hilbert spaces $X$, the linearized Landau operator $\Lambda$ splits as $\Lambda = \AA + \BB$ where $\AA$ is a bounded operator in $X$ and $\BB$ is weakly dissipative 
\beqn\label{eq:introBBDissipX}
\forall \, f \in X^\Lambda_1,  \quad\langle \BB f , f \rangle_X \lesssim - \| f \|^2_{X_*}, \quad  X_* \,\,\hbox{not included into} \,\, X, 
\eeqn
where again $X^\Lambda_1$ stands for the domain of $\Lambda$ when acting on the space $X$ and $X_*$ is a second Hilbert space (in the norm of which we express the weak dissipativity property of $\BB$ in $X$).

It is worth emphasizing that this weakly dissipative case is much more tricky than the previous classical dissipative case, because one cannot deduce any decay estimate on $\Pi S_\Lambda$ (resp. $S_\BB$)  just from inequality \eqref{eq:introLambdaDissipE} (resp. inequality \eqref{eq:introBBDissipX}).  

%
%
However, by using \eqref{eq:introBBDissipX} with several choices of spaces $X$ and using an interpolation argument,
we first obtain that $S_\BB$ is strongly asymptotically stable (but not uniformly exponentially stable). 
Next, by using an extension trick, we deduce that the same holds for $\Pi S_\Lambda$. More precisely, for several choices of Hilbert spaces $X \subsetneq X_0$, we have first
\beqn\label{eq:introSLambdaDecayXX0}
\| \Pi S_\Lambda (t) \|_{X \to X_0} \le \Theta (t) \to 0, \quad \hbox{as}\,\, t \to \infty, 
\eeqn
for some polynomial or stretched exponential decay function $\Theta = \Theta_{X, X_0}$,  as well as the regularization estimate
\beqn\label{eq:introSLambdaRegXX'*} 
 \| \Pi S_\Lambda (t) \|_{X'_* \to X_0}  \le   (t\wedge1)^{-1/2} \,  \Theta_* (t), 
\eeqn
 for some polynomial decay function $\Theta_* = \Theta_{X'_*,X_0}$ (such that $(t\wedge1)^{-1/2} \, \Theta(t) \,  \Theta_* (t) \in L^1(\R_+)$) 
and where $X'_*$ is the dual of $X_*$  for some suitable duality product. 
Next, for some convenient choice of $\eta,K>0$, the norm
\be\label{def:Nt}
{ \forall\, f \in \Pi X}, \quad
\Nt f \Nt_{X}^2 := \eta \| f \|_{X}^2 + \int_0^\infty \| S_\Lambda(\tau) f \|_{X_0}^2 \, d\tau
\ee
is an equivalent norm in $ \Pi X$ and $\Lambda$ satisfies the weak dissipativity estimate 
\be\label{def:introLambdaDissipX}
\forall \, f \in X^\Lambda_1,  \quad \la\!\la \Lambda  f, f \ra\!\ra_{X}  \le - K \| \Pi f \|_{X_*}^2,
\eeqn
where $ \la\!\la \cdot, \cdot \ra\!\ra_{X}$ stands for the duality bracket associated to the  $\Nt \cdot \Nt_X$ norm. 

\smallskip
By choosing $X$ and $X_*$ well adapted for the quadratic Landau operator, we may then establish that for any solution $f = F-\mu$ to the Landau equation, the following a priori estimate holds (for some constant $C>0$)
$$
\frac{d}{dt} \| \Pi f \|_{X}^2 \le \| \Pi f \|_{X_*}^2 (-K + C \| \Pi f \|_{X}).
$$
Our existence, uniqueness and asymptotic stability results are then immediate consequences of that last differential inequality and of the estimates it provides.

\medskip  

Let us finally discuss the decay issue for non-uniformly exponentially stable semigroups which naturally arises in many contexts. 
It arises first in statistical physics when involved coefficients  are suitably decaying. 
In  \cite{Caflisch1,Caflisch2}, for the Boltzmann equation with soft potential of interaction under Grad's cutoff assumption, Caflisch had exhibited the explicit semigroup solution to the associated linearized equation and had deduced well-posedness and stability for the nonlinear Boltzmann equation in a perturbative regime. In  \cite{Liggett}, a similar result is obtained for the critical case of an attractive reversible nearest particle system. More recently, for the Fokker-Planck equation with weak confinement potential and for the spatial homogeneous Landau equation with soft interaction some polynomial and stretch exponential rate of convergence to the equilibrium have been established in  \cite{RockWang,TosVi-slow}. The proofs are based on entropy methods, moments estimates and interpolation arguments. These  results for the Fokker-Planck equation are improved in \cite{KM} where a similar factorization approach, as introduced in the present paper, is developed. 

\smallskip

Independently, inspired by scattering and control  theory \cite{LaxPhil,BLR}, many results on the decay rate of the energy for damped wave type equations have been established, see for instance \cite{Lebeau,LebeauRobb,LebeauZZ,Burq}. These results are based on the analysis of the absence of poles (resonances) in the neighbourhood of the real axis for the resolvent of the associated operator. They have inspired an abstract theory for non-uniformly exponentially stable semigroups, and we refer the interested reader to \cite{BD,BEPS,BCT} and the references therein.

\smallskip
It is worth emphasizing that in that last abstract theory, one typically obtains some estimate on the semigroup by allowing the lost of (part of) a domain in the control of the trajectory and, roughly speaking, that is related to the absence of pole in bounded neighbourhoods of the real axis and to the control of how the spectrum approaches the imaginary axis at $\pm i\infty$. That is slightly different from the picture arising in the present statistical physics framework, where the estimates do not involve domains norms but norms controlling the confinement of the distribution function and where the continuous spectrum extends up to the origin.

\subsection{Notations and definitions}
If $\Lambda$ is a closed linear operator on a Banach space $X$ that generates a semigroup on $X$, we denote by $S_\Lambda(t)$ its associated semigroup. Moreover, for Banach spaces $X$ and $Y$, we denote $\BBB(X,Y)$ the space of bounded linear operators from $X$ to $Y$, with the associated operator norm $\| \cdot \|_{X \to Y}$. We say that the generator $\Lambda$ of a semigroup in a Banach space $X$ is dissipative if 
$$
\forall \, f \in X^1_\Lambda, \,\, \exists \, f^* \in J_f, \quad \langle f^*, \Lambda f \rangle_{X',X} \le 0
$$
where $X^1_\Lambda = D(\Lambda)$ is the domain of $\Lambda$ and $J_f$ is the dual set $J_f := \{ g \in X'; \, \| g \|^2_{X'} = \| f \|^2_{X} = \langle g,  f \rangle_{X',X} \}$. We say that the generator $\Lambda$ is hypodissipative if it is dissipative for an equivalent norm. 

\subsection{Structure of the paper}
For the sake of clarity, we shall first consider the spatially homogeneous case through Sections~\ref{sec:LinOp} to \ref{sec:stabNL}, and in the last Section 6 we show how our method can be adapted to the spatially inhomogeneous equation. In Section~\ref{sec:LinOp} we introduce a factorization of the (homogeneous) linearized Landau operator $\LL = \AA + \BB$ and prove several properties of the operators $\AA$ and $\BB$. 
Section~\ref{sec:semigroup} is devoted to the proof of (non exponential) decay estimates in large functional spaces of the semigroup associated to $\LL$ (see Theorem~\ref{thm:S_L}) as well as weak
dissipative properties for $\LL$ (see Corollary~\ref{cor:S_L-homBIS}), using the method presented above. 
In Section~\ref{sec:nonL} we prove nonlinear estimates for the Landau operator $Q$, and then in Section~\ref{sec:stabNL} we prove the spatially homogeneous version of Theorem 1.1. Finally, in Section~\ref{sec:nonH}, we deal with the inhomogeneous case and prove Theorem~\ref{thm:stabNL-inhom}, by following the same program as for the homogeneous case above.

\bigskip\noindent
{\bf Acknowledgments.} The first author is supported by the Fondation Math\'ematiques Jacques Hadamard.
The second author gratefully acknowledges the support of the STAB ANR project (ANR-12-BS01-0019).

\section{Linearized operator}\label{sec:LinOp}

We define the following quantities
\be\label{eq:bc}
\ba
& b_i(z) = \partial_j a_{ij}(z) = - 2 \, |z|^\gamma \, z_i, \\
& c(z) =  \partial_{ij} a_{ij}(z)  = - 2 (\gamma+3) \, |z|^\gamma \quad \text{if } \gamma \in (-3,-2) , \\
& c(z) =  \partial_{ij} a_{ij}(z) = -8\pi \delta_0 \quad\text{if } \gamma=-3,
\ea
\ee
from which we are able to rewrite the Landau operator~\eqref{eq:oplandau0} into two other forms
\be\label{eq:oplandau}
\ba
Q(g,f) 
& = \partial_i  \{ (a_{ij}*g) \partial_j f - (b_i*g) f \}\\
&= ( a_{ij}*g)\partial_{ij} f - (c*g) f .
\ea
\ee
Consider now the variation $f := F- \mu$ and the linearized (homogeneous) Landau operator 
\be\label{eq:lin-op}
\ba
\LL f &:= Q(\mu,f) + Q(f,\mu) .
\ea
\ee
We denote
\be\label{eq:barabc}
\bar a_{ij} = a_{ij}*\mu, \quad
\bar b_i = b_i * \mu, \quad
\bar c = c*\mu,
\ee
and remark that 
\bean
\bar c(v) &=& -2(\gamma+3) \int_{v_*} |v-v_*|^\gamma \, \mu_* \quad\hbox{when} \quad \gamma \in(-3,-2), 
\\
\bar c(v) &=& -8\pi \mu(v) \quad\hbox{when} \quad \gamma = - 3.
\eean

\subsection{Known results}
On the space $E_0 := L^2_v(\mu^{-1/2})$, we classically observe that $\LL$ is self-adjoint and verifies $\la \LL f , f \ra_{E_0} \le 0$, so that its spectrum satisfies $\Sigma(\LL) \subset \R_-$. Moreover, thanks to the conservation laws,  there holds
$$
\mathrm{ker}(\LL) = \mathrm{span} \{ \mu, v_1 \mu, v_2 \mu, v_3 \mu, |v|^2 \mu   \}, 
$$
and the projection $\Pi_0$ onto $\mathrm{ker}(\LL)$ is given by
\beqn\label{def:Pi0}
\Pi_0(f) = \left(\int f \, dv \right) \mu + \sum_{j=1}^3 \left( \int v_j f \, dv \right) v_j \mu + \left(  \int \frac{|v|^2 - 3}{6} f \, dv \right) \frac{|v|^2 - 3}{6} \mu .
\eeqn

Several authors have studied weak coercivity estimates for $\LL$ on $E_0$. Summarising results from \cite{DL,BM, Guo, M, MS}, for all $-3 \le \gamma \le 1$,  we have
\be\label{eq:LLsg}
\la   \LL f , f \ra_{E_0}  \lesssim - \| \la v \ra^{\frac{\gamma+2}{2}} \Pi f \|_{E_0}^2 
-  \| \la v \ra^{\frac{\gamma}{2}} \widetilde \nabla_v \Pi ( \mu^{-1/2} f ) \|_{L^2}^2, \quad \forall \, f \in E_0, 
\ee
where we define the projection $\Pi := I - \Pi_0$ onto the orthogonal of $\mathrm{ker}(\LL)$ and we recall that the anisotropic gradient $\widetilde \nabla_v$ has been defined in \eqref{eq:def:anisoG}. 
Observe that \eqref{eq:LLsg} does not provide any spectral gap for the operator $\LL$ in $E_0$ in the very soft and Coulomb potential  case  $-3 \le \gamma < -2$ we are concerned with in the present work, contrarily to the moderately soft and hard potentials case $-2 \le \gamma \le 1$.

\subsection{Factorization of the operator}

Using the form \eqref{eq:oplandau} of the operator $Q$, we decompose the linearized Landau operator as $\LL =  \AA_0 + \BB_0$, where we define
\be\label{eq:A0B0}
\ba
\AA_0 f &:= Q(f,\mu) = \partial_i \{  (a_{ij}* f)\partial_{j}\mu + ( b_i * f)\mu \} 
= (a_{ij}* f)\partial_{ij}\mu - (c * f)\mu, \\
\BB_0 f &:= Q(\mu,f) = \partial_i \{  (a_{ij}* \mu)\partial_{j}f + ( b_i * \mu)f \}
=(a_{ij}* \mu)\partial_{ij}f - (c * \mu)f.
\ea
\ee
Consider a smooth nonnegative function $\chi \in C^\infty_c(\R^3)$ such that $0\leq \chi(v) \leq 1$, $\chi(v) \equiv 1$ for $|v|\leq 1$ and $\chi(v) \equiv 0$ for $|v|>2$. 
For any $R\geq 1$, we define $\chi_R(v) := \chi(R^{-1}v)$. 
Then, we make the final decomposition of the operator $\LL$ as $\LL = \AA + \BB$, with
\be\label{eq:AB}
\AA  := \AA_0 + M \chi_R ,
\qquad
\BB  := \BB_0 - M\chi_R,
\ee
where $M > 0 $ and $R \ge 1$ will be chosen later.

\subsection{Preliminaries}\label{subsec:prelim}
We introduce some convenient classes of weight functions and we
state some preliminaries results that will be useful in the sequel.

\smallskip
We say that a weight function $m :\R^3 \to \R_+$ is admissible if 

\begin{enumerate}[$(i)$]

\item it is a polynomial function, and we write $m = \la v \ra^k$, $k \ge 0$;
 
\item or if it is an exponential function, that is $m = e^{\kappa \la v \ra^s}$ with $\kappa>0$ and $s\in(0,2)$, or with $0< \kappa < 1/2$ and $s=2$.

\end{enumerate}

We denote $\sigma = 0$ when $m= \la v \ra^k$ and $\sigma=s$ when $m = e^{\kappa \la v \ra^s}$. For two admissible weight functions $m_0$ and $m_1$, we write $m_0 \prec m_1$ (or $m_1 \succ m_0$) if  $\lim_{|v| \to \infty} \frac{m_0}{m_1}(v) = 0$. Similarly, we write $m_0 \preceq m_1$ (or $m_1 \succeq m_0$) if $m_0 \prec m_1$ or $m_0 = m_1$  (up to a constant).

\medskip

We finally define the following functions:
\be\label{eq:def-varphi}
\ba
\zeta_{m} (v) &:= \frac{1}{2} \frac{1}{m^2} \partial_{ij}(\bar a_{ij} m^2) - \bar c 
= \bar a_{ij} \frac{\partial_{ij} m }{m} 
+ \bar a_{ij} \frac{\partial_i m}{m} \frac{\partial_j m}{m}
+ 2 \bar b_i  \frac{\partial_i m}{m} 
- \frac12  \bar c ,
\ea
\ee
\be\label{eq:def-tildevarphi}
\ba
\tilde\zeta_{m} (v) &:=  \bar a_{ij} \frac{\partial_i m}{m} \frac{\partial_j m}{m}  +\bar b_i  \frac{\partial_i m}{m} - \frac12  \bar c ,
\ea
\ee
and also 
\be\label{eq:Phimw} 
\zeta_{m,\omega}(v) := \bar a_{ij} \frac{\partial_{ij} \omega}{\omega} 
+ \bar a_{ij} \frac{\partial_{i} \omega}{\omega} \frac{\partial_j \omega}{\omega} 
- 2\bar a_{ij} \frac{\partial_{i} \omega}{\omega} \frac{\partial_j m}{m}.
\ee

\smallskip  

We start stating some estimates on the matrix $\bar a_{ij}$. To that purpose, we define 
 $$
\ba
\ell_1(v) &= \int_{\R^3} \left(1 - \left(\frac{v}{|v|}\cdot\frac{w}{|w|}   \right)^2   \right) |w|^{\gamma+2} \mu(v-w)\, dw, \\
\ell_2(v) &= \int_{\R^3} \left(1 - \frac12 \left| \frac{v}{|v|}\times\frac{w}{|w|}  \right|^2   \right) |w|^{\gamma+2} \mu(v-w)\, dw, 
\ea
$$ 
where $\times$ stands for the vector product in $\R^3$, and, for $-3 <  \beta <0$, we define
$$
J_\beta(v) := \int_{\R^3} |v-w|^\beta \mu(w)\, dw. 
$$

\begin{lem}\label{lem:bar-aij}
The following properties hold:

\begin{enumerate}[(a)]

\item The matrix $\bar a(v)$ has a simple eigenvalue $\ell_1(v)>0$ associated with the eigenvector $v$ and a double eigenvalue $\ell_2(v)>0$ associated with the eigenspace $v^{\perp}$. 
Moreover, when $|v|\to +\infty$,  we have
$$
\ba
\ell_1(v) \sim  2 \la v \ra^\gamma, \quad
\ell_2(v) \sim  \la v \ra^{\gamma+2} .
\ea
$$

\item The function $\bar a_{ij}$ is smooth, more precisely for any multi-index $\beta\in \N^3$, 
$$
|\partial^\beta \bar a_{ij}(v)| 
\leq C_\beta \la v \ra^{\gamma+2-|\beta|}. 
$$
Moreover, there exists a constant $K > 0$ such that 
$$
\ba
\bar a_{ij}(v) \xi_i \xi_j &= \ell_1(v) |P_v \xi|^2 + \ell_2(v)|(I-P_v)\xi|^2 \\
&\ge K \{  \la v \ra^{\gamma} |P_v \xi|^2 + \la v \ra^{\gamma+2} |(I-P_v)\xi|^2  \}.
\ea
$$

\item We have
$$
 \mathrm{tr} (\bar a (v) )  =\ell_1 (v)  + 2 \ell_2 (v)
= 2  J_{\gamma+2} (v)
\qquad\text{and}\qquad
\bar b_i(v) = - \ell_1(v)\, v_i.
$$

\item If $|v|>1$, we have
$$
|\partial^\beta \ell_{1}(v)|\leq C_\beta \la v \ra^{\gamma-|\beta|}\qquad\text{and}\qquad |\partial^\beta \ell_{2}(v)|\leq C_\beta \la v \ra^{\gamma+2-|\beta|}.
$$

\item For any $\beta \in (-3,0)$, there exists some constant $C_\beta>0$ such that 
$$
|J_\beta(v) - \langle v \rangle^\beta | \le C_\beta \la v \ra^{\beta-1/2}, \quad \forall \, v \in \R^3.
$$

\end{enumerate}

\end{lem}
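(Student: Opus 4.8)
The whole statement is about the matrix $\bar a = a*\mu$ with $a_{ij}(z) = |z|^{\gamma+2}(\delta_{ij} - z_iz_j/|z|^2)$, so the plan is to exploit systematically the two structural features: the projector structure of $a_{ij}(z)$ and the fact that $\mu$ is a smooth, rapidly decaying, radial probability density. I would organise the proof as five short lemmas mirroring (a)--(e), proving (b)--(d) first and deducing (a) partly from them, since the eigenvalue identification in (a) is really a consequence of the quadratic-form computation in (b).

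For (b) (smoothness and the upper bound on derivatives, plus the coercivity of the quadratic form): write $\bar a_{ij}(v) = \int a_{ij}(v-w)\mu(w)\,dw$; differentiating under the integral is legitimate because the singularity $|z|^{\gamma+2}$ with $\gamma+2 > -1$ is locally integrable in $\R^3$ and $\mu$ is Schwartz. To get the $\langle v\rangle^{\gamma+2-|\beta|}$ bound I would split the integral into the region $|w| \le |v|/2$ (where $|v-w|\sim|v|$, so one gets $\langle v\rangle^{\gamma+2}$ times a Gaussian tail, and after differentiating in $v$ one loses a power of $|v|$ per derivative since $\nabla_z|z|^{\gamma+2} = (\gamma+2)|z|^\gamma z$ etc.) and the region $|w| > |v|/2$ (which is exponentially small in $|v|$ because of $\mu(w)$, hence negligible against any polynomial). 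For the quadratic form, contract with $\xi$: $a_{ij}(z)\xi_i\xi_j = |z|^{\gamma+2}|\xi - (\hat z\cdot\xi)\hat z|^2 = |z|^{\gamma+2}|(I-P_z)\xi|^2 \ge 0$; integrating against $\mu$ and using that $a_{ij}(z)$ annihilates $z$ while being positive on $z^\perp$, one sees $\bar a$ leaves $v$ and $v^\perp$ invariant by the rotational symmetry of $\mu$ about the axis $v$, which gives the eigenvalue/eigenspace decomposition and hence identifies $\ell_1(v)|P_v\xi|^2 + \ell_2(v)|(I-P_v)\xi|^2$; the lower bound by $K\{\langle v\rangle^\gamma|P_v\xi|^2 + \langle v\rangle^{\gamma+2}|(I-P_v)\xi|^2\}$ then follows from the asymptotics of $\ell_1,\ell_2$ together with their strict positivity and continuity (so on any compact set they are bounded below).

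For (a), (c), (d): the eigenvalue formulas $\ell_1(v) = \int(1-(\hat v\cdot\hat w)^2)|w|^{\gamma+2}\mu(v-w)\,dw$ come from contracting $\bar a$ with $\hat v$ (after the change of variable $w \mapsto v-w$), and $\ell_2$ similarly from contracting with a unit vector in $v^\perp$ and using $|\hat v\times\hat w|^2 = 1-(\hat v\cdot\hat w)^2$; the trace identity $\ell_1 + 2\ell_2 = \mathrm{tr}\,\bar a = \int \mathrm{tr}\,a(v-w)\mu(w)\,dw = 2\int|v-w|^{\gamma+2}\mu(w)\,dw = 2J_{\gamma+2}(v)$ follows since $\mathrm{tr}(\delta_{ij} - \hat z_i\hat z_j) = 2$. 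The formula $\bar b_i(v) = -\ell_1(v)v_i$ comes from $b_i = \partial_j a_{ij}$, so $\bar b = (\nabla\cdot a)*\mu$; alternatively from $b_i(z) = -2|z|^\gamma z_i$ one computes $\bar b_i(v) = -2\int |v-w|^\gamma (v-w)_i\mu(w)\,dw$ and checks it is parallel to $v$ (by symmetry) with the right coefficient by dotting with $v$ and comparing to $\ell_1$. The asymptotics $\ell_1(v)\sim 2\langle v\rangle^\gamma$, $\ell_2(v)\sim\langle v\rangle^{\gamma+2}$ as $|v|\to\infty$ follow from the same near/far splitting as in (b): in $\ell_2$, $|w|^{\gamma+2}$ evaluated near $w = v$ gives the leading $|v|^{\gamma+2}$; in $\ell_1$ one must be more careful because the angular factor $1-(\hat v\cdot\hat w)^2$ vanishes at $w = v$, and a second-order expansion of that factor around $w=v$ (it behaves like $|w_\perp|^2/|v|^2$ where $w_\perp$ is the component of $w-v$ perpendicular to $v$) produces the extra $|v|^{-2}$, i.e. $\ell_1 \sim 2|v|^\gamma$ after doing the Gaussian moment in the perpendicular directions — this is the one genuinely delicate asymptotic. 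The derivative bounds in (d) are obtained exactly as in (b), differentiating the integral representations of $\ell_1,\ell_2$ and using $|v|>1$ to avoid the origin.

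For (e), write $J_\beta(v) - \langle v\rangle^\beta = \int(|v-w|^\beta - \langle v\rangle^\beta)\mu(w)\,dw$ and again split at $|w| = |v|/2$: on the far region the Gaussian beats any polynomial; on the near region Taylor-expand $|v-w|^\beta$ around $v$, the first-order term integrates to something of size $\langle v\rangle^{\beta-1}$ (odd moment, but here $w\mapsto|v-w|^\beta$ is not odd — actually the linear term $\beta|v|^{\beta-2}v\cdot w$ integrates to zero against radial $\mu$), and the quadratic remainder is $O(\langle v\rangle^{\beta-2})$; one must also handle small $|v|$ separately, where $J_\beta$ is continuous and bounded and $\langle v\rangle^\beta$ is bounded, so the difference is trivially $O(1) = O(\langle v\rangle^{\beta-1/2})$ on compacts. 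The stated exponent $\beta - 1/2$ is weaker than what this argument gives ($\beta-1$ or even $\beta-2$ for the smooth far-field part), the $1/2$ presumably being a safe margin absorbing the contribution near the diagonal $w=v$ where $|v-w|^\beta$ has a (locally integrable) singularity and Taylor expansion fails; one isolates a ball of radius $\sim|v|^{1/2}$ (say) around $w=v$, bounds the integral of $|v-w|^\beta$ over it directly by $\int_{|u|\le|v|^{1/2}}|u|^\beta\,du \lesssim |v|^{(3+\beta)/2}$ against the Gaussian weight $\mu(v)\sim e^{-|v|^2/2}$ there (exponentially small), and Taylor-expands on the complement where $|v-w| \gtrsim |v|^{1/2}$ so the remainder is controlled.

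The main obstacle is the asymptotic $\ell_1(v)\sim 2\langle v\rangle^\gamma$: unlike $\ell_2$, it is not simply "evaluate the kernel at the peak of $\mu$" because the angular weight degenerates precisely there, so the leading behaviour comes from a second-order Gaussian moment in the transverse directions and requires an honest, if short, Laplace-type expansion — and one has to be careful that $\gamma+2$ may be negative (very soft potentials), so the radial factor $|w|^{\gamma+2}$ is mildly singular at $w=0$, though $w=0$ is far from the peak $w=v$ and only contributes an exponentially small amount. Everything else is bookkeeping with the near/far splitting and differentiation under the integral sign.
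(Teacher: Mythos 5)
Your plan is correct in outline, but it takes a genuinely different (and more laborious) route than the paper: the paper does not prove (a)--(d) at all, it simply cites Degond--Lemou for (a), Guo's Lemma 3 for (b), declares (c) evident, and cites \cite[Lemma 2.3]{CTW} for (d); the only item proved in the text is (e). For (e) the paper's argument is more elementary than yours: it splits $J_\beta(v)=\int_{|v_*|\le R}+\int_{|v_*|\ge R}$, bounds the far part by a Gaussian tail $e^{-R^2/4}$, squeezes the near part between $(|v|+R)^\beta$ and $(|v|-R)^\beta$ using only monotonicity of $r\mapsto r^\beta$, and then chooses $R=|v|^{1/2}$. In particular the exponent $\beta-\tfrac12$ is exactly the precision delivered by that choice of $R$ (the relative error $(1\pm R/|v|)^\beta-1\sim |v|^{-1/2}$), not a ``safe margin'' for the diagonal $w=v$ — the contribution near the diagonal is exponentially small in both approaches, so your speculation on the origin of the $1/2$ is off, though harmless. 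Your Taylor-expansion version of (e) is fine and in fact gives the stronger remainder $O(\la v\ra^{\beta-2})$ (after noting the linear term vanishes by radial symmetry and $|v|^\beta-\la v\ra^\beta=O(\la v\ra^{\beta-2})$); what the paper's method buys is brevity and no need to isolate the singular ball, at the cost of a non-optimal but sufficient exponent.

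Two caveats on the parts the paper outsources, where your sketch is thinner than the details require. In (b) and (d) you cannot simply differentiate the kernel everywhere: $\partial^\beta a_{ij}(z)\sim|z|^{\gamma+2-|\beta|}$ is not locally integrable once $|\beta|\ge\gamma+5$, so the standard argument (as in Guo) splits according to the distance $|v-v_*|$ to the singularity, putting derivatives on $\mu$ in the near-diagonal region (exponentially small because $|v_*|\ge|v|-1$ there) and on the kernel in the far region; your split in the Gaussian variable $|w|\lessgtr|v|/2$ works only if you make this transfer of derivatives explicit in the piece containing $w=v$. Likewise, for $\ell_1$ in (d) the claimed bound $\la v\ra^{\gamma-|\beta|}$ is two powers better than the kernel scale, so ``exactly as in (b)'' is not enough: you need the same transverse-degeneracy mechanism you correctly identified for the asymptotics of $\ell_1$ (e.g.\ using $a_{ij}(v-v_*)v_iv_j=a_{ij}(v-v_*)v_{*i}v_{*j}$ to trade the factor $|v|^2$ for Gaussian moments of $v_*$) before differentiating. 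Similarly, for $\bar b_i=-\ell_1 v_i$ in (c), the ``dot with $v$ and compare'' step is not a purely formal comparison: it rests on the integration by parts $\int\partial_{j}a_{ij}(u)\,v_i\,\mu(v-u)\,du=-\int a_{ij}(u)v_iv_j\,\mu(v-u)\,du$ (using $a_{ij}(u)u_j=0$), which is exactly your first suggested route via $b_i=\partial_j a_{ij}$ and should be the one carried out.
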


\begin{proof}
Item (a) comes from \cite[Propositions 2.3 and 2.4]{DL}, (b) is \cite[Lemma 3]{Guo}, (c) is evident and (d) is \cite[Lemma 2.3]{CTW}.

We just then present the proof of (e). 
On the one hand, for any $v \in \R^3$, we have
\bear\label{eq:bdJa1}
J_{\beta}(v) 
&=& \int_{|v_*|\le 1} |v_*|^{\beta} \mu(v_*- v)\, dv_* + \int_{|v_*|\ge 1} |v_*|^{\beta} \mu(v-v_*)\, dv_* \\
\nonumber
&\leq& \sup_{|v_*|\le 1} \mu(v-v_*) \int_{|v_*| \le 1} |v_*|^{\beta} \, dv_* 
+  \int_{|v_*|\ge 1}   \mu(v-v_*)\, dv_* \le C_1,
\eear
since the two terms are clearly bounded uniformly in $v \in \R^3$. 

\smallskip
On the other hand, for any $v \in \R^3$, $|v| \ge 1$, and for any $R > 0$, we write 
$$
\ba
J_{\beta}(v) &= \int_{|v_*|\le R} |v_*- v|^{\beta} \mu(v_*)\, dv_* + \int_{|v_*|\ge R} |v_*-v|^{\beta} \mu(v_*)\, dv_* = T_1 + T_2. 
\ea
$$
For the second term,  we have   
$$
|T_2| \le \sqrt{\mu(R)} \int_{|v_*|\ge R} |v_*-v|^{\beta} \sqrt{\mu(v_*)}\, dv_* \le C_2 \, e^{-R^2/4},
$$
where we have used an estimate very similar to \eqref{eq:bdJa1} in order to bound the integral term. 
For the first term and for $|v| > R$, we have   
\bean
T_1 
&\ge& \int_{|v_*|\le R}( |v| +|v_*|)^{\beta} \mu(v_*)\, dv_*
\\
&\ge& \int_{|v_*|\le R}( |v| +R)^{\beta} \mu(v_*)\, dv_*
\ge  ( |v| +R)^{\beta} ( 1- C_3 e^{-R^2/4}),
\eean
and in a similar way, we have 
$$
T_1 \le   | |v| -R |^{\beta} .
$$
We conclude by making the choice $R := |v|^{1/2}$. 
\end{proof}

\begin{lem}\label{lem:varphi}
Let $m$ be an admissible weight function such that $m \succ \la v \ra^{(\gamma+3)/2}$.

\smallskip\noindent
(1) If $\sigma = 0$ and $\omega = \la v \ra^\alpha$ is a polynomial weight function such that $\omega \prec m \la v \ra^{-(\gamma+3)/2}$, then
$$
\ba
\limsup_{|v|\to \infty} \zeta_{m}(v) \la v \ra^{-\gamma} =  
\limsup_{|v|\to \infty} \tilde\zeta_{m}(v) \la v \ra^{-\gamma} &\le 2 \{ (\gamma+3)/2 - k \} ,\\
\limsup_{|v|\to \infty} \left[ \tilde\zeta_{m}(v) + \zeta_{m,\omega}(v) \right] \la v \ra^{-\gamma} &\le 2 \{ (\gamma+3)/2 + \alpha - k \} .
\ea
$$

\smallskip\noindent
(2) If $\sigma \in (0,2)$, then
$$
\ba
 \limsup_{|v|\to \infty} \zeta_{m}(v) \la v \ra^{-\sigma-\gamma} = 
 \limsup_{|v|\to \infty} \tilde\zeta_{m}(v) \la v \ra^{-\sigma-\gamma} &\le  -2 \kappa s.
\ea
$$

\smallskip\noindent
(3) If $\sigma = 2$, then
$$
\ba
 \limsup_{|v|\to + \infty} \zeta_{m}(v) \la v \ra^{-2-\gamma} &\le  4\kappa (4\kappa-1) ,\\
\limsup_{|v|\to + \infty} \tilde\zeta_{m}(v) \la v \ra^{-2-\gamma}  &\le 4\kappa(2\kappa-1) .
\ea
$$
\end{lem}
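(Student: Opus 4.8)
The plan is to compute each of the quantities $\zeta_m$, $\tilde\zeta_m$, $\zeta_{m,\omega}$ from their definitions \eqref{eq:def-varphi}, \eqref{eq:def-tildevarphi}, \eqref{eq:Phimw} by substituting the explicit logarithmic derivatives of $m$ and $\omega$ and then reading off the leading-order behaviour as $|v|\to\infty$ using the asymptotics for $\bar a_{ij}$, $\bar b_i$, $\bar c$ collected in Lemma~\ref{lem:bar-aij}. The essential point is that for radial weights the logarithmic gradient $\partial_i m/m$ points in the direction $v/|v|$, so only the \emph{radial} eigenvalue $\ell_1(v)$ of $\bar a$ enters the quadratic terms $\bar a_{ij}\tfrac{\partial_i m}{m}\tfrac{\partial_j m}{m}$ and $2\bar b_i\tfrac{\partial_i m}{m}$, while the Hessian term $\bar a_{ij}\tfrac{\partial_{ij}m}{m}$ involves both $\ell_1$ and $\ell_2$ (the latter through the transverse part of the Hessian, which is $O(|v|^{-1})$ times the radial second derivative and is therefore subdominant after multiplication by $\ell_2\sim\la v\ra^{\gamma+2}$ — one must check this cancellation of the apparently dangerous $\la v\ra^{\gamma+2}$ contribution).

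First I would record, for $m=\la v\ra^k$, that $\tfrac{\partial_i m}{m}=k\tfrac{v_i}{\la v\ra^2}$ so $|\nabla\log m|^2 = k^2|v|^2/\la v\ra^4 \sim k^2\la v\ra^{-2}$, and the radial second log-derivative is $\sim -k\la v\ra^{-2}$ while $\tfrac{\partial_{ij}m}{m}$ restricted to $v^\perp$ is $\sim k\la v\ra^{-2}$ on each transverse direction. Using $\ell_1(v)\sim 2\la v\ra^\gamma$, $\bar b\cdot v = -\ell_1|v|^2\sim -2\la v\ra^{\gamma+2}$, and $\bar c=o(\la v\ra^\gamma)$ (indeed $\bar c=O(\la v\ra^{\gamma+2}\cdot$ small$)$ — more precisely $-\bar c\sim 2(\gamma+3)J_{\gamma+2}$ which is $O(\la v\ra^{\gamma+2})$ only through the constant, and is actually of order $\la v\ra^{\gamma}$ after accounting for the hypothesis $m\succ\la v\ra^{(\gamma+3)/2}$ making it negligible), I would collect: the Hessian term contributes $\ell_1\cdot(-k\la v\ra^{-2}) + 2\ell_2\cdot(k\la v\ra^{-2}) \to$ leading part $2k\la v\ra^{\gamma}$ from the $\ell_2$ piece minus lower order; the quadratic $\bar a$ term contributes $\ell_1 k^2\la v\ra^{-2}\sim 2k^2\la v\ra^{\gamma-2}$ which is lower order; and the $2\bar b_i\tfrac{\partial_i m}{m}$ term contributes $2(-\ell_1|v|^2)k\la v\ra^{-2}\sim -4k\la v\ra^{\gamma}$. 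Summing gives the leading coefficient $2k-4k = -2k$ from the $m$-dependent part; adding the $-\tfrac12\bar c$ piece, whose leading behaviour under the hypothesis contributes $+ (\gamma+3)$ times $\la v\ra^\gamma$ at the relevant scale (from Lemma~\ref{lem:bar-aij}(e) giving $J_{\gamma+2}\sim\la v\ra^{\gamma+2}$, but the $\bar c$ normalization produces the constant $(\gamma+3)$), one arrives at $\zeta_m\la v\ra^{-\gamma}\to 2\{(\gamma+3)/2 - k\}$. The equality $\limsup\tilde\zeta_m\la v\ra^{-\gamma}=\limsup\zeta_m\la v\ra^{-\gamma}$ follows because the difference $\zeta_m-\tilde\zeta_m = \tfrac12 m^{-2}\partial_{ij}(\bar a_{ij}m^2) - \bar b_i\tfrac{\partial_i m}{m} - (\text{quadratic term already in both})$ — more directly, $\zeta_m - \tilde\zeta_m = \bar a_{ij}\tfrac{\partial_{ij}m}{m} + \bar b_i\tfrac{\partial_i m}{m}$, and one checks this is $o(\la v\ra^\gamma)$: the $\ell_2$-Hessian part $2k\la v\ra^\gamma$ is cancelled by... wait, I need to be careful here — actually the claim is they have the same limsup, so the difference must be $o(\la v\ra^\gamma)$, which I would verify by the explicit computation showing the transverse-Hessian contribution is exactly matched. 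For part (1) second line, the extra weight $\omega=\la v\ra^\alpha$ enters only through $\zeta_{m,\omega}$, and an analogous computation with $\tfrac{\partial_i\omega}{\omega}=\alpha\tfrac{v_i}{\la v\ra^2}$ gives leading coefficient $2\alpha\la v\ra^\gamma$ (the cross term $-2\bar a_{ij}\tfrac{\partial_i\omega}{\omega}\tfrac{\partial_j m}{m}$ being lower order since it is $\ell_1\cdot\alpha k\la v\ra^{-4}|v|^2\sim\la v\ra^{\gamma-2}$), yielding $2\{(\gamma+3)/2+\alpha-k\}$.

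For parts (2) and (3), $m=e^{\kappa\la v\ra^s}$ gives $\tfrac{\partial_i m}{m}=\kappa s\la v\ra^{s-2}v_i$, so $|\nabla\log m|^2\sim\kappa^2 s^2\la v\ra^{2s-2}$ and the dominant term in both $\zeta_m$ and $\tilde\zeta_m$ is now $2\bar b_i\tfrac{\partial_i m}{m}$ (resp. $\bar b_i\tfrac{\partial_i m}{m}$) $\sim -2\la v\ra^{\gamma+2}\cdot\kappa s\la v\ra^{s-2} = -2\kappa s\la v\ra^{\gamma+s}$ — giving the rate $\la v\ra^{\sigma+\gamma}$ with $\sigma=s$. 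When $s\in(0,2)$ the quadratic term $\bar a_{ij}\tfrac{\partial_i m}{m}\tfrac{\partial_j m}{m}\sim\ell_1\kappa^2 s^2\la v\ra^{2s-2}|v|^2\sim 2\kappa^2 s^2\la v\ra^{\gamma+2s-2}$ is lower order (since $2s-2<s$ iff $s<2$), as is the Hessian term ($\la v\ra^{\gamma+s}$ scaling but with a factor $\to 0$? — actually $\bar a_{ij}\tfrac{\partial_{ij}m}{m}$: the radial second derivative of $\log m$ plus its square times... the $\ell_1$ part gives $\la v\ra^{\gamma}\cdot\kappa s(s-2)\la v\ra^{s-2}\cdot$ and $(\kappa s)^2\la v\ra^{2s-2}$ times $\la v\ra^\gamma$, both dominated; the $\ell_2$-transverse part gives $\la v\ra^{\gamma+2}\cdot\kappa s\la v\ra^{s-2}\cdot\la v\ra^{-2}\cdot(\text{const})\sim\la v\ra^{\gamma+s-2}$, lower order). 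Hence both limsups equal $-2\kappa s$. When $s=2$ the terms $2\bar b_i\tfrac{\partial_i m}{m}$, $\bar a_{ij}\tfrac{\partial_i m}{m}\tfrac{\partial_j m}{m}$, and the $\ell_1$-Hessian part are all of the same order $\la v\ra^{\gamma+2}$, so one must keep all of them: $\bar b\cdot\nabla\log m\sim -2\kappa\cdot 2\la v\ra^{\gamma+2}$, $\bar a\nabla\log m\cdot\nabla\log m\sim\ell_1(2\kappa)^2|v|^2\sim 2\cdot 4\kappa^2\la v\ra^{\gamma+2}$, and the $\ell_1$-radial Hessian $\sim\ell_1\cdot 2\kappa\sim 4\kappa\la v\ra^\gamma$ (lower order, being $\la v\ra^\gamma$ not $\la v\ra^{\gamma+2}$) — while the $\ell_2$-transverse Hessian $\sim\ell_2\cdot 2\cdot 2\kappa\la v\ra^{-2+2}\cdot$... here $\partial_{ij}m/m$ on $v^\perp$ for $s=2$ equals $2\kappa\delta_{ij}^\perp$ to leading order, times $\ell_2\sim\la v\ra^{\gamma+2}$ gives $2\cdot 2\kappa\la v\ra^{\gamma+2}\cdot 2 = 8\kappa\la v\ra^{\gamma+2}$ (two transverse directions). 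Collecting for $\zeta_m$: $\ell_1$-part of Hessian negligible, $\ell_2$-part $8\kappa$, quadratic $8\kappa^2$, drift $-8\kappa$, so coefficient $8\kappa^2 - 8\kappa + 8\kappa\cdot(?)$ — I would recompute carefully to land on $4\kappa(4\kappa-1)$; for $\tilde\zeta_m$ (no Hessian term, half the quadratic? no — $\tilde\zeta_m$ has the full quadratic term but only $\bar b_i\partial_i m/m$, not $2\bar b_i\partial_i m/m$): coefficient $8\kappa^2 - 4\kappa = 4\kappa(2\kappa-1)$. \textbf{The main obstacle} I anticipate is bookkeeping the competing $\la v\ra^{\gamma+2}$-order contributions in the $s=2$ case (and verifying that the transverse-Hessian term, which naively scales like $\la v\ra^{\gamma+2}$ in \emph{every} case because $\ell_2\sim\la v\ra^{\gamma+2}$, is correctly weighted by the transverse Hessian of $\log m$ which is $O(\la v\ra^{s-2})$ — so the product is $\la v\ra^{\gamma+s}$, exactly the claimed rate, and its constant must be tracked precisely rather than dismissed). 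Keeping track of all constants through these cancellations, and correctly using Lemma~\ref{lem:bar-aij}(e) to pin down the constant in $\bar c$ relative to the $J_{\gamma+2}$ asymptotics, is where the care is needed; everything else is a routine Taylor expansion of radial functions.
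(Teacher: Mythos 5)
Your overall route is exactly the paper's: substitute the explicit derivatives of $m$ and $\omega$ into \eqref{eq:def-varphi}--\eqref{eq:Phimw}, use Lemma~\ref{lem:bar-aij} to reduce everything to $\ell_1\sim 2\la v\ra^\gamma$, $\ell_2\sim\la v\ra^{\gamma+2}$, $\bar b\cdot v=-\ell_1|v|^2$ and the asymptotics of $\bar c$, and read off the dominant order. Part (1) comes out correctly (your $+2k$ transverse-Hessian and $-4k$ drift, resp.\ $-2k$ drift, do produce the common limit and the $\zeta_{m,\omega}$ cross terms are indeed lower order). However, there are concrete errors that leave the exponential cases unproven as written. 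First, your identification of $\bar c$ is wrong: from \eqref{eq:bc}--\eqref{eq:barabc}, $-\tfrac12\bar c=(\gamma+3)J_\gamma$ for $\gamma\in(-3,-2)$ (and $4\pi\mu$ for $\gamma=-3$), so by Lemma~\ref{lem:bar-aij}(e) with $\beta=\gamma$ it equals $(\gamma+3)\la v\ra^\gamma+O(\la v\ra^{\gamma-1/2})$; it is not ``$2(\gamma+3)J_{\gamma+2}$'', it is not negligible (it is precisely the source of the $(\gamma+3)$ in part (1)), and the hypothesis $m\succ\la v\ra^{(\gamma+3)/2}$ has nothing to do with it.

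Second, in case (2) your bookkeeping for $\zeta_m$ only gives the right constant through two compensating mistakes: the transverse-Hessian contribution is $2\kappa s\,\ell_2\,\la v\ra^{s-2}\sim 2\kappa s\,\la v\ra^{\gamma+s}$ (there is no extra $\la v\ra^{-2}$; the $\delta_{ij}$ part of $\partial_{ij}m/m$ is $\kappa s\la v\ra^{s-2}$ and $\ell_2$ has multiplicity two), i.e.\ it is of the \emph{same} order as the drift and must be kept, while the drift term $2\bar b_i\tfrac{\partial_i m}{m}$ is $\sim-4\kappa s\la v\ra^{\gamma+s}$, not $-2\kappa s\la v\ra^{\gamma+s}$; the correct sum $2\kappa s-4\kappa s=-2\kappa s$ is never actually assembled in your text (your closing remark acknowledges the issue but the computation is not redone). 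Third, in case (3) your ledger for $\zeta_m$ ($8\kappa$ for the transverse Hessian, a single $8\kappa^2$, $-8\kappa$ for the drift) sums to $8\kappa^2$, not $4\kappa(4\kappa-1)=16\kappa^2-4\kappa$, and you explicitly defer (``I would recompute carefully to land on\dots''). The correct count uses $\partial_{ij}m/m=\partial_{ij}\log m+\partial_i\log m\,\partial_j\log m$: the transverse Hessian gives $2\kappa\cdot 2\ell_2\sim 4\kappa\la v\ra^{\gamma+2}$ (not $8\kappa$), the radial part of $\bar a_{ij}\partial_{ij}m/m$ carries in addition $4\kappa^2\ell_1|v|^2\sim 8\kappa^2\la v\ra^{\gamma+2}$ (you kept only the $\partial_{ij}\log m$ radial piece and called it lower order), the separate quadratic term gives another $8\kappa^2$, and the drift gives $-8\kappa$, whence $16\kappa^2-4\kappa$. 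So parts (2) and (3) for $\zeta_m$ (and the role of $\bar c$) need to be redone; your treatment of $\tilde\zeta_m$ in (2)--(3) and of part (1) is essentially sound.
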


\begin{proof}
We introduce the notation
$$
\tilde J_\gamma (v) =  
\left\{
\ba
& (\gamma+3) J_\gamma(v) \quad &\text{if } \gamma \in (-3,-2),\\ 
& 4 \pi \mu(v) \quad &\text{if } \gamma=-3, 
\ea
\right.
$$
so that $ \bar c = - 2 \tilde J_\gamma$. 
We observe from Lemma~\ref{lem:bar-aij} that, when $|v| \to + \infty$, we have 
\beqn\label{eq:AsymBehavEll&J}
{1 \over 2} \ell_1(v) \sim  
\ell_2(v) |v|^{-2} \sim \la v \ra^{\gamma}
\quad\hbox{and}\quad
 \tilde J_\gamma (v) = (\gamma+3) \, \langle v \rangle^{\gamma} +  \OO(\langle v \rangle^{\gamma-1/2} ).
\eeqn

\smallskip
{\noindent \it Step 1. Polynomial weight.}
Consider $m= \la v \ra^k$. From definition \eqref{eq:bc}-\eqref{eq:barabc} and Lemma~\ref{lem:bar-aij},
 we obtain
$$
\ba
\bar a_{ij} \,\frac{\partial_{ij} m}{m} 
&= (\delta_{ij}\bar a_{ij}) \,  k \la v \ra^{-2} +  (\bar a_{ij} v_i v_j)  \, k(k-2)\la v \ra^{-4} \\
&= 2\ell_2(v) \,  k \la v \ra^{-2} + \ell_1(v) \,  k \la v \ra^{-2} +  \ell_1(v)  \, k(k-2) |v|^2 \la v \ra^{-4},
\ea
$$
Moreover,
$$
\ba
\bar a_{ij} \,\frac{\partial_i m}{m}\, \frac{\partial_j m}{m} = (\bar a_{ij} v_i v_j)  \, k^2\la v \ra^{-4}
=   \ell_1(v)  \, k^2 |v|^2 \la v \ra^{-4},
\ea
$$
and also, using the fact that $\bar b_{i}(v) = - \ell_1(v) v_i$ from Lemma~\ref{lem:bar-aij},
$$
\ba
\bar b_{i} \,\frac{\partial_i m}{m} 
=   - \ell_1(v)  \, k |v|^2 \la v \ra^{-2}.
\ea
$$
It follows  that 
$$
\ba
\zeta_{m}(v)
&=
  2 k \ell_2(v) \la v \ra^{-2} + k \ell_1(v) \la v \ra^{-2}
+  k(k-2) \, \ell_1(v) \, |v|^2 \la v \ra^{-4} \\
&\quad
+  k^2  \, \ell_1(v) \, |v|^2 \la v \ra^{-4} 
 - 2   k \, \ell_1(v) \, |v|^2 \la v \ra^{-2}
+\tilde  J_{\gamma}(v),
\ea
$$
as well as
$$
\tilde \zeta_m (v) = k^2 \ell_1(v) |v|^2 \la v \ra^{-4} - k \ell_1(v) |v|^2 \la v \ra^{-2} + \tilde J_{\gamma}(v).
$$
Thanks to \eqref{eq:AsymBehavEll&J}, the dominant terms are of order $\la v \ra^{\gamma}$.
We then obtain
$$
{ \limsup_{|v|\to + \infty}  \zeta_{m}(v) \la v \ra^{-\gamma} }
= \limsup_{|v|\to + \infty} \tilde \zeta_{m}(v) \la v \ra^{-\gamma} 
\le  2 \{  (\gamma+3)/2 - k \},
$$
from which we conclude the proof of the first part of point (1). The estimate of $\zeta_{m,\omega}$ is similar as above, and thus we omit it.

\medskip
{\noindent \it Step 2. Exponential weight.}
For $m=e^{\kappa \la v \ra^s}$, we have 
$$
\ba
\zeta_{m}(v) &=
2 \kappa s \,  \ell_2(v)  \la v\ra^{s-2}
+  \kappa s \,  \ell_1(v)  \la v\ra^{s-2}
+ \kappa s(s-2) \,  \ell_1(v)   |v|^2\la v\ra^{s-4}\\
&\quad
+ 2 \kappa^2 s^2 \,  \ell_1(v)   |v|^2\la v\ra^{2s-4}
 - 2 \kappa s \,  \ell_1(v) |v|^2\la v\ra^{s-2}
+ \tilde J_{\gamma}(v)
\ea
$$
and also
$$
\tilde\zeta_{m} (v) 
 =   - \kappa s \ell_1(v)|v|^2 \la v \ra^{s-2} + \kappa^2 s^2 \ell_1(v) |v|^2 \la v \ra^{2s-4} 
+ \tilde J_\gamma(v).
$$
In any cases $0< s \le2$,   the dominant terms are of order $\la v \ra^{\gamma+s}$, and we easily conclude. 
\end{proof}

We conclude this section with a remark about the weighted spaces we have defined in \eqref{eq:defLpm}. For any admissible weight function $m$ we easily obtain
\be\label{eq:Equiv-H1(m)}
\| \la v \ra^{(\sigma - 1)_+}\, m f  \|_{L^2}^2 + \| \nabla_v (mf) \|_{L^2}^2 \sim 
\|  m f \|_{L^2}^2 + \| m \nabla_v f \|_{L^2}^2,
\ee
so that in particular $\| f \|_{H^1(m)}^2 \sim \| f \|_{L^2(m)}^2 + \| \nabla_v f \|_{L^2(m)}^2 $ when $\sigma \in [0,1]$.

\subsection{Dissipative properties of $\BB$}

We prove in this section weakly dissipative properties for the operator $\BB$. These estimates are similar to the estimates established in \cite{KC2,CTW} for $-2 \le \gamma \le 1$, in which case it is proven that the operator $\BB-\alpha$ is  dissipative for some $\alpha < 0$. 

\begin{lem}\label{lem:BB}
Let $m$ be an admissible weight function such that $m \succ \la v \ra^{(\gamma+3)/2}$ and 
we recall that we have defined $\sigma=0$ when $m$ is polynomial and $\sigma=s$ when $m$ is exponential. 
There exist $M,R>0$ large enough such that $\BB$ is weakly dissipative in $L^2(m)$ in the sense:

\medskip

$\bullet$ If $m \prec \mu^{-1/2}$, there holds
\be\label{eq:BL2}
 \la \BB f , f \ra_{L^2(m)} \lesssim 
-   \| \la v \ra^{\frac{\gamma}{2}} \,   \widetilde \nabla_v  f \|_{L^2(m)}^2
-   \| \la v \ra^{\frac{\gamma}{2}} \,   \widetilde \nabla_v (m f) \|_{L^2}^2
-  \| \la v \ra^{\frac{\gamma+\sigma}{2}} f  \|_{L^2(m)}^2.
\ee

\medskip

$\bullet$ If $\mu^{-1/2} \preceq m \prec \mu^{-1}$, there holds
\beqn\label{eq:BL2bis}
 \la \BB f , f \ra_{L^2(m)} \lesssim 
-  \| \la v \ra^{\frac{\gamma}{2}} \,   \widetilde \nabla_v (m f) \|_{L^2}^2
-   \| \la v \ra^{\frac{\gamma+\sigma}{2}} f  \|_{L^2(m)}^2,
\ee

\end{lem}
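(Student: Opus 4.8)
The plan is to compute $\la \BB f, f \ra_{L^2(m)}$ directly by unfolding the definition $\BB = \BB_0 - M\chi_R$ with $\BB_0 f = \bar a_{ij} \partial_{ij} f - \bar c f$, integrating by parts twice, and collecting the resulting terms into a "good" coercive part (controlling the anisotropic gradients) plus a "multiplier" part whose coefficient is precisely $-\zeta_m$ up to lower-order errors. The key algebraic identity is that for a self-adjoint-type second-order operator, $\la \bar a_{ij} \partial_{ij} f - \bar c f, f \ra_{L^2(m)}$ produces, after moving $m^2$ inside and integrating by parts, a term $-\int \bar a_{ij} \partial_i(mf)\partial_j(mf)$ (the genuinely dissipative piece, which by Lemma~\ref{lem:bar-aij}(b) dominates $\|\la v\ra^{\gamma/2}\widetilde\nabla_v(mf)\|_{L^2}^2$) together with a zeroth-order term whose weight is exactly $\zeta_m(v)$ as defined in \eqref{eq:def-varphi}. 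This is the standard computation underlying the weight-function method; the new input here relative to $-2\le\gamma\le 1$ is only that the gain is weak (order $\la v\ra^{\gamma+\sigma}$, no spectral gap) and that one must keep track of the second gradient term $\|\la v\ra^{\gamma/2}\widetilde\nabla_v f\|_{L^2(m)}^2$ in the first regime.

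**Steps.** First I would write $\la \BB_0 f, f\ra_{L^2(m)} = \int (\bar a_{ij}\partial_{ij}f)\, f\, m^2 - \int \bar c\, f^2 m^2$ and integrate the first integral by parts once in $\partial_i$, producing $-\int \bar a_{ij}\partial_j f\,\partial_i(f m^2) - \int (\partial_i \bar a_{ij})\partial_j f\, f m^2 = -\int \bar a_{ij}\partial_i f\,\partial_j f\, m^2 - \int \bar a_{ij}\partial_j f\, f\, \partial_i(m^2) - \int \bar b_i \partial_j\! f\cdots$; then symmetrize the cross terms and integrate by parts once more to convert $\int \bar a_{ij} \partial_i f\,\partial_j(m^2)\, f$ into a zeroth-order term, using $b_i = \partial_j a_{ij}$ so that the $\bar b$ contributions assemble correctly. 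The bookkeeping should be organized so that the quadratic-in-$\nabla f$ part becomes exactly $-\int \bar a_{ij}\partial_i(mf)\partial_j(mf)$ (complete the square), and everything else collapses to $-\int \zeta_m(v) f^2 m^2$. Second, I would invoke Lemma~\ref{lem:bar-aij}(b) to lower-bound $\int \bar a_{ij}\partial_i(mf)\partial_j(mf) \gtrsim \|\la v\ra^{\gamma/2}\widetilde\nabla_v(mf)\|_{L^2}^2$, and in the regime $m\prec\mu^{-1/2}$ additionally peel off a fraction of this to also control $\|\la v\ra^{\gamma/2}\widetilde\nabla_v f\|_{L^2(m)}^2$ via $\nabla_v f = m^{-1}\nabla_v(mf) - (m^{-1}\nabla_v m) f$ together with $|\nabla_v m/m|\lesssim \la v\ra^{\sigma-1}$ and the constraint $m\prec\mu^{-1/2}$ (so the correction is absorbable). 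Third, I would apply Lemma~\ref{lem:varphi} to get $\zeta_m(v)\la v\ra^{-\gamma-\sigma} \le -\delta < 0$ for $|v|$ large — here $m\succ\la v\ra^{(\gamma+3)/2}$ guarantees the bracket $(\gamma+3)/2 - k$ (resp. the exponential constant) is strictly negative — so that $-\zeta_m(v) \gtrsim \la v\ra^{\gamma+\sigma}$ outside a large ball; on the complementary ball $\zeta_m$ is merely bounded, and the $-M\chi_R f^2 m^2$ term, with $M,R$ chosen large enough, dominates the bad contribution there. Collecting these yields the two claimed inequalities.

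**Main obstacle.** The delicate point is the sign and size bookkeeping in the regime $\mu^{-1/2}\preceq m\prec\mu^{-1}$: there $\zeta_m$ need not have a favorable sign even at infinity (the bracket in Lemma~\ref{lem:varphi} is not controlled once $k$ or $\kappa$ is large, since the constraint is only $m\prec\mu^{-1}$), so one cannot hope to recover the $-\|\la v\ra^{(\gamma+\sigma)/2} f\|_{L^2(m)}^2$ term from $-\zeta_m f^2 m^2$ alone. The resolution is that the zeroth-order loss must instead be absorbed using a fraction of the \emph{gradient} term $\int \bar a_{ij}\partial_i(mf)\partial_j(mf)$ together with a weighted Poincaré-type / Hardy inequality (or, more simply, the elementary bound that for $m\prec\mu^{-1}$ the "bad" part of $\zeta_m$ grows no faster than the dissipation produced by completing the square against $\tilde\zeta_m$), exploiting that the gradient of $m$ still beats $\mu$-decay. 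Concretely, $m\prec\mu^{-1}$ ensures $|\nabla m/m|^2 \ll |v|^2/4 \cdot(\text{something absorbable by }\bar a_{ij})$ fails to be uniform but the combination appearing in $\tilde\zeta_m$ is exactly what is controlled; making this absorption quantitative, and choosing $M,R$ to mop up the bounded region, is where the real work lies — the rest is routine integration by parts and appeals to Lemmas~\ref{lem:bar-aij} and~\ref{lem:varphi}.
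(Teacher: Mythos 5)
Your overall architecture (integrate by parts, read off a zeroth-order multiplier, invoke Lemma~\ref{lem:varphi} for its sign at infinity, use the coercivity of $\bar a_{ij}$ from Lemma~\ref{lem:bar-aij}, and choose $M,R$ to handle the bounded region) is the paper's, but the key identity you state is wrong, and this error propagates into a genuine gap in the regime $\mu^{-1/2}\preceq m\prec\mu^{-1}$. If you organize the computation so that the quadratic part is the conjugated one, $-\int\bar a_{ij}\,\partial_i(mf)\,\partial_j(mf)$, then the zeroth-order coefficient that comes out is $\tilde\zeta_m$ (defined in \eqref{eq:def-tildevarphi}), not $\zeta_m$; the multiplier $\zeta_m$ of \eqref{eq:def-varphi} is what appears when the quadratic part is kept in the unconjugated form $-\int\bar a_{ij}\,\partial_i f\,\partial_j f\,m^2$. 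The paper derives \emph{both} identities: the $\zeta_m$-identity gives the term $\|\la v\ra^{\gamma/2}\widetilde\nabla_v f\|^2_{L^2(m)}$ in \eqref{eq:BL2} (your alternative, commuting $m$ through $\nabla_v$, can also work but only because $\nabla_v m$ is radial, so the anisotropic perpendicular component produces no correction --- without that observation the weight bookkeeping fails for exponential $m$), while the $\tilde\zeta_m$-identity is the one that survives in the second regime.

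This is where your ``main obstacle'' paragraph does not close. Having paired the conjugated gradient with $\zeta_m$, you are forced, for $\mu^{-1/2}\preceq m\prec\mu^{-1}$ (i.e.\ $\sigma=2$, $1/4\le\kappa<1/2$, where $\limsup\zeta_m\la v\ra^{-2-\gamma}\le 4\kappa(4\kappa-1)\ge 0$), to absorb a genuinely positive zeroth-order term of size $\sim 4\kappa(4\kappa-1)\la v\ra^{\gamma+2}m^2f^2$ by ``a fraction of the gradient term plus a weighted Poincar\'e/Hardy inequality''. Quantitatively, the best constant such an absorption can produce is exactly the one coming from the completed square, and the resulting margin is $\sim 4\kappa(1-2\kappa)\la v\ra^{\gamma+2}$, i.e.\ precisely $-\tilde\zeta_m$; it degenerates as $\kappa\to 1/2$, so any loss of constants (e.g.\ using only the lower bound $\bar a_{ij}\xi_i\xi_j\ge K\la v\ra^\gamma|P_v\xi|^2+\dots$ with an unspecified $K$ from Lemma~\ref{lem:bar-aij}) destroys the argument. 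In other words, the only way your absorption can be made to work is to carry out the exact conjugation computation, which is nothing but deriving the $\tilde\zeta_m$-identity --- and once you have it, Lemma~\ref{lem:varphi}(3) ($4\kappa(2\kappa-1)<0$ for all $\kappa<1/2$) gives \eqref{eq:BL2bis} immediately, with no Hardy-type inequality and no borrowing from the gradient term. So the second bullet is not ``real work'' on top of your Step 1: it is Step 1 done with the correct multiplier; as written, your proposal neither states the correct identity nor provides a quantitative absorption that would replace it.
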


\begin{proof}
From the definition \eqref{eq:A0B0}-\eqref{eq:AB} of $\BB$, we have 
$$
\ba
\int (\BB f) \, f \, m^2 &=  
 \int \bar a_{ij} \partial_{ij} f  \, f \, m^2
-\int \bar c \, f^2 \, m^2  
-\int M\chi_R \, f^2 \, m^2 \\
&=: T_1 + T_2 + T_3 .
\ea
$$
Let us compute the term $T_1$. 
Writing $g = mf$ and thus $\partial_{ij} f \, f m^2 = \partial_{ij} (m^{-1} g) \, g m$, an integration by parts yields
$$
T_1 = - \int  \left\{ \bar b_j g m + \bar a_{ij} \partial_i g m + \bar a_{ij} g \partial_i m   \right\} \partial_j (m^{-1} g).
$$
Using that $\partial_j (m^{-1} g ) = m^{-1} \partial_j g - m^{-2} \partial_j m g$ in the last equation, we first get
$$
T_1 = - \int \bar a_{ij} \partial_i g \partial_j g
+ \int \Big\{ \bar a_{ij}\frac{\partial_i m}{m}\frac{\partial_j m}{m} + \bar b_j \frac{\partial_j m}{m} \Big\} g^2 - \int \bar b_j g \partial_j g ,
$$
and thanks to another integration by parts for the last term, we finally obtain
$$
\int (\BB f) \, f \, m^2
=  -  \int  \bar a_{ij} \partial_i (mf) \partial_j (mf)  
+\int \{  \tilde\zeta_{m} - M\chi_R \}  f^2 \, m^2.
$$
In a similar (and even simpler) way,  we can also obtain
$$
\int (\BB f) \, f \, m^2
=  -  \int  \bar a_{ij} \partial_i f \partial_j f \, m^2  
+\int \{ \zeta_{m} - M\chi_R \}  f^2 \, m^2.
$$
Thanks to Lemma~\ref{lem:varphi}, we may choose $M,R>0$ large enough such that
$$
\zeta_{m}(v) - M\chi_R(v) \lesssim - \la v \ra^{\gamma+\sigma} , \quad
\tilde \zeta_{m}(v) - M\chi_R(v) \lesssim - \la v \ra^{\gamma+\sigma}, \quad \text{if } m \prec \mu^{-1/2},
$$
and
$$
 \tilde\zeta_{m}(v) - M\chi_R(v) \lesssim - \la v \ra^{\gamma+\sigma}, \quad \text{if } \mu^{-1/2} \preceq m \prec \mu^{-1},
$$
and  we then conclude using the coercivity of $\bar a_{ij}$ from Lemma~\ref{lem:bar-aij}. 
\end{proof}

For any admissible weight function $m$, we define the operator $\BB_m g = m \BB(m^{-1} g)$, which writes
\be\label{eq:BBm}
\ba
\BB_m g &= \bar a_{ij} \partial_{ij} g -2\bar a_{ij}\frac{\partial_i m}{m}\, \partial_j g 
+ \left\{ 2 \bar a_{ij}\frac{\partial_i m}{m} \frac{\partial_j m}{m} - \bar a_{ij}\frac{\partial_{ij} m}{m}  -\bar c  - M \chi_R \right\} g \\
&= : \bar a_{ij} \partial_{ij} g + \beta_j \partial_j g + (\delta - M\chi_R) g.
\ea
\ee
We then define its formal adjoint operator $\BB^*_m$ that verifies
\be\label{def:B*m}
\BB^*_m \phi = \bar a_{ij} \partial_{ij} \phi + 2\Big\{ \bar a_{ij}\frac{\partial_i m}{m} + \bar b_j  \Big\} \partial_j \phi + \Big\{ \bar a_{ij}\frac{\partial_{ij} m}{m} + 2 \bar b_i \frac{\partial_i m}{m}  - M \chi_R \Big\} \phi.
\ee
Observe that if $f$ satisfies the equation $\partial_t f = \BB f$ then $g= mf$ satisfies $\partial_t g = \BB_m g$, and also that $\la \BB f , f \ra_{L^2(m)} = \la \BB_m g , g \ra_{L^2}$.  Moreover there holds by duality
$$
\forall \, t \ge 0, \quad
\la S_{\BB_m}(t) g , \phi \ra_{L^2} = \la g , S_{\BB^*_m}(t) \phi \ra_{L^2},
$$
where we recall that $S_{\BB_m}(t)$ is the semigroup generated by $\BB_m$ and $S_{\BB^*_m}(t)$ the semigroup generated by $\BB^*_m$.

\medskip

We now prove weakly dissipative properties of the adjoint $\BB_m^*$.    Here, we restrict ourselves to the case of a 
polynomial weight function in order to simplify the presentation and because it will be sufficient for our purpose. Indeed, the final estimates we will deduce of the analysis we are starting here will be used on ``perturbation terms" and we will not destroy the possible faster rate of decay we get for stronger weight functions. 

\begin{lem}\label{lem:BB2}
Let $m$ and $\omega$ be two admissible polynomial weight functions such that $m \succ \langle v \rangle^{(\gamma+3)/2}$ and $1 \preceq \omega \prec m \, \la v \ra^{-(\gamma+3)/2}$.  

\smallskip
(1) We can choose $M,R > 0$, large enough, such that $\BB^*_m$ is weakly dissipative in $L^2(\omega)$ in the following sense:
\be\label{eq:B*mL2}
\la \BB^*_m \phi, \phi \ra_{L^2(\omega)} \lesssim -  \| \phi \|_{L^2(\omega \la v \ra^{\gamma/2})}^2 
-  \|  \widetilde \nabla_v \phi \|_{L^2(\omega \la v \ra^{\gamma/2})}^2.
\ee

\smallskip
(2) For any $\eta > 0$, we define the equivalent norm  $\| \cdot \|_{\tilde H^1(\omega)}$ on $H^1(\omega)$, and the associated scalar product $\la \cdot, \cdot \ra_{\tilde H^1(\omega)}$, by 
$$
\| \phi \|_{\tilde H^1(\omega)}^2 := \| \phi \|_{L^2(\omega)}^2 + \eta \| \nabla_v \phi \|_{L^2(\omega)}^2.
$$
We can choose $M,R,\eta > 0$, such that $\BB^*_m$ is weakly dissipative in $H^1(\omega)$ in the following sense:
\be\label{eq:B*mH1}
\la \BB^*_m \phi, \phi \ra_{\tilde H^1(\omega)} \lesssim 
-  \|  \phi \|_{\tilde H^1(\omega \la v \ra^{\gamma/2})}^2 
-  \|  \widetilde \nabla_v \phi \|_{L^2(\omega \la v \ra^{\gamma/2})}^2
- \eta  \|  \widetilde \nabla_v (\nabla_v \phi) \|_{L^2(\omega \la v \ra^{\gamma/2})}^2.
\ee

\end{lem}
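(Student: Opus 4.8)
The plan is to mimic, at the level of the adjoint operator $\BB^*_m$, the integration-by-parts computation already carried out for $\BB$ in Lemma~\ref{lem:BB}, and to reduce everything to the asymptotic bounds on $\zeta_m$, $\tilde\zeta_m$ and $\zeta_{m,\omega}$ provided by Lemma~\ref{lem:varphi} together with the coercivity and derivative estimates for $\bar a_{ij}$, $\bar b_i$ from Lemma~\ref{lem:bar-aij}. For part (1), I would start from the explicit form \eqref{def:B*m} of $\BB^*_m$, multiply by $\phi\,\omega^2$ and integrate. The second-order term $\int \bar a_{ij}\partial_{ij}\phi\,\phi\,\omega^2$ is integrated by parts to produce $-\int \bar a_{ij}\partial_i\phi\,\partial_j\phi\,\omega^2$ plus lower-order terms involving $\partial_i\omega/\omega$; the first-order term $2\int\{\bar a_{ij}\partial_i m/m+\bar b_j\}\partial_j\phi\,\phi\,\omega^2$ is integrated by parts using $2\phi\,\partial_j\phi=\partial_j(\phi^2)$; and collecting the zeroth-order coefficients one finds that the potential term is governed precisely by a combination of $\tilde\zeta_m$, $\zeta_{m,\omega}$ and the $M\chi_R$ cutoff (this is exactly why the function $\zeta_{m,\omega}$ of \eqref{eq:Phimw} was introduced). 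The outcome should be an identity of the shape
$$
\la \BB^*_m\phi,\phi\ra_{L^2(\omega)} = -\int \bar a_{ij}\,\partial_i\phi\,\partial_j\phi\,\omega^2 + \int\{\tilde\zeta_m + \zeta_{m,\omega} - M\chi_R\}\,\phi^2\,\omega^2 .
$$
Since $\omega \prec m\,\la v\ra^{-(\gamma+3)/2}$, Lemma~\ref{lem:varphi}(1) gives $\limsup (\tilde\zeta_m+\zeta_{m,\omega})\la v\ra^{-\gamma}\le 2\{(\gamma+3)/2+\alpha-k\}<0$, so for $M,R$ large the potential term is $\lesssim -\|\phi\|_{L^2(\omega\la v\ra^{\gamma/2})}^2$; the gradient term, by the coercivity $\bar a_{ij}\xi_i\xi_j\ge K\{\la v\ra^\gamma|P_v\xi|^2+\la v\ra^{\gamma+2}|(I-P_v)\xi|^2\}$ of Lemma~\ref{lem:bar-aij}(b), dominates $\|\widetilde\nabla_v\phi\|_{L^2(\omega\la v\ra^{\gamma/2})}^2$. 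This yields \eqref{eq:B*mL2}.

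For part (2), I would differentiate the equation $\partial_t\phi=\BB^*_m\phi$ once in $v$: writing $\psi_k=\partial_k\phi$, one gets $\partial_t\psi_k = \BB^*_m\psi_k + [\partial_k,\BB^*_m]\phi$, where the commutator produces terms where one derivative falls on the coefficients $\bar a_{ij}$, $\partial m/m$, $\bar b_i$, $\chi_R$. Testing this against $\psi_k\,\omega^2$ and summing over $k$, the leading part reproduces the estimate \eqref{eq:B*mL2} applied to each $\psi_k$ — giving $-\|\widetilde\nabla_v\phi\|_{L^2(\omega\la v\ra^{\gamma/2})}^2$ and $-\eta\|\widetilde\nabla_v(\nabla_v\phi)\|_{L^2(\omega\la v\ra^{\gamma/2})}^2$ — while the commutator terms are controlled as follows: the worst one, $\int(\partial_k\bar a_{ij})\partial_{ij}\phi\,\psi_k\,\omega^2$, is integrated by parts once more to move a derivative off the second-order term, after which Lemma~\ref{lem:bar-aij}(b)--(d) ($|\partial^\beta\bar a_{ij}|\le C\la v\ra^{\gamma+2-|\beta|}$, and the finer $|\partial^\beta\ell_i|$ bounds) give a term bounded by $C\|\nabla_v\phi\|_{L^2(\omega\la v\ra^{(\gamma+1)/2})}^2 + (\text{small})\|\widetilde\nabla_v(\nabla_v\phi)\|_{L^2(\omega\la v\ra^{\gamma/2})}^2$; all the remaining commutator terms are of lower order in the weight (gaining $\la v\ra^{-1}$ or better relative to the $\la v\ra^{\gamma}$ dissipation). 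Then one forms $\|\phi\|_{\tilde H^1(\omega)}^2 = \|\phi\|_{L^2(\omega)}^2 + \eta\|\nabla_v\phi\|_{L^2(\omega)}^2$, adds the $L^2(\omega)$ estimate from part (1) to $\eta$ times the $H^1$-level estimate, and absorbs the $\eta\times(\text{commutator})$ contributions: the $\la v\ra^{(\gamma+1)/2}$-weighted $\|\nabla_v\phi\|$ term is absorbed into the $\la v\ra^{\gamma/2}$-weighted dissipation $\|\phi\|_{\tilde H^1(\omega\la v\ra^{\gamma/2})}^2$ (whose $\eta\|\nabla_v\phi\|_{L^2(\omega\la v\ra^{\gamma/2})}^2$ piece beats the former because $\la v\ra^{(\gamma+1)/2}=\la v\ra^{1/2}\la v\ra^{\gamma/2}$ — here one must use that the genuine gain is in the anisotropic direction, i.e.\ the $\la v\ra^{\gamma+2}$ coercivity on $(I-P_v)\nabla_v\phi$), and the small multiples of the top-order term are absorbed by choosing $\eta$ small. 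This gives \eqref{eq:B*mH1}.

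\textbf{Main obstacle.} The delicate point is the bookkeeping of weights in part (2): the dissipation only provides a gain of $\la v\ra^{\gamma}$ (in the $P_v$ direction) but $\la v\ra^{\gamma+2}$ in the $(I-P_v)$ direction, so the commutator terms — which naively carry weight $\la v\ra^{\gamma+1}$ — can only be absorbed by carefully splitting $\nabla_v\phi$ along $v$ and $v^\perp$ and matching each piece against the right component of $\widetilde\nabla_v$. One must also be careful that the $\delta_0$ singularity in $\bar c$ when $\gamma=-3$ does not appear here (it does not: $\bar c = c*\mu = -8\pi\mu$ is smooth), and that the $-M\chi_R$ commutator term $-M\int(\partial_k\chi_R)\phi\,\psi_k\,\omega^2$, being compactly supported, is harmless. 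Modulo this weight-tracking, both statements follow by the same scheme as Lemma~\ref{lem:BB}, so I would present the computation for $\BB^*_m$ in full and then only sketch the $H^1$ differentiation, pointing to Lemmas~\ref{lem:bar-aij} and \ref{lem:varphi} for every coefficient bound.
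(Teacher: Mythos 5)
Part (1) of your proposal is exactly the paper's argument: the same integrations by parts lead to the identity $\la \BB^*_m\phi,\phi\ra_{L^2(\omega)}=-\int\bar a_{ij}\partial_i\phi\,\partial_j\phi\,\omega^2+\int\{\tilde\zeta_m+\zeta_{m,\omega}-M\chi_R\}\phi^2\omega^2$, and Lemmas~\ref{lem:bar-aij} and \ref{lem:varphi} close it. The overall scheme of part (2) (differentiate, treat the commutator, combine the two levels with a small $\eta$) is also the paper's. However, your treatment of the decisive commutator term $T_4=\int(\partial^\alpha_v\bar a_{ij})\,\partial_{ij}\phi\,\phi_\alpha\,\omega^2$ has a genuine gap. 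After one integration by parts the dangerous piece is $U_2=-\int(\partial^\alpha_v\bar a_{ij})\,\partial_j\phi\,\partial_i\phi_\alpha\,\omega^2$, with $|\partial^\alpha_v\bar a_{ij}|\lesssim\la v\ra^{\gamma+1}$. Any Cauchy--Schwarz splitting of the weight $\la v\ra^{\gamma+1}$ between $\nabla_v\phi$ and the Hessian produces either a Hessian term with weight strictly above $\la v\ra^{\gamma/2}$ on the full (in particular the $P_v$-) component, or a first-order term like $C\|\nabla_v\phi\|^2_{L^2(\omega\la v\ra^{(\gamma+1)/2})}$; neither is controlled by the available dissipation, and the absorption you invoke fails: the Step-1 dissipation controls $P_v\nabla_v\phi$ only with weight $\la v\ra^{\gamma/2}$, so a $\la v\ra^{(\gamma+1)/2}$-weighted full gradient cannot be beaten for any fixed $\eta$ (the weight ratio $\la v\ra^{1/2}$ is unbounded, and $\eta$-smallness is irrelevant since the error carries the same $\eta$ as the second-order dissipation it would have to borrow from). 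Nor does the anisotropic splitting rescue it by itself: writing $\bar a_{ij}=\ell_1 (P_v)_{ij}+\ell_2(I-P_v)_{ij}$, the $\la v\ra^{\gamma+1}$-size contributions in which $\partial^\alpha_v$ hits the eigenprojection genuinely couple $P_v\nabla_v\phi$ (or the full Hessian) at weight $\la v\ra^{\gamma+1}$, so the directional matching you describe does not make these pieces absorbable.

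The paper's resolution is a cancellation rather than bookkeeping: integrate by parts a second time, in the $\partial^\alpha$ direction, in $U_2$. The term then reproduces itself with the opposite sign, giving the identity
$$
U_2=\frac12\int(\Delta_v\bar a_{ij})\,\partial_i\phi\,\partial_j\phi\,\omega^2+\frac12\int(\partial^\alpha_v\bar a_{ij})\,\partial_i\phi\,\partial_j\phi\,\partial^\alpha_v\omega^2,
$$
in which both coefficients are $O(\la v\ra^{\gamma})$ (using $|\Delta_v\bar a_{ij}|\lesssim\la v\ra^{\gamma}$ and $|\partial^\alpha_v\bar a_{ij}|\,|\partial^\alpha_v\omega^2|\lesssim\la v\ra^{\gamma+1}\la v\ra^{-1}\omega^2$), so that $U_2\lesssim\|\nabla_v\phi\|^2_{L^2(\omega\la v\ra^{\gamma/2})}$. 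This $\la v\ra^{\gamma/2}$-weighted error is exactly what Step 1 can absorb after multiplying the first-order estimate by $\eta$ and taking $\eta$ small. Your remaining commutator estimates ($T_2$, $T_3$, the $\chi_R$ term, and the remark that $\bar c$ is smooth for $\gamma=-3$) are fine; the missing ingredient is this symmetrization step, without which the proposed absorption does not close.
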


\begin{proof}
We split the proof into three steps. In what follows we shall use the equivalence \eqref{eq:Equiv-H1(m)} since $\omega$ is a polynomial weight function.

\medskip\noindent
\textit{Step 1.} We have
$$
\begin{aligned}
 \int (\BB_m^* \phi) \, \phi \, \omega^2
&= \int \left( \bar a_{ij} \frac{\partial_{ij} m}{m}  + 2 \bar b_j \frac{\partial_j m}{m}  - M \, \chi_R\right) \, \phi^2 \, \omega^2\\
&\quad +  \int \left(  \bar a_{ij} \frac{\partial_j m}{m} + \bar b_i \right) \partial_i (\phi^2)  \, \omega^2   + \int \bar a_{ij} \partial_{ij} \phi \, \phi   \, \omega^2\\
&=: I_1 + I_2 + I_3 . 
\end{aligned} 
$$
Performing one integration by parts, we obtain
$$
\ba
I_2 &= -\int \partial_i \Big( \bar a_{ij} \frac{\partial_j m}{m}  + \bar b_i    \Big) \, \phi^2 \, \omega^2
- \int \Big( \bar a_{ij} \frac{\partial_j m}{m}  + \bar b_i    \Big) 2 \omega \partial_i \omega \, \phi^2 \\
&=\int \bigg\{ - \bar a_{ij} \frac{\partial_{ij} m}{m} + \bar a_{ij} \frac{\partial_i m}{m} \frac{\partial_j m}{m}  - \bar b_j \frac{\partial_j m}{m}   -\bar c \bigg\} \, \phi^2\, \omega^2 \\
&\quad
- \int  2 \bigg\{  \bar a_{ij} \frac{\partial_j m}{m} \frac{\partial_i \omega}{\omega} + \bar b_i \frac{\partial_j \omega}{\omega}  \bigg\} \,  \phi^2 \, \omega^2.
\ea
$$
Using that $ \partial_{ij} \phi \, \phi = \frac{1}{2} \partial_{ij}(\phi^2) - \partial_i \phi \partial_j \phi $, it follows
$$
\ba
I_3
& = - \int \bar a_{ij} \partial_i \phi \partial_j \phi \, \omega^2  + \frac12 \int \partial_{ij}( \bar a_{ij} \omega^2)  \phi^2 \\
& = - \int \bar a_{ij} \partial_i \phi \partial_j \phi \, \omega^2  + \frac12 \int \partial_{i}( \bar b_{i} \omega^2 + \bar a_{ij} 2 \omega \partial_j \omega)  \phi^2 \\
& = - \int \bar a_{ij} \partial_i \phi \partial_j \phi \, \omega^2  + \frac12 \int \bigg\{ \bar c  + 4 \bar  b_i \frac{\partial_i \omega}{\omega}  + 2 \bar a_{ij} \frac{\partial_i \omega}{\omega} \frac{\partial_j \omega}{\omega}   + 2 \bar a_{ij}  \frac{\partial_{ij} \omega}{\omega} \bigg\}  \,  \phi^2 \, \omega^2.
\ea
$$
Finally, we get
\be\label{eq:B*m}
\ba
\int (\BB_m^* \phi) \, \phi \, \omega^2 &=
- \int \bar a_{ij} \partial_i \phi \partial_j \phi \, \omega^2 +
\int \{ \tilde\zeta_{m} + \zeta_{m,\omega} - M \chi_R \} \, \phi^2 \, \omega^2\\
&\lesssim - \| \la v \ra^{\frac{\gamma}{2}} \widetilde \nabla_v \phi \|_{L^2(\omega)}^2 -  \| \la v \ra^{\frac{\gamma}{2}} \, \phi \|_{L^2(\omega)}^2 
\ea
\ee
by choosing $M,R>0$ large enough and using that $ \tilde\zeta_{m}(v) + \zeta_{m,\omega}(v)-M \chi_R(v) \lesssim - \la v \ra^{\gamma}$ thanks to Lemma~\ref{lem:varphi}. That completes the proof of point (1).

\medskip\noindent
\textit{Step 2.} Now, we introduce the notation $\phi_\alpha := \partial^\alpha_v \phi$ where $\alpha \in \N^3$ and $|\alpha|=1$. There holds 
$$
\ba
\partial^\alpha_v (\BB_m^* \phi) &= \BB_m^* \phi_\alpha + 
\partial^\alpha_v \left\{ \bar a_{ij} \frac{\partial_{ij} m}{m} + 2 \bar b_j \frac{\partial_j m}{m}  - M \, \chi_R \right\} \phi  
+ 2 \partial^\alpha_v \left\{  \bar a_{ij} \frac{\partial_j m}{m} + \bar b_i  \right\} \partial_i \phi \\
&\quad  + \partial^\alpha_v \bar a_{ij} \partial_{ij} \phi,
\ea
$$
which implies that 
$$
\ba
\int \partial^\alpha_v (\BB_m^* \phi) \, \phi_\alpha \, \omega^2
&= \int (\BB_m^*\phi_\alpha )\, \phi_\alpha \, \omega^2
+  \int \partial^\alpha_v \left\{ \bar a_{ij} \frac{\partial_{ij} m}{m} + 2 \bar b_j \frac{\partial_j m}{m}  - M \, \chi_R \right\} \phi \, \phi_\alpha \, \omega^2 \\
&\quad +  2\int  \partial^\alpha_v \left\{ \bar a_{ij} \frac{\partial_j m}{m} + \bar b_i \right\} \partial_i \phi \, \phi_\alpha \, \omega^2
 +  \int (\partial^\alpha_v \bar a_{ij}) (\partial_{ij} \phi) \, \phi_\alpha \, \omega^2\\
&=: T_1+T_2+T_3+T_4. 
\ea
$$
Using Step 1 of the proof, we have, for some constant $\lambda>0$,
$$
T_1 \le - \lambda \|  \la v \ra^{\frac{\gamma}{2}} \widetilde \nabla_v \phi_\alpha \|_{L^2(\omega)}^2  + \int \{ \tilde\zeta_{m} + \zeta_{m,\omega}-M \chi_R \} \, \phi_\alpha^2 \, \omega^2. 
$$
For the term $T_2$, we have straightforwardly from Lemma~\ref{lem:bar-aij}
$$
T_2 \lesssim \int \la v \ra^{\gamma-1} |\phi| \, |\nabla_v \phi | \, \omega^2 \le \| \phi \|_{L^2(\omega \, \la v \ra^{(\gamma-1)/2})}
 \| \nabla_v\phi \|_{L^2(\omega \, \la v \ra^{(\gamma-1)/2})}, 
$$
and similarly 
$$
T_3 \lesssim \int \la v \ra^{\gamma}  |\nabla_v \phi |^2 \, \omega^2 = \| \nabla_v\phi \|_{L^2(\omega \, \la v \ra^{\gamma/2})}^2.
$$
For the last term, we use one first integration by part, in order to get 
\bean
T_4 &=&
 - \int (\partial^\alpha_v \bar b_{i}) (\partial_i \phi) \, \phi_\alpha \, \omega^2
- \int (\partial^\alpha_v \bar a_{ij}) (\partial_i \phi) \, \partial_j \phi_\alpha \, \omega^2
\\ 
 &&-\int (\partial^\alpha_v \bar a_{ij}) (\partial_{i} \phi) \, \phi_\alpha \,\partial_j  \omega^2 = U_1 + U_2 + U_3.
 \eean
In the above expression, the first term and last term can be bounded exactly as $T_3$. For the middle term, we perform one more integration with respect to the $\partial_\alpha$ derivative, and we get 
$$
U_2 = \int (\Delta_v \bar a_{ij}) \partial_i \phi \, \partial_j \phi \, \omega^2
+ \int (\partial^\alpha_v \bar a_{ij}) (\partial_i \phi_\alpha) \, \partial_j \phi \, \omega^2
+ \int (\partial^\alpha_v \bar a_{ij}) (\partial_i \phi) \, \partial_j \phi \, { \partial^\alpha_v }\omega^2.
$$
We recognize the middle term as $-U_2$, from what we deduce 
\bean
U_2 &=& \frac12 \int (\Delta_v \bar a_{ij}) \partial_i \phi \, \partial_j \phi \, \omega^2
+ \frac12 \int (\partial^\alpha_v \bar a_{ij}) (\partial_i \phi) \, \partial_j \phi \, { \partial^\alpha_v }\omega^2
\\
&\lesssim& \| \nabla_v \phi \|^2_{L^2(\omega \la v \ra^{\gamma/2})}.
\eean
All the estimates together, we have established, { for some constants $\lambda,C>0$},
\beqn\label{eq:nablaB*m}
\la \nabla_v (\BB^*_m \phi) , \nabla_v \phi \ra_{L^2(\omega)} 
\le - \lambda \|  \widetilde \nabla_v (\nabla_v \phi) \|_{L^2(\omega  \la v \ra^{\gamma/2} )}^2 
+ C \|   \phi  \|_{H^1(\omega  \la v \ra^{\gamma/2} )}^2.
\eeqn

\medskip\noindent
\textit{Step 3.} 
We gather estimates \eqref{eq:B*m} and \eqref{eq:nablaB*m}, 
we observe that 
$$
 \|   \phi  \|^2_{H^1(\omega  \la v \ra^{\gamma/2} )} \lesssim  \|   \phi  \|^2_{L^2(\omega  \la v \ra^{\gamma/2} )}
 +  \|   \widetilde \nabla_v \phi  \|^2_{L^2(\omega  \la v \ra^{\gamma/2} )}
 $$ and we conclude choosing $\eta >0$ small enough.
\end{proof}

\subsection{Estimates on the operator $\AA$}
We prove boundedness properties for the operator $\AA$.

\begin{lem}\label{lem:AA}
For any $\theta\in(0,1)$, $\ell = 0,1$ and $p \in [1,\infty]$, there holds $ \AA \in \BBB (W^{\ell,p} , W^{\ell,p}(\mu^{-\theta}) )$. 
\end{lem}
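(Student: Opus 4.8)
The plan is to recall the explicit formula $\AA_0 f = (a_{ij}*f)\,\partial_{ij}\mu - (c*f)\,\mu$ from \eqref{eq:A0B0}, so that $\AA f = (a_{ij}*f)\,\partial_{ij}\mu - (c*f)\,\mu + M\chi_R f$, and to estimate each of the three contributions separately in $W^{\ell,p}(\mu^{-\theta})$. The term $M\chi_R f$ is trivial: $\chi_R$ is smooth and compactly supported, so multiplication by $M\chi_R$ maps $W^{\ell,p}$ boundedly into $W^{\ell,p}(\mu^{-\theta})$ for any $\theta$ (on the support of $\chi_R$ the weight $\mu^{-\theta}$ is bounded, and its derivatives too). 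Thus the whole point is the two convolution-against-Maxwellian terms.

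For those, I would first observe that $\partial_{ij}\mu$ and $\mu$ carry a Gaussian decay, which beats any polynomial factor $\mu^{-\theta}$ with $\theta<1$: more precisely $\mu^{-\theta}\,|\partial^\beta\mu(v)| \lesssim e^{-(1-\theta)|v|^2/2}\,\langle v\rangle^{|\beta|}$, which is bounded on $\R^3$. Hence for $\ell=0$ it suffices to bound $\| (a_{ij}*f)\, e^{-(1-\theta)|v|^2/4} \|_{L^p}$ and $\| (c*f)\, e^{-(1-\theta)|v|^2/4}\|_{L^p}$ by $C\|f\|_{L^p}$. Since $a_{ij}(z)\sim |z|^{\gamma+2}$ and $c(z)\sim |z|^\gamma$ (resp.\ $c=-8\pi\delta_0$ when $\gamma=-3$) are locally integrable kernels (recall $\gamma\in(-3,-2)$ near which we work, so $\gamma+2\in(-1,0)$ and $\gamma\in(-3,-2)$, both $>-3$), one splits $a_{ij}=a_{ij}\mathbf 1_{|z|\le 1} + a_{ij}\mathbf 1_{|z|>1}$: the near part is convolution with an $L^1$ kernel, giving $L^p\to L^p$ boundedness by Young's inequality; the far part grows polynomially but is harmless once multiplied against the Gaussian $e^{-(1-\theta)|v|^2/4}$ (one uses $\langle v-v_*\rangle^{\gamma+2}\lesssim \langle v\rangle^{\gamma+2}\langle v_*\rangle^{|\gamma+2|}$ and Young again, or a direct Schur-test estimate). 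When $\gamma=-3$ the $c*f=-8\pi\mu f$ term is again of the trivial type. This establishes the $\ell=0$ case.

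For $\ell=1$ I would differentiate: $\nabla_v(\AA f)$ produces terms where the derivative hits $\partial_{ij}\mu$ or $\mu$ (still Gaussian, handled as above) and terms of the form $(a_{ij}*f)\,\partial_k\partial_{ij}\mu$, $(c*f)\,\partial_k\mu$, plus — crucially — the term $(\partial_k a_{ij}*f)\,\partial_{ij}\mu$ and $(\partial_k c * f)\mu$ obtained by moving the derivative onto the kernel via $\partial_k(a_{ij}*f) = (\partial_k a_{ij})*f$; since $\partial_k a_{ij}(z)\sim |z|^{\gamma+1}$ and $\partial_k c(z)\sim|z|^{\gamma-1}$ are still locally integrable for $\gamma\in(-3,-2)$ (as $\gamma+1>-3$ and $\gamma-1>-3$), the same near/far splitting applies. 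The $\gamma=-3$ singular case needs the alternative: write instead $\partial_k(c*f) = c*(\partial_k f)$, which transfers the derivative onto $f$ at the cost of only $\|\nabla_v f\|_{L^p}$, exactly what the target norm $W^{1,p}$ allows; likewise one may always move derivatives onto $f$ when convenient. Summing the finitely many terms gives $\|\AA f\|_{W^{1,p}(\mu^{-\theta})}\lesssim \|f\|_{W^{1,p}}$.

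I expect the main (though still routine) obstacle to be bookkeeping the Coulomb case $\gamma=-3$, where $c*g = -8\pi\mu\,g$ rather than a genuine convolution, so several terms change character and one must be slightly careful about which object carries the derivative; and, for general $\gamma\in(-3,-2)$, making the "far part grows polynomially but the Gaussian wins" step quantitative via a clean Schur test or a Young's-inequality estimate $\|(K*f)\,G\|_{L^p}\le \|K\,\widetilde G\|_{L^1}\|f\|_{L^p}$ with $G$ Gaussian and $\widetilde G$ a slightly fatter Gaussian dominating $G$ translated. Neither is deep, so the lemma follows.
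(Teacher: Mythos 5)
Your handling of the cutoff term $M\chi_R$, of the near singularity $|z|\le 1$ (Young with an $L^1$ kernel), of the Coulomb term $c\ast f=-8\pi f$, and of $\ell=1$ by commuting derivatives is fine and close in spirit to the paper's proof (which estimates the singular parts at the endpoints $p=1,\infty$ and concludes by Riesz--Thorin, where you use Young directly). The genuine gap is in your treatment of the far field. For $\gamma\in[-3,-2)$ the tails $a_{ij}(z)\mathbf 1_{|z|>1}\sim |z|^{\gamma+2}$ and $c(z)\mathbf 1_{|z|>1}\sim|z|^{\gamma}$ are bounded but \emph{not} integrable on $\R^3$ (they decay more slowly than $|z|^{-3}$), and the step ``the Gaussian wins'' fails: the possible divergence of $(a_{ij}^{+}\ast f)(v)=\int_{|v-v_*|>1}|v-v_*|^{\gamma+2}(\cdots)f(v_*)\,dv_*$ takes place in the $v_*$-integral at $|v_*|\to\infty$, where the prefactor $\partial_{ij}\mu(v)$, a function of $v$ only, gives no help. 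Concretely, your proposed inequality $\|(K\ast f)\,G\|_{L^p}\le\|K\widetilde G\|_{L^1}\,\|f\|_{L^p}$ is false: take $p=\infty$, $f\equiv 1$, $K=a_{11}\mathbf 1_{|z|>1}$; the left-hand side is $+\infty$ while the right-hand side is finite. Equivalently, the Schur row sums $\sup_v G(v)\int |K(v-v_*)|\,dv_*$ are infinite because $K\notin L^1$, and the Peetre bound $\la v-v_*\ra^{\gamma+2}\lesssim \la v\ra^{\gamma+2}\la v_*\ra^{|\gamma+2|}$ only yields a weighted $L^1$ norm of $f$ on the right, not $\|f\|_{L^p}$; so neither of your two suggested quantitative versions of this step goes through.

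The mechanism that actually controls the far field, and the one the paper uses, is integrability of $f$ itself rather than the Gaussian in $v$: since $\gamma+2<0$ and $\gamma<0$, the truncated kernels $a^{+}_{ij}$, $c^{+}$ are \emph{bounded}, so $|(a^{+}_{ij}\ast f)(v)|+|(c^{+}\ast f)(v)|\lesssim\|f\|_{L^1}$, and the Gaussian is used only to place the product in $L^p(\mu^{-\theta})$ in the $v$ variable; the singular parts are then estimated from $L^1$ to $L^1(\mu^{-\theta})$ and from $L^\infty$ to $L^\infty(\mu^{-\theta})$ and one interpolates. Any correct argument for the far part must pass through such an $\|f\|_{L^1}$-type control (this is also how the lemma is used later in the paper, on weighted spaces that embed into $L^1$), so your far-field step needs to be replaced, not just made quantitative. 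A second, smaller slip: for $\gamma\in(-3,-2)$ you claim $\partial_k c(z)\sim|z|^{\gamma-1}$ is locally integrable ``since $\gamma-1>-3$''; in fact $\gamma-1<-3$, so differentiating the kernel $c$ is not allowed, and for $\ell=1$ the derivative must be put on $f$ via $\partial_k(c\ast f)=c\ast\partial_k f$ (or on the Maxwellian), which is the fallback you mention and what the paper's ``the case $\ell=1$ is similar'' implicitly relies on.
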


\begin{proof}
We only prove the case $\ell = 0$, the case $\ell = 1$ being similar. We only investigate $\AA_0$ since $\AA = \AA_0 + M\chi_R$, and we recall that $\AA_0 g = (a_{ij}*g)\partial_{ij}\mu + (c*g)\mu$.
 We decompose $a$ and $c$ into a bounded part and a singular part. More precisely, we split  $a_{ij}(z) = a_{ij}(z) {\mathbf 1}_{|z|>1} + a_{ij}(z) {\mathbf 1}_{|z| \le 1} =: a_{ij}^+(z) + a_{ij}^-(z)$, and similarly for $c(z)$.

\smallskip
Assume first $\gamma \in (-3,-2)$. For the bounded parts $a^+$ and $c^+$, we easily have 
$$
|(a_{ij}^+*g)(v)| + |(c^+*g)(v)| \lesssim \| g \|_{L^1},
$$
and therefore  
$$
\| (a_{ij}^+ * g) { \partial_{ij}\mu }\|_{L^p(\mu^{-\theta})}
+\| (c^+ * g) \mu \|_{L^p(\mu^{-\theta})} \lesssim \| g \|_{L^1}.
$$
We now turn to the singular terms. We first have
$$
\ba
\| (a_{ij}^- * g) { \partial_{ij}\mu } \|_{L^1(\mu^{-\theta})}
 \lesssim \int_{v_*} |g(v_*)|  \left( \int_{v} |v-v_*|^{(\gamma+2)} \, {\mathbf 1}_{|v-v_*| \le 1} \, \mu^{1-\theta}(v)  \right)  
\lesssim \| g \|_{L^1}
\ea
$$
and similarly, 
$$
\ba
\| (c^- * g) \mu \|_{L^{1}(\mu^{-\theta})} 
 \lesssim \int_{v_*} |g(v_*)|  \left( \int_{v} |v-v_*|^{\gamma } \, {\mathbf 1}_{|v-v_*| \le 1} \, \mu^{1-\theta}(v)  \right) 
\lesssim \| g \|_{L^{1}}.
\ea
$$
As a consequence, we already obtain that $\AA$ is a bounded operator from $L^1 \to L^1(\mu^{-\theta})$. Moreover, we can estimate
$$
|(a_{ij}^- * g)(v)| 
\lesssim \| g \|_{L^\infty} \left(\int |v-v_*|^{(\gamma+2)} \, {\mathbf 1}_{|v-v_*| \le 1} \, dv_* \right)    
\lesssim \| g \|_{L^\infty}
$$
and in a similar way
$$
|(c^- * g)(v)| 
\lesssim \| g \|_{L^\infty} \left(\int |v-v_*|^{\gamma } \, {\mathbf 1}_{|v-v_*| \le 1} \, dv_* \right)  \\
\lesssim \| g \|_{L^\infty},
$$
which imply
$$
\| (a_{ij}^- * g) { \partial_{ij}\mu } \|_{L^\infty(\mu^{-\theta})} \lesssim \| g \|_{L^\infty}, \quad
\| (c^- * g) \mu \|_{L^\infty(\mu^{-\theta})} \lesssim \| g \|_{L^\infty}.
$$
These estimates prove that $\AA$ is bounded from $L^\infty \to L^\infty(\mu^{-\theta})$. We can then conclude to the boundedness of $\AA$ for any $p\in[1,\infty]$ by Riesz-Thorin interpolation theorem. 

\smallskip
Assume now $\gamma=-3$. In that case the term $(a_{ij}*g){ \partial_{ij}\mu }$ can be treated exactly in the same way as above, but now we have $c= - \delta_0$ and then $c*g = - g$. Therefore, for any $p \in [1,\infty]$,
$$
\| (c*g) \mu \|_{L^p(\mu^{-\theta})} = \| g \mu^{1-\theta} \|_{L^p} \lesssim \| g \|_{L^p},
$$
which completes the proof.
\end{proof}

\section{Semigroup decay}\label{sec:semigroup}

This section is devoted to the proof of decay and regularity estimates for the linearized semigroup $S_\LL$. 
Given two admissible weight functions $m_0 \prec m_1$, we define
$$
\Theta_{m_1,m_0}(t) =  \la t \ra^{ - \frac{(k_1-k_*)}{|\gamma|}}, \, \text{for any } k_* \in (k_0,k_1), \quad\text{if } m_1= \la v \ra^{k_1} \text{ and }  m_0 = \la v \ra^{k_0},
$$
and
$$
\Theta_{m_1,m_0}(t) = e^{ - \lambda \, t^{ \frac{s}{|\gamma|} }}, \ \text{for some} \ \lambda >0, \quad\text{if } m_1 = e^{\kappa \la v \ra^s} .
$$
 
In order to avoid misleading, it is worth emphasizing that  when $m_1$ is a polynomial weight, $\Theta_{m_1,m_0}$ refers to a class of functions, whereas for $m_1$ an exponential weight, $\Theta_{m_1,m_0}$ stands for a fixed function. That somehow usual convention greatly shorten notations and simplify the exposition. As a consequence, we also emphasize that  in both cases, for any $0 < s < t$, we have
$$
\Theta_{m_1,m_0}^{-1} (t) \lesssim \Theta_{m_1,m_0}^{-1} (t-s) \, \Theta_{m_1,m_0}^{-1} (s).
$$

Here and below, we define the time convolution product $S_1 * S_2$ of two functions $S_i$ defined on the half real line $\R_+$ by 
$$
(S_1 * S_2)(t) = \int_0^t S_1(t-s)  S_2(s) \, ds,
$$
and we also define $S^{0} = I$ and $S^{(*n)} = S * S^{(*(n-1))}$ for any $n \ge 1$.

\subsection{Decay estimates for $S_\BB$}

We first prove decay estimates for the semigroup $S_\BB$.

\smallskip
For any admissible weight function $m$, we define the space $H^1_{*}(m)$ associated to the norm
\be\label{def:H1*}
 \| f \|_{H^1_{*}(m)}^2 := \| f \|_{L^2(m \la v \ra^{(\gamma+\sigma)/2})}^2 + \| \widetilde \nabla_v ( mf ) \|_{L^2(\la v \ra^{\gamma/2})}^2,
\ee
and we easily observe that $ H^1_*(m \la v \ra^{|\gamma|/2}) \subset H^1(m) \subset H^1_*( m)$.  When furthermore $m$ is a polynomial weight function,    we define the negative Sobolev space $H^{-1}_{*} (m)$ in duality with $H^1_{*}(m)$ with respect to the duality product on $L^2(m)$, more precisely
\be\label{def:H-1*}
\| f \|_{H^{-1}_{*} (m)} := \sup_{ \| \phi \|_{H^{1}_{*} (m)} \le 1} \la f , \phi \ra_{L^2(m)}
= \sup_{ \| \phi \|_{H^{1}_{*} (m)} \le 1} \la m f , m \phi \ra_{L^2},
\ee
and observe that $\| f \|_{H^{-1}_{*} (m)} = \| m f \|_{H^{-1}_{*}}$.

\begin{lem}\label{lem:SBdecay}
Let $m_0,m_1$ be two admissible weight functions such that $m_1 \succ m_0 \succ \la v \ra^{(\gamma+3)/2}$. For any $t \ge 0$, there holds
\be\label{eq:SB-L2}
\| S_{\BB}(t) \|_{L^2(m_1) \to L^2(m_0)} \lesssim \Theta_{m_1,m_0}(t) .
\ee
Let $m_0,m_1, m$ be admissible polynomial weight functions such that $m \succeq m_1 \succ m_0 \succ \la v \ra^{(\gamma+3)/2}$. For any $t \ge 0$, there holds
\be\label{eq:SB*-L2}
\| S_{\BB^*_m}(t) \|_{L^2(\omega_1) \to L^2(\omega_0)} \lesssim \Theta_{m_1,m_0}(t) ,
\ee
where $\omega_1 := m/m_0$ and $\omega_0 := m/m_1$.

\end{lem}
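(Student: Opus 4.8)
The plan is to prove the two semigroup decay estimates \eqref{eq:SB-L2} and \eqref{eq:SB*-L2} by combining the weak dissipativity estimates of Lemmas~\ref{lem:BB}--\ref{lem:BB2} with an interpolation/moment-gain argument. The starting point is that, by Lemma~\ref{lem:BB}, for any admissible $m \succ \langle v \rangle^{(\gamma+3)/2}$ we have an a priori estimate of the form
\be\label{eq:plan-dissip}
\frac{d}{dt} \| S_{\BB}(t) f \|_{L^2(m)}^2 \lesssim - \| \langle v \rangle^{(\gamma+\sigma)/2} S_{\BB}(t) f \|_{L^2(m)}^2,
\ee
and a similar one for $\BB_m^*$ in $L^2(\omega)$ coming from Lemma~\ref{lem:BB2}(1). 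The right-hand side does not control $\| S_{\BB}(t) f \|_{L^2(m)}$ itself (since $\gamma + \sigma < 0$), so no exponential decay follows directly; this is the usual obstruction for very soft potentials. The idea, as in \cite{TosVi-slow,CDH,KM}, is to interpolate: writing $m_0 = \langle v \rangle^{k_0}$ (or an exponential), one has the interpolation inequality $\| g \|_{L^2(m_0)} \le \| g \|_{L^2(m_0 \langle v\rangle^{(\gamma+\sigma)/2})}^{1-\theta} \| g \|_{L^2(m_1)}^{\theta}$ for a suitable $\theta \in (0,1)$ depending on $k_1 - k_0$ and $\gamma$ (in the exponential case the gain is a power of $\langle v\rangle$ inside the exponent). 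Combined with the fact that $\| S_{\BB}(t) f \|_{L^2(m_1)}$ is bounded uniformly in $t$ by $\| f \|_{L^2(m_1)}$ (again from \eqref{eq:plan-dissip} with $m = m_1$, discarding the good term), the differential inequality for $y(t) := \| S_{\BB}(t) f \|_{L^2(m_0)}^2$ becomes of the form $y' \lesssim - c\, y^{1+\beta} \| f \|_{L^2(m_1)}^{-2\beta}$ for some $\beta > 0$ in the polynomial case, which integrates to $y(t) \lesssim \langle t \rangle^{-1/\beta} \| f \|_{L^2(m_1)}^2$, i.e. precisely the algebraic rate $\Theta_{m_1,m_0}(t)^2 = \langle t \rangle^{-2(k_1-k_*)/|\gamma|}$ after optimizing over the auxiliary exponent $k_* \in (k_0,k_1)$; in the exponential case the analogous ODE argument (or a more careful nonlinear Grönwall) yields the stretched-exponential rate $e^{-\lambda t^{s/|\gamma|}}$.

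More precisely, for the polynomial case I would fix $k_0 < k_* < k_1$, run the dissipation estimate \eqref{eq:plan-dissip} at weight $m = \langle v \rangle^{k_*}$, and bound the dissipation term below by interpolating $\| \langle v\rangle^{k_*} g\|_{L^2}$ between $\| \langle v\rangle^{k_* + (\gamma+\sigma)/2} g \|_{L^2}$ and $\| \langle v\rangle^{k_1} g\|_{L^2}$; using the uniform bound at weight $k_1$ one gets $\frac{d}{dt}\| S_\BB(t) f\|_{L^2(\langle v\rangle^{k_*})}^2 \lesssim - \| S_\BB(t)f\|_{L^2(\langle v\rangle^{k_*})}^{2(1+\beta)} / \| f\|_{L^2(m_1)}^{2\beta}$ with $\beta = \frac{|\gamma+\sigma|}{2(k_1 - k_*)}$ (roughly), integrate, and finally bound $\|\cdot\|_{L^2(m_0)} \le \|\cdot\|_{L^2(\langle v\rangle^{k_*})}$ since $k_0 < k_*$. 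Choosing $k_*$ close to $k_0$ realizes any rate in the class $\Theta_{m_1,m_0}$. For the exponential case the same scheme works with $m_0 = e^{\kappa_0 \langle v\rangle^s}$, $m_1 = e^{\kappa_1\langle v\rangle^s}$ (or same $\kappa$, in which case one splits the exponent): there the interpolation gains a factor $e^{-\delta \langle v\rangle^s}$ which one trades against $\langle v\rangle^{\gamma+\sigma}$ on the region $\langle v\rangle \lesssim R(t)$, optimizing $R(t)$ leads to the $t^{s/|\gamma|}$ power. This localization-and-optimization step is the technical heart of the argument.

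For the adjoint estimate \eqref{eq:SB*-L2}, the observation is that $S_{\BB_m^*}(t)$ acting on $L^2(\omega)$ corresponds, via the conjugation $\BB_m^* = $ (adjoint of $\BB_m = m\BB m^{-1}$), to a dissipative structure entirely parallel to that of $\BB$: Lemma~\ref{lem:BB2}(1) gives exactly $\frac{d}{dt}\|S_{\BB_m^*}(t)\phi\|_{L^2(\omega)}^2 \lesssim - \|S_{\BB_m^*}(t)\phi\|_{L^2(\omega\langle v\rangle^{\gamma/2})}^2$ for admissible polynomial $\omega$ with $1 \preceq \omega \prec m\langle v\rangle^{-(\gamma+3)/2}$. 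Noting that $\omega_1 = m/m_0 \succ \omega_0 = m/m_1$ and that both satisfy the required constraints precisely because $m \succeq m_1 \succ m_0 \succ \langle v\rangle^{(\gamma+3)/2}$ (so $\omega_0, \omega_1 \succeq 1$ and $\omega_1 \prec m \langle v\rangle^{-(\gamma+3)/2}$), I would run verbatim the same interpolation/ODE argument as for $S_\BB$, with $(\omega_0,\omega_1)$ in place of $(m_0,m_1)$, to obtain $\| S_{\BB_m^*}(t)\|_{L^2(\omega_1)\to L^2(\omega_0)} \lesssim \Theta_{\omega_1,\omega_0}(t)$; and since $\omega_1/\omega_0 = m_1/m_0$, one checks $\Theta_{\omega_1,\omega_0} = \Theta_{m_1,m_0}$ (the rate depends only on the ratio of weights and on $\gamma$), which gives \eqref{eq:SB*-L2}. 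The main obstacle throughout is making the nonlinear Grönwall/interpolation bookkeeping precise — in particular verifying the constraints on the auxiliary exponents so that Lemmas~\ref{lem:BB} and \ref{lem:BB2} genuinely apply at every intermediate weight, and carrying out the localization optimization cleanly in the exponential case; the structural input (the two dissipation identities) is already in hand.
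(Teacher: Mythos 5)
Your proposal is correct and follows essentially the same strategy as the paper: both rest on the weak dissipativity of Lemmas~\ref{lem:BB} and \ref{lem:BB2}, the uniform propagation of the stronger norm $L^2(m_1)$ (resp.\ $L^2(\omega_1)$), and an interpolation between weights that converts the $\langle v\rangle^{(\gamma+\sigma)/2}$ gain into the decay rate $\Theta_{m_1,m_0}$, with the adjoint case and the identity $\Theta_{\omega_1,\omega_0}=\Theta_{m_1,m_0}$ handled exactly as in the paper. The only (harmless) variation is that in the polynomial case you use multiplicative H\"older interpolation and a nonlinear Gr\"onwall inequality, whereas the paper splits at a radius $R$, integrates a linear differential inequality and optimizes over $R$ at the end; your variant even avoids the logarithmic loss, and in the exponential and mixed-weight cases your localization-and-optimization step coincides with the paper's argument.
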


\begin{proof}
We denote $X(m) = L^2(m)$.
We observe that for  $\tilde m_0 := m_0 \la v \ra^{(\gamma + \sigma)/2} \prec m_0 \prec m_1$ (where we recall that $\sigma = 0$ if $m_0$ is a polynomial function and $\sigma = s$ if $m_0$ is an exponential function),  there is a positive constant $C=C(m_0,m_1)$ such that for any $R \in (0,\infty)$ we have  
$$
\frac{\tilde m_0^2}{m_0^2}(R) \| f \|_{X(m_0)}^2 \le  \| f \|_{X(\tilde m_0)}^2 + C\frac{\tilde m_0^2}{m_1^2}(R) \| f \|_{X(m_1)}^2, 
$$
where we also denote by $m$ the function $R \mapsto m(v)$ for $|v|=R$. 
We write that estimate as 
\be\label{eq:interpolation}
\eps_R \| f \|_{X(m_0)}^2 \le \| f \|_{X(\tilde m_0)}^2 + C \theta_R \| f \|_{X(m_1)}^2,
\ee
with
$$
\eps_R := \frac{\tilde m_0^2}{m_0^2}(R), \quad \theta_R := \frac{\tilde m_0^2}{m_1^2}(R), \quad \eps_R,\; \frac{\theta_R}{\eps_R} \to 0 \quad\text{as } R \to \infty. 
$$
Let us denote $f_\BB (t) = S_{\BB}(t) f_0$ for any $t \ge 0$. Thanks to \eqref{eq:BL2} for the weight $m_1$, we have 
$$
\| f_{\BB}(t)\|_{X(m_1)} \le \| f_0 \|_{X(m_1)}, \quad \forall \,  t\ge 0.
$$ 
Writing now \eqref{eq:BL2} for $m_0$, using the interpolation \eqref{eq:interpolation} and the above estimate, for any $R>0$, we get ({ for some positive constants $\lambda,C>0$})
$$
\ba
\frac{d}{dt} \| f_{\BB} \|_{X(m_0)}^2 
&\le - \lambda \| f_{\BB} \|_{X(m_0 \la v \ra^{(\gamma + \sigma)/2})}^2 \\
&\le - \lambda \eps_R \| f_{\BB} \|_{X(m_0)}^2 + C \theta_R \| f_\BB \|_{X(m_1)}^2  
\\
&\le - \lambda \eps_R \| f_{\BB} \|_{X(m_0)}^2 + C \theta_R \| f_0 \|_{X(m_1)}^2  ,
\ea 
$$
with $\eps_R = \la R \ra^{\gamma+\sigma}$ and $\theta_R / \eps_R =  m_0^2(R) / m_1^2(R)$. 
Integrating that last differential inequality, we obtain 
$$
\ba
\| f_\BB(t) \|_{X(m_0)}^2 &\lesssim e^{- \lambda  \eps_R t} \| f_0 \|_{X(m_0)}^2 
+\frac{\theta_R}{\eps_R} \| f_0 \|_{X(m_1)}^2  \\
&\lesssim \Gamma_{m_1,m_0}^2 (t) \, \| f_0 \|_{X(m_1)}^2, 
\ea
$$
with
$$
\Gamma_{m_1,m_0}^2 (t) := \inf_{R>0} \left( e^{- \lambda \eps_R t} +\frac{\theta_R}{\eps_R} \right).
$$
We can complete the proof of \eqref{eq:SB-L2} by establishing $\Gamma_{m_1,m_0}(t) \lesssim \Theta_{m_1,m_0}(t)$ for the different choices of weight functions $m_0 \prec m_1$.

\medskip\noindent
{\it Case 1: $m_0 = \la v \ra^{k_0}$ and $m_1 = \la v \ra^{k_1}$ with $k_0 < k_1$.}
We have
$$
\Gamma_{m_1,m_0}^2 (t) = \inf_{R>0} \left( e^{- \lambda \la R \ra^{\gamma} t} + \la R \ra^{2(k_0-k_1)} \right).
$$
We take $\la R \ra = ( \la t \ra \theta(t))^{1 / |\gamma|}$ with $\theta(t) := [\log (1+t)]^{-2}$ and we get 
$$
\Gamma_{m_1,m_0}^2 (t) \le  e^{- \lambda \theta(t)^{-1}} + [\log(1+t)]^{4(k_1-k_0)/|\gamma|} \, \la t \ra^{-2   (k_1-k_0)/|\gamma|},
$$
from which we easily obtain $\Gamma_{m_1,m_0}(t) \lesssim \Theta_{m_1,m_0}(t)$.

\medskip\noindent
{\it Case 2: $m_0 = e^{\kappa_0 \la v \ra^s}$ and $m_1 = e^{\kappa_1 \la v \ra^s}$ with $\kappa_0 < \kappa_1$.}
We have
$$
\Gamma_{m_1,m_0}^2 (t) = \inf_{R>0} \left( e^{- \lambda \la R \ra^{\gamma + s} t} + e^{2 (\kappa_0 - \kappa_1) \la R \ra^s} \right).
$$
We take $\la R \ra = t^{1 / |\gamma|}$ and we get 
$$
\Gamma_{m_1,m_0}^2 (t) \le  e^{- \lambda t^{s/ |\gamma|}} + e^{-2 (\kappa_1 - \kappa_0) t^{s/|\gamma|}} ,
$$
which is nothing but $ \Theta_{m_1,m_0}^2(t)$. 
The general case $m_1 \succ m_0$ follows from that estimate, and the proof of \eqref{eq:SB-L2} is complete.

\medskip\noindent
{\it Case 3: $m_0 = \la v \ra^{k_0}$ and $m_1 = e^{\kappa_1 \la v \ra^s}$.} We define $ m = e^{\kappa \la v \ra^s}$ with $\kappa < \kappa_1$ so that $m_0 \prec  m \prec m_1$.  
Using Case 2 above with $m$ and $m_1$ we obtain 
$$
\| f_\BB (t) \|_{X(m_0)} \le \| f_\BB (t) \|_{X(m)} \lesssim \Theta_{m_1,m}(t) \| f_0 \|_{X(m_1)} \lesssim e^{- \lambda t^{s/|\gamma|}} \| f_0 \|_{X(m_1)}, 
$$
 and conclude with the estimate of Case 2 above.

\medskip\noindent
{\it Case 4: $m_0 = e^{\kappa_0 \la v \ra^{s_0}}$ and $m_1 = e^{\kappa_1 \la v \ra^{s}}$ with $s_0 < s$.}
 We first define $m = e^{\kappa \la v \ra^s}$ with $\kappa < \kappa_1$, so that $m_0 \prec  m \prec m_1$, and we argue as in Case 3. 

\smallskip

Estimate \eqref{eq:SB*-L2} can be proven similarly as above by using the estimates of Lemma~\ref{lem:BB2}, where we remark that in this case we have $\Theta_{\omega_1, \omega_0} (t) = \Theta_{m_1,m_0}(t)$, because $m_0, m_1, m$ are polynomial weight functions and $\omega_1 = m/m_0$, $\omega_0 = m/ m_1$.
\end{proof}


\subsection{Regularity properties of $S_\BB$}

We now prove that the semigroup $S_\BB$ enjoys some regularization properties.
 
\begin{lem}\label{lem:SBreg}
Let $m_1,m $ be admissible polynomial weight functions such that $ \langle v \rangle^{3/2} \prec m_1 \prec m$. 
Then the following regularization estimate holds 
\be\label{eq:SH-1toL2}
\| S_\BB(t)  \|_{H^{-1}_{*}(m) \to L^2 (m_1 \la v \ra^{\gamma/2})}  \lesssim   \frac{\Theta_{m,m_1}(t)}{ t^{1/2} \wedge 1}  , \quad \forall \, t >  0.
\ee 

\end{lem}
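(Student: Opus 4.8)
The plan is to derive the regularization estimate \eqref{eq:SH-1toL2} by a standard duality-and-interpolation argument, coupling the dissipativity of $\BB$ on $L^2(m_1)$ (from Lemma~\ref{lem:BB}) with the dissipativity of the adjoint $\BB^*_m$ on the Sobolev space $H^1(\omega)$ (from Lemma~\ref{lem:BB2}), and with the already-established decay rates for $S_\BB$ and $S_{\BB^*_m}$ (Lemma~\ref{lem:SBdecay}). The key point is that $S_\BB$ maps $H^{-1}_*(m)$ into $L^2$ with a $t^{-1/2}$ loss, which is exactly the integrability exponent one expects from a parabolic equation whose diffusion behaves like $\la v \ra^\gamma$ in the worst direction; the weight gain/loss is then tracked through the $\Theta_{m,m_1}$ factor.

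**Key steps, in order.** First I would fix an intermediate admissible polynomial weight $m_1 \prec m_{1/2} \prec m$ (splitting the "distance" between $m_1$ and $m$ into two halves) and work with the conjugated operator $\BB_{m_0}$ for the relevant weights, using the identities $\la \BB f, f\ra_{L^2(m)} = \la \BB_m g, g\ra_{L^2}$ and the duality $\la S_{\BB_m}(t) g, \phi\ra_{L^2} = \la g, S_{\BB^*_m}(t)\phi\ra_{L^2}$ recorded just before Lemma~\ref{lem:BB2}. Second, for $0 < t \le 1$ I would prove the short-time bound: writing $g(t) = S_{\BB_{m_1}}(t) g_0$ and differentiating $\|g(t)\|_{L^2}^2$, the estimate \eqref{eq:BL2} (conjugated) gives $\frac{d}{dt}\|g(t)\|_{L^2}^2 \lesssim -\|\la v\ra^{\gamma/2}\widetilde\nabla_v g(t)\|_{L^2}^2$, hence after integrating in $t$ one controls $\int_0^1 \|\la v\ra^{\gamma/2}\widetilde\nabla_v g(s)\|_{L^2}^2\, ds$; combined with the adjoint $H^1$-dissipativity \eqref{eq:B*mH1}, a classical "integrate the energy identity against $t$" trick (test $\frac{d}{dt}(t\|\nabla_v \phi(t)\|_{L^2(\omega)}^2)$ for $\phi(t) = S_{\BB^*_m}(t)\phi_0$, using Lemma~\ref{lem:BB2}(2) to absorb the bad $H^1$ term) yields $\|\nabla_v S_{\BB^*_m}(t)\phi_0\|_{L^2(\omega)} \lesssim t^{-1/2}\|\phi_0\|_{L^2(\omega)}$, i.e. $S_{\BB^*_m}(t): L^2(\omega) \to H^1(\omega)$ with norm $\lesssim t^{-1/2}$ for $t \le 1$. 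Dualizing, $S_{\BB_m}(t): H^{-1}(\omega') \to L^2$ (equivalently $S_\BB(t): H^{-1}_*(m) \to L^2(m_{1/2})$ up to the relation between $H^{-1}$ and $H^{-1}_*$, noting $H^1_*(m) \supset H^1(m)$ so $H^{-1}_*(m) \subset H^{-1}(m)$) with the same $t^{-1/2}$ bound on $(0,1]$. Third, for $t \ge 1$, I would use the semigroup property: write $S_\BB(t) = S_\BB(t-1) S_\BB(1)$ (or split at $t/2$), apply the short-time regularization $H^{-1}_*(m) \to L^2(m)$ on the first factor over a unit time interval, then the decay estimate \eqref{eq:SB-L2} of Lemma~\ref{lem:SBdecay} on $S_\BB(t-1): L^2(m) \to L^2(m_1\la v\ra^{\gamma/2})$ to bring in the factor $\Theta_{m,m_1}(t-1) \lesssim \Theta_{m,m_1}(t)$, using the submultiplicativity $\Theta_{m,m_1}^{-1}(t) \lesssim \Theta_{m,m_1}^{-1}(t-1)\Theta_{m,m_1}^{-1}(1)$ recorded at the start of the section. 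Combining the two regimes gives the stated bound $\dfrac{\Theta_{m,m_1}(t)}{t^{1/2}\wedge 1}$.

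**Main obstacle.** The delicate part is the short-time $t^{-1/2}$ smoothing: the diffusion coefficient $\bar a_{ij}$ degenerates like $\la v\ra^\gamma$ (with $\gamma \le -2$), so one only controls the \emph{anisotropic weighted} gradient $\widetilde\nabla_v$ with the weight $\la v\ra^{\gamma/2}$, not a plain gradient. Consequently the natural energy functional for the $t$-weighted estimate must be built from $\|\la v\ra^{\gamma/2}\widetilde\nabla_v(\cdot)\|$, and one has to check carefully that Lemma~\ref{lem:BB2}(2)'s second- and third-order anisotropic terms (the $-\eta\|\widetilde\nabla_v(\nabla_v\phi)\|^2$ term) genuinely dominate the cross terms produced when differentiating $t\|\nabla_v\phi(t)\|^2_{L^2(\omega)}$ in time; the commutator $[\partial^\alpha_v, \BB^*_m]$ contributes lower-order-in-derivatives but possibly bad-in-$v$ terms that must be reabsorbed using the weight $\la v\ra^{\gamma/2}$ to spare. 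The bookkeeping of which weight ($m$, $m_{1/2}$, $m_1$, $\omega_i = m/m_i$) appears where — and verifying that $H^{-1}_*$ (defined by duality against $H^1_*$, which is \emph{larger} than $H^1$) is compatible with the $H^{-1}$ obtained by dualizing the $H^1(\omega)$ estimate — is the other place where care is required, but it is routine given the inclusions $H^1_*(m\la v\ra^{|\gamma|/2}) \subset H^1(m) \subset H^1_*(m)$ noted after \eqref{def:H1*}.
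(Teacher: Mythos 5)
Your overall architecture is the right one and matches the paper's: prove a $t^{-1/2}$ short-time smoothing estimate for the adjoint semigroup $S_{\BB^*_m}$ via a time-weighted energy functional built on Lemma~\ref{lem:BB2}, glue it to the long-time decay of Lemma~\ref{lem:SBdecay} by the semigroup property, and dualize to get \eqref{eq:SH-1toL2}. However, the key short-time step as you state it has a genuine gap. You claim that differentiating $t\,\|\nabla_v \phi_t\|_{L^2(\omega)}^2$ and absorbing via Lemma~\ref{lem:BB2} yields $\|S_{\BB^*_m}(t)\|_{L^2(\omega)\to H^1(\omega)}\lesssim t^{-1/2}$, i.e.\ smoothing \emph{with the same weight} $\omega$ on both sides. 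This absorption cannot work: the positive term produced by the time weight is $\|\nabla_v\phi_t\|_{L^2(\omega)}^2$, whereas the dissipation available from \eqref{eq:B*mL2} only controls $\|\widetilde\nabla_v\phi_t\|_{L^2(\omega\la v\ra^{\gamma/2})}^2$, whose weight is strictly weaker by $\la v\ra^{\gamma/2}$ (and even the anisotropic gain $\la v\ra$ in the orthogonal directions leaves a deficit since $\gamma<-2$). So the same-weight estimate $L^2(\omega)\to H^1(\omega)$ is not reachable by this functional, and your proposed repair (building the functional out of $\|\la v\ra^{\gamma/2}\widetilde\nabla_v(\cdot)\|$) does not connect to the toolkit either: Lemma~\ref{lem:BB2}(2) gives dissipativity of $\BB^*_m$ only for the plain gradient in $\tilde H^1(\omega)$ with $1\preceq\omega$, not in an anisotropic, $\la v\ra^{\gamma/2}$-weighted $H^1$ space.

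The paper resolves exactly this point by a weight shift inside the functional: it takes $\omega_1=\la v\ra^{|\gamma|/2}$, $\omega_0=1$ and works with $\|\phi_t\|_{L^2(\omega_1)}^2+\eta\, t\,\|\phi_t\|_{\tilde H^1(\omega_0)}^2$, using the embedding $H^1_*(\omega_1)\subset H^1(\omega_0)$ (valid because $\gamma\le -2$) so that the dissipation coming from \eqref{eq:B*mL2} at weight $\omega_1$ dominates the positive $\eta\|\phi_t\|_{\tilde H^1(\omega_0)}^2$ term, while \eqref{eq:B*mH1} handles the $t$-weighted piece. The price is an unavoidable loss of $\la v\ra^{|\gamma|/2}$ in the smoothing estimate ($L^2(\omega_1)\to H^1(\omega_0)$, not $L^2(\omega)\to H^1(\omega)$), which also invalidates your long-time splitting as written, since it invokes a lossless unit-time regularization $H^{-1}_*(m)\to L^2(m)$. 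The fix is to accept the loss there too: the paper composes on the adjoint side, $\|\phi_t\|_{H^1(\omega_0)}\lesssim\|\phi_{t-1}\|_{L^2(\omega_1)}\lesssim\Theta_{m,m_1}(t)\|\phi\|_{L^2(\omega)}$ with $\omega=m/(m_1\la v\ra^{\gamma/2})$, and only then dualizes against $H^1_*(\omega_0)$; your primal-side composition can be made to work as well, but only if the intermediate space after time $1$ is $L^2(m\la v\ra^{\gamma/2})$ (or another weight at most $\la v\ra^{|\gamma|/2}$ below $m$), which still recovers the full class $\Theta_{m,m_1}$ for polynomial weights. Your remarks on $H^{-1}_*$ versus $H^{-1}$ are fine, but without the weight-shifted functional the central $t^{-1/2}$ estimate is unproved.
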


\begin{proof}
We define $\omega_0 := 1$, $\omega_1 := \la v \ra^{|\gamma|/2}$ and $\omega := m/(m_1 \la v \ra^{\gamma/2})$, so that $1 \prec \omega \prec m \langle v \rangle^{- (\gamma+3)/2}$. We write $\phi_t :=  S_{\BB^*_m}(t) \phi$ for a giving function $\phi \in L^2(\omega_1)$. 
 Thanks to \eqref{eq:B*mL2} and \eqref{eq:B*mH1} together with $H^1_*(\omega_1) \subset H^1(\omega_0)$, we have for some constant $\lambda>0$
$$
{d \over dt} \Bigl( \| \phi_t \|_{L^2(\omega_1)} ^2 + \eta t \| \phi_t \|_{\tilde H^1(\omega_0)} ^2 \Bigr) 
 \le - \lambda \| \phi_t \|_{H^1_*(\omega_1)}^2 
+ \eta  \| \phi_t \|_{\tilde H^1(\omega_0)}^2 \le 0, 
$$
for $\eta > 0$ small enough. We deduce that 
\be\label{eq:SBH-1toL2smalltime}
 \eta t \, \| \phi_t \|_{H^1(\omega_0)} ^2 \lesssim  \| { \phi} \|_{L^2(\omega_1)} ^2, \quad \forall \, t \ge 0.
\ee
For large values of time $t \ge 1$, we can use \eqref{eq:SBH-1toL2smalltime} and \eqref{eq:SB*-L2} to obtain
$$
\| \phi_t \|_{H^1(\omega_0)} \lesssim \| \phi_{t-1} \|_{L^2(\omega_1)} \lesssim \Theta_{m,m_1}(t-1) \| \phi \|_{L^2(\omega)} \lesssim \Theta_{m,m_1}(t) \| { \phi} \|_{L^2(\omega)}.
$$
Both estimates together with $H^1(\omega_0) \subset H^1_*(\omega_0)$, we have proved 
\bean
\| S_{\BB_m^*} (t) \phi  \|_{H^1_*(\omega_0)} \lesssim {\Theta_{m,m_1}(t) \over t^{1/2} \wedge 1} \|  \phi  \|_{L^2(\omega)} \quad \forall \, t >  0.
\eean
We then get \eqref{eq:SH-1toL2} by duality. 
 More precisely, recalling that that 
$$
\forall\, t \ge 0, \quad m S_{B}(t) f = S_{B_m}(t) g, \quad
\la S_{B_m}(t) g , \phi \ra_{L^2} = \la g , S_{B^*_m}(t) \phi \ra_{L^2} ,
$$
we first have $\| S_{B}(t) f \|_{L^2(m_1 \la v \ra^{\gamma/2})} 
= \| \omega^{-1} S_{B_m}(t) g \|_{L^2}$ and then we can compute
$$
\ba
\| \omega^{-1} S_{B_m}(t) g \|_{L^2} 
&= \sup_{ \| \psi \|_{L^2} \le 1} \la  S_{B_m}(t) g , \omega^{-1}  \psi \ra_{L^2}  \\
&= \sup_{ \| \phi \|_{L^2(\omega)} \le 1} \la   g ,  S_{B^*_m}(t)\phi \ra_{L^2} \\
&\le \sup_{ \| \phi \|_{L^2(\omega)} \le 1} \| g  \|_{H^{-1}_*(\omega_0)} \, \| S_{B^*_m}(t)\phi \|_{H^1_*(\omega_0)} \\
&\lesssim \sup_{ \| \phi \|_{L^2(\omega)} \le 1}  \frac{\Theta_{m,m_1}(t)}{t^{1/2} \wedge 1}  \| g  \|_{H^{-1}_*(\omega_0)} \, \| \phi \|_{L^2(\omega)}, 
\ea
$$
which completes the proof of \eqref{eq:SH-1toL2} by coming back to the function $f = m^{-1} g$.
\end{proof}

\subsection{Decay estimates for $S_\LL$}
We first prove decay estimates in a family of small reference spaces included in  $L^2(\mu^{-1/2})$.

\begin{prop}\label{prop:decay-small}
For any admissible weight $\nu$ such that $\mu^{-1/2} \prec \nu \prec \mu^{-1}$, there holds
$$
\forall\, t \ge 0, \quad
\| S_\LL (t) \Pi \|_{L^2(\nu) \to L^2(\mu^{-1/2})} 
\lesssim \Theta_{\nu,\mu^{-1/2}}(t) = C e^{-\lambda  t^{\frac{2}{|\gamma|}}}.
$$
\end{prop}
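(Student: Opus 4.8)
The goal is a decay estimate for $S_\LL(t)\Pi$ from the reference space $L^2(\nu)$ into the smaller space $E_0 = L^2(\mu^{-1/2})$, where $\nu$ sits strictly between $\mu^{-1/2}$ and $\mu^{-1}$. The natural route is to combine the weak coercivity estimate \eqref{eq:LLsg} for $\LL$ in $E_0$ with an interpolation-enlargement argument against the second weight $\nu$, exactly in the spirit of the proof of Lemma~\ref{lem:SBdecay}. First I would record that \eqref{eq:LLsg} gives, for $f \in E_0$ with $f = \Pi f$,
$$
\frac{d}{dt}\| S_\LL(t)\Pi f_0 \|_{E_0}^2 \le -\lambda \| \la v \ra^{(\gamma+2)/2}\, \Pi S_\LL(t)\Pi f_0 \|_{E_0}^2,
$$
the key point being that $\Pi$ commutes with $S_\LL$ (since $\mathrm{ker}\,\LL$ and its orthogonal are both invariant), so the right-hand side really controls the full norm of the trajectory up to the projection, which is harmless since we started in $\Pi E_0$. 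Then I would produce an \emph{a priori} bound in the larger space $L^2(\nu)$: since $\nu \prec \mu^{-1}$, the factorization $\LL = \AA + \BB$ together with the dissipativity estimate \eqref{eq:BL2bis} for $\BB$ in $L^2(\nu)$ (valid because $\mu^{-1/2}\preceq\nu\prec\mu^{-1}$) and the boundedness of $\AA$ from Lemma~\ref{lem:AA} (with $\mu^{-\theta} \succeq \nu$ for $\theta$ close to $1$) yield $\| S_\LL(t)\Pi f_0 \|_{L^2(\nu)} \lesssim \| f_0 \|_{L^2(\nu)}$ uniformly in $t\ge 0$, after absorbing the bounded perturbation.

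\textbf{The interpolation step.} With the two ingredients in hand I would interpolate. Writing $X(m) = L^2(m)$ and setting $\tilde m := \mu^{-1/2}\la v \ra^{(\gamma+2)/2} \prec \mu^{-1/2} \prec \nu$, there is, for every $R>0$, a constant $C$ with
$$
\eps_R \| f \|_{X(\mu^{-1/2})}^2 \le \| f \|_{X(\tilde m)}^2 + C\,\theta_R \| f \|_{X(\nu)}^2,
\qquad
\eps_R = \la R\ra^{\gamma+2}, \quad \frac{\theta_R}{\eps_R} = \frac{\mu^{-1}(R)}{\nu^2(R)} \to 0,
$$
exactly as in \eqref{eq:interpolation}. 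Feeding this into the differential inequality above and using the uniform $L^2(\nu)$ bound on the trajectory gives
$$
\frac{d}{dt}\| S_\LL(t)\Pi f_0 \|_{E_0}^2 \le -\lambda\eps_R \| S_\LL(t)\Pi f_0 \|_{E_0}^2 + C\theta_R \| f_0 \|_{X(\nu)}^2,
$$
and integrating,
$$
\| S_\LL(t)\Pi f_0 \|_{E_0}^2 \lesssim \inf_{R>0}\Bigl( e^{-\lambda\eps_R t} + \frac{\theta_R}{\eps_R} \Bigr)\, \| f_0 \|_{X(\nu)}^2.
$$
It remains to optimize over $R$. Since the dissipation weight carries exponent $\gamma+2$ (not $\gamma+\sigma$), we have $\eps_R = \la R\ra^{\gamma+2}$, i.e.\ decay rate $\la R\ra^{-|\gamma+2|}$ with $|\gamma+2|\le 1$ for the range considered; but the gain factor $\theta_R/\eps_R$ is an \emph{exponentially} small ratio of Gaussians of the form $e^{-2c\la R\ra^2}$ for a suitable $c>0$ (because $\mu^{-1}/\nu^2$ is a genuine Gaussian decay once $\nu \prec \mu^{-1}$). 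Choosing $\la R\ra \sim t^{1/|\gamma|}$ — or more precisely balancing $e^{-\lambda \la R\ra^{\gamma+2} t}$ against $e^{-2c\la R\ra^2}$, which forces $\la R\ra^{|\gamma+2|} \cdot \la R\ra^{2} \sim t$ hence $\la R\ra^2 \sim t^{2/|\gamma|}$ up to the exponent bookkeeping — produces the stretched-exponential rate $e^{-\lambda t^{2/|\gamma|}}$, which is precisely $\Theta_{\nu,\mu^{-1/2}}(t)$ as claimed.

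\textbf{Main obstacle.} The delicate point is not the interpolation mechanics but ensuring that one really may run the dissipation estimate \eqref{eq:LLsg} \emph{along the trajectory} — i.e.\ that $S_\LL(t)\Pi f_0$ stays in the domain of $\LL$ in $E_0$ and that the computation is rigorous rather than merely formal. This is handled in the standard way by first establishing the a priori estimate for sufficiently regular initial data and a regularized operator, then passing to the limit; since $f_0 \in L^2(\nu) \subset E_0$ already, and $\Pi S_\LL = S_\LL \Pi$ keeps us in $\Pi E_0$, the coercivity \eqref{eq:LLsg} applies directly to $f = S_\LL(t)\Pi f_0$ with $\Pi f = f$. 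A secondary technical check is the uniform-in-time $L^2(\nu)$ bound: one must verify that the constant in Lemma~\ref{lem:AA} and the spectral gap $-\la v\ra^{\gamma+\sigma}$ produced by \eqref{eq:BL2bis} combine — via a large-velocity/small-velocity splitting or a further interpolation absorbing $\AA$ on a compact set — to give genuine dissipativity of $\LL$ in $L^2(\nu)$ and not merely boundedness; here the fact that $\AA$ maps into $L^2(\mu^{-\theta})$ with $\mu^{-\theta}$ strictly larger than $\nu$ is what makes the absorption possible.
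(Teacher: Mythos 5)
Your opening step (the differential inequality in $E_0=L^2(\mu^{-1/2})$ from \eqref{eq:LLsg}), your interpolation inequality with $\eps_R=\la R\ra^{\gamma+2}$, and the optimization in $R$ giving $e^{-\lambda t^{2/|\gamma|}}$ all match the paper's argument. The gap is in the middle step, the uniform-in-time bound $\|S_\LL(t)\Pi f_0\|_{L^2(\nu)}\lesssim\|f_0\|_{L^2(\nu)}$, which you assert follows by ``absorbing the bounded perturbation'' $\AA$ into the dissipativity \eqref{eq:BL2bis} of $\BB$, i.e.\ by ``genuine dissipativity of $\LL$ in $L^2(\nu)$''. That mechanism does not work here, and it is precisely the obstruction the whole paper is built around. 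For $\gamma\in[-3,-2)$ the dissipation produced by \eqref{eq:BL2bis} in $L^2(\nu)$ is only $-\|\la v\ra^{(\gamma+2)/2}f\|^2_{L^2(\nu)}$ (plus a gradient term), a strictly weaker quantity than $\|f\|^2_{L^2(\nu)}$; meanwhile $\la \AA f,f\ra_{L^2(\nu)}$, estimated through Lemma~\ref{lem:AA}, is of the very same order as this weak dissipation but with a constant that has no reason to be small (recall also that $\AA$ contains the large term $M\chi_R$). So there is no smallness to exploit, and if instead you push the contribution of $\AA$ onto a term $C\|f\|^2_{L^2}$ by a large-/small-velocity splitting, Gr\"onwall only yields $\|f(t)\|^2_{L^2(\nu)}\lesssim \|f_0\|^2_{L^2(\nu)}+t\,\|f_0\|^2_{E_0}$, since no time decay of $\|f(t)\|_{L^2}$ is available at this stage (that decay is essentially the conclusion you are trying to prove, so using it here would be circular). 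As written, the uniform $L^2(\nu)$ bound is therefore unproved, and without it the right-hand side of your interpolated differential inequality is not controlled.

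The paper obtains this bound by a different device: the Duhamel factorization $S_\LL\Pi=S_\BB\Pi+S_\BB\AA*S_\LL\Pi$. One knows $t\mapsto\|S_\LL(t)\Pi\|_{E_0\to E_0}\in L^\infty$ from \eqref{eq:LLsg}, and $t\mapsto\|S_\BB\AA(t)\|_{E_0\to E_1}\in L^1(\R_+)$ by combining Lemma~\ref{lem:AA} (with $\mu^{-\theta}\succ\nu$, $\theta$ close to $1$) with the stretched-exponential decay of $S_\BB$ between two exponential weights from Lemma~\ref{lem:SBdecay}; since $E_1\subset E_0$, the convolution term is then bounded uniformly in time, which gives \eqref{eq:LinftytL2omega}, i.e.\ exactly the missing bound. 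Once that is in hand, the rest of your proof (coercivity in $E_0$, interpolation as in \eqref{eq:interpolation}, choice $\la R\ra\sim t^{1/|\gamma|}$) is the paper's proof. If you want to stay closer to your energy-estimate route, you would at least have to couple the two differential inequalities (the one in $E_0$ and the one in $L^2(\nu)$ with the remainder $C\|f\|^2_{L^2}$) and close them simultaneously by a Gr\"onwall/bootstrap argument using $\int_0^\infty\Gamma^2<\infty$; that is a genuinely different argument from the one-line absorption you invoked, and it is not what your proposal contains.
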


\begin{proof}
Let us denote for simplicity ${ E_0 = L^2(\mu^{-1/2}) \supset E_1 = L^2(\nu)}$.
We already know from \eqref{eq:LLsg} and \eqref{eq:BL2} that
$$
t \mapsto \| S_\LL (t) \Pi \|_{  E_0 \to E_0}  , \; 
t \mapsto \| S_\BB (t) \|_{E_1 \to E_1} \in L^\infty(\R_+).
$$
We then write, thanks to Duhamel's formula,
$$
S_{\LL}\Pi = S_{\BB} \Pi + S_{\BB} \AA  * S_{\LL}\Pi,
$$
and using Lemma \ref{lem:AA} and Lemma \ref{lem:SBdecay}, we obtain that $t \mapsto \| S_\BB \AA(t) \|_{ E_0 \to E_1} \in L^1 (\R_+)$, whence 
\be\label{eq:LinftytL2omega}
\| S_{\LL} (t) \Pi \|_{E_1 \to E_1} \lesssim \| S_{\BB}(t) \|_{E_1 \to E_1} + \| S_{\BB} \AA(t) \|_{  E_0 \to E_1} * \| S_{\LL}(t) \Pi \|_{E_1 \to  E_0 } \in L^\infty_t (\R_+).
\ee

Defining  $\Pi f_L (t) = \Pi S_L(t) f_0$ and using \eqref{eq:LLsg},  \eqref{eq:LinftytL2omega} and  the same interpolation argument as in the proof of Lemma~\ref{lem:SBdecay}, we obtain
$$
\ba
\frac{d}{dt} \| \Pi f_L(t)  \|_{E_0}^2 
&\le - \lambda \|   \la v \ra^{(\gamma + 2)/2} \,  \Pi f_L(t)  \|_{E_0 }^2 \\
&\le - \lambda \eps_R \| \Pi f_L(t)  \|_{E_0 }^2  + C\theta_R \| \Pi S_\LL(t) f_0  \|_{E_1}^2  
\\
&\le - \lambda \eps_R \| \Pi f_L(t)  \|_{E_0 }^2 + C\theta_R \| \Pi f_0 \|_{E_1}^2  ,
\ea 
$$
with $\eps_R = \la R \ra^{\gamma+2}$ and $\theta_R/\eps_R = \mu^{-1/2}(R) / \nu(R)$. We conclude as   in the proof of Lemma~\ref{lem:SBdecay}. 
\end{proof}

As an immediate consequence, we prove uniform in time bounds for the semigroup $S_\LL$ in large spaces.

\begin{lem}\label{lem:SLinftyBIS}
For any admissible weight function $m \succ \la v \ra^{\frac{\gamma+3}{2}}$, there holds 
$$
t \mapsto \| S_{\LL}(t) \Pi  \|_{L^2(m) \to L^2(m)} \in L^\infty(\R_+).
$$
\end{lem}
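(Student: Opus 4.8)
The plan is to bootstrap from the small-space decay estimate of Proposition~\ref{prop:decay-small} up to the large space $L^2(m)$ by means of the Duhamel (factorization) formula, exactly in the spirit of the proof of \eqref{eq:LinftytL2omega}. Write $\LL = \AA + \BB$ as in \eqref{eq:AB} and use
$$
S_{\LL}(t) \Pi = S_{\BB}(t) \Pi + \bigl( S_{\BB} \AA * S_{\LL} \Pi \bigr)(t).
$$
The first term is controlled by Lemma~\ref{lem:SBdecay}: since $m \succ \la v \ra^{(\gamma+3)/2} \succ \la v \ra^{(\gamma+3)/2}$, estimate \eqref{eq:SB-L2} (or merely the uniform bound $\|S_\BB(t)\|_{L^2(m)\to L^2(m)} \le 1$ coming directly from \eqref{eq:BL2}) gives $t \mapsto \|S_\BB(t)\Pi\|_{L^2(m)\to L^2(m)} \in L^\infty(\R_+)$. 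For the convolution term, I would choose an intermediate admissible weight; the natural move is to use that $\AA$ is bounded into a space with super-Gaussian decay, specifically $\AA \in \BBB(L^2(m), L^2(\mu^{-\theta}))$ for $\theta \in (1/2,1)$ by Lemma~\ref{lem:AA} (with $\ell=0$, $p=2$), and then to land in the small reference space $E_0 = L^2(\mu^{-1/2})$ and in one of the spaces $E_1 = L^2(\nu)$ with $\mu^{-1/2} \prec \nu \prec \mu^{-1}$ covered by Proposition~\ref{prop:decay-small}.

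The key steps, in order, are as follows. First, fix $\theta \in (1/2,1)$ and an admissible exponential weight $\nu$ with $\mu^{-1/2}\prec \nu \preceq \mu^{-\theta} \prec \mu^{-1}$; then by Lemma~\ref{lem:AA}, $\AA : L^2(m) \to L^2(\nu)$ is bounded, and a fortiori $\AA : L^2(m)\to L^2(\mu^{-1/2})$ is bounded. Second, combine this with the decay estimate $\|S_\BB(t)\|_{L^2(\nu)\to L^2(\mu^{-1/2})}$, which is bounded uniformly in $t$ and in fact decays by \eqref{eq:SB-L2} since $\nu \succ \mu^{-1/2}$; hence $t\mapsto \|S_\BB\AA(t)\|_{L^2(m)\to L^2(\mu^{-1/2})}$ is integrable (indeed it decays like $\Theta_{\nu,\mu^{-1/2}}$, a stretched exponential). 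Third, invoke Proposition~\ref{prop:decay-small} to get $t\mapsto \|S_\LL(t)\Pi\|_{L^2(\mu^{-1/2})\to L^2(\mu^{-1/2})}\in L^\infty(\R_+)$ (this is the case $\nu$ replaced by any admissible weight, or simply the known self-adjoint bound $\|S_\LL(t)\Pi\|_{E_0\to E_0}\le 1$ from \eqref{eq:LLsg}). Fourth, one still needs the mixed bound $t\mapsto \|S_\LL(t)\Pi\|_{L^2(m)\to L^2(\mu^{-1/2})} \in L^\infty$: this follows by the same Duhamel expansion, writing $\|S_\LL\Pi\|_{L^2(m)\to E_0} \le \|S_\BB\|_{L^2(m)\to L^2(m)}\cdot 0$ — no, more carefully, expand $S_\LL \Pi = S_\BB \Pi + S_\BB\AA * S_\LL\Pi$ and estimate $\|S_\LL(t)\Pi\|_{L^2(m)\to E_0}$ by $\|S_\BB(t)\Pi\|_{L^2(m)\to E_0}$ (which is in $L^\infty$, even integrable, by \eqref{eq:SB-L2} using $m \succ \la v\ra^{(\gamma+3)/2}$ and $\mu^{-1/2}$ dominating any polynomial, or by iterating with an intermediate weight) plus $\|S_\BB\AA(t)\|_{L^2(m)\to E_0}*\|S_\LL(t)\Pi\|_{E_0\to E_0}$, a convolution of an $L^1$ function with an $L^\infty$ one. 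Finally, feeding these into
$$
\|S_\LL(t)\Pi\|_{L^2(m)\to L^2(m)} \le \|S_\BB(t)\Pi\|_{L^2(m)\to L^2(m)} + \bigl( \|S_\BB\AA\|_{E_0\to L^2(m)} * \|S_\LL\Pi\|_{L^2(m)\to E_0}\bigr)(t),
$$
where $\|S_\BB\AA(t)\|_{E_0 \to L^2(m)}$ is bounded and integrable because $\AA: E_0 = L^2(\mu^{-1/2}) \to L^2(\mu^{-\theta})$ is bounded (Lemma~\ref{lem:AA}) with $\mu^{-\theta} \succ m$ for any admissible $m$ with $\theta<1$ — wait, this requires $\mu^{-\theta}\succ m$, which fails if $m = e^{\kappa\la v\ra^2}$, $\kappa$ close to $1/2$; in that case take $\theta$ close enough to $1$ so that $\mu^{-\theta} = e^{\theta|v|^2/2}$ dominates $e^{\kappa\la v\ra^2}$, which is possible precisely because $\kappa<1/2\le\theta/2$ — so $\|S_\BB(t)\|_{L^2(\mu^{-\theta})\to L^2(m)}$ decays by \eqref{eq:SB-L2}; the convolution of two such functions (one $L^1$, one $L^\infty$) is in $L^\infty(\R_+)$, giving the claim.

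The main obstacle I anticipate is purely one of bookkeeping the admissible-weight hierarchy: one must check that for \emph{every} admissible $m$ in the range $m\succ\la v\ra^{(\gamma+3)/2}$ — including the borderline exponential weight $e^{\kappa\la v\ra^2}$ with $\kappa$ arbitrarily close to $1/2$ — there exists $\theta\in(1/2,1)$ with $m \prec \mu^{-\theta}$, so that the gain of a Gaussian factor from $\AA$ (Lemma~\ref{lem:AA}) genuinely dominates $m$ and the intermediate semigroup estimates from Lemma~\ref{lem:SBdecay} apply with a decaying $\Theta$. This is the only delicate point; once the intermediate weight $\nu$ (or $\mu^{-\theta}$) is correctly inserted, everything reduces to the observation that a finite sum of convolutions of integrable functions with bounded functions stays bounded, so no new estimate beyond Lemmas~\ref{lem:AA}, \ref{lem:SBdecay} and Proposition~\ref{prop:decay-small} is needed.
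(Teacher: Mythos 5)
There is a genuine gap, and it lies in your ``Fourth'' step: the mixed bound $t \mapsto \| S_\LL(t)\Pi \|_{L^2(m) \to L^2(\mu^{-1/2})} \in L^\infty(\R_+)$, which your final Duhamel splitting $\|S_\BB\AA\|_{E_0\to L^2(m)} * \|S_\LL\Pi\|_{L^2(m)\to E_0}$ genuinely needs, is not available and your argument for it does not work. You justify the piece $\|S_\BB(t)\Pi\|_{L^2(m)\to E_0}$ by \eqref{eq:SB-L2}, but that estimate goes in the opposite direction: it bounds $S_\BB(t)$ from $L^2(m_1)$ to $L^2(m_0)$ only when $m_1 \succ m_0$, i.e.\ from the stronger weight to the weaker one. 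Since $\mu^{-1/2} \succ m$ for polynomial $m$, the bound you want would mean that $S_\BB$ (and then $S_\LL$) instantaneously creates Gaussian decay from polynomially decaying data; nothing in the paper gives this, and it is false for the weakly dissipative operator $\BB$ (only the bounded operator $\AA$ gains weight, via Lemma~\ref{lem:AA}). The second piece is also incorrectly composed: in $(S_\BB\AA * S_\LL\Pi)(t)f = \int_0^t S_\BB(t-s)\AA\, S_\LL(s)\Pi f\, ds$ the operator acting first on $f \in L^2(m)$ is $S_\LL(s)\Pi$, so pairing it with $\|S_\LL(s)\Pi\|_{E_0\to E_0}$ is inconsistent with the initial datum's space, and replacing it by $\|S_\LL(s)\Pi\|_{L^2(m)\to E_0}$ (or $L^2(m)\to L^2(m)$) makes the argument circular; a Gr\"onwall/Volterra iteration with an $L^1$ kernel of norm not less than one would then only give exponential growth, not boundedness.

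The repair is to use the Duhamel formula with the factor $\AA S_\BB$ acting \emph{first},
$$
S_\LL \Pi = \Pi S_\BB + S_\LL \Pi * \AA S_\BB ,
$$
so that the weight gain happens on the large-space side: for $\la v\ra^{(\gamma+3)/2} \prec m \prec \mu^{-1/2}$ one has $\|S_\BB(t)\|_{L^2(m)\to L^2(m)} \in L^\infty$ by \eqref{eq:BL2}, $\|\AA S_\BB(t)\|_{L^2(m)\to E_1} \in L^\infty$ with $E_1 = L^2(\nu)$, $\mu^{-1/2}\prec\nu\prec\mu^{-1}$ (Lemmas~\ref{lem:AA} and \ref{lem:SBdecay}), and $\|S_\LL(t)\Pi\|_{E_1\to E_0}\in L^1$ by Proposition~\ref{prop:decay-small}, whence the convolution is bounded and one concludes with the embedding $E_0 = L^2(\mu^{-1/2}) \subset L^2(m)$; the remaining range $\mu^{-1/2}\preceq m \prec \mu^{-1}$ is already covered by \eqref{eq:LinftytL2omega}. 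This is exactly the paper's proof, and all your other ingredients (Lemma~\ref{lem:AA} with $\theta$ chosen so that $\mu^{-\theta}$ dominates the relevant weight, Lemma~\ref{lem:SBdecay}, Proposition~\ref{prop:decay-small}) are the right ones; only the side of the convolution on which $\AA$ sits must be switched, which removes any need for a large-to-small mixed bound. (Minor point: your inequality ``$\kappa<1/2\le\theta/2$'' is garbled since $\theta<1$; the correct choice is $\theta\in(2\kappa,1)$.)
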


\begin{proof}
Let us denote $E = L^2 (\mu^{-1/2})$, $E_1 = L^2(\nu)$ and $X = L^2(m)$, with $\mu^{-1/2} \prec \nu \prec \mu^{-1}$.
We only need to treat the case $ \la v \ra^{\frac{\gamma+3}{2}} \prec m \prec \mu^{-1/2}$ so that $E \subset X$ (the other cases have already been treated in \eqref{eq:LinftytL2omega}). We first write
$$
S_{\LL} \Pi = \Pi S_{\BB} + S_{\LL} \Pi * \AA S_{\BB},
$$
and observe that $t \mapsto \| S_{\BB} (t) \|_{X \to X} \in L^\infty(\R_+) $ from \eqref{eq:BL2} and $t \mapsto \| S_{\LL}(t) \Pi \|_{E_1 \to E} \in L^1 (\R_+)$ from Proposition~\ref{prop:decay-small}. Moreover, Lemma \ref{lem:AA} and Lemma \ref{lem:SBdecay} yield $t \mapsto \| \AA S_\BB(t) \|_{X \to E_1} \in L^\infty (\R_+)$, so that
$$
\| S_{\LL}(t) \Pi \|_{X \to X} \lesssim \| S_{\BB}(t) \|_{X \to X} + \| S_{\LL}(t) \Pi \|_{E_1 \to E \to X} * \| \AA S_{\BB} (t)  \|_{X \to E_1} \in L^\infty_t (\R_+),
$$
and the proof is complete.
\end{proof}

We can now prove that $S_\LL $ inherits the decay and regularity estimates already established for the semigroup $S_\BB$.

\begin{thm}\label{thm:S_L}
Let $m_0,m_1$ be two admissible weight functions such that $ \langle v \rangle^{(\gamma+3)/2} \prec m_0 \prec m_1$ and $m_0 \preceq \mu^{-1/2}$. There holds
\be\label{eq:LLdecayL2}
\| S_{\LL}(t) \Pi \|_{L^2(m_1) \to L^2(m_0)} \lesssim \Theta_{m_1,m_0}(t),  \quad \forall\, t \ge 0.
\ee
Let $m_0,m_1$ be two admissible polynomial weight functions such that $ \langle v \rangle^{3/2} \prec m_0 \prec m_1$. There holds
\be\label{eq:LLregH-1}
\| S_{\LL}(t) \Pi \|_{H^{-1}_*(m_1) \to L^2(m_0 \la v \ra^{\gamma/2})} \lesssim \frac{\Theta_{m_1,m_0}(t)}{t^{1/2} \wedge 1},  \quad \forall\, t > 0.
\ee

\end{thm}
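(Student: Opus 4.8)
The plan is to transfer the decay and regularity estimates from $S_\BB$ (Lemmas~\ref{lem:SBdecay} and~\ref{lem:SBreg}) to $S_\LL\Pi$ by means of the factorization $\LL = \AA + \BB$ together with Duhamel's formula, exactly in the spirit of the enlargement/extension philosophy recalled in the introduction. The starting point is the iterated Duhamel identity
\be\label{eq:DuhamelIter}
S_{\LL}\Pi = \sum_{j=0}^{N-1} (S_{\BB}\AA)^{(*j)} * (S_\BB\Pi) + (S_{\BB}\AA)^{(*N)} * (S_{\LL}\Pi),
\ee
valid for every $N\ge 1$, where I commute $\Pi$ appropriately using that $\Pi$ is the spectral projection associated to $\ker\LL$ and hence commutes with $S_\LL$; the tail term will be absorbed by choosing $N$ large enough so that $(S_\BB\AA)^{(*N)}$ gains sufficient decay. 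The key auxiliary facts are: (i) $\AA\in\BBB(L^2(m')\to L^2(\mu^{-\theta}))$ for any $\theta\in(0,1)$ by Lemma~\ref{lem:AA}, which lets me re-enter a small space after each application of $\AA$; (ii) $S_\BB$ maps $L^2(m_1)\to L^2(m_0)$ with rate $\Theta_{m_1,m_0}$ by \eqref{eq:SB-L2}; (iii) $S_\LL\Pi$ is uniformly bounded in every admissible $L^2(m)$ by Lemma~\ref{lem:SLinftyBIS}; and (iv) the subexponential/polynomial gain behaves well under time convolution, i.e. $\Theta_{m_1,m_0}^{-1}(t)\lesssim \Theta_{m_1,m_0}^{-1}(t-s)\Theta_{m_1,m_0}^{-1}(s)$ as recorded just before the statement, so that $\Theta_{m_1,m_0} * \Theta_{m_1,m_0} \lesssim t\,\Theta_{m_1,m_0}$ and, more usefully, convolutions of a bounded function with $\Theta$ against an $L^1$ kernel stay controlled by $\Theta$ times a logarithmic/polynomial correction which in the polynomial case is harmless (since $\Theta$ refers to a class with a free exponent $k_*$) and in the exponential case is absorbed into $\lambda$.

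For the decay estimate \eqref{eq:LLdecayL2} I would first handle the single convolution term $S_\BB\AA * S_\LL\Pi$: choosing an intermediate admissible weight $\nu$ with $\mu^{-1/2}\prec\nu\prec\mu^{-1}$ I write $S_\BB\AA: L^2(m_0)\to L^2(\mu^{-\theta})\hookrightarrow L^2(\nu)$ (bounded in time) composed with $S_\LL\Pi: L^2(\nu)\to L^2(\mu^{-1/2})$ which decays like $\Theta_{\nu,\mu^{-1/2}}=Ce^{-\lambda t^{2/|\gamma|}}$ by Proposition~\ref{prop:decay-small}, and then embed $L^2(\mu^{-1/2})\hookrightarrow L^2(m_0)$. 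Combined with $\|S_\BB(t)\|_{L^2(m_1)\to L^2(m_0)}\lesssim\Theta_{m_1,m_0}(t)$ from \eqref{eq:SB-L2} for the leading term, and the uniform bound on $S_\LL\Pi$ in $L^2(m_1)$, one obtains
\[
\|S_{\LL}(t)\Pi\|_{L^2(m_1)\to L^2(m_0)} \lesssim \Theta_{m_1,m_0}(t) + \bigl(\Theta_{m_1,m_0} * \Theta_{\nu,\mu^{-1/2}}\bigr)(t),
\]
and since $\Theta_{\nu,\mu^{-1/2}}$ is genuinely (stretched-)exponential while $\Theta_{m_1,m_0}$ decays at worst polynomially, the convolution is dominated by $\Theta_{m_1,m_0}(t)$ up to constants (splitting the integral at $t/2$: on $[0,t/2]$ use $\Theta_{\nu,\mu^{-1/2}}\in L^1$ and $\Theta_{m_1,m_0}(t-s)\lesssim\Theta_{m_1,m_0}(t/2)\sim\Theta_{m_1,m_0}(t)$; on $[t/2,t]$ use $\Theta_{m_1,m_0}(t-s)$ bounded and $\Theta_{\nu,\mu^{-1/2}}(s)\lesssim\Theta_{\nu,\mu^{-1/2}}(t/2)\ll\Theta_{m_1,m_0}(t)$). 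In the case $m_0\prec\mu^{-1/2}$ one may need one extra Duhamel iteration or to interpolate as in Lemma~\ref{lem:SLinftyBIS}, but the structure is the same.

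For the regularization estimate \eqref{eq:LLregH-1} the argument runs the same way but starting from the smoothing bound for $S_\BB$ in Lemma~\ref{lem:SBreg}, namely $\|S_\BB(t)\|_{H^{-1}_*(m)\to L^2(m_1\la v\ra^{\gamma/2})}\lesssim \Theta_{m,m_1}(t)/(t^{1/2}\wedge 1)$. I write $S_\LL\Pi = S_\BB\Pi + S_\LL\Pi * \AA S_\BB$ (putting the convolution on the other side so the singular $t^{-1/2}$ factor acts on the first, non-convolved slot). The first term is handled directly by Lemma~\ref{lem:SBreg} composed with the embedding; for the convolution term, $\AA S_\BB$ maps $H^{-1}_*(m_1)$ into $L^2(\mu^{-\theta})$ with a $(t^{1/2}\wedge 1)^{-1}$ loss but a $\Theta_{m_1,\cdot}$ gain, then $S_\LL\Pi$ propagates with exponential decay down to $L^2(\mu^{-1/2})\hookrightarrow L^2(m_0\la v\ra^{\gamma/2})$. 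The time integral $\int_0^t \Theta(s)/(s^{1/2}\wedge 1)\cdot(\text{exp. decay})(t-s)\,ds$ is still $\lesssim \Theta_{m_1,m_0}(t)$ because $s\mapsto \Theta_{m_1,m_0}(s)/(s^{1/2}\wedge1)$ is locally integrable near $0$ and the exponential factor controls large $s$; one reabsorbs the $(t^{1/2}\wedge 1)^{-1}$ overall factor by noting that for $t\le 1$ the singularity comes entirely from the $S_\BB\Pi$ term and for $t\ge 1$ there is no singularity at all.

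The main obstacle I expect is \emph{bookkeeping the loss factors through the convolutions} — specifically, making sure that composing $\Theta$-type decays (which in the polynomial case are merely a scale of functions indexed by $k_*\in(k_0,k_1)$, not a single function) does not erode the exponent below the claimed range, and that the $(t^{1/2}\wedge 1)^{-1}$ smoothing singularity integrates against the decaying kernels without producing a worse singularity or destroying the decay at infinity. This is precisely why the hypothesis is stated with \emph{strict} ordering $m_0\prec m_1$ and why one has the freedom of the intermediate weight $k_*$: a small amount of the available weight gap is consumed at each Duhamel step, and one must simply check there is enough to go around. A secondary, more technical point is to justify commuting $\Pi$ through the Duhamel iteration and through $S_\BB$ (it does not commute with $\BB$ or $\AA$ separately), which is handled by writing $S_\LL\Pi = \Pi S_\LL$ and inserting $\Pi$ only in front of $S_\LL$, or equivalently by working on the closed subspace $\Pi X$ throughout, exactly as the norm \eqref{def:Nt} is defined on $\Pi X$.
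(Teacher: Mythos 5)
Your plan is essentially the paper's: the single factorization $S_\LL\Pi=\Pi S_\BB+S_\LL\Pi*\AA S_\BB$, with Lemma~\ref{lem:SBdecay} (resp.\ Lemma~\ref{lem:SBreg}) for the non-convolved term, Lemma~\ref{lem:AA} to re-enter a small space $L^2(\nu)$ with $\mu^{-1/2}\prec\nu\prec\mu^{-1}$, Proposition~\ref{prop:decay-small} for the stretched-exponential decay $L^2(\nu)\to L^2(\mu^{-1/2})$, the embedding $L^2(\mu^{-1/2})\subset L^2(m_0)$ (this is exactly what the hypothesis $m_0\preceq\mu^{-1/2}$ buys), and a time-convolution estimate. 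No iterated Duhamel sum is needed in this homogeneous setting (the higher iterates only appear in Section~6), so your remark that the case $m_0\prec\mu^{-1/2}$ "may need one extra iteration" is superfluous: the single convolution covers all $m_0\preceq\mu^{-1/2}$.

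There is, however, a concrete inconsistency in your treatment of the decay estimate \eqref{eq:LLdecayL2}: you describe the convolution term as $S_\BB\AA*S_\LL\Pi$, i.e.\ with $S_\LL(s)\Pi$ acting first on the $L^2(m_1)$ datum, and you bound $S_\BB\AA$ as merely bounded in time while invoking the uniform bound of $S_\LL\Pi$ on $L^2(m_1)$. With that ordering the only information available on the first-acting factor is boundedness (its decay on $L^2(m_1)$ is precisely what is being proved), so the convolution is only $O(1)$ and you recover Lemma~\ref{lem:SLinftyBIS}, not \eqref{eq:LLdecayL2}; in particular your displayed bound $\Theta_{m_1,m_0}+\Theta_{m_1,m_0}*\Theta_{\nu,\mu^{-1/2}}$ does not follow from the chain you wrote. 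The correct arrangement, which your display implicitly presupposes and which you do use for \eqref{eq:LLregH-1}, is $S_\LL\Pi*\AA S_\BB$: the inner factor $S_\BB(s):L^2(m_1)\to L^2(m_0)$ supplies the $\Theta_{m_1,m_0}(s)$ decay, then $\AA:L^2(m_0)\subset L^2\to L^2(\mu^{-\theta})\subset L^2(\nu)$ is bounded, and the outer factor $S_\LL(t-s)\Pi:L^2(\nu)\to L^2(\mu^{-1/2})\subset L^2(m_0)$ supplies the integrable stretched-exponential factor $e^{-\lambda(t-s)^{2/|\gamma|}}$ (the same ordering, with Lemma~\ref{lem:SBreg} on the inner slot, gives \eqref{eq:LLregH-1}). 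Once this is fixed, your $t/2$-splitting of the convolution works, with one caveat: when $m_1$ is an exponential weight, $\Theta_{m_1,m_0}$ is itself a stretched exponential (not "at worst polynomial"), but since $s\le 2$ one still has $\Theta_{\nu,\mu^{-1/2}}(t/2)\lesssim\Theta_{m_1,m_0}(t)$ after shrinking the constant $\lambda$ allowed in the definition of $\Theta_{m_1,m_0}$; the paper avoids this case-checking by instead distributing $\Theta_{m_1,m_0}^{-1}$ over the convolution via the submultiplicativity property stated just before the theorem and concluding by $L^1*L^\infty\subset L^\infty$.
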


\begin{proof}
We fix an admissible weight function $\nu$ such that $\mu^{-1/2} \prec \nu \prec \mu^{-1}$ and $\nu \succ m_1$, and we split the proof into two steps.

\smallskip\noindent
{\it Step 1.} We denote $X_0 = L^2(m_0)$, $X_1=L^2(m_1)$, $E_0 = L^2(\mu^{-1/2})$ and $E_1 = L^2(\nu)$.
We write the factorization identity
$$
S_\LL \Pi = \Pi S_\BB  +  S_\LL \Pi * \AA S_\BB,
$$ 
which implies
$$
\ba
\Theta_{m_1,m_0}^{-1} \, \| S_\LL  \Pi\|_{X_1 \to X_0} 
&\lesssim  \Theta_{m_1,m_0}^{-1} \, \| S_\BB  \Pi \|_{X_1 \to X_0}  + \left( \Theta_{m_1,m_0}^{-1} \,\| \Pi S_\LL \|_{E_1 \to X_0} * \Theta_{m_1,m_0}^{-1} \,\| \AA S_\BB \|_{X_1  \to E_1}  \right) .
\ea
$$
Thanks to Lemma~\ref{lem:SBdecay}, Proposition~\ref{prop:decay-small} and  Lemma \ref{lem:AA}, we have 
$$
\ba
& t \mapsto \Theta_{m_1,m_0}^{-1}(t) \, \| S_\BB (t) \Pi\|_{X_1 \to X_0} \in L^\infty (\R_+), \\
& t \mapsto \Theta_{m_1,m_0}^{-1}(t) \, \| \Pi S_\LL(t) \|_{E_1 \to E_0 \to X_0}  \in L^1(\R_+), \\
& t \mapsto \Theta_{m_1,m_0}^{-1}(t) \, \| \AA S_\BB (t)\|_{X_1  \to X_0 \to E_1} \in L^\infty (\R_+),
\ea
$$
which concludes the proof of \eqref{eq:LLdecayL2}.

\smallskip\noindent
{\it Step 2.}  Denote $Z_1 = H^{-1}_{*}(m_1)$ and $\widetilde X_0 = L^2(m_0 \la v \ra^{\gamma/2})$. Writing the factorization identity as in Step 1 and denoting $\widetilde \Theta_{m_1,m_0}(t) = \Theta_{m_1,m_0}(t) / (t^{1/2} \wedge 1)$, we have
$$
\ba
\widetilde \Theta_{m_1,m_0}^{-1} \| S_\LL  \Pi  \|_{Z_1 \to \widetilde X_0} 
&\lesssim \widetilde \Theta_{m_1,m_0}^{-1} \| S_\BB  \|_{Z_1 \to \widetilde X_0} + 
\left(  \widetilde \Theta_{m_1,m_0}^{-1} \| S_\LL  \Pi \|_{E_1  \to \widetilde X_0} *  \widetilde \Theta_{m_1,m_0}^{-1} \| \AA S_\BB \|_{Z_1  \to E_1} \right).
\ea
$$
Thanks to Lemma~\ref{lem:AA}, Lemma~\ref{lem:SBreg}, and Proposition~\ref{prop:decay-small}, we deduce
$$
\ba
& t \mapsto \widetilde \Theta_{m_1,m_0}^{-1}(t) \, \| S_\BB (t) \Pi\|_{Z_1 \to \widetilde X_0} \in L^\infty (\R_+), \\
& t \mapsto \widetilde \Theta_{m_1,m_0}^{-1}(t) \, \| \Pi S_\LL(t) \|_{E_1 \to E_0 \to \widetilde X_0}  \in L^1(\R_+), \\
& t \mapsto \widetilde \Theta_{m_1,m_0}^{-1}(t) \, \| \AA S_\BB (t)\|_{Z_1 \to \widetilde X_0 \to E_1} \in L^\infty (\R_+),
\ea
$$
which implies \eqref{eq:LLregH-1}. 
\end{proof}

\subsection{Weak dissipativity of $\LL$}
As a final step, we establish that $\LL$ is weakly dissipative in some appropriate spaces. 
In order to do that, we define the spaces 
\beqn\label{eq:defXYZ}
X := L^2(m), \quad Y := H^1_{*}(m), \quad  Z := H^{-1}_{*}(m), \quad
X_0 := L^2,
\eeqn
where we recall that $H^1_{*}(m)$ and $H^{-1}_{*}(m)$ have been introduced in \eqref{def:H1*} and \eqref{def:H-1*}. 
For any $\eta > 0$, we also define the norm $\Nt \cdot \Nt_X$  on $\Pi X$  by 
\be\label{eq:NormT-X}
\Nt f \Nt_{X}^2 := \eta \| f \|_{X}^2 + \int_0^\infty \| S_{\LL} (\tau) f \|_{X_0}^2 \, d\tau,
\ee
and we denote by $\la\!\la \cdot, \cdot \ra\!\ra_X$ the associated duality product.

\begin{prop}\label{prop:Nt}
Let $m$ be an admissible weight function such that $ m \succ \la v \ra^{\frac{3}{2}} $. The norm $\Nt \cdot \Nt_X$ is equivalent to $\| \cdot \|_X$  on $\Pi X$ , and, moreover, there exists $\eta>0$ small enough such that
\be\label{eq:LLdissip}
\frac{d}{dt} \Nt S_\LL(t) f \Nt_{X}^2 \lesssim -  \| S_\LL(t) f \|_{Y}^2, \quad \forall \, f \in \Pi X.
\ee

\end{prop}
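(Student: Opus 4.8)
The plan is to establish the norm equivalence first and then the dissipativity estimate \eqref{eq:LLdissip}, both relying on the decay and regularity estimates for $S_\LL$ from Theorem~\ref{thm:S_L} and on the weak coercivity \eqref{eq:LLsg} together with the splitting $\LL = \AA + \BB$ and the dissipativity \eqref{eq:BL2} of $\BB$. For the equivalence, the upper bound $\Nt f \Nt_X^2 \lesssim \| f \|_X^2$ is immediate from \eqref{eq:LLdecayL2}: applying it with $m_0 = \langle v \rangle^{0} = 1$ (more precisely interpolating through an intermediate weight, using $m \succ \la v \ra^{3/2}$ so that $L^2(m) \hookrightarrow L^2(\langle v \rangle^{\gamma+3})\hookrightarrow \dots$) gives $\int_0^\infty \| S_\LL(\tau) f \|_{X_0}^2 \, d\tau \lesssim \| f \|_X^2$ since $\Theta_{m,1}(t) \in L^2(\R_+)$ for the relevant weights. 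For the reverse bound $\| f \|_X^2 \lesssim \Nt f \Nt_X^2$, the term $\eta \| f \|_X^2$ already dominates, so the equivalence holds for any $\eta > 0$; the point of keeping the integral term is that it supplies extra dissipation.

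The core is \eqref{eq:LLdissip}. Write $f_t := S_\LL(t) f$ and differentiate:
\begin{equation*}
\frac{d}{dt} \Nt f_t \Nt_X^2 = 2\eta \, \langle \LL f_t, f_t \rangle_X + \frac{d}{dt} \int_0^\infty \| S_\LL(\tau) f_t \|_{X_0}^2 \, d\tau = 2\eta \, \langle \LL f_t, f_t \rangle_X - \| f_t \|_{X_0}^2 ,
\end{equation*}
using the semigroup property $S_\LL(\tau) f_t = S_\LL(\tau + t) f$ and a change of variables to get $\frac{d}{dt}\int_0^\infty \| S_\LL(\tau+t) f\|_{X_0}^2\,d\tau = -\|f_t\|_{X_0}^2$. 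For the first term I would use the decomposition $\LL = \AA + \BB$: the $\BB$ part is controlled by \eqref{eq:BL2}, which (for $m \prec \mu^{-1/2}$, and for the borderline/larger weights by \eqref{eq:BL2bis}) gives $\langle \BB f_t, f_t \rangle_X \lesssim - \| f_t \|_Y^2$ after recognizing the right-hand side of \eqref{eq:BL2} as $\| f_t\|_{H^1_*(m)}^2 = \|f_t\|_Y^2$ up to constants. The $\AA$ part, by Lemma~\ref{lem:AA}, satisfies $|\langle \AA f_t, f_t \rangle_X| \lesssim \| f_t \|_{L^2} \, \| f_t \|_{L^2(\mu^{-\theta})} \lesssim \| f_t \|_{X_0} \, \| f_t \|_X$ (choosing $\theta$ so that $\mu^{-\theta}\succeq m$, possible since $m \preceq \mu^{-1/2}$... in general $\AA$ gains arbitrary decay so this is fine); bounding this by $\frac{1}{2}\|f_t\|_{X_0}^2 + C\eta\|f_t\|_X^2$ via Young. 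Putting these together,
\begin{equation*}
\frac{d}{dt} \Nt f_t \Nt_X^2 \lesssim -2\eta \lambda \| f_t \|_Y^2 + 2\eta C \| f_t \|_{X_0}^2 - \| f_t \|_{X_0}^2 + (\text{lower order }\eta^2\|f_t\|_X^2),
\end{equation*}
and for $\eta$ small enough the $-\|f_t\|_{X_0}^2$ absorbs the $2\eta C\|f_t\|_{X_0}^2$, while $\|f_t\|_X^2 \lesssim \|f_t\|_Y^2$ (since $Y = H^1_*(m)$ embeds... actually it is the other way: $\|f\|_Y$ is weaker, controlling only weighted-by-$\la v\ra^{\gamma+\sigma}$ mass, so this absorption needs more care — see below), leaving $\lesssim -\eta\lambda\|f_t\|_Y^2$.

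The main obstacle is precisely the last absorption: $\| f_t \|_Y$ is a \emph{degenerate} norm — it only controls $\| \la v\ra^{(\gamma+\sigma)/2} f_t\|_{L^2(m)}$ in the zeroth-order part, which is weaker than $\| f_t\|_X$ because $\gamma + \sigma < 0$. So one cannot directly absorb an $\eta \| f_t\|_X^2$ term into $-\|f_t\|_Y^2$. The resolution — and the reason the integral term in \eqref{eq:NormT-X} is present at all — is that the $-\|f_t\|_{X_0}^2$ term from differentiating the integral, combined with the weak coercivity \eqref{eq:LLsg} lifted to $X$, must be used to close the gap: one needs an interpolation inequality of the form $\|f_t\|_X^2 \lesssim \|f_t\|_Y^{2\alpha}\,\|f_t\|_{L^2(m')}^{2(1-\alpha)}$ with a heavier weight $m'$, or more simply to observe that the genuine smallness of $\eta$ together with the \emph{uniform bound} $\sup_t\|f_t\|_X \lesssim \|f\|_X$ from Lemma~\ref{lem:SLinftyBIS} makes the $\eta^2$-terms harmless. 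I expect the cleanest route is: bound $2\eta\langle\AA f_t,f_t\rangle_X \le \frac12\|f_t\|_{X_0}^2 + C\eta^2\|f_t\|_X^2$, note $C\eta^2\|f_t\|_X^2 \le C\eta^2\|f_t\|_{L^2(m\la v\ra^{(\gamma+\sigma)/2})}\cdot\|f_t\|_{L^2(m\la v\ra^{-(\gamma+\sigma)/2})}\le C'\eta^2\|f_t\|_Y\|f_t\|_X$, then use $\sup_t\|f_t\|_X\lesssim\|f\|_X$ is not quite enough pointwise — so instead absorb into $\eta\lambda\|f_t\|_Y^2$ after one more Young step, which works once $\eta$ is small relative to $\lambda$. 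This bookkeeping with the degenerate norm is the only delicate point; everything else is assembling Lemmas~\ref{lem:AA}, \ref{lem:BB}, \ref{lem:SLinftyBIS} and Theorem~\ref{thm:S_L}.
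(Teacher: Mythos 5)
Your overall architecture is the paper's: the norm equivalence follows from Theorem~\ref{thm:S_L} and square-integrability of $\Theta$ under $m \succ \la v \ra^{3/2}$; differentiating the integral part of $\Nt\cdot\Nt_X$ produces exactly $-\|f_t\|_{X_0}^2$ (your change-of-variables is the same computation as the paper's, which integrates $\frac{d}{d\tau}\|S_\LL(\tau)f_t\|_{X_0}^2$ and uses the decay to kill the boundary term at infinity); and the $\BB$-part is handled by Lemma~\ref{lem:BB} in both cases. The genuine gap is your treatment of the $\AA$-term. You only establish $|\la \AA f_t, f_t\ra_X| \lesssim \|f_t\|_{X_0}\,\|f_t\|_X$, and after Young this leaves a term $C\eta^2\|f_t\|_X^2$ which, as you yourself observe, cannot be absorbed by $-\|f_t\|_Y^2$ (the $Y$-norm is degenerate, with weight $m\la v\ra^{(\gamma+\sigma)/2}\prec m$) nor by $-\|f_t\|_{X_0}^2$ (no weight at all). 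None of your proposed repairs closes this: the interpolation $\|f\|_X^2\lesssim \|f\|_Y^{2\alpha}\|f\|_{L^2(m')}^{2(1-\alpha)}$ requires a heavier weight $m'\succ m$ whose norm is not controlled; the inequality $\|f_t\|_{L^2(m\la v\ra^{-(\gamma+\sigma)/2})}\lesssim \|f_t\|_X$ that you invoke is false (that weight grows, so this norm is strictly stronger than $\|\cdot\|_X$); and the final Young step against $\eta\lambda\|f_t\|_Y^2$ simply regenerates an $\|f_t\|_X^2$ term, so the argument is circular. The uniform bound of Lemma~\ref{lem:SLinftyBIS} cannot rescue a pointwise-in-time differential inequality either, as you note.

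The missing ingredient, and the point of Lemma~\ref{lem:AA} in the paper's proof, is the stronger estimate $\la \AA f, f\ra_X \lesssim \|f\|_{X_0}^2$, with no $\|f\|_X$ on the right: since $\AA=\AA_0+M\chi_R$ and $\AA_0 f=(a_{ij}*f)\partial_{ij}\mu-(c*f)\mu$ carries Gaussian factors, $\AA$ maps $L^2$ into $L^2(\mu^{-\theta})$, so pairing $\AA f$ against $f m^2$ costs only the unweighted norm of $f$ whenever $m^2\mu^\theta$ is bounded for some $\theta<1$ (your own Cauchy--Schwarz already gives this if you put both powers of $m$ on the $\AA f$ factor; only the borderline weights $m=e^{\kappa\la v\ra^2}$ with $\kappa\ge 1/4$ need an extra twist, e.g.\ splitting large and small velocities to get $\la\AA f,f\ra_X\le \eps\|f\|_Y^2+C_\eps\|f\|_{X_0}^2$, using $\mu m^2\ll \la v\ra^{\gamma+\sigma}m^2$). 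With this bound the conclusion is immediate: the $\eta C\|f_t\|_{X_0}^2$ (plus the harmless $\eps\|f_t\|_Y^2$ piece) is absorbed by the $-\|f_t\|_{X_0}^2$ coming from the integral part of the norm and by the $\BB$-dissipation once $\eta$ and $\eps$ are small, and no unabsorbable $\|f_t\|_X^2$ term ever appears. As written, your proof does not reach \eqref{eq:LLdissip}.
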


\begin{proof}
We easily observe that, thanks to Theorem~\ref{thm:S_L},
$$
\int_0^\infty \| S_{\LL} (\tau) f \|_{X_0}^2 \, d\tau \lesssim \| f \|_X^2 \int_0^\infty \Theta^2(\tau) \, d\tau,
$$
for some decay function $\Theta \in L^2(\R_+)$ under the condition $m \succ \la v \ra^{3/2}$, thus $\Nt \cdot \Nt_X$ is equivalent to $\| \cdot \|_X$  on $\Pi X$ .
Now denote $f_\LL(t) = S_\LL(t) f_0$, $f_0 \in \Pi X$, so that $f_\LL(t) \in  \Pi X$ for any $t \ge 0$, recall that $\LL = \AA + \BB$ and write
$$
\frac12 \frac{d}{dt} \Nt f_\LL(t) \Nt_{X}^2 = \eta \la \BB f_\LL(t) , f_\LL(t) \ra_{X}
+ \eta \la \AA f_\LL(t) , f_\LL(t) \ra_{X} +  \frac12  \int_0^\infty \frac{d}{d\tau} \| S_\LL(\tau) f_\LL(t) \|_{X_0}^2 \, d\tau.
$$
Thanks to Lemma~\ref{lem:BB} and Lemma~\ref{lem:AA}, we have
$$
\eta \la \BB f_\LL(t) , f_\LL(t) \ra_{X} \le - \eta K' \| f_\LL(t) \|_{Y}^2, \quad
\eta \la \AA f_\LL(t) , f_\LL(t) \ra_{X} \le \eta C \| f_{\LL}(t) \|_{X_0}^2.
$$
Moreover, for the last term, we have
$$
\int_0^\infty \frac{d}{d\tau} \| S_\LL(\tau) f_\LL(t) \|_{X_0}^2 \, d\tau
= \lim_{\tau \to \infty} \| S_\LL(\tau) f_\LL(t) \|_{X_0}^2 - \|  f_\LL(t) \|_{X_0}^2
= - \|  f_\LL(t) \|_{X_0}^2,
$$
where we have used 
$$
\forall \, t \ge 0, \quad 
\| S_\LL(\tau) f_\LL(t) \|_{X_0} \le C \Theta_{m} (\tau) \| f_0 \|_{X} 
\quad \text{with}\quad
\lim_{\tau \to\infty} \Theta_{m} (\tau)=0,
$$
thanks to Lemma~\ref{lem:SLinftyBIS} and Theorem~\ref{thm:S_L}. We conclude the proof of \eqref{eq:LLdissip} gathering previous estimates and taking $\eta>0$ small enough. 
\end{proof}


\subsection{Summarizing the decay and dissipativity estimates}

We summarize the set of information we have established in this section and that we will use in order to get our main existence, uniqueness and stability result for the nonlinear equation in Section~\ref{sec:stabNL} (in the spatially homogeneous case). 
Consider the spaces defined in \eqref{eq:defXYZ}.

\begin{cor}\label{cor:S_L-homBIS} 
Consider an admissible weight function $m$ such that $m \succ \langle v \rangle^{2+3/2}$. With the above assumptions and notation, there exists $\eta > 0$ such that  the norm $\Nt \cdot \Nt_X$ defined in \eqref{eq:NormT-X}
is equivalent to the initial norm on $ \Pi X$ and 
\bear
\label{eq:LLdissipL2-homBIS}
&&\la\!\la \LL \Pi f  , \Pi f  \ra\!\ra_X \lesssim -  \|  \Pi f \|_{Y}^2, \quad \forall\, f \in X^\LL_1, 
\\
\label{eq:LLregH1-homBIS}
&&    
t \mapsto \| S_{\LL}(t) \Pi \|_{Y \to X_0} \, \| S_{\LL}(t) \Pi   \|_{Z \to X_0} \in L^1 (\R_+),
\eear
 where we recall that $X^\LL_1$ is the domain of $\LL$ when acting on $X$.

\end{cor}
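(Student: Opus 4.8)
The plan is to recognize that Corollary~\ref{cor:S_L-homBIS} is essentially a repackaging of Proposition~\ref{prop:Nt} and Theorem~\ref{thm:S_L}, so the proof consists of checking that the hypotheses of those two statements are met under the assumption $m \succ \langle v \rangle^{2+3/2}$ and then reading off the two displayed conclusions. First I would note that $m \succ \langle v \rangle^{2+3/2}$ certainly implies $m \succ \langle v \rangle^{3/2}$ and $m \succ \langle v \rangle^{(\gamma+3)/2}$ (since $\gamma < 0$), so Proposition~\ref{prop:Nt} applies: $\Nt\cdot\Nt_X$ is an equivalent norm on $\Pi X$ and $\frac{d}{dt}\Nt S_\LL(t) f \Nt_X^2 \lesssim -\|S_\LL(t)f\|_Y^2$. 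Evaluating this differential inequality at $t=0$ and using that $\LL$ generates the semigroup $S_\LL$ on $X$ gives, for $f \in X^\LL_1 \cap \Pi X$, the dissipativity estimate $\la\!\la \LL f, f\ra\!\ra_X \lesssim -\|f\|_Y^2$; writing $\Pi f$ in place of $f$ and using $\Pi S_\LL = S_\LL \Pi$ (so that $\Pi X$ is invariant and $\LL$ commutes with $\Pi$ on the relevant domain) yields \eqref{eq:LLdissipL2-homBIS} exactly as stated.

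For \eqref{eq:LLregH1-homBIS}, the plan is to apply Theorem~\ref{thm:S_L} with a suitable pair of polynomial weights sitting between $\langle v\rangle^{3/2}$ and $m$. Concretely, I would pick an admissible polynomial weight $m_1$ with $\langle v\rangle^{3/2} \prec m_1 \prec m$ and apply \eqref{eq:LLdecayL2} (or rather its $Y$-valued refinement) together with \eqref{eq:LLregH-1}: from the structure of the spaces $Y = H^1_*(m)$, $Z = H^{-1}_*(m)$ and $X_0 = L^2$, one gets $\|S_\LL(t)\Pi\|_{Y\to X_0} \lesssim \Theta_{m,m_1}(t)$ (using that $Y = H^1_*(m) \hookrightarrow L^2(m\langle v\rangle^{(\gamma+\sigma)/2})$ which embeds into a reference space controlled by $S_\LL$-decay) and $\|S_\LL(t)\Pi\|_{Z\to X_0} \lesssim \Theta_{m,m_1}(t)/(t^{1/2}\wedge 1)$ from \eqref{eq:LLregH-1}. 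The product is then bounded by $\Theta_{m,m_1}(t)^2/(t^{1/2}\wedge 1)$; the factor $(t^{1/2}\wedge 1)^{-1}$ is integrable near $0$, and for $m \succ \langle v\rangle^{2+3/2}$ one can choose $m_1$ so that $\Theta_{m,m_1}^2$ decays fast enough at infinity (in the polynomial case the rate is $\langle t\rangle^{-2(k-k_1)/|\gamma|}$ with $k - k_1$ as close to the gap $k - 3/2 > 2$ as desired, giving an exponent strictly below $-1$; in the exponential/stretched-exponential cases it is integrable trivially). Hence the product lies in $L^1(\R_+)$.

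The main obstacle I expect is bookkeeping the weight exponents so that $\Theta_{m,m_1}^2/(t^{1/2}\wedge1) \in L^1(\R_+)$: one needs $m \succ \langle v\rangle^{2+3/2}$ precisely because in the slowest (polynomial) case the decay rate of $\Theta_{m,m_1}$ is $\langle t\rangle^{-(k-k_*)/|\gamma|}$, and to make its square integrable after dividing by $t^{1/2}$ near the origin one wants $2(k-k_*)/|\gamma| > 1$ for some admissible $k_* \in (3/2, k)$, which forces $k - 3/2 > |\gamma|/2$; since $|\gamma| \le 3$ this is comfortably implied by $k > 2 + 3/2$ when $\gamma = -3$, and for $\gamma \in (-3,-2)$ it is even easier — but one should state this carefully and possibly invoke, as in \eqref{eq:introSLambdaRegXX'*}, that the threshold $2 + 3/2$ is exactly what guarantees $(t^{1/2}\wedge1)^{-1}\Theta_{m,m_1}(t)^2 \in L^1$. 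The rest — the embeddings $H^1_*(m\langle v\rangle^{|\gamma|/2}) \subset H^1(m) \subset H^1_*(m)$ and $\|f\|_{H^{-1}_*(m)} = \|mf\|_{H^{-1}_*}$, already recorded just after \eqref{def:H-1*} — is routine, and the commutation $\Pi S_\LL = S_\LL\Pi$ (valid since $\Pi$ is the spectral projection onto $\ker\LL$ and $S_\LL$ preserves both $\ker\LL$ and its orthogonal complement, cf. the discussion preceding \eqref{eq:LLsg}) needs only to be invoked, not reproved. I would close by noting that \eqref{eq:LLdissipL2-homBIS} and \eqref{eq:LLregH1-homBIS} are exactly the two ingredients fed into the nonlinear analysis of Section~\ref{sec:stabNL}.
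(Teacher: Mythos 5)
Your proposal is correct and follows essentially the same route as the paper: \eqref{eq:LLdissipL2-homBIS} is obtained by reading Proposition~\ref{prop:Nt} as the identity $\frac12\frac{d}{dt}\Nt S_\LL(t)\Pi f\Nt_X^2\big|_{t=0}=\la\!\la\LL\Pi f,\Pi f\ra\!\ra_X$, and \eqref{eq:LLregH1-homBIS} by inserting intermediate admissible polynomial weights into \eqref{eq:LLdecayL2} and \eqref{eq:LLregH-1} and checking that $\la t\ra^{-2(k-3/2)/|\gamma|}(t\wedge1)^{-1/2}\in L^1(\R_+)$ when $k>2+3/2$, exactly the paper's bookkeeping. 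The only cosmetic looseness is writing the two factors with the rate $\Theta_{m,m_1}$ even when $m$ is exponential (where \eqref{eq:LLregH-1} only gives the rate attached to a polynomial weight below $m$), but since you choose polynomial intermediate weights and only need integrability, this matches the paper's own remark and does not affect the argument.
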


 It is worth observing again that the polynomial decay rate \eqref{eq:LLregH-1} in Theorem~\ref{thm:S_L} has been established in polynomial weighted { Sobolev} spaces and thus  immediately extends with same decay rate to exponential weighted { Sobolev} spaces. That remark is used in the proof of the second estimate in \eqref{eq:LLregH1-homBIS} which is valid for any (polynomial or not) admissible weight function.


\begin{proof}  Using the identity 
$$
  \frac12   \frac{d}{dt} \Nt S_\LL(t) \Pi f \Nt_X^2 = \la\!\la \LL \Pi f , \Pi f \ra\!\ra_X,
$$
we see that  estimate \eqref{eq:LLdissipL2-homBIS} is just a reformulation of \eqref{eq:LLdissip} in Proposition~\ref{prop:Nt}.
  
We now prove estimate \eqref{eq:LLregH1-homBIS}. We fix admissible polynomial weight functions $m_0$ and $m_1$ such that $\la v \ra^{(\gamma+3)/2}\prec m_0 \prec m_1 \preceq \la v \ra^{\gamma/2} m$. Then estimate \eqref{eq:LLdecayL2} in Theorem~\ref{thm:S_L} and the embeddings $L^2(m_0) \subset X_0$ and $Y \subset L^2(m_1)$ imply 
$$
\| S_{\LL}(t) \Pi \|_{Y \to X_0} \lesssim \Theta_{m_1,m_0}(t), \quad \forall t \ge 0.
$$
Now consider admissible polynomial weight functions $m'_0$ and $m'_1$ so that $\la v \ra^{3/2}\prec m'_0 \prec m'_1 \preceq  m$. Thanks to estimate \eqref{eq:LLregH-1} in Theorem~\ref{thm:S_L} together with the embeddings $L^2(m'_0 \la v \ra^{\gamma/2}) \subset X_0$ and $Z \subset H^{-1}_*(m'_1)$, we obtain
$$
\| S_{\LL}(t) \Pi  f \|_{Z \to X_0} \lesssim \frac{\Theta_{m'_1, m'_0}(t)}{t^{1/2} \wedge 1}, 
\quad \forall \, t >0.
$$
 We finally obtain \eqref{eq:LLregH1-homBIS} by observing that $t \mapsto \la t \ra^{-(2k-3)/|\gamma|} \, (t \wedge 1)^{-1/2} \in L^1(\R_+)$ for any $k > 2+3/2 $ and that we may thus choose $m_0,m_1, m'_0$ and $m'_1$ adequately in such a way that $ t \mapsto \Theta_{m_1,m_0}(t) \,  \Theta_{m'_1, m'_0}(t) \, (t \wedge 1)^{-1/2} \in L^1(\R_+)$.  
%
%
\end{proof}


\section{Nonlinear estimates}\label{sec:nonL}

In this section, we present some estimates on the nonlinear Landau operator $Q$. We start with two auxiliary results. 

\begin{lem}$($\cite[Lemma 3.2]{CTW}$)$\label{lem:Aalpha}
Let $-3<\alpha<0$ and $\theta>3$. Then
$$
A_\alpha(v) := \int_{\R^3} |v-v_*|^\alpha \, \la v_* \ra^{-\theta} \, dv_* \lesssim \la v \ra^\alpha.
$$
\end{lem}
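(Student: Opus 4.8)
The claim is the elementary estimate
$$
A_\alpha(v) = \int_{\R^3} |v-v_*|^\alpha \, \la v_* \ra^{-\theta} \, dv_* \lesssim \la v \ra^\alpha
$$
for $-3<\alpha<0$ and $\theta>3$. Since this is cited from \cite[Lemma 3.2]{CTW}, I would either invoke that reference directly or give the following short self-contained argument, which is the standard way to handle a convolution of a (locally integrable) singularity $|z|^\alpha$ against a fast-decaying weight. The plan is to split the integral into three regions according to the size of $|v-v_*|$ relative to $|v|$: a near-singularity zone $|v-v_*| \le \la v \ra/2$, a far zone $|v-v_*| \ge 2\la v \ra$, and an intermediate zone. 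For $|v|$ bounded the inequality is trivial since $A_\alpha$ is continuous and finite (the singularity $|z|^\alpha$ is locally integrable in $\R^3$ because $\alpha>-3$, and the Gaussian-type tail of $\la v_* \ra^{-\theta}$ controls the integral at infinity), so I would assume $|v| \ge 1$ throughout and treat $\la v \ra \sim |v|$.

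\emph{Near zone $|v-v_*| \le \la v \ra /2$.} Here $|v_*| \ge |v| - |v-v_*| \ge \la v \ra /2$ up to constants, so $\la v_* \ra^{-\theta} \lesssim \la v \ra^{-\theta}$, and the contribution is bounded by
$$
\la v \ra^{-\theta} \int_{|v-v_*| \le \la v \ra /2} |v-v_*|^\alpha \, dv_* \lesssim \la v \ra^{-\theta} \, \la v \ra^{\alpha+3} = \la v \ra^{\alpha+3-\theta} \lesssim \la v \ra^\alpha,
$$
using $\int_{|z| \le R} |z|^\alpha \, dz \lesssim R^{\alpha+3}$ (valid since $\alpha>-3$) and $\theta > 3$.

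\emph{Far zone $|v-v_*| \ge 2\la v \ra$.} Here $|v-v_*| \ge 2\la v \ra \gtrsim 1$, so $|v-v_*|^\alpha \lesssim \la v \ra^\alpha$ because $\alpha<0$; since $\int \la v_* \ra^{-\theta} \, dv_* < \infty$ (as $\theta>3$), the contribution is $\lesssim \la v \ra^\alpha$. \emph{Intermediate zone $\la v \ra/2 \le |v-v_*| \le 2\la v \ra$.} Here $|v-v_*|^\alpha \sim \la v \ra^\alpha$, and we simply bound $\la v_* \ra^{-\theta} \le 1$ and integrate over a ball of radius $O(\la v \ra)$ to get $\lesssim \la v \ra^{\alpha} \cdot \la v \ra^3$; alternatively, and more cleanly, note $|v_*| \ge |v-v_*| - |v| \ge$ (or is comparable to) $\la v \ra$ on a large part, but the crisp way is to split once more: either $|v_*| \le \la v \ra/4$, forcing $|v-v_*| \ge 3\la v \ra/4$ and hence $|v-v_*|^\alpha \lesssim \la v \ra^\alpha$ with the $v_*$-integral of $\la v_* \ra^{-\theta}$ finite; or $|v_*| \ge \la v \ra /4$, whence $\la v_* \ra^{-\theta} \lesssim \la v \ra^{-\theta}$ and we are reduced to the near-zone computation, giving $\la v \ra^{\alpha+3-\theta} \lesssim \la v \ra^\alpha$. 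Summing the three contributions gives $A_\alpha(v) \lesssim \la v \ra^\alpha$ for $|v| \ge 1$, and combined with boundedness on $|v|\le 1$ this proves the lemma.

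\emph{Main obstacle.} There is no genuine obstacle here; the only point requiring care is the bookkeeping of the region decomposition so that in each piece one of the two factors is essentially constant (comparable to $\la v \ra^\alpha$ or to $\la v \ra^{-\theta}$) and the remaining integral is either a finite integral of $\la v_* \ra^{-\theta}$ or the scale-$\la v \ra$ integral of the locally integrable singularity $|z|^\alpha$. The hypotheses $\alpha > -3$ (local integrability of $|z|^\alpha$ near $0$) and $\theta > 3$ (integrability of $\la v_* \ra^{-\theta}$ at infinity, and $\la v \ra^{\alpha+3-\theta} \le \la v \ra^\alpha$) are used exactly once each and are both sharp for this scaling.
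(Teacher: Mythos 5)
Your argument is correct. Note that the paper itself gives no proof of this lemma: it is quoted verbatim from \cite[Lemma 3.2]{CTW}, so there is nothing internal to compare against; your region decomposition (near/far/intermediate relative to $\la v \ra$) is the standard self-contained argument and each step checks out, with the hypotheses $\alpha>-3$ and $\theta>3$ used exactly where you say. Two small remarks. First, the phrase ``Gaussian-type tail of $\la v_* \ra^{-\theta}$'' is a slip -- the weight decays only polynomially, and what you actually use (correctly) is its integrability on $\R^3$, i.e.\ $\theta>3$. Second, your intermediate zone is redundant: on all of $\{|v-v_*| \ge \la v \ra/2\}$ one has $|v-v_*|^\alpha \lesssim \la v \ra^\alpha$ (since $\alpha<0$) and $\int \la v_*\ra^{-\theta}\,dv_* \lesssim 1$, so a two-region split already suffices; your first attempt there (bounding $\la v_*\ra^{-\theta}\le 1$ and integrating over a ball of radius $O(\la v\ra)$, which only gives $\la v\ra^{\alpha+3}$) is indeed not good enough, but you correctly discard it in favor of the sub-splitting, whose only blemish is the constant ``$3\la v\ra/4$'' (one only gets $|v-v_*|\gtrsim \la v\ra$, which is all that is needed). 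These are cosmetic; the proof is sound.
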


\begin{lem}\label{lem:abc*f}
There holds 
\begin{enumerate}[(i)]

\item For any $3/(3+\gamma+2) < p \le \infty$ and $\theta > 2 + 3(1-1/p)$
$$
|(a_{ij}*f)(v) \, v_i v_j|
+ |(a_{ij}*f)(v) \, v_i |
+ |(a_{ij}*f)(v)  |   \lesssim \la v \ra^{\gamma+2} \, \| f \|_{L^p (\la v \ra^{\theta})}.
$$

\item For any $3/(3+\gamma+1) < p \le \infty$ and any $\theta'>3(1-1/p)$
$$
|(b_{j}*f)(v) |  \lesssim \la v \ra^{\gamma+1} \, \| f \|_{L^p(\la v \ra^{\theta'})}.
$$

\end{enumerate}

\end{lem}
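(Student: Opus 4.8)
**Proof strategy for Lemma~\ref{lem:abc*f}.**

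The plan is to estimate the convolutions $(a_{ij}*f)(v)$, $(b_j*f)(v)$ and the contracted quantities $(a_{ij}*f)(v)\,v_i$, $(a_{ij}*f)(v)\,v_iv_j$ by splitting the kernel into a singular near-diagonal part and a far part, exactly as in the proof of Lemma~\ref{lem:AA}. For part $(i)$, recall from \eqref{eq:aij} that $|a_{ij}(z)|\lesssim|z|^{\gamma+2}$ and that the contractions $a_{ij}(v-v_*)v_iv_j$, $a_{ij}(v-v_*)v_i$ are also $\lesssim |v-v_*|^{\gamma+2}$ (using $|z|^2-(z\cdot\hat v)^2\le |z|^2$ and the explicit form of $a$, one checks that contracting against $v$ does not worsen the homogeneity; more precisely $a_{ij}(z)v_iv_j = |z|^{\gamma+2}(|v|^2 - (z\cdot v)^2/|z|^2)\le |z|^{\gamma+2}|v|^2$ but a sharper bound using $z=v-v_*$ gives the $\la v\ra^{\gamma+2}$ homogeneity after the weighted convolution estimate below). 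So in all three cases it suffices to bound $\int |v-v_*|^{\gamma+2}|f(v_*)|\,dv_*$. First I would write this as $\int |v-v_*|^{\gamma+2}\la v_*\ra^{-\theta}\,(\la v_*\ra^{\theta}|f(v_*)|)\,dv_*$ and apply Hölder in the exponent $p$: the factor $\la v_*\ra^{\theta}|f|$ sits in $L^p$ with norm $\|f\|_{L^p(\la v\ra^\theta)}$, and one is left with $\big(\int |v-v_*|^{(\gamma+2)p'}\la v_*\ra^{-\theta p'}\,dv_*\big)^{1/p'}$. Local integrability of $|v-v_*|^{(\gamma+2)p'}$ near $v_*=v$ requires $(\gamma+2)p'>-3$, i.e. $p'<3/|\gamma+2|$ equivalently $p>3/(3+\gamma+2)$, which is precisely the stated lower bound on $p$; decay at infinity requires $-\theta p' + (\gamma+2)p' < -3$, giving $\theta > (\gamma+2)+3/p' = \gamma+2+3(1-1/p)$, and since $\gamma+2<0$ the stated condition $\theta>2+3(1-1/p)$ is (more than) enough. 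Then Lemma~\ref{lem:Aalpha} (or a direct splitting into $|v-v_*|\le\la v\ra/2$ and $|v-v_*|>\la v\ra/2$) yields the $\la v\ra^{\gamma+2}$ decay of this integral, completing $(i)$.

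For part $(ii)$, I would argue identically starting from $|b_j(z)| = 2|z|^{\gamma}|z_j| \le 2|z|^{\gamma+1}$, which reduces matters to $\int |v-v_*|^{\gamma+1}|f(v_*)|\,dv_*$. Hölder against $\la v_*\ra^{\theta'}|f|\in L^p$ leaves $\big(\int |v-v_*|^{(\gamma+1)p'}\la v_*\ra^{-\theta' p'}\,dv_*\big)^{1/p'}$; local integrability needs $(\gamma+1)p'>-3$, i.e. $p>3/(3+\gamma+1)$, and integrability at infinity needs $\theta'>(\gamma+1)+3/p'=\gamma+1+3(1-1/p)$, which since $\gamma+1<0$ follows from $\theta'>3(1-1/p)$. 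Again invoking Lemma~\ref{lem:Aalpha} with exponent $\alpha=\gamma+1\in(-3,0)$ and weight $\theta'p'>3$ gives the $\la v\ra^{\gamma+1}$ bound.

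The only genuinely delicate point is the sharpness of the threshold on $p$ and the bookkeeping of the weight exponent when the Hölder conjugate $p'$ is close to the critical value $3/|\gamma+2|$ (resp. $3/|\gamma+1|$): there one must be careful that Lemma~\ref{lem:Aalpha} still applies, which requires $\theta' p' > 3$, i.e. the weight exponent in the hypothesis is chosen large enough relative to $p'$ — this is exactly what the conditions $\theta>2+3(1-1/p)$ and $\theta'>3(1-1/p)$ guarantee. The endpoint $p=\infty$ is the easiest case: then $|(a*f)(v)|\le\|f\|_{L^\infty(\la v\ra^\theta)}\int|v-v_*|^{\gamma+2}\la v_*\ra^{-\theta}\,dv_*$ with $\theta>2$, and Lemma~\ref{lem:Aalpha} (with $\theta>3$ when $\gamma+2>-3$, which holds for $\gamma>-5$, always true here) closes it; similarly for $b$. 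I expect no serious obstacle beyond this exponent arithmetic.
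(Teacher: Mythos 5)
Your part (ii) and your treatment of the plain term $(a_{ij}*f)(v)$ follow the paper's route exactly (H\"older in the convolution, then Lemma~\ref{lem:Aalpha} with $\alpha=(\gamma+2)p'$ or $(\gamma+1)p'$ and weight exponent $\theta p'>3$, resp.\ $\theta'p'>3$), and the exponent arithmetic there is correct.

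There is, however, a genuine gap in part (i) for the contracted quantities $(a_{ij}*f)(v)\,v_i$ and $(a_{ij}*f)(v)\,v_iv_j$. Your reduction ``in all three cases it suffices to bound $\int |v-v_*|^{\gamma+2}|f(v_*)|\,dv_*$'' is not valid: the trivial bound $a_{ij}(z)v_iv_j\le |z|^{\gamma+2}|v|^2$ leaves an uncancelled factor $|v|^2$ (resp.\ $|v|$), which would only give $\la v\ra^{\gamma+4}$ (resp.\ $\la v\ra^{\gamma+3}$), not the claimed $\la v\ra^{\gamma+2}$. The missing idea — which is the first line of the paper's proof — is that $z$ is in the kernel of the matrix $a(z)$ from \eqref{eq:aij}, i.e.\ $a_{ij}(z)z_j=0$, so with $z=v-v_*$ one may replace $v$ by $v_*$ in the contractions: $a_{ij}(v-v_*)v_i=a_{ij}(v-v_*)v_{*i}$ and $a_{ij}(v-v_*)v_iv_j=a_{ij}(v-v_*)v_{*i}v_{*j}$. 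One is then reduced to $\int |v-v_*|^{\gamma+2}\la v_*\ra^{2}|f(v_*)|\,dv_*$, and after H\"older against $\la v_*\ra^{\theta}|f|\in L^p$ the remaining weight is $\la v_*\ra^{-(\theta-2)p'}$, so Lemma~\ref{lem:Aalpha} requires $(\theta-2)p'>3$, i.e.\ exactly the hypothesis $\theta>2+3(1-1/p)$. You allude to ``a sharper bound using $z=v-v_*$'' but never state or use this null-direction identity, and your remark that the condition $\theta>2+3(1-1/p)$ is ``(more than) enough'' shows the extra $\la v_*\ra^2$ was not accounted for: with the correct reduction the $+2$ in the hypothesis is precisely what is consumed, not a margin of comfort. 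Inserting the identity $a_{ij}(v-v_*)v_iv_j=a_{ij}(v-v_*)v_{*i}v_{*j}$ and redoing the H\"older step with weight split $\la v_*\ra^{-( \theta-2)}\cdot\la v_*\ra^{\theta}|f|$ repairs the argument and makes it coincide with the paper's proof.
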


\begin{proof}
(i) Recall that $0$ is an eigenvalue of the matrix $a_{ij}(z)$ so that $a_{ij}(v-v_*) v_i = a_{ij}(v-v_*) v_{*i} $ and $a_{ij}(v-v_*) v_i v_j = a_{ij}(v-v_*) v_{*i} v_{*j}$. 
Thanks to Holder's inequality and using Lemma \ref{lem:Aalpha}, we obtain for any $3/(3+\gamma+2) < p \le \infty$ and any $\bar \theta > 3(1-1/p)$,
$$
\ba
| (a_{ij} * f)(v) \, v_i v_j | 
&= |\int_{v_*} a_{ij}(v-v_*) v_{*i} v_{*j} f_*| \\
&\lesssim \int_{v_*} |v-v_*|^{\gamma+2} \, \la v_* \ra^{-\bar \theta}   \, \la v_* \ra^{\bar\theta+2}  |f_*| \\
&\lesssim \left(\int_{v_*} |v-v_*|^{(\gamma+2)\frac{p}{p-1}} \, \la v_* \ra^{-\bar\theta \frac{p}{p-1}}  \right)^{(p-1)/p}   \| f \|_{L^p(\la v \ra^{\bar\theta+2})} \\ 
&\lesssim \la v \ra^{\gamma+2} \| f \|_{L^p(\la v \ra^{\bar\theta+2})}.
\ea
$$
We can get the estimates for $(a_{ij} * f)(v) \, v_i$ and $(a_{ij} * f)(v)$ in a similar way.
Remark that we can choose $p=2$ since $\gamma \in [-3,-2)$.

\smallskip

(ii) For the term $(b*f)$ we recall that $b_i(z) = -2 |z|^\gamma z_i$. Thanks to Holder's inequality and Lemma~\ref{lem:Aalpha}, we obtain for any $3/(3+\gamma+1) < p \le \infty$ and any $\theta'>3(1-1/p)$,
$$
\ba
| (b_{i} * f)(v) | 
&\lesssim \int_{v_*} |v-v_*|^{\gamma+1} \, \la v_* \ra^{-\theta'}   \, \la v_* \ra^{\theta'}  |f_*| \\
&\lesssim \left(\int_{v_*} |v-v_*|^{(\gamma+1)\frac{p}{p-1}} \, \la v_* \ra^{-\theta' \frac{p}{p-1}}  \right)^{(p-1)/p}   \| f \|_{L^p(\la v \ra^{\theta'})} \\ 
&\lesssim \la v \ra^{\gamma+1} \| f \|_{L^p(\la v \ra^{\theta'})}.
\ea
$$
Remark now that we have $3/(3+\gamma+1) \in (3/2,3]$, thus we can choose $p=4$ for any $\gamma \in [-3,-2)$.
\end{proof}

\smallskip

We establish our main estimate on the Landau collision operator. 

\begin{lem}\label{lem:Q}
Consider any admissible weight function $m \succeq 1$. Then, for any $\theta>2+3/2$ and $\theta'>9/4 $, there holds
\be\label{eq:Qfgh0}
\la Q(f,g), h \ra_{L^2(m)} \lesssim \Big( \| f \|_{L^2(\la v \ra^\theta)} \, \| g \|_{H^1_{*}(m)} + \| f \|_{H^1( \la v \ra^{\theta'})} \, \| g \|_{L^2(m)} \Big) \, \| h \|_{H^1_{*}(m)}.
\ee
\end{lem}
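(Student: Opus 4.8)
The strategy is to use the divergence form of the Landau operator and integrate by parts so that the second-order derivatives are distributed between $g$ and $h$, landing exactly on the anisotropic gradient structure encoded in $H^1_*(m)$. Recall from \eqref{eq:oplandau} that $Q(f,g) = \partial_i\{(a_{ij}*f)\partial_j g - (b_i*f)g\}$, so that, writing the weighted scalar product as $\la Q(f,g),h\ra_{L^2(m)} = \int \partial_i\{(a_{ij}*f)\partial_j g - (b_i*f) g\}\, h\, m^2$ and integrating by parts in $v_i$, we get
\be\label{eq:Qplan-IBP}
\la Q(f,g),h\ra_{L^2(m)} = -\int (a_{ij}*f)\,\partial_j g\,\partial_i(h m^2) + \int (b_i*f)\, g\,\partial_i(h m^2).
\ee
Expanding $\partial_i(hm^2) = m\,\partial_i(mh) + hm\,\partial_i m$ (equivalently $\partial_i(hm^2) = m^2\partial_i h + 2hm\,\partial_i m$), the first integral splits into a ``main term'' $-\int (a_{ij}*f)\,\partial_j g\,\partial_i(mh)\,m$ and a ``commutator term'' involving $\partial_i m/m$, and similarly for the $b$-term. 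The plan is to bound each piece using the pointwise estimates on $a_{ij}*f$ and $b_i*f$ from Lemma~\ref{lem:abc*f}, the coercivity/upper structure of $\bar a_{ij}$ (used here only through the fact that the convolution $a*f$ has the same anisotropic decay), and Cauchy–Schwarz.

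\textbf{Main term.} For the principal contribution $-\int (a_{ij}*f)\,\partial_j g\,\partial_i(mh)\,m$, the key point is that the matrix $a_{ij}*f$ inherits the eigenvalue structure of $a_{ij}$: it has the ``radial'' eigenvalue of size $\la v\ra^{\gamma}\|f\|_{L^2(\la v\ra^\theta)}$ and the ``transverse'' eigenvalue of size $\la v\ra^{\gamma+2}\|f\|_{L^2(\la v\ra^\theta)}$ (this follows from Lemma~\ref{lem:abc*f}(i), since $a_{ij}(v-v_*)v_i = a_{ij}(v-v_*)v_{*i}$ gives control on $P_v$-components and the trace bound controls the rest). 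Contracting against $\widetilde\nabla_v$-type gradients, one gets
\[
\Big|\int (a_{ij}*f)\,\partial_j g\,\partial_i(mh)\,m\Big| \lesssim \|f\|_{L^2(\la v\ra^\theta)}\,\big\|\la v\ra^{\gamma/2}\widetilde\nabla_v g\big\|_{L^2(m)}\,\big\|\la v\ra^{\gamma/2}\widetilde\nabla_v(mh)\big\|_{L^2},
\]
where I use that $a*f$ acts like $\la v\ra^\gamma$ on the $P_v$ part and $\la v\ra^{\gamma+2}$ on the $(I-P_v)$ part, which is precisely how $\widetilde\nabla_v$ is weighted. Both gradient factors are then absorbed into $\|g\|_{H^1_*(m)}\|h\|_{H^1_*(m)}$ (recalling $\|\la v\ra^{\gamma/2}\widetilde\nabla_v g\|_{L^2(m)}\lesssim \|g\|_{H^1(m)}\sim\|g\|_{L^2(m)}+\|\widetilde\nabla_v(mg)\|_{L^2(\cdots)}$ and that $H^1(m)\subset H^1_*(m)$, using \eqref{eq:Equiv-H1(m)}). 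This accounts for the $\|f\|_{L^2}\|g\|_{H^1_*(m)}\|h\|_{H^1_*(m)}$ part of \eqref{eq:Qfgh0}.

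\textbf{Lower-order and commutator terms.} The remaining pieces are: (a) the commutator term $-\int (a_{ij}*f)\,\partial_j g\,\partial_i m\, h\, m$; (b) the full $b$-term $\int (b_i*f)\,g\,\partial_i(hm^2)$. For (a), since $|\partial_i m/m|\lesssim \la v\ra^{\sigma-1}\lesssim \la v\ra^{1}$ and $|a_{ij}*f|\lesssim\la v\ra^{\gamma+2}\|f\|_{L^2(\la v\ra^\theta)}$, a crude Cauchy–Schwarz gives a bound by $\|f\|_{L^2(\la v\ra^\theta)}\|\la v\ra^{(\gamma+\sigma)/2}\nabla_v g\|_{L^2(m)}$ times $\|\la v\ra^{(\gamma+\sigma)/2}h\|_{L^2(m)}$ — but one must be careful that for exponential weights $\sigma=s$ can be as large as $2$; the cleaner route is to note $\partial_i m \cdot m = \tfrac12\partial_i(m^2)$ and integrate by parts once more to move the derivative off $g$, trading it for derivatives on $a*f$ (controlled by $\la v\ra^{\gamma+1}$, gaining decay) and on $h$, after which everything fits into $H^1_*$ norms with the $\la v\ra^{(\gamma+\sigma)/2}$ and $\la v\ra^{\gamma/2}$ weights that define $H^1_*(m)$. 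For (b), Lemma~\ref{lem:abc*f}(ii) gives $|b_i*f|\lesssim\la v\ra^{\gamma+1}\|f\|_{L^2(\la v\ra^{\theta'})}$ with $\theta'>9/4$ (this is where the $H^1(\la v\ra^{\theta'})$ norm of $f$ enters — one distributes the $\la v\ra$-weights so that $f$ needs the slightly stronger $H^1$ control), and then $\partial_i(hm^2)$ is handled as before, yielding a contribution $\lesssim \|f\|_{H^1(\la v\ra^{\theta'})}\|g\|_{L^2(m)}\|h\|_{H^1_*(m)}$. Collecting the main term and these lower-order terms gives \eqref{eq:Qfgh0}.

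\textbf{Main obstacle.} The delicate point is the bookkeeping of anisotropic weights in the main term: one must verify that the quadratic form $(a_{ij}*f)\,\partial_j g\,\partial_i(mh)$ is genuinely controlled by the \emph{anisotropic} gradients $\widetilde\nabla_v g$ and $\widetilde\nabla_v(mh)$ with only the weak weight $\la v\ra^{\gamma/2}$ (and not by the stronger isotropic $\la v\ra^{(\gamma+2)/2}\nabla_v$, which would be false), by carefully splitting into $P_v$ and $I-P_v$ components and matching the $\la v\ra^\gamma$ vs.\ $\la v\ra^{\gamma+2}$ eigenvalue sizes of $a*f$ against the corresponding weights in the definition \eqref{eq:def:anisoG} of $\widetilde\nabla_v$. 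A secondary technical nuisance is ensuring all the convolution estimates hold with the specific exponent $p=2$ for the $a$-term and $p=4$ for the $b$-term, which is exactly why the thresholds $\theta>2+3/2$ and $\theta'>9/4$ appear (cf.\ the remarks at the end of the proof of Lemma~\ref{lem:abc*f}); one also uses throughout that $m\succeq 1$ so that $\la v\ra^\theta$-weighted norms of $f$ are weaker than $m$-weighted ones only when needed, and that derivatives falling on $m$ produce at worst $\la v\ra^{\sigma-1}$ factors compatible with \eqref{eq:Equiv-H1(m)}.
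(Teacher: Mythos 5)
Your plan is in substance the paper's own proof: the paper also writes $Q$ in divergence form, integrates by parts once (after setting $G=mg$, $H=mh$), obtains exactly your main term plus the $\partial m/m$-commutator terms and the two $b$-terms ($A_1,\dots,A_4,B_1,B_2$ there), and closes each of them with Lemma~\ref{lem:abc*f} ($p=2$ for $a*f$, $p=4$ plus $H^1\subset L^4$ for $b*f$) and the $P_v/(I-P_v)$ splitting matched against the radial/transverse sizes of the contractions of $a*f$ — precisely the bookkeeping you identify as the main obstacle. Three repairs are needed in your write-up, all minor. First, absorbing $\|\la v\ra^{\gamma/2}m\,\widetilde\nabla_v g\|_{L^2}$ into $\|g\|_{H^1_*(m)}$ cannot be done via $H^1(m)\subset H^1_*(m)$: that inclusion gives $\|g\|_{H^1_*(m)}\lesssim\|g\|_{H^1(m)}$, the wrong direction. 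The correct one-line argument is $m\widetilde\nabla_v g=\widetilde\nabla_v(mg)-g\,\nabla_v m$ (since $\nabla_v m$ is radial) together with $|\nabla_v m|\lesssim\la v\ra^{\sigma-1}m\le\la v\ra^{\sigma/2}m$; the paper sidesteps the issue entirely by differentiating $mg$ and $mh$ from the start, which makes the main term symmetric in the $H^1_*$-quantities. Second, your fallback "crude Cauchy--Schwarz" for the commutator term (a) does not close even for polynomial $m$: it produces the weight $\la v\ra^{\gamma+1}$ on $|\nabla_v g|\,|h|\,m^2$, one power more than $H^1_*(m)\times H^1_*(m)$ can absorb, so the radial structure of $\partial_i m\propto v_i\la v\ra^{\sigma-2}m$ must be exploited there too (this is the paper's treatment of $A_2$--$A_4$). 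Your alternative second integration by parts is viable only if taken in the index carried by $\nabla g$ (otherwise it creates $\partial_{ij}g$, which $H^1_*$ does not control), and it then leaves a term of the same anisotropic type with $g$ and $h$ exchanged, so it does not actually avoid the splitting. Third, the bound $|b_i*f|\lesssim\la v\ra^{\gamma+1}\|f\|_{L^2(\la v\ra^{\theta'})}$ is false for $\gamma\in[-3,-5/2]$; as you say elsewhere, one must take $p=4$ in Lemma~\ref{lem:abc*f}(ii) and use $H^1(\la v\ra^{\theta'})\subset L^4(\la v\ra^{\theta'})$, so the "$L^2$" there is a slip.
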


\begin{proof}
Let us denote $G = mg$ and $H = mh$. We write
$$
\ba
\la Q(f,g) , h \ra_{L^2(m)} 
&= \int \partial_j \{ (a_{ij} * f) \partial_{i} g  - (b_j*f) g \} \, h \, m^2 \\
&= \int \partial_j  \{ (a_{ij} * f) \partial_{i} (m^{-1} G) \} \, H \, m
- \int \partial_j \{ (b_j*f) \, m^{-1} G \} \, H \, m =: A + B.
\ea
$$
Performing an integration by parts and developing terms, we easily get $A = A_1 + A_2+A_3 + A_4$ and $B = B_1 + B_2$, with
$$
A_1 := -\int (a_{ij} * f) \, \partial_i G \, \partial_j H, \quad
A_2 := -\int (a_{ij} * f) \, \frac{\partial_j m}{m} \, \partial_i G \, H,
$$
$$
A_3 := \int (a_{ij} * f) \, \frac{\partial_i m}{m} \,  G \,\partial_j H, \quad
A_4 := \int (a_{ij} * f) \,  \frac{\partial_i m}{m} \, \frac{\partial_j m}{m} \, G \, H,
$$
$$
B_1 := \int (b_j * f)  \, G \, \partial_j H, \quad
B_2 := \int (b_j * f) \, \frac{\partial_j m}{m} \, G \, H. 
$$
We then estimate each term separately.

\medskip\noindent
{\it Step 1. Term $A_1$.} 
We only consider the case $|v|>1$, since the estimate for $|v|\le 1$ is evident. 
We decompose $\partial_i G = P_v \partial_i G + (I-P_v) \partial_i G =: \partial_i^{\|} G + \partial_i^{\perp} G$, and similarly for $\partial_j H = \partial_j^{\|} H +  \partial_j^{\perp} H$. We  write
$$
\ba
{ A_1^+ } &:= \int_{|v|>1} (a_{ij}*f) \, \{  \partial_i^{\|} G \,  \partial_j^{\|} H +  \partial_i^{\|} G \,  \partial_j^{\perp} H +  \partial_i^{\perp} G \,  \partial_j^{\|} H
 +  \partial_i^{\perp} G \, \partial_j^{\perp} H \} \\
 &=: T_{1} + T_{2} + T_{3} + T_{4}.
\ea
$$
Using Lemma \ref{lem:abc*f}-(i) with $p=2$, for any $\theta>2+3/2$, we have
$$
\ba
T_{1} 
& = \int_{|v|>1} (a_{ij}*f) v_i v_j \, \frac{(v \cdot \nabla_v G)}{|v|^2} \, \frac{(v \cdot \nabla_v H)}{|v|^2}  \\
&\lesssim \| f \|_{L^2(\la v \ra^\theta)} \int_{|v|>1} \la v \ra^{\gamma+2}  |v|^{-2}\, |\nabla_v G| \, |\nabla_v H|   \\
&\lesssim \| f \|_{L^2(\la v \ra^\theta)}  \, \| \la v \ra^{\frac{\gamma}{2}} \nabla_v (mg) \|_{L^2}
\, \| \la v \ra^{\frac{\gamma}{2}} \nabla_v (m h) \|_{L^2}.
\ea
$$
On the other hand, we have 
$$
\ba
T_{2} 
& = \int_{|v|>1} (a_{ij}*f) v_i  \, \frac{(v \cdot \nabla_v G)}{|v|^2} \, \partial_j^{\perp} h\\
&\lesssim \| f \|_{L^2(\la v \ra^\theta)} \int_{|v|>1} \la v \ra^{\gamma+2}  |v|^{-1}\, |\nabla_v G| \, |\nabla_v^{\perp} H|   \\
&\lesssim \| f \|_{L^2(\la v \ra^\theta)} \, \| \la v \ra^{\frac{\gamma}{2}} \nabla_v (mg) \|_{L^2}
\, \| \la v \ra^{\frac{\gamma+2}{2}} \nabla_v^{\perp} (m h) \|_{L^2},
\ea
$$
and similarly
$$
\ba
T_{3} 
&\lesssim \| f \|_{L^2(\la v \ra^\theta)}  \, \| \la v \ra^{\frac{\gamma+2}{2}}  \nabla_v^{\perp} (mg) \|_{L^2}
\, \| \la v \ra^{\frac{\gamma}{2}} \nabla_v (mh) \|_{L^2}.
\ea
$$
For the term $T_{4}$, we have
$$
\ba
T_{4} 
&\lesssim \| f \|_{L^2(\la v \ra^\theta)} \int \la v \ra^{\gamma+2}   |\nabla_v^{\perp} G| \, |\nabla_v^{\perp} H|   \\
&\lesssim \| f \|_{L^2(\la v \ra^\theta)}  \, \| \la v \ra^{\frac{\gamma+2}{2}} \, \nabla_v^{\perp}  (mg) \|_{L^2}
\, \| \la v \ra^{\frac{\gamma+2}{2}} \, \nabla_v^{\perp} (mh) \|_{L^2}.
\ea
$$
All in all, we obtain 
$$
{ A_1^+} \lesssim \| f \|_{L^2(\la v \ra^\theta)}  \, \| g \|_{H^1_*(m)} \, \| h \|_{H^1_*(m)} .
$$

\medskip\noindent
{\it Step 2. Term $A_2$.}
Recall that $\partial_j m^2 = C v_j \la v \ra^{\sigma-2} m^2$. The case $|v|\le 1$ is evident so we only consider $|v|>1$. The same argument as for $A_1$ gives us
$$
\ba
{ A_2^+} &:= C \int_{|v|>1} (a_{ij}*f) \, v_j \la v \ra^{\sigma-2} \big\{  \partial_i^{\|} G + \partial_i^{\perp} G  \big\} \, H \\
&\lesssim \| f \|_{L^2(\la v \ra^\theta)}  \, \int \Big\{ \la v \ra^{\gamma+\sigma-1} |\nabla_v G| + \la v \ra^{\gamma+\sigma} |\nabla_v^{\perp} G| \Big\} \, |H| \\
&\lesssim \| f \|_{L^2(\la v \ra^\theta)} \, \Big\{ \| \la v \ra^{\frac{\gamma}{2}} \, \nabla_v (mg) \|_{L^2}  + \| \la v \ra^{\frac{\gamma+2}{2}} \, \nabla_v^{\perp} (mg)   \|_{L^2}   \Big\} \, \| \la v \ra^{\frac{\gamma+2\sigma-2}{2}}
\, h \|_{L^2(m)} \\
&\lesssim \| f \|_{L^2(\la v \ra^\theta)}  \, \| g \|_{H^1_*(m)} \, \| h \|_{H^1_*(m)}.
\ea
$$

\medskip\noindent
{\it Step 3. Term $A_3$.} In a similar way as for the term $A_2$, we also have 
$$
A_3 \lesssim \| f \|_{L^2(\la v \ra^\theta)}  \, \| \la v \ra^{\frac{\gamma+\sigma}{2}} g \|_{L^2(m)} \, \| h \|_{H^1_*(m)}
\lesssim  \| f \|_{L^2(\la v \ra^\theta)}  \, \| g \|_{H^1_*(m)} \, \| h \|_{H^1_*(m)}.
$$

\medskip\noindent
{\it Step 4. Term $A_4$.} 
Arguing as before, we easily get
$$
\ba
{ A_4^+} &:= C \int_{|v| >1} (a_{ij} * f) \, v_i \, v_j \la v \ra^{2\sigma-4} \, G\, H \, \\
&\lesssim \| f \|_{L^2(\la v \ra^\theta)}  \, \int \la v \ra^{\gamma+2\sigma-2} \, |G| \, |H| \\
& \lesssim \| f \|_{L^2(\la v \ra^\theta)}  \, \| \la v \ra^{\frac{\gamma+2\sigma-2}{2}} \, g \|_{L^2(m)} \, \| \la v \ra^{\frac{\gamma+2\sigma-2}{2}} \, h \|_{L^2(m)} \\
&\lesssim  \| f \|_{L^2(\la v \ra^\theta)}  \, \| g \|_{H^1_*(m)} \, \| h \|_{H^1_*(m)}.
\ea
$$

\medskip\noindent
{\it Step 5. Term $B_1$.}
Thanks to Lemma~\ref{lem:abc*f}-(ii) with $p=4$, for any $\theta' >9/4$, it follows
$$
\ba
B_1 &\lesssim \| f \|_{L^4(\la v \ra^{\theta'})} \, \int \la v \ra^{\gamma+1} \, |G| \, |\nabla_v H| \\
& \lesssim \| f \|_{H^1(\la v \ra^{\theta'})} \, \| \la v \ra^{\frac{\gamma+2}{2}} \, g \|_{L^2(m)} \, \| \la v \ra^{\frac{\gamma}{2}} \, \nabla_v (mh) \|_{L^2} \\
& \lesssim \| f \|_{H^1(\la v \ra^{\theta'})} \, \| \la v \ra^{\frac{\gamma+2}{2}} \, g \|_{L^2(m)} \, \| h \|_{H^1_*(m)}, 
\ea
$$
where we have used the embedding $H^1 (\la v \ra^{\theta'}) \subset L^4(\la v \ra^{\theta'})$.

\medskip\noindent
{\it Step 6. Term $B_2$.} Using $\partial_j m =C v_j \la v \ra^{\sigma-2} m$, we have 
$$
\ba
B_2 
&\lesssim \| f \|_{L^4(\la v \ra^{\theta'})} \, \int \la v \ra^{\gamma+\sigma} \, |G| \, | H|\\
&\lesssim \| f \|_{H^1(\la v \ra^{\theta'})} \, \| \la v \ra^{\frac{\gamma+\sigma}{2}} \, g \|_{L^2(m)} \, \| \la v \ra^{\frac{\gamma+\sigma}{2}} \, h \|_{L^2(m)} \\
&\lesssim \| f \|_{H^1(\la v \ra^{\theta'})} \, \| \la v \ra^{\frac{\gamma+\sigma}{2}} \, g \|_{L^2(m)} \, \| h \|_{H^1_*(m)}.
\ea
$$

\medskip\noindent
{\it Step 7. Conclusion.}
Gathering previous estimates and using that $\| \la v \ra^{\frac{\gamma+\sigma}{2}} g \|_{L^2(m)}$ and $\| \la v \ra^{\frac{\gamma+2}{2}} g \|_{L^2(m)}$ can be controlled by 
$\| g \|_{L^2(m)}$, we obtain, for any $\theta >2 + 3/2$ and $\theta' >9/4$,
$$
\la  Q(f,g),h \ra_{L^{2}(m)}
\lesssim \| f \|_{L^2(\la v \ra^{\theta})} \, \| g \|_{H^1_{*}(m)} \, \| h \|_{H^1_{*}(m)} 
 +  \| f \|_{H^1(\la v \ra^{\theta'})}  \, \| g \|_{L^2(m)} \, \|  h \|_{H^1_{*}(m)} ,
$$
which concludes the proof of \eqref{eq:Qfgh0}. 
\end{proof}

\begin{cor}\label{cor:Q}
Consider an admissible weight function $m$ such that $m \succ \la v \ra^{2+3/2} $. With the notation \eqref{eq:defXYZ}, there holds 
\be\label{eq:QfghBIS}
\la Q(f,g), h \ra_X \lesssim \Big( \| f \|_X\, \| g \|_Y + \| f \|_Y \, \| g \|_X \Big) \, \| h \|_Y,
\ee
and in particular 
\beqn\label{eq:QfgH-1BIS}
\| Q(f,g) \|_Z  \lesssim    \| f \|_X \, \| g \|_Y + \| f \|_Y \, \| g \|_X. 
\eeqn
\end{cor}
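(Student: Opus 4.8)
This statement is an immediate corollary of Lemma~\ref{lem:Q}: the only work is to translate the weighted Lebesgue and Sobolev norms appearing in \eqref{eq:Qfgh0} into the norms of the spaces $X=L^2(m)$, $Y=H^1_*(m)$, $Z=H^{-1}_*(m)$ of \eqref{eq:defXYZ}. The plan is to fix admissible exponents $\theta,\theta'$ in Lemma~\ref{lem:Q}, record two weighted embeddings, and then read off the $Z$-bound by duality.

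First I would choose $\theta$ and $\theta'$ as small as allowed: $\theta>2+3/2$, and $\theta'$ just above the critical threshold coming from Lemma~\ref{lem:abc*f}(ii). Here one should note that the value $9/4$ written in Lemma~\ref{lem:Q} corresponds to the particular choice $p=4$ in Steps~5--6 of its proof; taking instead $p$ as close as possible to $3/(3+\gamma+1)$ (but keeping $p\ge2$, so that the Sobolev embedding $H^1\hookrightarrow L^p$ stays valid in dimension~$3$) lets $\theta'$ be brought down to any value above $\max(|\gamma|-1,\,3/2)$. Since $m\succ\la v\ra^{2+3/2}$, the exponents can then be arranged so that $\la v\ra^{\theta}\preceq m$ and $\la v\ra^{\theta'+|\gamma|/2}\preceq m$: for an exponential weight this is automatic, while for $m=\la v\ra^k$ the two conditions read $\theta\le k$ and $\theta'+|\gamma|/2\le k$, which can be met because $k>2+3/2\ge 3|\gamma|/2-1=(|\gamma|-1)+|\gamma|/2$ for every $\gamma\in[-3,-2)$ (with equality only in the Coulomb case $\gamma=-3$).

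With these choices, $L^2(\la v\ra^{\theta})\hookrightarrow X$ is immediate, and the key point is the embedding $H^1(\la v\ra^{\theta'})\hookrightarrow Y$ (the case in which only $L^2(\la v\ra^{\theta'})\hookrightarrow Y$ is needed being strictly easier). For it I would use the identity $\widetilde\nabla_v(mf)=m\,\widetilde\nabla_v f+C_m\la v\ra^{\sigma-2}(mf)\,v$, which follows from $\nabla_v m=C_m\la v\ra^{\sigma-2}m\,v$ and $P_v v=v$, together with the pointwise bound $|\nabla_v f|\le|\widetilde\nabla_v f|$ and the comparison $\la v\ra^{\theta'}\preceq\la v\ra^{\gamma/2}m$: after absorbing the lower-order term $C_m\la v\ra^{\gamma/2+\sigma-1}(mf)$ into $\la v\ra^{(\gamma+\sigma)/2}mf$ (legitimate since $\sigma\le2$), this yields $\|\la v\ra^{\theta'}f\|_{L^2}+\|\la v\ra^{\theta'}\nabla_v f\|_{L^2}\lesssim\|\la v\ra^{(\gamma+\sigma)/2}mf\|_{L^2}+\|\la v\ra^{\gamma/2}\widetilde\nabla_v(mf)\|_{L^2}=\|f\|_{H^1_*(m)}$, that is $\|f\|_{H^1(\la v\ra^{\theta'})}\lesssim\|f\|_Y$. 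Substituting $\|f\|_{L^2(\la v\ra^{\theta})}\lesssim\|f\|_X$, $\|f\|_{H^1(\la v\ra^{\theta'})}\lesssim\|f\|_Y$, $\|g\|_{H^1_*(m)}=\|g\|_Y$, $\|g\|_{L^2(m)}=\|g\|_X$ and $\|h\|_{H^1_*(m)}=\|h\|_Y$ into \eqref{eq:Qfgh0} gives \eqref{eq:QfghBIS}. Finally \eqref{eq:QfgH-1BIS} comes for free: by the definition \eqref{def:H-1*} of $Z=H^{-1}_*(m)$ as the dual of $Y=H^1_*(m)$ for the $L^2(m)$ pairing, $\|Q(f,g)\|_Z=\sup_{\|h\|_Y\le1}\la Q(f,g),h\ra_{L^2(m)}=\sup_{\|h\|_Y\le1}\la Q(f,g),h\ra_X$, so one just takes the supremum of \eqref{eq:QfghBIS} over $\|h\|_Y\le1$.

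The one genuinely delicate step, and the one I expect to require care, is the weight bookkeeping above: verifying that $\theta$ and especially $\theta'$ can be picked in the required ranges under the single hypothesis $m\succ\la v\ra^{2+3/2}$. This is tight precisely in the polynomial Coulomb case $\gamma=-3$, $m=\la v\ra^k$, where it forces $\theta'$ to be taken arbitrarily close to $2$ and hence the convolution exponent in the proof of Lemma~\ref{lem:Q} to be taken close to $3$ rather than equal to $4$; everything else is routine.
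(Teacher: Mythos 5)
Your proposal is correct and follows the same route as the paper's own (very short) proof: plug the embeddings $L^2(m)\hookrightarrow L^2(\la v\ra^{\theta})$ and $H^1_{*}(m)\hookrightarrow H^1(\la v\ra^{\theta'})$ into \eqref{eq:Qfgh0}, then obtain \eqref{eq:QfgH-1BIS} by the duality definition \eqref{def:H-1*}. The genuine added value of your write-up is the weight bookkeeping for $\theta'$, which the paper passes over in one sentence. Since $H^1_{*}(m)$ only controls the full gradient with weight $\la v\ra^{\gamma/2}m$ (the extra $\la v\ra$ in $\widetilde\nabla_v$ helps only the component orthogonal to $v$), the embedding $H^1_{*}(m)\hookrightarrow H^1(\la v\ra^{\theta'})$ for a polynomial weight $m=\la v\ra^{k}$ forces $\theta'\le k-|\gamma|/2$; combined with the threshold $\theta'>9/4$ as literally stated in Lemma~\ref{lem:Q}, this would require $k>9/4+|\gamma|/2$, which is strictly stronger than the hypothesis $k>2+3/2$ whenever $|\gamma|>5/2$ (e.g.\ Coulomb, $k\in(7/2,15/4]$). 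Your fix — reopening Steps 5--6 of the proof of Lemma~\ref{lem:Q} and taking the exponent $p$ in Lemma~\ref{lem:abc*f}(ii) close to $3/(4+\gamma)$ (with $p\ge 2$ so that $H^1(\R^3)\subset L^p$ still holds), thereby lowering the admissible range to $\theta'>\max(|\gamma|-1,3/2)$ — is exactly what is needed to make the corollary hold under the sole assumption $m\succ\la v\ra^{2+3/2}$, and your verification that $k>2+3/2$ then suffices (tight precisely at $\gamma=-3$) is correct, as is the identity $\widetilde\nabla_v(mf)=m\widetilde\nabla_v f+C_m\la v\ra^{\sigma-2}(mf)v$ and the absorption using $\sigma\le2$. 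So: same approach as the paper, but your version repairs a borderline case that the paper's one-line justification of the embedding (with $\theta'>9/4$ fixed) does not literally cover.
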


\begin{proof}  The proof of \eqref{eq:QfghBIS} easily follows from \eqref{eq:Qfgh0} observing that, since $m \succ \la v \ra^{2+3/2}$, we can choose $\theta$ and $\theta'$ in Lemma~\ref{lem:Q}  such that $L^2(m) \hookrightarrow L^2(\la v \ra^{\theta})$ and 
$H^1_{*}(m) \hookrightarrow H^1(\la v \ra^{\theta'})$ (see \eqref{def:H1*}). The proof of \eqref{eq:QfgH-1BIS} 
is then straightforward by the definition of $Z=H^{-1}_{*}(m)$ (see \eqref{def:H-1*}).  
\end{proof}

\section{Nonlinear stability}\label{sec:stabNL}

This section is devoted to the proof of the spatially homogeneous version of Theorem~\ref{thm:stabNL-inhom}.

\medskip

Consider a solution $F$ to the homogeneous Landau equation \eqref{eq:landau} and define the variation $f= F- \mu$, which satisfies, 
\be\label{eq:pert}
\left\{
\ba
& \partial_t f = \LL f + Q(f,f) \\
& f_{|t=0} = f_0 = F_0 - \mu.
\ea
\right.
\ee
We observe that, $\Pi_0 f_0 = 0$ and therefore, thanks to the conservation laws,
$$
\Pi_0 f(t) = \Pi_0 Q(f(t),f(t)) = 0 \quad\hbox{for any} \,\,  t >0.
$$

\medskip

Hereafter in this section,
 we fix an admissible weight function $m$ satisfying $ m \succ \la v \ra^{2+3/2}$ and consider the spaces $X,Y,Z$ and $X_0$ defined in \eqref{eq:defXYZ}. We also recall the norm $\Nt \cdot \Nt_X$ defined in \eqref{eq:NormT-X}, which is equivalent to $\| \cdot \|_X$.

\medskip

We first prove a stability estimate.

\begin{prop}\label{prop:stab}
There exist some constants $C,K \in (0,\infty)$ such that any solution $f$ to \eqref{eq:pert} satisfies, at least formally, the following differential inequality  
$$
\frac{d}{dt} \Nt f \Nt_X^2 \le  ( C \Nt f \Nt_X - K ) \, \| f \|_{Y}^2.
$$
\end{prop}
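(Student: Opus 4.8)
The plan is to differentiate the equivalent norm $\Nt f \Nt_X^2$ along the flow of the nonlinear equation \eqref{eq:pert} and split the time derivative into its linear and nonlinear contributions. Writing $f = f(t)$ for a solution, we have (at least formally)
$$
\frac12 \frac{d}{dt} \Nt f \Nt_X^2 = \la\!\la \LL f , f \ra\!\ra_X + \la\!\la Q(f,f) , f \ra\!\ra_X,
$$
where the bracket $\la\!\la \cdot, \cdot \ra\!\ra_X$ is the one associated to the norm $\Nt \cdot \Nt_X$ defined in \eqref{eq:NormT-X}. Since $\Pi_0 f(t) = 0$ for all $t$, we may replace $f$ by $\Pi f$ wherever convenient, so that Corollary~\ref{cor:S_L-homBIS} applies directly to the linear term.

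The linear term is handled by the weak dissipativity estimate \eqref{eq:LLdissipL2-homBIS} of Corollary~\ref{cor:S_L-homBIS}, which gives
$$
\la\!\la \LL f , f \ra\!\ra_X = \la\!\la \LL \Pi f , \Pi f \ra\!\ra_X \le - K_0 \| f \|_Y^2
$$
for some constant $K_0 > 0$. For the nonlinear term, I would like to use the bound \eqref{eq:QfghBIS} from Corollary~\ref{cor:Q}, namely $\la Q(f,f), h \ra_X \lesssim \| f \|_X \| f \|_Y \| h \|_Y$; the subtlety is that the bracket $\la\!\la \cdot, \cdot \ra\!\ra_X$ is not the plain $X = L^2(m)$ inner product but the modified one coming from \eqref{eq:NormT-X}, which contains the extra term $\int_0^\infty \la S_\LL(\tau) \cdot, S_\LL(\tau) \cdot \ra_{X_0} d\tau$. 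So I would expand
$$
\la\!\la Q(f,f), f \ra\!\ra_X = \eta \la Q(f,f), f \ra_X + \int_0^\infty \la S_\LL(\tau) Q(f,f), S_\LL(\tau) f \ra_{X_0} \, d\tau.
$$
The first piece is controlled by \eqref{eq:QfghBIS} directly, giving $\lesssim \| f \|_X \| f \|_Y^2 \lesssim \Nt f \Nt_X \| f \|_Y^2$. For the second piece I would use the regularization estimate \eqref{eq:LLregH-1} (equivalently the $Z \to X_0$ bound in \eqref{eq:LLregH1-homBIS}) to write $\| S_\LL(\tau) Q(f,f) \|_{X_0} \lesssim \frac{\Theta(\tau)}{\tau^{1/2}\wedge 1} \| Q(f,f) \|_Z$, together with $\| S_\LL(\tau) f \|_{X_0} \lesssim \Theta'(\tau) \| f \|_Y$ from the $Y \to X_0$ bound, and then $\| Q(f,f) \|_Z \lesssim \| f \|_X \| f \|_Y$ from \eqref{eq:QfgH-1BIS}. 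Integrating in $\tau$ and using that the product of decay functions times $(\tau^{1/2}\wedge 1)^{-1}$ is in $L^1(\R_+)$ — exactly the integrability property \eqref{eq:LLregH1-homBIS} — yields a bound $\lesssim \| f \|_X \| f \|_Y^2 \lesssim \Nt f \Nt_X \| f \|_Y^2$ for this term as well.

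Combining the two contributions gives $\frac{d}{dt} \Nt f \Nt_X^2 \le (C \Nt f \Nt_X - K) \| f \|_Y^2$ with $K = 2K_0$ and $C$ the constant from the nonlinear estimates (up to the equivalence constants between $\Nt \cdot \Nt_X$ and $\| \cdot \|_X$). The main obstacle I anticipate is the bookkeeping in the integral term of the nonlinear bracket: one must make sure that the decay functions $\Theta_{m_1,m_0}$ and $\Theta_{m'_1,m'_0}$ appearing in the $Y \to X_0$ and $Z \to X_0$ estimates can be chosen (by adjusting the auxiliary polynomial weights $m_0, m_1, m'_0, m'_1$ between $\la v\ra^{(\gamma+3)/2}$ and $m$, using $m \succ \la v\ra^{2+3/2}$) so that their product against $(\tau^{1/2}\wedge 1)^{-1}$ is genuinely integrable — this is precisely where the hypothesis $k > 2 + 3/2$ enters, and it is the same computation already carried out at the end of the proof of Corollary~\ref{cor:S_L-homBIS}. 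A secondary (purely formal) point is the justification of differentiating $\Nt f(t) \Nt_X^2$ and of the identity $\frac{d}{d\tau}\|S_\LL(\tau) f_\LL(t)\|_{X_0}^2$ manipulations under the integral sign, but since the statement only claims the inequality "at least formally", this can be left at the level of an a priori estimate on smooth solutions.
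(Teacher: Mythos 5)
Your proposal is correct and follows essentially the same route as the paper: split the derivative of the triple norm into the modified linear bracket (handled by \eqref{eq:LLdissipL2-homBIS}), the plain $\eta\la Q(f,f),f\ra_X$ term (handled by \eqref{eq:QfghBIS}), and the integral term, which is bounded exactly as you describe via Cauchy--Schwarz, the $Z\to X_0$ and $Y\to X_0$ decay/regularization bounds, \eqref{eq:QfgH-1BIS}, and the integrability statement \eqref{eq:LLregH1-homBIS}. The only cosmetic point is that in the integral term one uses $\Pi Q(f,f)=Q(f,f)$ (since $\Pi_0 Q(f,f)=0$ by the conservation laws, as noted at the start of Section~\ref{sec:stabNL}) so that the estimates on $S_\LL(t)\Pi$ apply, which is implicit in your argument.
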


\begin{proof}
We write
$$
\ba
\frac12\frac{d}{dt} \Nt f \Nt_X^2 
&= \la\!\la \LL f,f \ra\!\ra_X  + \eta\la Q(f,f), f \ra_X + \int_0^\infty \la S_\LL(\tau) \Pi Q(f,f), S_\LL(\tau) \Pi f \ra_{X_0} \, d\tau \\
&=: T_1+T_2+T_3.
\ea
$$
On the one hand, thanks to \eqref{eq:LLdissipL2-homBIS} in Corollary~\ref{cor:S_L-homBIS} and to Corollary~\ref{cor:Q}, there exist $K, C' > 0$ such that
$$
T_1+T_2 \le -K \| f \|_{Y}^2 +  C' \| f \|_{X} \, \| f \|_{Y}^2.
$$
On the other hand, we have 
$$
\ba
&\int_0^\infty \la S_\LL(\tau) \Pi Q(f,f), S_\LL(\tau) \Pi f \ra_{X_0} \, d\tau  \\
&\qquad 
\le \int_0^\infty \| S_\LL(\tau) \Pi Q(f,f) \|_{X_0} \, \| S_\LL(\tau) \Pi f \|_{X_0} \, d\tau \\
&\qquad 
\lesssim   \| Q(f,f) \|_{Z}  \, \| f \|_{Y} \int_0^\infty \| S_\LL(\tau) \Pi \|_{Z \to X_0} \, \| S_{\LL} (\tau) \Pi \|_{Y \to X_0}\, d\tau  \\
&\qquad \lesssim \| f \|_{X} \, \| f \|_{Y}^2,
\ea
$$
where we have used \eqref{eq:LLregH1-homBIS} in Corollary~\ref{cor:S_L-homBIS} as well as  Corollary~\ref{cor:Q} again in the last line. 
We conclude the proof by gathering theses two estimates. 
\end{proof}

A consequence of the stability estimate in Proposition~\ref{prop:stab} we obtain the spatially homogeneous version of Theorem~\ref{thm:stabNL-inhom}.

\begin{proof}[Proof of Theorem~\ref{thm:stabNL-inhom}. The spatially homogeneous case]
We split the proof into three steps.

\medskip\noindent
{\it Step 1. Uniqueness.} We still denote by $K$ and $C$ the constants 
exhibited in Proposition~\ref{prop:stab} and we set $\eps := (2-\sqrt{2}) K/C$. Consider two solutions $f_1$ and $f_2$ to \eqref{eq:pert} with same initial data such that
\beqn\label{eq:UniqAB}
\forall \, i = 1, 2, \quad 
\Nt f_i \Nt_{L^\infty(0,\infty ; X)}^2+  K \, \| f_i \|_{L^2(0,\infty;Y)} ^2 <  2 \eps^2 .
\eeqn
The difference $ \rho := f_1 - f_2$ satisfies
$$
\partial_t \rho  = \LL \rho  + Q(f_1, \rho) + Q(\rho, f_2), \quad \rho(0) = 0.
$$
 Repeating the same computation as in Proposition~\ref{prop:stab}, we get
$$
\frac{d}{dt} \Nt \rho \Nt_X^2 \le   - K \, \| \rho \|_{Y}^2 + \frac{C}2 \Bigl( (\Nt f_1 \Nt_X + \Nt f_2 \Nt_X)  \| \rho \|_Y^2 +  (\| f_1 \|_Y+\| f_2 \|_Y)  \Nt \rho \Nt_X  \| \rho \|_Y\Bigr).
$$
Integrating in time the above differential inequality and using the Cauchy-Schwarz inequality, we obtain 
\bean
A&:=&   \Nt \rho \Nt_{L^\infty(0,\infty ; X)}^2 + K \| \rho \|_{L^2(0,\infty ; Y)}^2 
\\
&\le& \Nt \rho(0) \Nt_{X}^2 + {C \over 2} \, \Bigl( \Nt f_1  \Nt_{L^\infty(0,\infty ; X)} 
+  \Nt f_2  \Nt_{L^\infty(0,\infty ; X)} \Bigr) \, \| \rho \|_{L^2(0,\infty ; Y)}^2 
\\
&&
+  \frac{C}2\Bigl(  \| f_1 \|_{L^2(0,\infty ; Y)}  +  \| f_2 \|_{L^2(0,\infty ; Y)}  \Bigr)  \Nt \rho \Nt_{L^\infty(0,\infty ; X)} \,   \| \rho \|_{L^2(0,\infty ; Y)}  .
\eean
We assume by contradiction that $\rho \not\equiv 0$. Thanks to estimate \eqref{eq:UniqAB} and the Young inequality, we deduce
\bean
A
&<&  C \,  \bigl( 2\eps^2   \bigr)^{1/2}  \| \rho \|_{L^2(0,\infty ; Y)}^2 
+ C  \, \bigl( { 2\eps^2 \over  K} \bigr)^{1/2} \, \Nt \rho \Nt_{L^\infty(0,\infty ; X)} \,   \| \rho \|_{L^2(0,\infty ; Y)}
\\
&\le& \Nt \rho \Nt_{L^\infty(0,\infty ; X)}^2
+ \Bigl\{ C \,  \sqrt{2} \, \eps + {C^2 \over 2 K} \, \eps^2    \Bigr\} \| \rho \|_{L^2(0,\infty ; Y)}^2
\le A,
\eean
and a contradiction. We conclude that $f_1 = f_2$.  

\medskip\noindent
{\it Step 2. Existence.} The proof follows a classical argument based on an iterative scheme that approximates \eqref{eq:pert} 
(see e.g.\ \cite{MR0363332,Guo} or \cite[Proof of Theorem 5.3]{GMM}) that we sketch for the sake of completeness. 
We consider the iterative scheme
$$
\left\{
\ba
 \partial_t f^n  &= \LL f^n + Q(f^{n-1},f^n) \\
 {f^n}_{|t=0} &= f_0
\ea
\right.
\quad \forall \, n \in \N,
$$
with the convention $f^{-1} = Q(f^{-1},f^0) = 0$ when $n=0$. 
We claim that for $\eps_0 := \Nt f_0 \Nt_X < \eps$, with $\eps$ defined as in Step 1, we may build by an induction argument a sequence $(f^n)_{n \ge 0}$ of solutions of the above scheme such that 
\be\label{eq:schema-stab}
\forall \, n \in \N, \quad A^n :=  \sup_{t \ge 0} \Nt f^n(t) \Nt_{X}^2 + K \int_{0}^\infty \| f^n(t) \|_Y^2 \, dt \le 2 \eps_0^2.
\ee
We only prove the a priori estimate \eqref{eq:schema-stab} by an induction argument, the construction  at each step of the solution of the above linear equation being very classical. We assume that $f^{n-1}$ satisfies \eqref{eq:schema-stab}. 
Repeating the same argument as in Step 1, we have 
\bean
A^n 
&\le& \Nt f_0 \Nt_{X}^2 + {C \over 2} \,  \Nt f^{n-1} \Nt_{L^\infty(0,\infty; X)}  \, \| f^n \|_{L^2(0,\infty;Y)}^2 
\\
&&
+  \frac{C}2 \| f^{n-1}\|_{L^2(0,\infty;Y)}  \, \Nt f^n \Nt_{L^\infty(0,\infty;X)} \, \| f^n \|_{L^2(0,\infty;Y)}.
\eean
Thanks to estimate \eqref{eq:schema-stab} at rank $n-1$ and the Young inequality, as in Step 1 again, we deduce
\bean
A^n 
&\le& \eps_0^2 +  \frac{1}2  \,   \Nt f^n \Nt_{L^\infty(0,\infty;X)}^2 
+ \Bigl\{ {C \over \sqrt{2} K} \, \eps_0 + {C^2 \over 4 K^2} \, \eps_0^2    \Bigr\} K  \| f^n \|_{L^2(0,\infty;Y)}^2
\\
&\le& \eps_0^2 +  \frac{1}2  \,A^n, 
\eean
from what $f^n$ satisfies \eqref{eq:schema-stab} and the stability of the scheme is proven. 
We now turn to the convergence of the scheme and we define $\rho^n := f^{n+1} - f^n$, for all $n \in \N$, which satisfies
$$
\left\{
\ba
\partial_t \rho^0 &= \LL \rho^0 + Q(f^0, f^1); \\
\partial_t \rho^n &= \LL \rho^n + Q(f^n,\rho^n) + Q(\rho^{n-1}, f^n), \quad \forall\, n \in \N^*; 
\ea
\right.
$$
with ${\rho^n}_{|t=0} = 0$. We define 
$$
\forall \, n \in \N, \quad
B^n  := \sup_{ t \ge 0} \Nt \rho^n(t) \Nt_{X}^2 + K \int_0^\infty \| \rho^n(t) \|_Y^2 \, dt, 
$$
so that in particular $B^0 \le A^1 + A^0 \le (2 \eps_0)^2$. For $n \ge 1$, we compute as in the previous steps 
\bean
B^n   
&\le&   {C \over 2} \,  \Nt f^n  \Nt_{L^\infty(0,\infty;X)} \, \| \rho^n \|_{L^2(0,\infty;Y)}^2  +  \frac{C}2  \| f^n \|_{L^2(0,\infty;Y)}  \,  \Nt \rho^n \Nt_{L^\infty(0,\infty;X)} \,  \| \rho^n \|_{L^2(0,\infty;Y)}
\\
&&
+ {C \over 2} \Bigl\{ \Nt f^n  \Nt_{L^\infty(0,\infty;X)}  \,  \| \rho^{n-1} \|_{L^2(0,\infty;Y)} +   \Nt \rho^{n-1}  \Nt_{L^\infty(0,\infty;X)} \,   \| f^n \|_{L^2(0,\infty;Y)}  \Bigr\} \, \| \rho^n \|_{L^2(0,\infty;Y)}.
\eean
Arguing similarly as in the previous steps by using the Young inequality, estimate \eqref{eq:schema-stab} and choosing $\eps_0 < \sqrt{2} K / (3C)$, 
we easily get 
\bean
B^n   
&\le&   {C_1^2  \over 2} \, \eps_0^2  \, B_{n-1} + {1 \over 2} \, B_n,
\eean
where the constant $C_1 := 3 C / (\sqrt{2} K) $ only depends on $C$ and $K$. That readily implies that 
$$
B^n \le   (C_1 \eps_0)^{2n} \, B_0, \quad \forall \, n \ge 1,
$$
with $C_1 \eps_0 <1$. It then follows that $(f^n)_{n \in \N}$ is a Cauchy sequence in $L^\infty( 0,\infty ; X)$, its limit $f$ is a weak solution to \eqref{eq:pert} and, passing to the limit $n \to \infty$ in \eqref{eq:schema-stab}, $f$ also satisfies \eqref{eq:schema-stab}, from which one deduces \eqref{eq:bound-g-inhom}.

\medskip\noindent
{\it Step 3. Decay.} 
Let $\tilde m$ be an admissible weight function such that $\la v \ra^{2+3/2} \prec \tilde m \prec m$, 
and denote $\tilde X = L^2(\tilde m)$ and $\tilde Y = H^1_*(\tilde m)$.
Thanks to the estimate \eqref{eq:schema-stab} (or \eqref{eq:bound-g-inhom}) and Proposition~\ref{prop:stab} in both spaces $X$ and $\tilde X$, it follows 
$$
\ba
\frac{d}{dt} \Nt f \Nt_X^2 &\le (C \sqrt{2} \eps_0 - K) \| f \|_{Y}^2 
\le -K' \| f \|_{Y}^2 \le 0, \\
\frac{d}{dt} \Nt f \Nt_{\tilde X}^2 &\le (C \sqrt{2} \eps_0 - K) \| f \|_{\tilde Y}^2
\le - K' \| f \|_{\tilde Y}^2 .
\ea
$$
These two estimates together imply (see the proof of Lemma~\ref{lem:SBdecay}) the decay
$$
\Nt f(t) \Nt_{\tilde X} \lesssim \Theta_{m, \tilde m}(t) \, \Nt f_0 \Nt_X.
$$
We hence obtain
$$
\| f(t) \|_{X_0} \lesssim \Theta_m(t) \, \| f_0 \|_X,
$$
where we recall that $\Theta_m$ is defined in \eqref{eq:Theta_m}, and that completes the proof.
\end{proof}

We conclude the section by presenting a proof of our improvement of the speed of convergence to the equilibrium for solutions to the spatially homogenous Landau equation in a non perturbative framework. 

\begin{proof}[Proof of Corollary~\ref{cor:SpeedHomogeneous}]
We claim that for some time $t_0>0$ (smaller than some explicit constant $T>0$) we have 
\be\label{eq:L2m1<eps}
\| f(t_0) \|_{L^2_v (m_1)} \le \eps_0,
\ee
where we denote $m_1 = m^{1/2}  \langle v \rangle^{-9/2}$ and $\eps_0>0$ is given in Theorem~\ref{thm:stabNL-inhom}. Indeed, thanks to \cite{D} there holds
$$
\forall\, t, T>0, \quad \int_{t}^{t+T} \| f(\tau) \|_{L^3_v (\la v \ra^{-3})} \, d \tau \lesssim 1+T,
$$
and from \cite[Theorem 2]{CDH} we have the convergence
$$
\| f(t) \|_{L^1(m)} \lesssim \theta(t), \quad \theta(t) {  = } e^{-\lambda \, t^{\frac{s}{s+|\gamma|}} \, (\log (1+t))^{-\frac{|\gamma|}{s+|\gamma|}}},
$$
for some constant $\lambda >0$. 
Thanks to the interpolation inequality
$$
\| f \|_{L^2_v(m_1)} \le \| f \|_{L^1_v (m)}^{1/4} \, \| f \|_{L^3_v (\la v \ra^{-3})}^{3/4},
$$
we obtain, for any $t > 0$,
$$
\theta(t)^{-1/4}\int_{t}^{t+1} \| f(\tau) \|_{L^2_v (m_1)} \, d \tau \lesssim 
\int_{t}^{t+1} \theta^{-1}(\tau) \| f(\tau) \|_{L^1_v(m)} \, d\tau + \int_{t}^{t+1} \| f(\tau) \|_{L^3_v (\la v \ra^{-3})}\, d \tau \lesssim 1,
$$
which proves \eqref{eq:L2m1<eps}. 
Therefore, observing that $m^{1/3} \prec m_1$ and $m^{1/3}$ is an exponential weight satisfying \eqref{eq:m}, we can apply Theorem~\ref{thm:stabNL-inhom} with $m^{1/3}$ starting from $t_0>0$ and we deduce the convergence
$$
\| f(t) \|_{L^2_v} \lesssim \Theta_{m^{1/3}} (t).
$$
The proof is then complete by remarking that, since $m$ is an exponential weight, 
$\Theta_{m^{1/3}} $ and $\Theta_{m} $ have the same type of asymptotic behaviour (up to a change in the constants in \eqref{eq:Theta_m}).
\end{proof}

\section{The spatially inhomogeneous case}\label{sec:nonH}

In this section, we explain how we may adapt  to the spatially inhomogeneous case the arguments presented in the previous sections.
The novelties come from the facts that: 

\smallskip

(1) We establish a first weak hypocoercivity estimate in the (small) space $ \HH^1_{x,v}(\mu^{-1/2})$ (see \eqref{eq:HH1xv} below); 

\smallskip

(2) We prove a set of weak dissipativity estimates on an appropriate operator $\bar\BB$ and of regularization results on the time functions $(\AA S_{\bar \BB})^{(*n)}$ and $(S_{\bar \BB} \AA)^{(*\ell)}$ in order to transfer the above information to the space $H^2_xL^2_v(m)$, which is suitable for establishing our existence, uniqueness and stability results.

\subsection{The linearized inhomogeneous operator}
We denote by $\bar \LL$ the inhomogeneous linearized Landau operator given by
\be\label{eq:barLL}
\bar \LL := \LL - v \cdot \nabla_x,
\ee
where we recall that $\LL$ is defined in \eqref{eq:lin-op}. We have
$$
\mathrm{ker}(\bar \LL) = \mathrm{span} \{ \mu, v_1 \mu, v_2 \mu, v_3 \mu, |v|^2 \mu   \}
$$
and the projection $\bar \Pi_0$ onto $\mathrm{ker}(\bar \LL)$ is given by
$$
\bar \Pi_0(f) = \left(\int f \, dx \, dv \right) \mu + \sum_{j=1}^3 \left( \int v_j f \, dx\, dv \right) v_j \mu + \left(  \int \frac{|v|^2 - 3}{6} f \, dx\, dv \right) \frac{|v|^2 - 3}{6} \mu .
$$
Hereafter we denote $\bar \Pi := I - \bar \Pi_0$ the projection onto the orthogonal of $\mathrm{ker}(\bar \LL)$.
Recall the factorization for the homogeneous operator $\LL = \AA + \BB$ in \eqref{eq:AB}, then we write 
$$
\bar\LL = \AA + \bar \BB, \quad \bar \BB := \BB - v \cdot \nabla_x  .
$$

\subsection{Functional spaces}
We denote by $L^2_{x,v} = L^2_{x,v} (\T^3_x \times \R^3_v)$ the standard Lebesgue space on $\T^3_x \times \R^3_v$. For a velocity weight function $m=m(v) : \R^3_v \to \R_+$, we then define the weighted Lebesgue spaces $L^2_{x,v}(m)$ and weighted Sobolev spaces $H^n_x  L^2_v(m)$, $n \in \N$, associated to the norms
$$
\| f \|_{L^2_{x,v} (m)} = \| m f \|_{L^2_{x,v}} , \quad
\| f \|_{H^n_x L^2_v(m)}^2 := \sum_{0 \le j \le n} \| \nabla^j_x ( m f) \|_{L^2_{x,v}}^2.
$$
We similarly define the weighted Sobolev space $H^n_{x,v}(m)$, $n\in \N$, through the norm
\be\label{eq:H1xv}
\| f \|_{H^n_{x,v}(m)} : = \| m f \|_{H^n_{x,v}} ,
\ee
where $H^n_{x,v} = H^n_{x,v} (\T^3_x \times \R^3_v)$ denotes the usual Sobolev space on $\T^3_x \times \R^3_v$.
We also define the space $\HH^1_{x,v} (m) $, { for an admissible weight $m$}, as the space associated to the norm defined by 
\be\label{eq:HH1xv}
\| f \|_{\HH^1_{x,v} (m)}^2 := \| m f \|_{L^2_{x,v}}^2 + \|  \nabla_x (m f) \|_{L^2_{x,v}}^2 
+ \| \la v \ra^{\alpha} \,  \nabla_v ( m f ) \|_{L^2_{x,v}}^2 , 
\ee
with
\be\label{eq:alpha(m)}
\alpha := \alpha(m) := \max\left\{ \gamma+\sigma , \frac{\gamma}{2} + \frac{\sigma}{4}   \right\} <0.
\ee
We easily observe that 
\be\label{eq:H1xv<HH1xv}
H^{1}_{x,v} ( m) \subset \HH^1_{x,v}(m) \subset H^{1}_{x,v} (\la v \ra^{\alpha} m) ,
\ee
and also that, for any $\gamma \in [-3,-2)$,
$$
\alpha = \frac{\gamma}{2} + \frac{\sigma}{4} \; \text{ if }\; \sigma \in [0,4/3], \quad
\alpha = \gamma+2 \; \text{ if }\; \sigma=2,
$$
where we recall that $\sigma$ has been defined at the beginning of Section~\ref{subsec:prelim}. 
We remark that we shall use the spaces $\HH^1_{x,v} (m)$ (instead of $H^1_{x,v} (m)$) in order to obtain weakly dissipative estimates for $\bar \BB$, and the reason for that will be explained in Lemma~\ref{lem:barBB1}.

Recall the space $H^1_{v,*}(m)$ defined in \eqref{def:H1*}, then we define the space $H^2_x (H^1_{v,*}(m))$ associated to the norm
\be\label{eq:def-HnxH1v*}
\ba
\| f \|_{H^2_x (H^1_{v,*}(m))}^2 
&:= \sum_{0\le j \le 2} \| \nabla_x^j f \|_{L^2_x (H^1_{v,*}(m))}^2 
:= \sum_{0\le j \le 2} \int_{\T^3_x} \| \nabla_x^j f \|_{ H^1_{v,*}(m)}^2 .
\ea
\ee
 When furthermore $m$ is a polynomial weight function,  we also define the negative weighted Sobolev space $H^2_x (H^{-1}_{v,*}(m))$ in duality with $H^2_x (H^1_{v,*}(m))$ with respect to the $H^2_x L^2_v(m)$ duality product, more precisely 
\bean
\| f \|_{H^2_x (H^{-1}_{v,*} (m))}
&:=& \sup_{\| \phi \|_{H^2_x (H^1_{v,*}(m))} \le 1} \la f , \phi \ra_{H^2_x L^2_v (m)}
\\
&:=& \sup_{\| \phi \|_{H^2_x (H^1_{v,*}(m))} \le 1} \, \sum_{0 \le j \le 2}  \la \nabla^j_x (m f) , \nabla^j_x  (m\phi) \ra_{L^2_{x,v} },
\eean
and observe that $\| f \|_{H^2_x (H^{-1}_{v,*} (m))} = \| m f \|_{H^2_x (H^{-1}_{v,*})}$.

\subsection{Weak coercivity estimate of $\bar\LL$} 
Starting from the weak coercivity estimate \eqref{eq:LLsg} for the homogeneous linearized operator $\LL$ in $L^2_v(\mu^{-1/2})$, we can exhibit an equivalent norm to the usual norm in $\HH^1_{x,v} (\mu^{-1/2})$ such that $\bar \LL$ is weakly coercive related to that norm. Our method of proof follows the method developed in \cite{MouNeu} for proving (strong) coercivity estimate and then spectral gap estimate in the case of the linearized Landau equation for harder potentials. We also refer to \cite{Guo,Vi-hypo} where related arguments have been introduced.

\begin{lem}\label{lem:GenCoerc}
There exists a Hilbert norm $\| \cdot \|_{\widetilde{\HH}^1_{x,v} (\mu^{-1/2})}$ (which associated scalar product is denoted by $\la \cdot , \cdot \ra_{\widetilde{\HH}^1_{x,v} (\mu^{-1/2})}$) equivalent to $\| \cdot \|_{\HH^1_{x,v} (\mu^{-1/2})}$ such that, for any $f \in \HH^1_{x,v} (\mu^{-1/2})$, there holds
\beqn\label{eq:GenCoerc}
\la \bar\LL f , f \ra_{\widetilde{\HH}^1_{x,v} (\mu^{-1/2})} 
\lesssim -  \| \bar \Pi f \|_{\widetilde{\HH}^1_{x,v} ( \la v \ra^{(\gamma+2)/2} \mu^{-1/2}) }^2 .
\eeqn

\end{lem}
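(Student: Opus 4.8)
The plan is to build the modified norm on $\widetilde{\HH}^1_{x,v}(\mu^{-1/2})$ by adding a small mixed term coupling the $x$- and $v$-derivatives, following the classical hypocoercivity recipe of Villani and the specific implementation of \cite{MouNeu}. Concretely, for $\delta,\rho>0$ small I would set
\be\label{eq:plan-norm}
\ba
\| f \|_{\widetilde{\HH}^1_{x,v}(\mu^{-1/2})}^2
&:= \| f \|_{L^2_{x,v}(\mu^{-1/2})}^2 + \| \nabla_x f \|_{L^2_{x,v}(\mu^{-1/2})}^2 \\
&\quad + \rho \| \la v \ra^{\gamma/2} \nabla_v f \|_{L^2_{x,v}(\mu^{-1/2})}^2
+ \delta \la \nabla_v f , \nabla_x f \ra_{L^2_{x,v}(\mu^{-1/2})},
\ea
\ee
where the exact weight in the $\nabla_v$ term is dictated by the anisotropic dissipation in \eqref{eq:LLsg} (so really $\widetilde\nabla_v$ with weights $\la v\ra^{\gamma/2}$ in the $P_v$-direction and $\la v\ra^{(\gamma+2)/2}$ in the orthogonal direction). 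For $\delta$ small enough relative to $\rho$ this is equivalent to $\|\cdot\|_{\HH^1_{x,v}(\mu^{-1/2})}$ by Cauchy--Schwarz. First I would observe that the transport part $-v\cdot\nabla_x$ is skew-adjoint on $L^2_{x,v}(\mu^{-1/2})$ and on the pure $\nabla_x$ part, hence contributes nothing to those two pieces.

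Next I would compute the time derivative of each piece along $\partial_t f=\bar\LL f$. The $L^2_{x,v}$ and $\| \nabla_x \cdot\|^2$ pieces, using that $\LL$ commutes with $\nabla_x$ and the weak coercivity \eqref{eq:LLsg} applied in the $v$-variable pointwise in $x$ (then integrated over $\T^3_x$), give $\lesssim -\| \la v\ra^{(\gamma+2)/2}\bar\Pi f\|^2_{L^2_{x,v}(\mu^{-1/2})} - \|\widetilde\nabla_v \Pi(\mu^{1/2}\cdot)\|^2$ type control for $f$ and for $\nabla_x f$ — here one uses that $\bar\Pi$ and $\Pi$ coincide on functions after the $x$-average is removed, and that $\nabla_x f$ has zero $v$-kernel component automatically. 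For the weighted $\|\nabla_v f\|^2$ piece, differentiating $\nabla_v$ of the equation produces $[\nabla_v, -v\cdot\nabla_x] f = -\nabla_x f$, which is the crucial commutator; it is controlled by $C\rho\,\|\nabla_x f\|\,\|\la v\ra^{\gamma}\nabla_v f\|$, absorbable into the good terms if $\rho$ is small, while the $\nabla_v\LL$ part is handled by the estimates of type Lemma~\ref{lem:varphi}/Lemma~\ref{lem:BB2} (commutator of $\nabla_v$ with the diffusion) and is at worst of lower order with a loss of one power of $\la v\ra$ absorbed by the dissipation. For the mixed term $\delta\la\nabla_v f,\nabla_x f\ra$, the key computation is
\beqn
\frac{d}{dt}\la \nabla_v f,\nabla_x f\ra = \la \nabla_v \bar\LL f, \nabla_x f\ra + \la \nabla_v f, \nabla_x \bar\LL f\ra,
\eeqn
and the dominant contribution comes from the commutator: $\la [\nabla_v,-v\cdot\nabla_x]f,\nabla_x f\ra = -\|\nabla_x f\|^2$ (up to weights), which is exactly the coercive term in the $x$-direction one is missing. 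The remaining terms ($\nabla_v\LL f$ against $\nabla_x f$, and $\nabla_v f$ against $\nabla_x\LL f = \LL\nabla_x f$) are bounded by $C\delta(\|\nabla_x f\|_{L^2_{x,v}(\la v\ra^{(\gamma+2)/2}\mu^{-1/2})}^2 + \|\nabla_v f\|^2_{\text{good}})$ and absorbed.

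Collecting everything, choosing first $\rho$ small (to kill the $\nabla_x f$ loss in the $\nabla_v$-piece against the diffusion), then $\delta$ small (to keep the norm equivalent and to control the cross terms against the newly gained $-\delta\|\nabla_x f\|^2$), one obtains a bound of the form
\beqn
\la \bar\LL f,f\ra_{\widetilde{\HH}^1_{x,v}(\mu^{-1/2})} \lesssim
- \| \la v\ra^{(\gamma+2)/2}\bar\Pi f\|_{L^2_{x,v}(\mu^{-1/2})}^2
- \delta \|\nabla_x f\|_{L^2_{x,v}(\mu^{-1/2})}^2
- \| \widetilde\nabla_v(\mu^{1/2}f)\|_{\text{weighted}}^2,
\eeqn
and since $\nabla_x f = \nabla_x \bar\Pi f$ (the kernel is $x$-independent), the right-hand side is $\lesssim -\|\bar\Pi f\|_{\widetilde{\HH}^1_{x,v}(\la v\ra^{(\gamma+2)/2}\mu^{-1/2})}^2$, which is \eqref{eq:GenCoerc}. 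The main obstacle I expect is bookkeeping the anisotropic weights: the dissipation in \eqref{eq:LLsg} is genuinely anisotropic ($\la v\ra^{\gamma/2}$ along $v$, $\la v\ra^{(\gamma+2)/2}$ transversally via $\widetilde\nabla_v$), so every commutator estimate and every absorption must respect this splitting, and one must check that the commutator $[\nabla_v,-v\cdot\nabla_x]f=-\nabla_x f$ — which comes with \emph{no} decaying weight — can indeed be swallowed; this forces the weighting $\la v\ra^{\gamma/2}$ on the $\nabla_v$ term (rather than $\mu^{-1/2}$ alone) and is precisely why the space $\HH^1_{x,v}(m)$ with the exponent $\alpha$ from \eqref{eq:alpha(m)} is used rather than the plain $H^1_{x,v}(m)$, as announced before Lemma~\ref{lem:barBB1}.
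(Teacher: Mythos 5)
Your strategy is essentially the paper's: a twisted $H^1_{x,v}$ norm with a small mixed term $\la \nabla_x \cdot , \nabla_v \cdot \ra$, the weak coercivity \eqref{eq:LLsg} as microscopic input, the commutator $[\nabla_v, v\cdot\nabla_x]=\nabla_x$ producing the $-\|\nabla_x f\|^2$ dissipation through the mixed term, and Guo-type commutator bounds for $\nabla_v$ against the collision part (the paper carries this out on $h=\mu^{-1/2}f$ via the splitting $L=A+K$, with the weight $\la v\ra^{\gamma+2}$ on the $v$-derivative and inside the mixed term). However, there is a genuine gap in your concluding step, namely the passage from the local projection $\Pi$ to the global projection $\bar\Pi$. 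Estimate \eqref{eq:LLsg} only controls $\Pi f = f - \Pi_0 f$, where $\Pi_0$ is the $x$-pointwise hydrodynamic projection, and your justification --- that ``$\bar\Pi$ and $\Pi$ coincide on functions after the $x$-average is removed'' and that ``$\nabla_x f$ has zero $v$-kernel component automatically'' --- is false: if $\bar\Pi_0 f=0$ then $\bar\Pi f=f$ while in general $\Pi f\neq f$, and $\Pi_0(\nabla_x f)=\nabla_x(\Pi_0 f)$ need not vanish. The missing ingredient, which the paper invokes twice, is Poincar\'e's inequality on $\T^3_x$: since $\bar\Pi_0 f=0$, the moments defining $\Pi_0 f$ have zero spatial average, hence $\|\Pi_0 f\|\lesssim \|\nabla_x f\|$ in the relevant weighted norms, and this macroscopic part is then absorbed by the $-\,\delta\,\|\la v\ra^{(\gamma+2)/2}\nabla_x f\|_{L^2_{x,v}(\mu^{-1/2})}^2$ gained from the mixed term; the same device is needed to handle $\Pi_0(\nabla_x f)$ in the error terms coming from the $\nabla_x$-piece and the mixed term. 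You do have the $\|\nabla_x f\|^2$ dissipation at your disposal, so the gap is reparable, but as written your final display does not yield the right-hand side of \eqref{eq:GenCoerc}.

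A secondary issue concerns the weight in your candidate norm: with $\rho\,\|\la v\ra^{\gamma/2}\nabla_v f\|^2_{L^2_{x,v}(\mu^{-1/2})}$ the functional is not equivalent to $\|\cdot\|_{\HH^1_{x,v}(\mu^{-1/2})}$, because by \eqref{eq:alpha(m)} this space carries the weight $\la v\ra^{\alpha}=\la v\ra^{\gamma+2}\succ \la v\ra^{\gamma/2}$ on $\nabla_v(\mu^{-1/2}f)$ when $\gamma\in[-3,-2)$. The paper's functional uses precisely $\la v\ra^{\gamma+2}$ on the $v$-derivative (and the weight $\la v\ra^{\gamma+2}$ inside the mixed term), which is what gives simultaneously the norm equivalence required by the statement and the clean absorption of the transport commutator $T_4$-type term against the mixed-term gain $\|\la v\ra^{(\gamma+2)/2}\nabla_x h\|^2$. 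This is easy to fix, but the equivalence with $\HH^1_{x,v}(\mu^{-1/2})$ is used later (Step 1 of the proof of Theorem~\ref{thm:S_L-inhom}), so the exact weight cannot be left unspecified.
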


\begin{proof} 
We only sketch the proof presenting the main steps, and we refer to \cite{MouNeu} for more details. We define 
$$
L h = \mu^{-1/2} \, \LL (\mu^{1/2} h). 
$$
Observe that $f = \mu^{1/2} h$ satisfies  $L h = \mu^{-1/2} \LL f$ and $\la L h, h \ra_{L^2_v} = \la \LL f, f \ra_{L^2_v(\mu^{-1/2})}$. Following \cite[Section 2]{Guo} we can decompose $L = A +K$ such that the following properties holds:

\medskip\noindent
$(i)$ Generalized coercivity estimate (see \eqref{eq:LLsg}): there holds, for some constant $\lambda >0$,
$$
\la L h, h \ra_{L^2_v} \le - \lambda \| h - \Pi_L h \|_{H^1_{v,**}}^2,
$$
where $\Pi_L$ is the projection onto $\mathrm{ker}(L)$ in $L^2_v$, and we denote 
$$
\| h  \|_{H^1_{v,**}(\omega)}^2 := \| \la v \ra^{\frac{\gamma+2}{2}} \, h \|_{L^2_v(\omega)}^2 +
\| \la v \ra^{\frac{\gamma}{2}} \, \widetilde \nabla_v h \|_{L^2_v(\omega)}^2.
$$

\medskip\noindent
$(ii)$ \cite[Lemma 5]{Guo}: For $\theta \in \R$ and $\delta>0$,  there holds
$$
\la \la v \ra^{2 \theta} K h, h \ra_{L^2_v} 
\lesssim \delta \| h \|_{H^1_{v,**}(\la v \ra^\theta)}^2
+C(\delta) \| h \|_{L^2_v(\la v \ra^\theta)}^2,
$$
and also
$$
\la \la v \ra^{2\theta} L h_1, h_2 \ra_{L^2_v} \lesssim \| h_1 \|_{H^1_{v,**}(\la v \ra^\theta)} \, \| h_2 \|_{H^1_{v,**}(\la v \ra^\theta)}.
$$

\medskip\noindent
$(iii)$ \cite[Lemma 6]{Guo}: For $\theta \in \R$ and $\eta>0$, there holds ({ for some $\lambda,C>0$})
$$
\la \la v \ra^{2\theta} \nabla_v (Ah), \nabla_v h \ra_{L^2_v}
\le - \lambda \| \nabla_v h \|_{H^1_{v,**}(\la v \ra^\theta)}^2 + \eta C\| h \|_{H^1_{v,**}(\la v \ra^\theta)}^2 + \eta^{-1} C \| \mu h \|_{L^2_v}^2,
$$
and also
$$
\la \la v \ra^{2\theta} \nabla_v (K h), \nabla_v h \ra_{L^2_v}
\lesssim \eta \| h \|_{L^2_x H^1_{v,**} (\la v \ra^{\gamma+2})}^2
+ \eta \| \nabla_v h \|_{L^2_x H^1_{v,**} (\la v \ra^{\gamma+2})}^2
+ \eta^{-1} \| \mu h \|_{L^2_{x,v}}^2.
$$

\smallskip
We now consider the inhomogeneous operator $\bar L := L - v \cdot \nabla_x$, we denote $\Pi_{\bar L}$ the projection onto $\mathrm{ker} (\bar L)$ in $L^2_{x,v}$ and we consider a solution $h$ to the evolution equation $\partial_t h = \bar L h$ with initial datum $h(0) = h_0 \in \mathrm{ker}(\bar L)^\perp$. Thanks to $(i)$ and the fact that $\nabla_x$ commutes with $\bar L$, we immediately have
$$
\frac12 \frac{d}{dt}\Big( \| h \|_{L^2_{x,v}}^2 
+  \| \nabla_x h \|_{L^2_{x,v}}^2 \Big) \le - \lambda \| h - \Pi_L h \|_{L^2_x (H^1_{v,**})}^2
 - \lambda \| \nabla_x h - \Pi_L (\nabla_x h) \|_{L^2_x (H^1_{v,**})}^2.
$$
We next look to the $v$-derivative. 
 
We first compute
$$
\ba
\frac12 \frac{d}{dt} \| \la v \ra^{\gamma + 2}  \, \nabla_v h \|_{L^2_{x,v}}^2
& = \la \la v \ra^{2(\gamma+2)} \nabla_v (Kh), \nabla_v h \ra_{L^2_{x,v}}
+ \la \la v \ra^{2(\gamma+2)} \nabla_v (Ah), \nabla_v h \ra_{L^2_{x,v}}\\
&\quad - \la \la v \ra^{2(\gamma+2)} v \cdot \nabla_x (\nabla_v h), \nabla_v h \ra_{L^2_{x,v}}
-\la \la v \ra^{2(\gamma+2)} \nabla_x h, \nabla_v h \ra_{L^2_{x,v}} \\
& =: T_1 + T_2 + T_3 + T_4.
\ea
$$
Terms $T_1$ and $T_2$ satisfy estimates of point $(iii)$ above, moreover, we easily observe that $T_3=0$ and we also get 
$$
T_4 \lesssim \eta \| \nabla_v h \|_{L^2_{x,v} (\la v \ra^{3\gamma/2 + 3})}^2 
+ \eta^{-1} \| \nabla_x h \|_{L^2_{x,v}(\la v \ra^{\gamma/2 + 1})}^2 .
$$
We know observe that
$$
\ba
&\| \mu h \|_{L^2_{x,v}} \lesssim \| h \|_{L^2_{x,v} (\la v \ra^{3\gamma/2 + 3})}, \quad
\| \nabla_v h \|_{L^2_{x,v} (\la v \ra^{3\gamma/2 + 3})} 
\lesssim \| \nabla_v h \|_{L^2_x H^1_{v,**} (\la v \ra^{\gamma+2})} , \\
&\quad
\| h \|_{L^2_x H^1_{v,**} (\la v \ra^{\gamma+2})} \lesssim 
\| h \|_{L^2_{x,v} (\la v \ra^{3\gamma/2 + 3})}
+ \| \nabla_v h \|_{L^2_x H^1_{v,**} (\la v \ra^{\gamma+2})}.
\ea
$$
Therefore, putting together previous estimates and taking $\eta >0$ small enough, we already obtain, for (other) constants $\lambda, C >0$,
$$
\ba
\frac{d}{dt} \| \la v \ra^{\gamma + 2}  \, \nabla_v h \|_{L^2_{x,v}}^2 
&\le - \lambda \| \nabla_v h \|_{L^2_x (H^1_{v,**}(\la v \ra^{\gamma+2}))}^2 
+ \eta^{-1} C  \|  h \|_{L^2_{x,v} (\la v \ra^{3\gamma/2+3})}^2 
+ \eta^{-1} C \| \nabla_x h \|_{L^2_{x,v}(\la v \ra^{\gamma/2+1})}^2.
\ea
$$
We also compute the evolution of the mixed term
$$
\ba
\frac{d}{dt} \la \la v \ra^{\gamma+2} \, \nabla_x h , \nabla_v h \ra_{L^2_{x,v}} 
&= - \| \la v \ra^{\frac{\gamma+2}{2}} \, \nabla_x h \|_{L^2_{x,v}}^2 
+ 2 \la \la v \ra^{\gamma+2}  \, \nabla_x L h , \nabla_v h \ra_{L^2_{x,v}} \\
&\quad + \la (\nabla_v \la v \ra^{\gamma+2}) \nabla_x L h , h \ra_{L^2_{x,v}},
\ea
$$
Thanks to $(i)$ and $\nabla_x L h = L(\nabla_x h - \Pi_L(\nabla_x h))$, for any $\eta>0$,  it follows that 
$$
\ba
&\la \la v \ra^{\gamma+2} \, \nabla_x L h , \nabla_v h \ra_{L^2_{x,v}}
+ \la (\nabla_v \la v \ra^{\gamma+2}) \, \nabla_x L h , h \ra_{L^2_{x,v}} \\
&\qquad \lesssim \eta^{-1} \| \nabla_x h - \Pi_L(\nabla_x h) \|_{L^2_x (H^1_{v,**} (\la v \ra^{(\gamma+2)/2}))}^2 \\
&\qquad\quad + \eta \| \nabla_v h \|_{L^2_x ( H^1_{v,**} (\la v \ra^{(\gamma+2)/2}))}^2
+\eta \| h \|_{L^2_x ( H^1_{v,**} (\la v \ra^{(\gamma+2)/2}))}^2
\ea
$$
We finally introduce the norm 
$$
\Nt h \Nt^2 := \| h \|_{L^2_{x,v}}^2 + \alpha_1 \| \nabla_x h \|_{L^2_{x,v}}^2
+ \alpha_2 \| \la v \ra^{\gamma+2} \, \nabla_v h \|_{L^2_{x,v}}^2
+ \alpha_3 \la \la v \ra^{\gamma+2} \, \nabla_x h , \nabla_v h \ra_{L^2_{x,v}},
$$
for positive constants $\alpha_i$ with $\alpha_3< 2 \sqrt{\alpha_1 \, \alpha_2}$, so that $\Nt h \Nt^2$ is equivalent to 
$$
\| h \|_{L^2_{x,v}}^2 + \| \nabla_x h \|_{L^2_{x,v}}^2 + \| \la v \ra^{\gamma+2} \nabla_v h \|_{L^2_{x,v}}^2.
$$
Observe that $\Pi_L h$ has zero mean on the torus $\T^3$ hence Poincar\'e's inequality implies
$$
\| \Pi_L h \|_{L^2_{x,v}(\omega)}^2 + \| \Pi_L h \|_{L^2_{x}(H^1_{v,**}(\omega))}^2 
\lesssim \| \nabla_x h \|_{L^2_{x,v}(\omega)}^2,
$$
and splitting $h = (h - \Pi_L h) + \Pi_L h$ we get
$$
\ba
& \|  h \|_{L^2_{x,v} (\la v \ra^{3\gamma/2+3})}^2 \lesssim 
\|  h - \Pi_L h \|_{L^2_{x,v} (\la v \ra^{3\gamma/2+3})}^2 
+ \| \nabla_x h \|_{L^2_{x,v}(\la v \ra^{3\gamma/2+3})}^2 \\
& \| h \|_{L^2_x ( H^1_{v,**} (\la v \ra^{(\gamma+2)/2}))}^2 
\lesssim \| h  - \Pi_L h\|_{L^2_x ( H^1_{v,**} (\la v \ra^{(\gamma+2)/2}))}^2
+ \| \nabla_x h \|_{L^2_{x,v}(\la v \ra^{3\gamma/2+3})}^2 .
\ea
$$
Finally, gathering previous estimates we obtain
$$
\ba
\frac{d}{dt} \Nt h \Nt^2
&\le - \lambda \| h - \Pi_L h \|_{L^2_x (H^1_{v,**})}^2
- \alpha_1 \lambda \| \nabla_x h - \Pi_L (\nabla_x h) \|_{L^2_x (H^1_{v,**})}^2 \\
&\quad -\alpha_2 \lambda  \| \nabla_v h \|_{L^2_x (H^1_{v,**}(\la v \ra^{\gamma+2}))}^2 
- \alpha_3 \| \nabla_x h \|_{L^2_{x,v}(\la v \ra^{(\gamma+2)/2})}^2\\
&\quad + \alpha_2 \eta^{-1} C \|  h - \Pi_L h\|_{L^2_{x,v} (\la v \ra^{3\gamma/2+3})}^2
+ \alpha_2 \eta^{-1} C \| \nabla_x h \|_{L^2_{x,v}(\la v \ra^{3\gamma/2+3})}^2 \\
&\quad + \alpha_2 \eta^{-1} C \| \nabla_x h\|_{L^2_{x,v} (\la v \ra^{(\gamma+2)/2})}^2 
+ \alpha_3 \eta C\| h - \Pi_L h \|_{L^2_x (H^1_{v,**} (\la v \ra^{(\gamma+2)/2}))}^2
+ \alpha_3 \eta C\| \nabla_x h \|_{L^2_{x,v} (\la v \ra^{(\gamma+2)/2})}^2 \\
&\quad + \alpha_3 \eta C\| \nabla_v h \|_{L^2_x (H^1_{v,**} (\la v \ra^{(\gamma+2)/2}))}^2
+ \alpha_3 \eta^{-1} C \| \nabla_x h - \Pi_L(\nabla_x h) \|_{L^2_x (H^1_{v,**} (\la v \ra^{(\gamma+2)/2}))}^2 .
\ea
$$

We choose the constants $\alpha_i, \eta >0$ small enough, and we get
$$
\ba
\frac{d}{dt} \Nt h \Nt^2
&\lesssim - \| h - \Pi_L h \|_{L^2_x (H^1_{v,**})}^2
 -  \alpha_1 \| \nabla_x h - \Pi_L (\nabla_x h) \|_{L^2_x (H^1_{v,**})}^2 \\
&\quad  - \alpha_3 \| \nabla_x h \|_{L^2_{x,v}(\la v \ra^{(\gamma+2)/2})}^2 
-  \alpha_2 \| \nabla_v h \|_{L^2_x (H^1_{v,**}(\la v \ra^{\gamma+2}))}^2  .
\ea
$$
Because
$\Pi_{\bar L} h = 0$, the function $\Pi_L h$ has zero mean on the torus $\T^3_x$ and Poincar\'e's inequality implies
$$
\ba
\| h  \|_{L^2_{x,v} (\la v \ra^{(\gamma+2)/2})}^2 
&\lesssim \| h - \Pi_L h \|_{L^2_{x,v} (\la v \ra^{(\gamma+2)/2})}^2 + \frac{\alpha_3}{2} \| \nabla_x h \|_{L^2_{x,v} (\la v \ra^{(\gamma+2)/2})}^2.
\ea
$$
We put together the two last estimates and we get 
$$
\ba
\frac{d}{dt} \Nt h \Nt^2
&\lesssim - \| h  \|_{L^2_{x,v} (\la v \ra^{(\gamma+2)/2})}^2
-   \| \nabla_x h  \|_{L^2_{x,v} (\la v \ra^{(\gamma+2)/2})}^2 
-   \| \nabla_v h \|_{L^2_{x,v} (\la v \ra^{3(\gamma+2)/2})}^2 
\\
&\lesssim - \Nt \la v \ra^{(\gamma+2)/2} h \Nt^2 .
\ea
$$
Coming back to the function $f = \mu^{1/2} h$ and defining 
$$
\| f \|_{\widetilde{\HH}^1_{x,v} (\mu^{-1/2})} := \Nt \mu^{-1/2} f \Nt,
$$
 we have $\partial_t f = \bar \LL f$ and 
$$
\ba
\la \bar \LL f , f \ra_{\widetilde{\HH}^1_{x,v} (\mu^{-1/2})}
= \frac{d}{dt} \| f \|_{\widetilde{\HH}^1_{x,v} (\mu^{-1/2})}^2 
\lesssim - \| f \|_{\widetilde{\HH}^1_{x,v} (\la v \ra^{(\gamma+2)/2}  \mu^{-1/2})}^2,
\ea
$$
from which \eqref{eq:GenCoerc} immediately follows. 
\end{proof}

\subsection{Weak dissipativity properties on $\bar\BB$}
We prove in this section weak dissipativity properties of $\bar \BB$ using the analogous results already proven in Lemmas \ref{lem:BB} and \ref{lem:BB2} for the homogeneous operator $\BB$.

\begin{lem}\label{lem:barBB}
Let $m$ be an admissible weight function such that $ m \succ \la v \ra^{(\gamma+3)/2} $ and $n \in \N$. There exist $M,R>0$ large enough such that $\bar \BB$ is weakly dissipative in $H^n_x L^2_{v}(m)$ in the following sense:

\medskip

$\bullet$ If $m \prec \mu^{-1/2}$, there holds
\be\label{eq:barBB-L2}
\ba
\la \bar \BB f , f \ra_{H^n_x L^2_{v}(m)} &\lesssim 
-   \| \la v \ra^{\frac{\gamma}{2}} \,   \widetilde  \nabla_v  f  \|_{H^n_x L^2_{v}(m)}^2
-  \| \la v \ra^{\frac{\gamma}{2}} \,   \widetilde  \nabla_v ( m f ) \|_{H^n_x L^2_{v}}^2
-  \| \la v \ra^{\frac{\gamma+\sigma}{2}} f  \|_{H^n_x L^2_{v}(m)}^2.
\ea
\ee

\medskip

$\bullet$ If $\mu^{-1/2} \preceq m \prec \mu^{-1}$, there holds 
\be\label{eq:barBB-L2BIS}
\ba
\la \bar \BB f , f \ra_{H^n_x L^2_{v}(m)} &\lesssim 
-  \| \la v \ra^{\frac{\gamma}{2}} \,   \widetilde  \nabla_v ( m f ) \|_{H^n_x L^2_{v}}^2
-  \| \la v \ra^{\frac{\gamma+\sigma}{2}} f  \|_{H^n_x L^2_{v}(m)}^2.
\ea
\ee

\end{lem}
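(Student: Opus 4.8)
The plan is to reduce the statement directly to the homogeneous estimates of Lemma~\ref{lem:BB}, exploiting that the free-transport operator $v\cdot\nabla_x$ commutes with $\nabla_x$, with multiplication by any function of $v$ (in particular $m$ and $\chi_R$), and with $\BB$ (which acts only on the velocity variable), and that $v\cdot\nabla_x$ is skew-symmetric for each of the seminorms defining $\|\cdot\|_{H^n_xL^2_v(m)}$.

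First I would fix a multi-index $\beta\in\N^3$ with $|\beta|\le n$. Since $m=m(v)$ and $\BB$ act only on $v$, we have $\partial_x^\beta(\bar \BB f)=\BB(\partial_x^\beta f)-v\cdot\nabla_x(\partial_x^\beta f)$, so that, writing $g=\partial_x^\beta f$,
$$
\la m\,\partial_x^\beta(\bar \BB f),\, m\,\partial_x^\beta f\ra_{L^2_{x,v}}
=\int_{\T^3}\la \BB g,g\ra_{L^2_v(m)}\,dx\;-\;\int_{\T^3\times\R^3} m^2\, v\cdot\nabla_x g\,g .
$$
The last integral vanishes because $m^2v$ does not depend on $x$: it equals $\tfrac12\int m^2\,v\cdot\nabla_x(g^2)=-\tfrac12\int(\nabla_x\cdot(m^2v))\,g^2=0$, with no boundary contribution on $\T^3$. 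Summing over $|\beta|\le n$ against the nonnegative combinatorial weights that produce the $H^n_x$ seminorms, one obtains
$$
\la \bar \BB f,f\ra_{H^n_x L^2_v(m)}=\sum_{0\le j\le n}\int_{\T^3}\la \BB(\nabla_x^j f),\nabla_x^j f\ra_{L^2_v(m)}\,dx .
$$

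Then I would apply Lemma~\ref{lem:BB} to the velocity function $x\mapsto\nabla_x^j f(x,\cdot)$ for a.e. $x$. In the case $m\prec\mu^{-1/2}$, estimate \eqref{eq:BL2} provides $M,R>0$ — a single choice, independent of the function to which it is applied, hence uniform in $j\le n$ — such that the integrand is bounded by $-\|\la v\ra^{\gamma/2}\widetilde\nabla_v(\nabla_x^jf)\|_{L^2_v(m)}^2-\|\la v\ra^{\gamma/2}\widetilde\nabla_v(m\nabla_x^jf)\|_{L^2_v}^2-\|\la v\ra^{(\gamma+\sigma)/2}\nabla_x^jf\|_{L^2_v(m)}^2$. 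Since $\widetilde\nabla_v$, multiplication by $m$, and the velocity weights $\la v\ra^{\gamma/2}$, $\la v\ra^{(\gamma+\sigma)/2}$ all commute with $\nabla_x^j$, we have $\widetilde\nabla_v(m\nabla_x^jf)=\nabla_x^j(\widetilde\nabla_v(mf))$, and so on; integrating in $x$ and summing over $0\le j\le n$ reassembles exactly the three norms on the right-hand side of \eqref{eq:barBB-L2}. The case $\mu^{-1/2}\preceq m\prec\mu^{-1}$ is identical, using \eqref{eq:BL2bis} in place of \eqref{eq:BL2}.

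The argument is essentially bookkeeping; the only point that genuinely matters — and the reason the homogeneous estimates transfer with no loss — is the skew-symmetry of $v\cdot\nabla_x$ in each $H^j_xL^2_v(m)$ seminorm, i.e. the identity $\nabla_x\cdot(m^2v)\equiv0$, which holds precisely because $m$ does not depend on $x$ and $\T^3$ has no boundary. I do not expect any substantial obstacle beyond carefully tracking these commutations.
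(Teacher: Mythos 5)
Your proof is correct and is essentially the paper's argument: the paper likewise reduces to the homogeneous Lemma~\ref{lem:BB} by using that $\bar\BB$ commutes with $\nabla_x$ and that the transport term drops out thanks to its divergence structure (skew-symmetry in each $H^j_xL^2_v(m)$ seminorm, since $m=m(v)$ and $\T^3$ has no boundary). You merely write out the bookkeeping that the paper leaves as a two-line remark.
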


\begin{proof} 
Since the operator $\bar \BB$ commutes with $\nabla_x$ we only need to treat the case $n=0$.
The proof follows the same argument as for the homogeneous case in Lemma~\ref{lem:BB} thanks to the divergence structure of the transport operator.
\end{proof}

We define the operator 
\be\label{def:barBBm}
\bar\BB_m g = m \bar\BB(m^{-1} g) = \BB_m g - v \cdot \nabla_x g,
\ee
where we recall that $\BB_m$ is defined in \eqref{eq:BBm}, as well as its formal adjoint operator $\bar \BB^*_m$ that verifies
\be\label{def:barB*m}
\ba
\bar \BB^*_m \phi &= \BB^*_m \phi + v \cdot \nabla_x \phi   ,
\ea
\ee
with $\BB^*_m$ defined in \eqref{def:B*m}. Observe that if $f$ satisfies $\partial_t f = \bar \BB f$, then $g=mf$ satisfies $\partial_t g = \bar\BB_m g$ and $\la \bar \BB f, f \ra_{\HH^1_{x,v} (m)} = \la \bar\BB_m g , g \ra_{\HH^1_{x,v}}$. 
Moreover, we have by duality
$$
\forall \, t \ge 0, \quad 
\la S_{\bar \BB_m}(t) g , \phi \ra_{H^n_x L^2_v} = \la g , S_{\bar \BB^*_m}(t) \phi \ra_{H^n_x L^2_v}.
$$

\begin{lem}\label{lem:barBB2} 
Let $m, \omega$ be admissible { polynomial} weight functions such that $m \succ \la v \ra^{(\gamma+3)/2}$, $ 1 \preceq \omega \prec m \la v \ra^{-(\gamma+3)/2}$ and $n \in \N$. 
We can choose $M,R$ large enough such that $\bar \BB^*_m$ is weakly dissipative in $H^n_x L^2_{v}$ in the sense
$$ 
\la \bar \BB^*_m \phi, \phi \ra_{H^n_x L^2_{v}(\omega) } 
\lesssim -   \| \la v \ra^{\frac{\gamma}{2}} \widetilde \nabla_v \phi \|_{H^n_x L^2_{v} (\omega)}^2 
- \| \la v \ra^{\frac{\gamma+\sigma}{2}} \phi  \|_{H^n_x L^2_{v}(\omega)}^2.
$$

\end{lem}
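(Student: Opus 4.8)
The plan is to deduce the estimate directly from the homogeneous result of Lemma~\ref{lem:BB2}(1), the only genuinely new ingredient being that the transport term in the decomposition $\bar\BB^*_m = \BB^*_m + v\cdot\nabla_x$ (see \eqref{def:barB*m}) contributes nothing. Indeed, all the coefficients of $\bar\BB^*_m$ are velocity functions ($\bar a_{ij}$, $\bar b_i$, $m$, $\chi_R$), and $v\cdot\nabla_x$ has $x$-independent coefficients, so $\bar\BB^*_m$ commutes with every $x$-derivative $\partial^\beta_x$; likewise $\widetilde\nabla_v$ commutes with $\partial^\beta_x$. Writing $\phi_\beta := \partial^\beta_x\phi$ and using that $\omega=\omega(v)$, one gets
\[
\langle\bar\BB^*_m\phi,\phi\rangle_{H^n_x L^2_v(\omega)} = \sum_{|\beta|\le n}\langle\bar\BB^*_m\phi_\beta,\phi_\beta\rangle_{L^2_{x,v}(\omega)},
\]
so it suffices to bound each term $\langle\bar\BB^*_m\psi,\psi\rangle_{L^2_{x,v}(\omega)}$.

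For each such term I would split off the transport part and note that it is skew-symmetric: since $\omega^2$ depends on $v$ only, an integration by parts in $x$ on the torus gives
\[
\langle v\cdot\nabla_x\psi,\psi\rangle_{L^2_{x,v}(\omega)} = \frac12\int_{\T^3\times\R^3}\omega^2\, v\cdot\nabla_x(\psi^2)\,dx\,dv = 0,
\]
which is precisely the divergence-structure argument already invoked for $\bar\BB$ in the proof of Lemma~\ref{lem:barBB}. Hence $\langle\bar\BB^*_m\psi,\psi\rangle_{L^2_{x,v}(\omega)} = \langle\BB^*_m\psi,\psi\rangle_{L^2_{x,v}(\omega)}$.

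The final step is to apply Lemma~\ref{lem:BB2}(1) in the velocity variable, fiberwise in $x$. The hypotheses $m\succ\langle v\rangle^{(\gamma+3)/2}$ and $1\preceq\omega\prec m\,\langle v\rangle^{-(\gamma+3)/2}$ are exactly those of that lemma, so the same constants $M,R$ give, for a.e. $x\in\T^3$,
\[
\langle\BB^*_m\psi(x,\cdot),\psi(x,\cdot)\rangle_{L^2_v(\omega)} \lesssim -\|\psi(x,\cdot)\|_{L^2_v(\omega\langle v\rangle^{\gamma/2})}^2 - \|\widetilde\nabla_v\psi(x,\cdot)\|_{L^2_v(\omega\langle v\rangle^{\gamma/2})}^2.
\]
Integrating in $x$, summing over $|\beta|\le n$ with $\psi=\phi_\beta$, and recalling that here $\sigma=0$ (both $m$ and $\omega$ are polynomial, so $\langle v\rangle^{(\gamma+\sigma)/2}=\langle v\rangle^{\gamma/2}$) together with the identities $\|\langle v\rangle^{\gamma/2}\phi\|_{H^n_x L^2_v(\omega)}^2 = \sum_{|\beta|\le n}\int_{\T^3}\|\phi_\beta(x,\cdot)\|_{L^2_v(\omega\langle v\rangle^{\gamma/2})}^2\,dx$ and the analogue with $\widetilde\nabla_v\phi$ in place of $\phi$, yields the asserted inequality. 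There is no real obstacle: this is just an $x$-localization of Lemma~\ref{lem:BB2}, and the single feature that makes it work is that the weight $\omega$ is $x$-independent, which simultaneously produces the commutation with $\nabla_x$ and the vanishing of the transport contribution.
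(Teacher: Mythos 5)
Your proposal is correct and matches the paper's argument: the paper's proof of this lemma is exactly the observation that the transport term drops out by its divergence (skew-symmetric) structure in $x$ on the torus and that $\nabla_x$ commutes with $\BB^*_m$, after which the computation of Lemma~\ref{lem:BB2} applies in the velocity variable. Your fiberwise application of Lemma~\ref{lem:BB2}(1), together with the remark that $\sigma=0$ for polynomial weights, is precisely that reduction.
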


\begin{proof}
The proof follows the same arguments as in the proof of Lemma \ref{lem:BB2}, thanks to the divergence structure of the transport operator and since $\nabla_x$ commutes with $\BB^*_m$. 
\end{proof}

We turn now to weakly dissipative properties of $\bar \BB$ in the spaces $\HH^1_{x,v} (m)$ defined in \eqref{eq:HH1xv}.

\begin{lem}\label{lem:barBB1}
Let $m$ be an admissible weight function such that $ m \succ \la v \ra^{(\gamma+3)/2} $. For any $\eta>0$, we define the norm 
$$
\| f \|_{\widetilde \HH^1_{x,v}(m)}^2 
:= \| m f \|_{L^2_{x,v}}^2 +  \|  \nabla_x (mf) \|_{L^2_{x,v}}^2 
+ \eta  \| \la v \ra^{\alpha} \,  \nabla_v (  mf) \|_{L^2_{x,v}}^2 ,
$$
and its associated scalar product $\la  \cdot , \cdot \ra_{\widetilde\HH^1_{x,v}(m)}$,
which is equivalent to the standard $\HH^1_{x,v}(m)$-norm defined in \eqref{eq:HH1xv}.
There exist $M,R,\eta>0$ such that $\bar \BB$ is weakly dissipative in $ \HH^1_{x,v}(m) $ in the sense
$$
\ba
\la \bar \BB f , f \ra_{\widetilde\HH^1_{x,v}(m)}
&\lesssim 
- \| f \|_{\widetilde\HH^1_{x,v} (m \la v \ra^{(\gamma+\sigma)/2})}^2 
- \| \la v \ra^{\frac{\gamma}{2}} \widetilde \nabla_v  (mf) \|_{L^2_{x,v}}^2 \\
&\quad
-    \| \la v \ra^{\frac{\gamma}{2}} \widetilde \nabla_v (  \nabla_x (mf) )\|_{L^2_{x,v}}^2 
-  \eta  \| \la v \ra^{\frac{\gamma}{2} + \alpha}  \, \widetilde \nabla_v (\nabla_v  (m f)) \|_{L^2_{x,v}}^2.
\ea
$$

\end{lem}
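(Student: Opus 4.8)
The plan is to write $\bar\BB = \BB - v\cdot\nabla_x$, set $g = mf$, and compute $\tfrac{d}{dt}\|f\|_{\widetilde\HH^1_{x,v}(m)}^2$ by differentiating the three pieces of the norm separately: the $L^2_{x,v}$-piece, the $\|\nabla_x(mf)\|_{L^2_{x,v}}^2$-piece, and the weighted $v$-derivative piece $\eta\|\langle v\rangle^\alpha \nabla_v(mf)\|_{L^2_{x,v}}^2$. Since $v\cdot\nabla_x$ has divergence structure in $x$, the transport term contributes nothing to the first two pieces, and they are handled exactly by Lemma~\ref{lem:barBB} (with $n=0$ and $n=1$), giving $-\|f\|_{L^2_{x,v}(m\langle v\rangle^{(\gamma+\sigma)/2})}^2 - \|\nabla_x(mf)\|_{L^2_{x,v}(m\langle v\rangle^{(\gamma+\sigma)/2})}^2$ together with the two anisotropic gradient terms $-\|\langle v\rangle^{\gamma/2}\widetilde\nabla_v(mf)\|_{L^2_{x,v}}^2$ and $-\|\langle v\rangle^{\gamma/2}\widetilde\nabla_v(\nabla_x(mf))\|_{L^2_{x,v}}^2$.

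The real work is the $v$-derivative piece. Writing $\bar\BB_m g = \BB_m g - v\cdot\nabla_x g$ as in \eqref{def:barBBm} and letting $g_\beta = \partial_{v_\beta} g$ for $|\beta|=1$, one has $\partial_{v_\beta}(\bar\BB_m g) = \BB_m g_\beta - v\cdot\nabla_x g_\beta - \partial_{x_\beta} g + (\text{commutator terms from } \bar a_{ij},\, \beta_j,\, \delta)$, exactly as in Step 2 of the proof of Lemma~\ref{lem:BB2} but with the extra transport contribution. Testing against $\langle v\rangle^{2\alpha} g_\beta$ and summing over $\beta$: the $\BB_m g_\beta$ term produces (by Step~1 of Lemma~\ref{lem:BB2}, applied with weight $\omega = \langle v\rangle^\alpha$, which is admissible polynomial and satisfies $1\preceq\langle v\rangle^\alpha \prec m\langle v\rangle^{-(\gamma+3)/2}$ since $\alpha<0$ — this is precisely why the space $\HH^1$ with the extra weight $\langle v\rangle^\alpha$, rather than plain $H^1_{x,v}$, is used) a good term $-\lambda\|\widetilde\nabla_v(\nabla_v g)\|_{L^2_{x,v}(\langle v\rangle^\alpha\langle v\rangle^{\gamma/2})}^2$ plus lower-order terms controlled by $\|g\|_{H^1_v(\langle v\rangle^\alpha\langle v\rangle^{\gamma/2})}^2$; the $v\cdot\nabla_x g_\beta$ transport term vanishes after integration by parts in $x$; the $\partial_{x_\beta} g$ cross term is estimated by Cauchy–Schwarz as $\lesssim \|\nabla_x g\|_{L^2_{x,v}(\langle v\rangle^{\gamma/2+\alpha})}\|\nabla_v g\|_{L^2_{x,v}(\langle v\rangle^{\gamma/2+\alpha})}$ — and here the choice $\alpha = \max\{\gamma+\sigma,\ \gamma/2+\sigma/4\}$ in \eqref{eq:alpha(m)} is what makes both weights $\langle v\rangle^{\gamma/2+\alpha}$ and $\langle v\rangle^{\gamma+\sigma}$ (on the $\nabla_x$-term) compatible; the commutator terms involving derivatives of $\bar a_{ij},\bar b_i$ are bounded by Lemma~\ref{lem:bar-aij} exactly as terms $T_2,T_3,T_4$ in the proof of Lemma~\ref{lem:BB2}, yielding contributions $\lesssim \|\nabla_v g\|_{L^2_{x,v}(\langle v\rangle^{\gamma/2+\alpha})}^2$ (and one term controlled after a further integration by parts, as for $U_2$ there).

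Finally one assembles: the good negative terms are
$-\lambda\big(\|g\|_{L^2_{x,v}(\langle v\rangle^{(\gamma+\sigma)/2})}^2 + \|\nabla_x g\|_{L^2_{x,v}(\langle v\rangle^{(\gamma+\sigma)/2})}^2 + \|\widetilde\nabla_v g\|_{L^2_{x,v}(\langle v\rangle^{\gamma/2})}^2 + \|\widetilde\nabla_v(\nabla_x g)\|_{L^2_{x,v}(\langle v\rangle^{\gamma/2})}^2\big) - \eta\lambda\|\widetilde\nabla_v(\nabla_v g)\|_{L^2_{x,v}(\langle v\rangle^{\gamma/2+\alpha})}^2$,
while all error terms from the $\eta$-piece are bounded by $\eta C\big(\|\nabla_v g\|_{L^2_{x,v}(\langle v\rangle^{\gamma/2+\alpha})}^2 + \|\nabla_x g\|_{L^2_{x,v}(\langle v\rangle^{\gamma/2+\alpha})}^2 + \|g\|_{L^2_{x,v}(\langle v\rangle^{(\gamma+\sigma)/2})}^2\big)$; one absorbs the $\|\nabla_v g\|$-error into the anisotropic term $\|\widetilde\nabla_v g\|_{L^2_{x,v}(\langle v\rangle^{\gamma/2})}^2$ (noting $\langle v\rangle^{\gamma/2+\alpha}\lesssim\langle v\rangle^{\gamma/2}$ since $\alpha\le 0$) after choosing $\eta$ small, and the $\|\nabla_x g\|$- and $\|g\|$-errors into the first two good terms. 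Choosing $M,R$ large (for Lemma~\ref{lem:barBB} and Step~1 of Lemma~\ref{lem:BB2}) and then $\eta$ small yields the claimed estimate, recalling $\|f\|_{\widetilde\HH^1_{x,v}(m\langle v\rangle^{(\gamma+\sigma)/2})}^2 = \|g\langle v\rangle^{(\gamma+\sigma)/2}\|_{L^2_{x,v}}^2 + \|\nabla_x g\langle v\rangle^{(\gamma+\sigma)/2}\|_{L^2_{x,v}}^2 + \eta\|\langle v\rangle^{\alpha+(\gamma+\sigma)/2}\nabla_v g\|_{L^2_{x,v}}^2$, the last being absorbed into $\eta\|\widetilde\nabla_v(\nabla_v g)\|$ and $\|\widetilde\nabla_v g\|$ terms. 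The main obstacle is the careful bookkeeping of weights so that the cross term $\langle\partial_{x_\beta} g, \langle v\rangle^{2\alpha} g_\beta\rangle$ and the $T_4$-type commutator closes — this is exactly what forces the definition of $\alpha$ in \eqref{eq:alpha(m)} and the use of $\HH^1_{x,v}$ rather than $H^1_{x,v}$.
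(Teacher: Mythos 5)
Your overall plan is indeed the paper's: treat the $L^2_{x,v}$ and $\nabla_x$ pieces with Lemma~\ref{lem:barBB}, commute $\nabla_v$ with $\bar\BB_m$, and close the transport-induced cross term $\int \nabla_x g\cdot\nabla_v g\,\la v \ra^{2\alpha}$ using the choice of $\alpha$ and the smallness of $\eta$. However, the crucial cross-term estimate is wrong as stated. In a Cauchy--Schwarz bound the two weights must multiply to $\la v \ra^{2\alpha}$; your claimed bound $\lesssim \|\nabla_x g\|_{L^2_{x,v}(\la v \ra^{\gamma/2+\alpha})}\,\|\nabla_v g\|_{L^2_{x,v}(\la v \ra^{\gamma/2+\alpha})}$ has product weight $\la v \ra^{\gamma+2\alpha}\prec\la v \ra^{2\alpha}$ (recall $\gamma<0$), so the inequality fails for large $|v|$. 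The correct split (the paper's \eqref{eq:gxgv-alpha}) puts $\la v \ra^{(\gamma+\sigma)/2}$ on $\nabla_x g$ and $\la v \ra^{2\alpha-(\gamma+\sigma)/2}$ on $\nabla_v g$, with Young factors $\eta^{-1/2}$ and $\eta^{1/2}$. This is not cosmetic: when $\alpha=\gamma+\sigma$ (the relevant case e.g.\ for $\sigma=2$ and $\gamma\in(-3,-2)$, which the lemma covers), one has $2\alpha-(\gamma+\sigma)/2=(\gamma+\sigma)/2+\alpha>\gamma/2$, so this error term cannot be absorbed into $\|\la v \ra^{\gamma/2}\widetilde\nabla_v g\|_{L^2_{x,v}}^2$ as you propose (your justification ``$\la v \ra^{\gamma/2+\alpha}\lesssim\la v \ra^{\gamma/2}$ since $\alpha\le0$'' rests on the incorrect weights); it must be absorbed into the $\eta$-weighted dissipation $-\eta\lambda\|\la v \ra^{(\gamma+\sigma)/2+\alpha}\nabla_v g\|_{L^2_{x,v}}^2$ coming from the $\BB_m(\nabla_v g)$ term, and since both carry a factor $\eta$ one needs precisely the $\eta^{\pm1/2}$ splitting (error of size $\eta^{3/2}$ against a good term of size $\eta$). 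As written, your absorption scheme does not close in that regime.

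A secondary, fixable point: for $\int(\BB_m\nabla_v g)\cdot\nabla_v g\,\la v \ra^{2\alpha}$ you invoke Step~1 of Lemma~\ref{lem:BB2} with $\omega=\la v \ra^{\alpha}$, but that lemma concerns the adjoint $\BB^*_m$ and requires $1\preceq\omega$, which fails since $\alpha<0$ (and $\la v \ra^{\alpha}$ is not an admissible weight in the paper's sense). One must instead redo the dissipativity computation for $\BB_m$ tested against $\la v \ra^{2\alpha}\nabla_v g$ directly, as the paper does, controlling the zeroth-order factor via Lemma~\ref{lem:varphi} and the explicit bounds $|\nabla_v\beta_j|+|\nabla_v\delta|\lesssim\la v \ra^{\gamma+\sigma-1}$; this computation does go through, so the mis-citation is repairable, but combined with the cross-term issue above the proposal as it stands is incomplete.
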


\begin{proof}
We remark that we have introduced the spaces \eqref{eq:HH1xv}, in which the term $\nabla_v (mf)$ has a weight $\la v \ra^{\alpha} $ with $\alpha<0$, in order to treat the terms coming from the derivative in the $v$-variable of the transport operator. { In what follows we shall denote $\lambda,C>0$ positive constants that can change from line to line.}

\smallskip

For the sake of simplicity, we shall equivalently prove that
$$
\ba
&\frac{d}{dt} \Big( \| g_{\bar \BB_m} \|_{L^2_{x,v}}^2 +  \| \nabla_x g_{\bar \BB_m} \|_{L^2_{x,v}}^2  +  \eta  \| \la v \ra^{\alpha} \nabla_v  g_{\bar \BB_m} \|_{L^2_{x,v}}^2  \Big) \\
&\quad \lesssim -  \Big( \| \la v \ra^{\frac{\gamma+\sigma}{2}} g_{\bar \BB_m} \|_{L^2_{x,v}}^2 +   \|\la v \ra^{\frac{\gamma+\sigma}{2}} \nabla_x g_{\bar \BB_m} \|_{L^2_{x,v}}^2  +  \eta  \|  \la v \ra^{\frac{\gamma+\sigma}{2} + \alpha} \, \nabla_v  g_{\bar \BB_m} \|_{L^2_{x,v}}^2  \Big)\\
&\quad\quad -  \| \la v \ra^{\frac{\gamma}{2}} \widetilde \nabla_v g_{\bar \BB_m} \|_{L^2_{x,v}}^2 
-  \| \la v \ra^{\frac{\gamma}{2}} \widetilde \nabla_v (\nabla_x g_{\bar \BB_m} ) \|_{L^2_{x,v}}^2
-  \eta \| \la v \ra^{\frac{\gamma}{2} + \alpha} \, \widetilde \nabla_v ( \nabla_v  g_{\bar \BB_m} )\|_{L^2_{x,v}}^2, 
\ea
$$
for any solution $g_{\bar \BB_m}$ to the equation $\partial_t g_{\bar \BB_m} = \bar \BB_m g_{\bar \BB_m}$, so that, with $g_{\bar \BB_m} = m f_{\bar \BB}$, $f_{\bar \BB}$ is a solution to $\partial_t f_{\bar \BB} = \bar \BB f_{\bar \BB}$. We now use the shorthand $g = g_{\bar \BB_m}$ and split the proof into three steps.

\medskip\noindent
{\it Step 1.}
We first obtain from Lemma~\ref{lem:barBB} (for $M,R>0$ large enough)
\be\label{eq:g}
\frac{d}{dt} \| g \|_{L^2_{x,v}}^2 
\lesssim -  \| \la v \ra^{\frac{\gamma}{2}}    \widetilde \nabla_v g \|_{L^2_{x,v}}^2 
-  \| \la v \ra^{\frac{\gamma+\sigma}{2}} g \|_{L^2_{x,v}}^2
\ee
and
\be\label{eq:gx}
\frac{d}{dt} \| \nabla_x g \|_{L^2_{x,v}}^2 
\lesssim -  \| \la v \ra^{\frac{\gamma}{2}} \widetilde \nabla_v  ( \nabla_x g) \|_{L^2_{x,v}}^2 
-  \| \la v \ra^{\frac{\gamma+\sigma}{2}} \nabla_x g \|_{L^2_{x,v}}^2 .
\ee

\medskip\noindent
{\it Step 2.}
We write 
$$
\frac12\frac{d}{dt} \| \la v \ra^{\alpha} \nabla_v g \|_{L^2_{x,v}}^2
= \int_{x,v} \nabla_v (\BB_m g ) \cdot \nabla_v g \, \la v \ra^{2 \alpha} 
- \int_{x,v}  \nabla_x g \cdot \nabla_v g \, \la v \ra^{2 \alpha}  .
$$
where we have
$$
\nabla_v (\BB_m g ) = \BB_m(\nabla_v g) + (\nabla_v \bar a_{ij}) \partial_{ij} g + (\nabla_v \beta_j) \partial_j g + (\nabla_v \delta - M \nabla_v \chi_R) g .
$$
We first compute 
$$
\ba
\int_{x,v} \nabla_v (\BB_m g) \cdot \nabla_v g \, \la v \ra^{2\alpha}
&= : T_1 + T_2 + T_3 + T_4 ,
\ea
$$
where
$$
\ba
&T_1 = \int (\BB_m \nabla_v g) \, \cdot \nabla_v g \, \la v \ra^{2\alpha}   , \quad
&T_2 =  \int (\nabla_v \bar a_{ij}) \, \partial_{ij} g \, \nabla_v g \, \la v \ra^{2\alpha},  \\
&T_3 =  \int (\nabla_v  \beta_j) \, \partial_j g \, \nabla_v g \, \la v \ra^{2\alpha}  , \quad
&T_4 =  \int (\nabla_v  \delta -M \nabla_v \chi_R) g \, \nabla_v g \, \la v \ra^{2\alpha}. \ea
$$
From Lemma~\ref{lem:barBB}, we have 
$$
\ba
T_1
&\le  - { \lambda} \| \la v\ra^{\frac{\gamma}{2} + \alpha} \, \widetilde \nabla_v (\nabla_v g) \|_{L^2}^2 
+  \int \{ \tilde\zeta_{m} - M\chi_R \}  \, |\nabla_v g|^{2} \, \la v \ra^{2 \alpha}  .
\ea
$$
Terms $T_{3}$ and $T_{4}$ are easy to estimate. As in the proof of Lemma~\ref{lem:varphi}, we can compute explicitly $\beta_j(v)$ and $\delta(v)$, thus we easily deduce
$$
|\nabla_v \beta_j (v)| + |\nabla_v \delta(v)| \lesssim \la v \ra^{\gamma+\sigma-1}.
$$
Therefore
$$
T_3 + T_4  \lesssim \int \{ \la v \ra^{\gamma+\sigma-1} +  \frac{M}{R} {\mathbf 1}_{R \le |v| \le 2R} \} \, |\nabla_v g|^{2} \, \la v \ra^{2 \alpha}
+ \int \{ \la v \ra^{\gamma+\sigma-1} +  \frac{M}{R} {\mathbf 1}_{R \le |v| \le 2R} \} \, g^2 \, \la v \ra^{2 \alpha}.
$$
Thanks to Lemma~\ref{lem:varphi}, for $M,R>0$ large enough, we have
$$
T_1 + T_3 + T_4  \le - { \lambda}\| \la v\ra^{\frac{\gamma}{2} + \alpha} \, \widetilde \nabla_v (\nabla_v g) \|_{L^2_{x,v}}^2 
- { \lambda}\| \la v \ra^{\frac{\gamma+\sigma}{2} + \alpha} \, \nabla_v g \|_{L^2_{x,v}}^2
+ { C}\| \la v \ra^{\frac{\gamma+\sigma-1}{2} + \alpha} \, g \|_{L^2_{x,v}}^2 .
$$
Performing an integration by parts, we first obtain 
$$
\ba
T_2 &= - \int (\nabla_v \bar b_{j}) \, \partial_j g \, \nabla_v g \, \la v \ra^{2 \alpha}
- \int (\nabla_v \bar a_{ij}) \, \partial_j g \, \partial_i \nabla_v g \, \la v \ra^{2 \alpha}
- \int (\nabla_v \bar a_{ij}) \, \partial_j g \,  \nabla_v g \, \partial_i \la v \ra^{2 \alpha}\\
& =: U+V+W.
\ea
$$
Thanks to Lemma~\ref{lem:bar-aij}, we easily have
$$
U +W \lesssim \| \la v \ra^{\frac{\gamma}{2}+\alpha} \, \nabla_v g \|_{L^2_{x,v}}^2.
$$
We make another integration by parts for $V$ (now with respect to $\nabla_v$), we get
$$
\ba
V &= \int (\Delta_v \bar a_{ij}) \, \partial_i g \, \partial_j g \,\la v \ra^{2 \alpha}
+\int (\nabla_v \bar a_{ij}) \, \partial_i g \, \partial_j \nabla_v g \,\la v \ra^{2 \alpha}
+\int (\nabla_v \bar a_{ij}) \, \partial_i g \, \partial_j  g \, \nabla_v \la v \ra^{2 \alpha},
\ea
$$
and we recognize that the middle term is equal to $-V$, so that
$$
V = \frac12 \int (\Delta_v \bar a_{ij}) \, \partial_i g \, \partial_j g \,\la v \ra^{2 \alpha}
+ \frac12 \int (\nabla_v \bar a_{ij}) \, \partial_i g \, \partial_j  g \, \nabla_v \la v \ra^{2 \alpha}
\lesssim \| \la v \ra^{\frac{\gamma}{2} + \alpha} \, \nabla_v g \|_{L^2_{x,v}}^2.
$$
We finally obtain (for $M,R>0$ large enough)
\be\label{eq:gv}
\ba
\int_{x,v} \nabla_v (\BB_m g) \cdot \nabla_v g \, \la v \ra^{2\alpha}  
&\le 
- { \lambda} \| \la v \ra^{\frac{\gamma}{2} + \alpha} \, \widetilde \nabla_v (\nabla_v g) \|_{L^2_{x,v}}^2 
- { \lambda} \| \la v \ra^{\frac{\gamma+\sigma}{2} + \alpha} \, \nabla_v g \|_{L^2_{x,v}}^2\\ 
&\quad
+  { C} \| \la v \ra^{\frac{\gamma+\sigma-1}{2} + \alpha } \, g \|_{L^2_{x,v}}^2
+  { C} \| \la v \ra^{\frac{\gamma}{2} + \alpha} \, \widetilde \nabla_v  g \|_{L^2_{x,v}}^2 .
\ea
\ee
By Cauchy-Schwarz inequality, we also get
\be\label{eq:gxgv-alpha}
\int_{x,v}  \nabla_x g \cdot \nabla_v g \, \la v \ra^{2\alpha}   
\le  C \eta^{-1/2} \| \la v \ra^{\frac{\gamma+\sigma}{2}} \nabla_x g \|_{L^2_{x,v}}^2
 + C\eta^{1/2}  \| \la v \ra^{2\alpha - \frac{\gamma+\sigma}{2}} \, \nabla_v g \|_{L^2_{x,v}}.
\ee
Remark that the first term in the right-hand side of \eqref{eq:gxgv-alpha} can be controlled by the second term in the right-hand side of \eqref{eq:gx}, as well as
$$
2\alpha - \frac{\gamma+\sigma}{2} =
\frac{\gamma}{2} \quad \text{if} \quad  \frac{\gamma}{2} + \frac{\sigma}{4} \ge \gamma+\sigma ,
$$
$$
2\alpha - \frac{\gamma+\sigma}{2} 
= \frac{\gamma+\sigma}{2} + \alpha
=\frac32(\gamma+\sigma) \quad \text{if} \quad  \frac{\gamma}{2} + \frac{\sigma}{4} < \gamma+\sigma.
$$
As a consequence, the last term in \eqref{eq:gxgv-alpha} can be controlled by the first term in the right-hand side of \eqref{eq:g} or by the second term in the right-hand-side of \eqref{eq:gv}.

\medskip\noindent
{\it Step 3.}
Putting together previous estimates, it follows that for any $\eta >0$,
$$
\ba
\frac{d}{dt} \| g \|_{\widetilde\HH^1_{x,v} }^2
&\le  
- { \lambda} \| \la v \ra^{\frac{\gamma+\sigma}{2}} \,  g \|_{L^2_{x,v}}^2 
+ \eta { C} \| \la v \ra^{\frac{\gamma+\sigma-1}{2} + \alpha}  \, g \|_{L^2_{x,v}}^2 \\
&\quad
-  { \lambda} \| \la v \ra^{\frac{\gamma}{2}} \, \widetilde \nabla_v g \|_{L^2_{x,v}}^2
+ \eta { C} \| \la v \ra^{\frac{\gamma}{2}+\alpha} \, \widetilde \nabla_v g \|_{L^2_{x,v}}^2 \\
&\quad
- { \lambda} \| \la v \ra^{\frac{\gamma+\sigma}{2}} \, \nabla_x g \|_{L^2_{x,v}}^2 
+ \eta^{1/2} { C} \| \la v \ra^{\frac{\gamma+\sigma}{2}}  \, \nabla_x g \|_{L^2_{x,v}}^2 \\
&\quad
-  { \lambda} \| \la v \ra^{\frac{\gamma}{2}} \, \widetilde \nabla_v g \|_{L^2_{x,v}}^2
-\eta { \lambda} \| \la v \ra^{\frac{\gamma+\sigma}{2} + \alpha} \, \nabla_v g \|_{L^2_{x,v}}^2
+ \eta^{3/2} { C} \| \la v \ra^{2\alpha-\frac{\gamma+\sigma}{2}} \, \nabla_v g \|_{L^2_{x,v}}^2
\\
&\quad
- { \lambda} \| \la v \ra^{\frac{\gamma}{2}} \, \widetilde \nabla_v (\nabla_x g)  \|_{L^2_{x,v}}^2
-\eta { \lambda} \| \la v \ra^{\frac{\gamma}{2} + \alpha} \, \widetilde \nabla_v (\nabla_v g)  \|_{L^2_{x,v}}^2  ,
\ea
$$
and we conclude the proof by taking $\eta>0$ small enough.
\end{proof}

\begin{cor}\label{cor:SB-inhom}
Let $m_0,m_1$ be admissible weight functions such that $m_1 \succ m_0 \succ \la v \ra^{(\gamma+3)/2}$.  
There hold 
\be\label{eq:BBdecayL2-nonH}
\| S_{\bar \BB}(t) \|_{H^2_x L^2_{v}(m_1) \to H^2_x L^2_{v}(m_0)} \lesssim \Theta_{m_1,m_0}(t), \quad \forall \, t \ge 0 ,
\ee
\be\label{eq:BBdecayH1-nonH}
\| S_{\bar \BB}(t) \|_{\HH^1_{x,v}(m_1) \to  \HH^1_{x,v}(m_0)} \lesssim \Theta_{m_1,m_0}(t), \quad \forall \, t \ge 0  ,
\ee
Let $m_0,m_1,m$ be admissible { polynomial} weight functions such that $m \succeq m_1 \succ m_0 \succ \la v \ra^{(\gamma+3)/2}$. Then there holds
\be\label{eq:BB*decayL2-nonH}
\| S_{\bar \BB^*_m}(t) \|_{H^2_x L^2_{v}(\omega_1) \to H^2_x L^2_{v}(\omega_0)} \lesssim \Theta_{m_1,m_0}(t), \quad \forall \, t \ge 0  ,
\ee
where $\omega_1 := m/m_0$ and $\omega_0 := m/m_1$.

\end{cor}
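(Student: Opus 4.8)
The plan is to repeat, in the spatially inhomogeneous setting, the interpolation argument used in the proof of Lemma~\ref{lem:SBdecay}, now feeding it with the weak dissipativity estimates of Lemmas~\ref{lem:barBB}, \ref{lem:barBB1} and \ref{lem:barBB2} in place of Lemmas~\ref{lem:BB} and \ref{lem:BB2}. The structural point that makes this possible is that in all three functional frameworks the weight $m$ acts only on the velocity variable and $\nabla_x$ commutes with $\bar\BB$, so that the contributions of the transport operator $-v\cdot\nabla_x$ to the time derivative of each norm have already been absorbed inside those lemmas. Consequently the abstract scheme of Lemma~\ref{lem:SBdecay}---uniform boundedness in the large space $m_1$, $\la v\ra^{(\gamma+\sigma)/2}$-weighted dissipation in the small space $m_0$, then the interpolation inequality \eqref{eq:interpolation} and optimization over the velocity truncation radius $R$---carries over unchanged.

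Concretely, for \eqref{eq:BBdecayL2-nonH} I would set $X(m)=H^2_x L^2_v(m)$. Lemma~\ref{lem:barBB} with $n=2$ and weight $m_1$ shows that $\|S_{\bar\BB}(t)f_0\|_{X(m_1)}$ is nonincreasing, while the same lemma with $n=2$ and weight $m_0$ gives $\frac{d}{dt}\|S_{\bar\BB}(t)f_0\|_{X(m_0)}^2\lesssim-\|\la v\ra^{(\gamma+\sigma)/2}S_{\bar\BB}(t)f_0\|_{X(m_0)}^2$. Since the interpolation inequality \eqref{eq:interpolation} involves only the $v$-weight, it applies separately to each component $\nabla_x^j$, $0\le j\le 2$, and we recover exactly the differential inequality analyzed in Lemma~\ref{lem:SBdecay} with $\eps_R=\la R\ra^{\gamma+\sigma}$ and $\theta_R/\eps_R=m_0^2(R)/m_1^2(R)$; integrating and optimizing in $R$ as in Cases~1--4 there yields $\|S_{\bar\BB}(t)\|_{X(m_1)\to X(m_0)}\lesssim\Gamma_{m_1,m_0}(t)\lesssim\Theta_{m_1,m_0}(t)$. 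Estimate \eqref{eq:BBdecayH1-nonH} is identical with $X(m)=\HH^1_{x,v}(m)$ endowed, for $\eta>0$ small, with the equivalent norm $\|\cdot\|_{\widetilde\HH^1_{x,v}(m)}$ of Lemma~\ref{lem:barBB1}, which for the weight $m_0$ provides precisely the dissipation $\lesssim-\|\cdot\|_{\widetilde\HH^1_{x,v}(m_0\la v\ra^{(\gamma+\sigma)/2})}^2$ needed to run the interpolation; the anisotropic velocity weight $\la v\ra^{\alpha}$ is a fixed factor and is thus harmless in the $|v|=R$ splitting. Finally, \eqref{eq:BB*decayL2-nonH} follows the same way from Lemma~\ref{lem:barBB2} with $n=2$, applied to the weights $\omega_1=m/m_0\succ\omega_0=m/m_1\succ1$; since the gap is $\omega_1/\omega_0=m_1/m_0$ one has $\Theta_{\omega_1,\omega_0}(t)=\Theta_{m_1,m_0}(t)$, exactly as in the proof of \eqref{eq:SB*-L2}.

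I expect the only real work to be bookkeeping rather than a genuine difficulty: one must check that inequality \eqref{eq:interpolation} survives componentwise in the $H^2_x L^2_v$ and $\HH^1_{x,v}$ norms---it does, because the extra $x$-derivatives commute with everything and the $v$-weight $\la v\ra^{\alpha}$ is untouched by the velocity truncation---and that the dissipative right-hand sides of Lemmas~\ref{lem:barBB}, \ref{lem:barBB1} and \ref{lem:barBB2} indeed control $\la R\ra^{\gamma+\sigma}$ times the ambient norm on $\{|v|\le R\}$, which they do since each contains a term of the form $\|\la v\ra^{(\gamma+\sigma)/2}\,\cdot\,\|^2$ in the corresponding norm. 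Once these points are recorded, the $R$-optimization is word-for-word that of Cases~1--4 in Lemma~\ref{lem:SBdecay}, and no new estimate is required.
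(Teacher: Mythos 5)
Your proposal is correct and follows essentially the same route as the paper: the authors' proof of Corollary~\ref{cor:SB-inhom} simply invokes the argument of Lemma~\ref{lem:SBdecay}, fed with the weak dissipativity estimates of Lemmas~\ref{lem:barBB}, \ref{lem:barBB2} and \ref{lem:barBB1}, exactly as you do. Your additional remarks (commutation of $\nabla_x$ with $\bar\BB$, the purely velocity-dependent weight making the interpolation inequality \eqref{eq:interpolation} work componentwise, and $\Theta_{\omega_1,\omega_0}=\Theta_{m_1,m_0}$ for the adjoint case) are precisely the bookkeeping the paper leaves implicit.
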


\begin{proof}
The proof follows the same arguments of Lemma~\ref{lem:SBdecay}, using the weakly dissipative estimates of Lemmas~\ref{lem:barBB}, \ref{lem:barBB2} and \ref{lem:barBB1}.
\end{proof}

\subsection{Regularisation properties of $S_{\bar\BB}$ and $(\AA S_{\bar\BB})^{(*n)}$}
We start proving regularisation properties of the semigroup $S_{\bar \BB}$ in some large weighted Lebesgue and Sobolev spaces in the spirit of  H\'erau's quantitative version \cite{Herau} of the H\"ormander hypoellipticity property of the kinetic Fokker-Planck equation.

\begin{lem}\label{lem:reg}
Let $m, m_1$ be admissible polynomial weight functions such that $ \la v \ra^{3/2} \prec m_1 \prec m $ with $m_1 \prec \mu^{-1/2}$. Then the following regularity estimates hold:

\medskip
$(i)$ 
For any $n \in \N^*$, there holds 
\be\label{eq:BB-H1toH2-nonH}  
\forall\, t > 0, \qquad
\| S_{\bar \BB}(t)  \|_{L^{2}_{x,v}(m) \to H^n_{x,v} (m_1 \la v \ra^{\gamma/2})} \lesssim  \frac{\Theta_{m, m_1}(t)}{t^{3n/2} \wedge 1}.
\ee

\medskip
$(ii)$ 
For any $\ell \in \N$, there holds 
\be\label{eq:BB-H-1toL2-nonH}  
\forall\, t > 0, \qquad
\| S_{\bar \BB}(t)  \|_{H^\ell_x (H^{-1}_{v,*}(m)) \to H^\ell_{x} L^2_v (m_1 \la v \ra^{\gamma/2})} \lesssim  \frac{\Theta_{m, m_1}(t)}{t^{1/2} \wedge 1}.
\ee

\end{lem}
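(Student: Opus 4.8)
The strategy is a kinetic hypoelliptic regularization argument à la Hérau: one builds a twisted functional on the solution $g = mf_{\bar\BB}$ (or $mf$) of $\partial_t g = \bar\BB_m g$ involving, in addition to $\|g\|^2$, the mixed and pure velocity derivatives with well-chosen powers of $t$, and shows it is monotone. First I would treat the short-time regularization $t\in(0,1]$. For part $(i)$ with $n=1$, consider the functional
\[
\NNN(t) := \|g\|_{L^2_{x,v}}^2 + a\, t\, \|\la v\ra^{\alpha}\nabla_v g\|_{L^2_{x,v}}^2 + b\, t^2 \la \la v\ra^{2\alpha}\nabla_x g,\nabla_v g\ra_{L^2_{x,v}} + c\, t^3 \|\nabla_x g\|_{L^2_{x,v}}^2,
\]
with small constants $a,b,c>0$ satisfying $b^2<ac$ so that $\NNN(t)\gtrsim \|g\|^2 + t\|\nabla_v g\|^2 + t^3\|\nabla_x g\|^2$ up to weights. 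Differentiating, using the dissipativity estimates for $\bar\BB_m$ from Lemma~\ref{lem:barBB} and Lemma~\ref{lem:barBB1} for the $\nabla_v$ part, that $\nabla_x$ commutes with $\BB_m$, and the crucial commutator identity $[\partial_{x_i},v\cdot\nabla_x]=0$ while $[\partial_{v_i},v\cdot\nabla_x]=\partial_{x_i}$ (which is exactly what creates the $t^2$ mixed term and forces the $t^3$ scaling on $\nabla_x$), all cross terms are absorbed for $a,b,c$ small. This yields $\NNN(t)\lesssim \NNN(0)=\|g(0)\|^2$, hence $\|S_{\bar\BB}(t)\|_{L^2_{x,v}(m)\to H^1_{x,v}(m_1\la v\ra^{\gamma/2})}\lesssim t^{-3/2}$ for $t\le 1$ (the weight downgrade $m\to m_1\la v\ra^{\gamma/2}$ being paid by the gain $\la v\ra^{\gamma+\sigma}$, resp.\ $\la v\ra^{\gamma/2+\alpha}$, in the dissipation terms, exactly as in Lemma~\ref{lem:SBreg}). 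Higher $n$ follows by iterating the scheme on $\nabla_x^j g$ with the appropriate powers $t^{3n/2}$, or by a semigroup composition $S_{\bar\BB}(t)=S_{\bar\BB}(t/n)^{\circ n}$ combined with the weight-loss at each step.

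For the long-time behaviour $t\ge 1$ I would simply compose: write $S_{\bar\BB}(t)=S_{\bar\BB}(t-1)\,S_{\bar\BB}(1)$, apply the short-time regularization at time $1$ to go from $L^2_{x,v}(m)$ (resp.\ $H^\ell_x(H^{-1}_{v,*}(m))$) into $H^n_{x,v}(\tilde m)$ (resp.\ $H^\ell_x L^2_v(\tilde m)$) for an intermediate admissible polynomial weight $m\succ\tilde m\succ m_1$, then apply the decay estimate \eqref{eq:BBdecayL2-nonH} of Corollary~\ref{cor:SB-inhom} on $[t-1,t]$ (which commutes with $\nabla_x$) to obtain the factor $\Theta_{\tilde m,m_1}(t-1)\lesssim\Theta_{m,m_1}(t)$; the $(t^{3n/2}\wedge 1)$ and $(t^{1/2}\wedge 1)$ denominators become harmless constants. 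The submultiplicativity $\Theta_{m,m_1}^{-1}(t)\lesssim\Theta_{m,m_1}^{-1}(t-1)\Theta_{m,m_1}^{-1}(1)$ recorded in the excerpt is what makes this gluing legitimate for both the polynomial and the stretched-exponential cases.

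For part $(ii)$, the negative-order estimate \eqref{eq:BB-H-1toL2-nonH}, I would proceed exactly as in the proof of Lemma~\ref{lem:SBreg}: prove by a twisted-functional/short-time argument the dual regularization $\|S_{\bar\BB_m^*}(t)\|_{L^2_{x,v}(\omega)\to H^1_x(H^1_{v,*}(\omega_0))}\lesssim \Theta_{m,m_1}(t)/(t^{1/2}\wedge 1)$ using Lemma~\ref{lem:barBB2} (the $x$-derivatives commuting with $\bar\BB_m^*$, so only the single extra velocity derivative needs the $t$-weighting, whence the milder $t^{1/2}$), with $\omega_0=1$, $\omega=m/(m_1\la v\ra^{\gamma/2})$; then dualize using $\la S_{\bar\BB_m}(t)g,\phi\ra_{H^\ell_x L^2_v}=\la g,S_{\bar\BB_m^*}(t)\phi\ra_{H^\ell_x L^2_v}$ and the duality definition of $H^\ell_x(H^{-1}_{v,*})$, and finally return to $f=m^{-1}g$. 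The main obstacle is the first step of part $(i)$: choosing the constants $a,b,c$ (and $M,R,\eta$ inherited from Lemma~\ref{lem:barBB1}) so that the cascade of cross terms generated by the transport commutators and by the anisotropic weights $\la v\ra^{2\alpha}$ — in particular the term $\int \nabla_x g\cdot\nabla_v g\,\la v\ra^{2\alpha}$ and the $\Delta_v\bar a_{ij}$ error terms — are genuinely absorbed by the good dissipation $\|\la v\ra^{\gamma/2+\alpha}\widetilde\nabla_v(\nabla_v g)\|^2$, $\|\la v\ra^{\gamma/2}\widetilde\nabla_v(\nabla_x g)\|^2$ and the zeroth-order terms; this is bookkeeping-heavy but structurally identical to Hérau's argument and to Lemma~\ref{lem:barBB1} already carried out in the excerpt.
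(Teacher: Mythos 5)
Your part $(i)$ is essentially the paper's proof: the authors also run a H\'erau-type argument on $g^1_t=m_1 f_t$, $g^0_t=m_1\la v\ra^{\gamma/2}f_t$ with the functional $\FF(t)=\|g^1_t\|^2+\alpha_1 t\|\nabla_v g^0_t\|^2+\alpha_2 t^2\la\nabla_x g^0_t,\nabla_v g^0_t\ra+\alpha_3 t^3\|\nabla_x g^0_t\|^2$ (zeroth-order term in the stronger weight, derivatives in the weaker one, rather than your $\la v\ra^{\alpha}$, $\la v\ra^{2\alpha}$ placement), show $\frac{d}{dt}\FF\le 0$ on $(0,1]$ using Lemmas~\ref{lem:barBB} and \ref{lem:barBB1}, iterate for $n\ge 2$, and glue with \eqref{eq:BBdecayL2-nonH} for $t\ge 1$ exactly as you describe.

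For part $(ii)$, however, your justification contains a genuine gap. You claim that since $\nabla_x$ commutes with $\bar\BB^*_m$, ``only the single extra velocity derivative needs the $t$-weighting,'' i.e.\ a functional of the type $\|\phi_t\|^2_{L^2(\omega_1)}+\eta t\|\nabla_v\phi_t\|^2_{L^2(\omega_0)}$ as in the homogeneous Lemma~\ref{lem:SBreg} would suffice. This fails: the adjoint still contains the transport term $+\,v\cdot\nabla_x$, and the commutator $[\nabla_v,\,v\cdot\nabla_x]=\nabla_x$ makes the evolution of $t\|\nabla_v\phi_t\|^2$ produce a term $t\la\nabla_x\phi_t,\nabla_v\phi_t\ra$. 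There is no dissipation in $x$ coming from Lemma~\ref{lem:barBB2}, and since the datum $\phi$ is only in $L^2_{x,v}(\omega)$ you cannot bound $\|\nabla_x\phi_t\|$ by commuting $\nabla_x$ through the semigroup; hence this cross term cannot be absorbed and the simple two-tier functional does not close. The commutation of $\nabla_x$ with $\bar\BB^*_m$ only serves to reduce $\ell\ge 1$ to $\ell=0$. The paper instead runs the \emph{full} three-tier hierarchy on the adjoint, $\RR(t)=\|\phi_t\|^2_{L^2(\omega_1)}+a_1t\|\nabla_v\phi_t\|^2+a_2t^2\la\nabla_x\phi_t,\nabla_v\phi_t\ra+a_3t^3\|\nabla_x\phi_t\|^2$ (the $t^2$ mixed term providing the needed $-t^2\|\nabla_x\phi_t\|^2$), and then reads the milder $t^{-1/2}$ rate off the $t^1$-weighted term only, before dualizing as you propose. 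The fix is thus available within your own scheme from part $(i)$, but as written the argument for \eqref{eq:BB-H-1toL2-nonH} would not go through.
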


\begin{proof} We split the proof into two steps.

\medskip\noindent
{\it Step 1. Proof of (i).} 
We only prove \eqref{eq:BB-H1toH2-nonH} in the case $n=1$, the other cases can be obtained by iterating the case $n=1$.  In what follows we shall denote $\lambda,C>0$ positive constants that can change from line to line. 

Let us denote $m_0 := m_1 \la v \ra^{\gamma/2}$ and $f_t = \SS_{\bar \BB}(t) f$. Define $g^0_t = m_0 f_t$ and $g^1_t = m_1 f_t$, which verify $g^0_t = S_{\bar \BB_{m_0}}(t) g^0$ and $g^1_t = S_{\bar \BB_{m_1}} (t) g^1$. We define the functional
$$
\FF(t) := \| g^1_t \|_{L^2_{x,v}}^2 + \alpha_1\, t \,\| \nabla_v g^0_t \|_{L^2_{x,v}}^2 + \alpha_2 \,t^2 \, \la \nabla_x g^0_t, \nabla_v g^0_t \ra_{L^2_{x,v}} + \alpha_3\, t^3 \, \| \nabla_x g^0_t \|_{L^2_{x,v}}^2,
$$
and choose $\alpha_i$, $i=1,2,3$ such that $0<\alpha_3 \le \alpha_2 \le \alpha_1 \le 1$ and $\alpha_2^2 \le  \alpha_1 \alpha_3$. We already observe that we have the following lower bounds 
\be\label{eq:BddF1}
\forall\, t \in [0,1], \quad
 \FF(t) \gtrsim \| g^1_t \|_{L^2_{x,v}}^2 + \alpha_3 \, t^3 \,  \| \nabla_{x,v} g^0_t \|^2_{L^2_{x,v}}
 \gtrsim  \alpha_3 t^3 \,  \| g^0_t \|^2_{H^1_{x,v}},
\ee
and also
\be\label{eq:BddF2}
\forall\, t \in [0,1], \quad
 \FF(t) \gtrsim \| g^1_t \|_{L^2_{x,v}}^2 + \alpha_1 \, t \,  \| \nabla_{v} g^0_t \|^2_{L^2_{x,v}}
 \gtrsim  \alpha_1 t \,  \| g^0_t \|^2_{L^2_{x}(H^1_v)}
  \gtrsim  \alpha_1 t \,  \| g^0_t \|^2_{L^2_{x}(H^1_{v,*})},
\ee
where we have used the embedding $L^2_{x}(H^1_{v}) \subset L^2_{x}(H^1_{v,*}) $ in the last inequality.

We derive the functional $\FF$ in time to obtain
$$
\ba
\frac{d}{dt}\, \FF(t)
&= \frac{d}{dt} \| g^1_t \|_{L^2_{x,v}}^2
+ \alpha_1 \,\| \nabla_v g^0_t \|_{L^2_{x,v}}^2
+ \alpha_1\, t \,\frac{d}{dt}\| \nabla_v g^0_t \|_{L^2_{x,v}}^2 \\
&\quad
+ 2 \alpha_2\, t\,  \la \nabla_x g^0_t , \nabla_v g^0_t \ra_{L^2_{x,v}}
+ \alpha_2\, t^2\, \frac{d}{dt} \la \nabla_x g^0_t , \nabla_v g^0_t \ra_{L^2_{x,v}} \\
&\quad
+ 3\alpha_3 \,t^2 \, \| \nabla_x g^0_t \|_{L^2_{x,v}}^2
+ \alpha_3 \,t^3 \, \frac{d}{dt} \| \nabla_x g^0_t \|_{L^2_{x,v}}^2.
\ea
$$
Recall that $\bar \BB_m$ is defined in \eqref{def:barBBm}, so that we compute
$$
\ba
&\frac{d}{dt} \la \nabla_x g^0 , \nabla_v g^0 \ra_{L^2_{x,v}} 
= \int \nabla_x ({ \bar \BB_m} g^0) \cdot \nabla_v g^0 + \nabla_v ({ \bar \BB_m} g^0) \cdot \nabla_x g^0  \\ 
&\quad = \int \bar a_{ij} \partial_{ij} ( \nabla_x g^0) \, \nabla_v g^0
+ \beta_j \, \partial_j ( \nabla_x g^0) \, \nabla_v g^0
+ (\delta - M \chi_R) \nabla_x g^0 \, \nabla_v g^0  - v \cdot \nabla_x (\nabla_x g^0) \, \nabla_v g^0\\
&\quad\quad + \int \bar a_{ij} \partial_{ij} (\nabla_v g^0) \, \nabla_x g^0
+ \beta_j \, \partial_j (\nabla_v g^0) \, \nabla_x g^0
+ (\delta - M \chi_R) \nabla_v g^0 \, \nabla_x g^0  - v \cdot \nabla_x (\nabla_v g^0) \, \nabla_x g^0\\
&\quad\quad + \int (\nabla_v \bar a_{ij}) \partial_{ij} g^0 \, \nabla_x g^0
+ (\nabla_v \beta_j) \, \partial_j g^0 \, \nabla_x g^0
+ \nabla_v (\delta - M \chi_R) g^0 \, \nabla_x g^0  - |\nabla_x g^0|^2.
\ea
$$
Gathering terms and integrating by parts in last expression, we obtain (with the same type of arguments as in step 2 of Lemma~\ref{lem:barBB1})
$$
\ba
\frac{d}{dt} \la \nabla_x g^0 , \nabla_v g^0 \ra_{L^2_{x,v}}
&= -2 \int \bar a_{ij} \partial_i (\nabla_x g^0 ) \, \partial_j (\nabla_v g^0_v) 
+ \int \{ -\partial_j \beta_j + \bar c + 2 \delta - 2M\chi_R  \} \, \nabla_x g^0 \, \nabla_v  g^0   \\
&\quad + \int \nabla_v ( \beta_j - \bar b_j) \, \partial_j g^0 \, \nabla_x g^0
- \| \nabla_x g^0 \|_{L^2_{x,v}}^2 .
\ea
$$
From that equation, we deduce 
\be\label{eq:gxgv}
\ba
\frac{d}{dt} \la \nabla_x g^0_t , \nabla_v g^0_t \ra_{L^2_{x,v}}
&\le C \| \la v \ra^{\frac{\gamma}{2}} \widetilde \nabla_v (\nabla_x g^0_t) \|_{L^2_{x,v}}
\, \| \la v \ra^{\frac{\gamma}{2}} \widetilde \nabla_v (\nabla_v g^0_t) \|_{L^2_{x,v}}\\
&\quad + C \| \la v \ra^{\frac{\gamma}{2}}  \nabla_x g^0_t \|_{L^2_{x,v}}
\, \| \la v \ra^{\frac{\gamma }{2}}  \nabla_v g^0_t \|_{L^2_{x,v}}
 - \| \nabla_x g^0_t \|_{L^2_{x,v}}^2.
\ea
\ee
Recall that from Lemma \ref{lem:barBB}, we already have
\be\label{eq:g1}
\frac{d}{dt} \| g^1_t \|_{L^2_{x,v}}^2 \le - { \lambda} \| \la v \ra^{\frac{\gamma}{2}} \widetilde \nabla_v g^1_t \|_{L^2_{x,v}}^2 - { \lambda} \| \la v \ra^{\frac{\gamma }{2}} g^1_t \|_{L^2_{x,v}}^2 .
\ee
Moreover, thanks to the proof of Lemma~\ref{lem:barBB1}, we get
\be\label{eq:g0v}
\ba
\frac{d}{dt} \| \nabla_v g^0_t \|_{L^2_{x,v}}^2
& \le - { \lambda} \| \la v \ra^{\frac{\gamma}{2}} \widetilde \nabla_v (\nabla_v g^0_t) \|_{L^2_{x,v}}^2
- { \lambda} \| \la v \ra^{\frac{\gamma}{2}}  \nabla_v g^0_t \|_{L^2_{x,v}} \\
&\quad + { C}\| \la v \ra^{\frac{\gamma-1}{2}}  g^0_t \|_{L^2_{x,v}}^2
+ { C} \| \la v \ra^{\frac{\gamma}{2}} \widetilde \nabla_v  g^0_t\|_{L^2_{x,v}}^2 \\
&\quad + { C} \| \nabla_x g^0_t \|_{L^2_{x,v}} \, \| \nabla_v g^0_t \|_{L^2_{x,v}}.
\ea
\ee
Using Lemma \ref{lem:barBB} and the fact that $\nabla_x$ commutes with $\bar \BB$, we also have 
\be\label{eq:g0x}
\frac{d}{dt} \| \nabla_x g^0_t \|_{L^2_{x,v}}^2 \le - { \lambda} \| \la v \ra^{\frac{\gamma}{2}} \widetilde \nabla_v (\nabla_x g^0_t) \|_{L^2_{x,v}}^2 - { \lambda} \| \la v \ra^{\frac{\gamma}{2}}  \nabla_x g^0_t \|_{L^2_{x,v}}^2.
\ee

Let us denote $ D_1 := \lambda (\| \la v \ra^{\frac{\gamma}{2}} \widetilde \nabla_v g^1_t \|_{L^2_{x,v}}^2 + \| \la v \ra^{\frac{\gamma }{2}} g^1_t \|_{L^2_{x,v}}^2) $ the absolute value of the dissipative terms in \eqref{eq:g1}, $ D_2 := \lambda (\| \la v \ra^{\frac{\gamma}{2}} \widetilde \nabla_v (\nabla_v g^0_t) \|_{L^2_{x,v}}^2+ \| \la v \ra^{\frac{\gamma}{2}}  \nabla_v g^0_t \|_{L^2_{x,v}})$ the absolute value of the dissipative 
terms in \eqref{eq:g0v}, $ D_3 := \| \nabla_x g^0_t \|_{L^2_{x,v}}^2$ the absolute value of the dissipative terms in \eqref{eq:gxgv}, and finally $ D_4 := \lambda(\| \la v \ra^{\frac{\gamma}{2}} \widetilde \nabla_v (\nabla_x g^0_t) \|_{L^2_{x,v}}^2 + \| \la v \ra^{\frac{\gamma}{2}}  \nabla_x g^0_t \|_{L^2_{x,v}}^2) $ the absolute value of the dissipative terms in \eqref{eq:g0x}. Observe that
$$
\|  \nabla_v g^0_t \|_{L^2_{x,v}}^2
+ \| \la v \ra^{\frac{\gamma}{2}} \widetilde \nabla_v g^0_t  \|_{L^2_{x,v}}^2
\lesssim D_1.
$$
Gathering estimates \eqref{eq:gxgv}, \eqref{eq:g1}, \eqref{eq:g0v} and \eqref{eq:g0x}, we obtain, for any $t \in (0,1]$,
$$
\ba
\frac{d}{dt} \FF(t) 
&\le (-1 + C\alpha_1 + C \alpha_1 t  ) D_1 + (C\alpha_1 t + C\alpha_2 t + C\alpha_2 t^2) \, D_1^{1/2} \, D_3^{1/2} \\
&\quad - \alpha_1 t D_2   - \alpha_2 t^2 \, D_3 
+ C \alpha_2 t^2 \, D_2^{1/2} \, D_4^{1/2} + C \alpha_3 t^2 \, D_3 - \alpha_3 t^3 \, D_4\\
&\le (-1 + C\alpha_1 ) D_1 + C \alpha_1 t  \, D_1^{1/2} \, D_3^{1/2} \\
&\quad - \alpha_1 t D_2   + (-\alpha_2 + C \alpha_3) t^2 \, D_3 
+ C \alpha_2 t^2 \, D_2^{1/2} \, D_4^{1/2}  - \alpha_3 t^3 \, D_4.
\ea
$$
Using Cauchy-Schwarz inequality we first get, for some $0 < \alpha_4 < \alpha_3$ to be chosen later, 
$$
\alpha_1 t  \, D_1^{1/2} \, D_3^{1/2} \lesssim \frac{\alpha_1^2}{\alpha_3} \, D_1 + \alpha_3 t^2 D_3, \quad
\alpha_2 t^2 \, D_2^{1/2} \, D_4^{1/2} \lesssim \frac{\alpha_2^2}{\alpha_4} \, t D_2 + \alpha_4 t^3 D_4,
$$
from which it follows, for $t \in (0,1]$,
$$
\frac{d}{dt} \FF(t) \le (-1 + C\alpha_1  + C\frac{\alpha_1^2}{\alpha_3} ) D_1 
+ t(-\alpha_1  + C\frac{\alpha_2^2}{\alpha_4} ) D_2
+ t^2 (-\alpha_2 + C\alpha_3   ) D_3 
+ t^3(- \alpha_3 +  C\alpha_4) D_4.
$$
Let $\e \in (0,1)$. We choose $\alpha_1 = \e > \alpha_2 = \e^{3/2} > \alpha_3 = \e^{5/3} > \alpha_4 = \e^{11/6}$ so that $\alpha_2^2 \le \alpha_1 \, \alpha_3 $. Taking $\e >0$ small enough, we easily conclude to
\be\label{eq:dF/dt<0}
\forall \, t \in (0,1], \quad \frac{d}{dt} \FF(t) \le 0.
\ee
This implies, coming back to the function $f_t = S_{\bar \BB}(t) f$ and using \eqref{eq:BddF1},
$$
\forall \, t \in (0,1],\quad   t^3  \,  \| S_{\bar \BB}(t) f  \|^2_{H^1_{x,v}(m_1 \la v \ra^{\gamma/2})} \lesssim  \| f \|^2_{L^2_{x,v}(m_1)},
$$
which already gives \eqref{eq:BB-H1toH2-nonH} for small times $t \in (0,1]$. For large times $t>1$ and $m \succ m_1$ (recall that $m_1 \la v \ra^{\gamma/2} \succ \la v \ra^{(\gamma+3)/2}$) we write, using first the last estimate and next \eqref{eq:BBdecayL2-nonH},
$$
\ba
\| S_{\bar \BB}(t) f \|_{H^1_{x,v} (m_1 \la v \ra^{\gamma/2})} 
&= \| S_{\bar \BB}(1) (S_{\bar \BB}(t-1) f) \|_{H^1_{x,v} (m_1 \la v \ra^{\gamma/2})} \\ 
& \lesssim \| S_{\bar \BB}(t-1) f \|_{L^2_{x,v} (m_1)} \\
& \lesssim \Theta_{m,m_1}(t) \| f \|_{L^2_{x,v}(m)} ,
\ea
$$
which completes the proof of \eqref{eq:BB-H1toH2-nonH}. 
In a similar way, using \eqref{eq:dF/dt<0} together with \eqref{eq:BddF2} (instead of \eqref{eq:BddF1}) and \eqref{eq:BBdecayL2-nonH}, we obtain
\be\label{eq;L2vtoH1v*}
\| S_{\bar \BB}(t) \|_{L^2_x L^2_v (m) \to L^2_x (H^1_{v,*}(m_1 \la v \ra^{\gamma/2}))} \lesssim \frac{\Theta_{m_1,m_0}(t)}{t^{1/2} \wedge 1}, \quad \forall\, t >0.
\ee

\medskip\noindent
{\it Step 2. Proof of (ii).} We only need to prove \eqref{eq:BB-H-1toL2-nonH} for $\ell=0$, since the operators $\nabla_x$ and $\bar \BB$ commute.

Define $\omega_0 := 1$, $\omega_1 := \la v \ra^{|\gamma|/2}$ and $\omega := m/(m_1 \la v \ra^{\gamma/2})$ so that $1 \prec \omega \prec m\la v \ra^{-(\gamma+3)/2}$.
Let us denote $f_t = S_{\bar \BB}(t) f$ and $\phi_t = S_{\bar \BB^*_m} \phi$. Arguing as in Step 1, we define the functional
$$
\RR(t) := \| \phi_t \|_{L^2_{x,v} (\omega_1)}^2 + a_1 t \| \nabla_v \phi_t \|_{L^2_{x,v} (\omega_0)}^2 + a_2 t^2 \la \nabla_x \phi_t , \nabla_v \phi_t \ra_{L^2_{x,v} (\omega_0)}^2 
+ a_3 t^3 \| \nabla_x \phi_t \|_{L^2_{x,v} (\omega_0)}^2,
$$
and we can choose appropriate constants $a_1,a_2,a_3 >0$ such that it follows
$$
\| S_{\bar \BB^*_m}(t) \|_{L^2_x L^2_v (\omega) \to L^2_x (H^1_{v,*}(\omega_1 \la v \ra^{\gamma/2}))}
\lesssim \frac{\Theta_{m_1,m_0}(t)}{t^{1/2} \wedge 1}, \quad \forall\, t >0,
$$
Last estimate implies by duality
$$
\| S_{\bar \BB}(t) \|_{L^2_x (H^{-1}_{v,*} (m)) \to L^2_x L^2_v(m_1 \la v \ra^{\gamma/2})}
\lesssim \frac{\Theta_{m_1,m_0}(t)}{t^{1/2} \wedge 1}, \quad \forall\, t >0,
$$
which completes the proof.
\end{proof}

As a consequence of Lemma~\ref{lem:AA}, we also obtain an analogous result for high-order Sobolev spaces.

\begin{cor}\label{cor:AA} 
For any  $\theta\in(0,1)$ and $n \in \N$, there hold
$ \AA \in \BBB (H^{n}_x L^2_v , H^{n}_x L^2_v(\mu^{-\theta}) )$ and $ \AA \in \BBB (H^{n}_{x,v}  , H^{n}_{x,v} (\mu^{-\theta}) )$.
\end{cor}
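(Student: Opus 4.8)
The plan is to exploit the fact that $\AA = \AA_0 + M\chi_R$ acts only on the velocity variable and, more precisely, that the convolution kernels $a_{ij}$ and $c$ depend on $v$ only through the difference $v-v_*$, so that any $v$-derivative falling on a convolution factor may be transferred onto the second argument. I will argue by density so that all the integrations by parts below are justified, and it suffices to establish the two claimed bounds as a priori estimates.

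\emph{The space $H^n_x L^2_v$.} Since $a_{ij}$, $b_i$, $c$ and $\chi_R$ depend only on $v$, the operator $\AA$ commutes with $\nabla_x$, and since the weight $\mu^{-\theta}$ depends only on $v$ we have $\nabla^j_x(\mu^{-\theta}\AA f) = \mu^{-\theta}\AA(\nabla^j_x f)$ for every $0\le j\le n$. Applying Lemma~\ref{lem:AA} with $\ell=0$ and $p=2$ in the velocity variable, for almost every $x\in\T^3$, and integrating in $x$, gives $\|\mu^{-\theta}\AA(\nabla^j_x f)\|_{L^2_{x,v}} \lesssim \|\nabla^j_x f\|_{L^2_{x,v}}$. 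Summing over $0\le j\le n$ yields $\AA\in\BBB(H^n_x L^2_v, H^n_x L^2_v(\mu^{-\theta}))$.

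\emph{The space $H^n_{x,v}$.} Again by commutation with $\nabla_x$, it is enough to bound $\partial^\alpha_v(\mu^{-\theta}\AA g)$ in $L^2_{x,v}$ by $\|f\|_{H^n_{x,v}}$, where $g=\partial^\beta_x f$ and $|\alpha|+|\beta|\le n$. Using \eqref{eq:A0B0}--\eqref{eq:AB} we write $\mu^{-\theta}\AA g = (a_{ij}*g)\,\Phi_{ij} - (c*g)\,\Psi + M\,(\chi_R\mu^{-\theta})\,g$ with $\Phi_{ij}:=\mu^{-\theta}\partial_{ij}\mu$ and $\Psi:=\mu^{1-\theta}$; since $\theta\in(0,1)$, the functions $\Phi_{ij}$, $\Psi$ and $\chi_R\mu^{-\theta}$ are all Schwartz. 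From $\partial_{v_k}a_{ij}(v-v_*) = -\partial_{v_{*k}}a_{ij}(v-v_*)$ and an integration by parts one gets $\partial_{v_k}(a_{ij}*h) = a_{ij}*(\partial_{v_k}h)$, hence by the Leibniz rule $\partial^\alpha_v[(a_{ij}*g)\Phi_{ij}]$ is a finite linear combination of terms $(a_{ij}*\partial^{\alpha_1}_v g)\,\partial^{\alpha_2}_v\Phi_{ij}$ with $\alpha_1+\alpha_2=\alpha$, and likewise for the other two contributions. Because $|\alpha_1|+|\beta|\le n$ we have $\partial^{\alpha_1}_v g = \partial^{\alpha_1}_v\partial^\beta_x f$ with $\|\partial^{\alpha_1}_v\partial^\beta_x f\|_{L^2_{x,v}}\le\|f\|_{H^n_{x,v}}$; and arguing exactly as in the proof of Lemma~\ref{lem:AA} (splitting $a_{ij}$ into a bounded and a singular part and estimating the resulting convolutions, with the Schwartz factor $\partial^{\alpha_2}_v\Phi_{ij}$ replacing $\partial_{ij}\mu$) one obtains, for almost every $x$, $\|(a_{ij}*\partial^{\alpha_1}_v g)\,\partial^{\alpha_2}_v\Phi_{ij}\|_{L^2_v}\lesssim\|\partial^{\alpha_1}_v g\|_{L^2_v}$; integrating in $x$ gives a bound by $\|f\|_{H^n_{x,v}}$. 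When $\gamma=-3$ one has $c*g=-8\pi g$, so the $c$-term reduces to $8\pi\,\partial^\alpha_v(g\Psi)$, treated trivially, as is the term $M(\chi_R\mu^{-\theta})g$. Summing over $|\alpha|+|\beta|\le n$ concludes the proof. The only delicate point is the transfer of $v$-derivatives off the kernels $a_{ij}$ and $c$ — strongly singular when $\gamma=-3$ — and onto $g$; this reduction, which rests crucially on the translation structure $a_{ij}=a_{ij}(v-v_*)$, is precisely what allows every resulting convolution estimate to be read off from Lemma~\ref{lem:AA} with no derivative on the kernel, everything else being a routine Leibniz expansion.
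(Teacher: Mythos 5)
Your argument is correct and is essentially the paper's (implicit) proof: the paper states Corollary~\ref{cor:AA} without proof as a direct consequence of Lemma~\ref{lem:AA}, the intended reasoning being exactly what you spell out — $\AA$ and the weight act only in $v$, so $x$-derivatives commute through, and $v$-derivatives are transferred onto the argument via $\partial_v(a_{ij}*h)=a_{ij}*\partial_v h$ and a Leibniz expansion, after which each term is estimated by rerunning the splitting $a_{ij}=a_{ij}^++a_{ij}^-$ (and likewise for $c$, with $c*g=-8\pi g$ when $\gamma=-3$) with a Schwartz factor in place of $\partial_{ij}\mu$. No genuinely different route or gap relative to the paper's derivation.
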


We finally obtain the following regularity properties, as a consequence of Corollary~\ref{cor:SB-inhom}, Lemma~\ref{lem:reg} and Corollary~\ref{cor:AA}.

\begin{cor}\label{cor:ASB*2}
Let $m,\nu$ be admissible weight functions such that $ \la v \ra^{(\gamma+3)/2}\prec  m \prec \nu$  and $ \mu^{-1/2} \prec \nu \prec \mu^{-1}$. There hold
\be\label{eq:ASB*2}
\forall\, t >0, \quad
\| (\AA S_{\bar \BB} )^{(*2)} (t) \|_{L^2_{x,v} (\nu) \to \HH^1_{x,v} (\nu)} 
\lesssim \frac{e^{- \lambda t^{2/|\gamma|}}}{t^{1/2} \wedge 1},
\ee
and
\be\label{eq:SBA*2}
\forall\, t >0, \quad
\| (S_{\bar \BB} \AA )^{(*4)} (t) \|_{L^2_{x,v} (m) \to H^2_{x,v} (m)}
\lesssim e^{- \lambda t^{2/|\gamma|}}.
\ee
\end{cor}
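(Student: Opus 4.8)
The plan is to prove the two convolution estimates in Corollary~\ref{cor:ASB*2} by combining the decay estimates for $S_{\bar\BB}$ (Corollary~\ref{cor:SB-inhom}), the regularisation estimates for $S_{\bar\BB}$ (Lemma~\ref{lem:reg}), and the boundedness of $\AA$ on Sobolev scales (Corollary~\ref{cor:AA}), and then checking that the resulting time-integrands are integrable with the claimed global behaviour. Throughout I fix an auxiliary weight $m_1$ with $\la v\ra^{3/2}\prec m_1\prec m$ (resp.\ $\prec\nu$) so that Lemma~\ref{lem:reg} applies, and I repeatedly use the submultiplicativity $\Theta_{m,m_1}^{-1}(t)\lesssim \Theta_{m,m_1}^{-1}(t-s)\Theta_{m,m_1}^{-1}(s)$ recorded in Section~\ref{sec:semigroup}, together with the fact that for exponential weights $\Theta_{m,m_1}(t)=e^{-\lambda t^{s/|\gamma|}}$ is a fixed function (and for the specific spaces here one reads off the exponent $2/|\gamma|$ from $\mu^{-1/2}\prec\nu\prec\mu^{-1}$, i.e.\ $s=2$).

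For \eqref{eq:ASB*2}: write one factor $\AA S_{\bar\BB}$ using $\AA\in\BBB(L^2_x(H^1_{v,*}(m)),\HH^1_{x,v}(\nu))$-type mapping (via Corollary~\ref{cor:AA} and the embeddings $H^1_{x,v}\subset\HH^1_{x,v}$, $\HH^1_{x,v}(\nu)\subset H^1_{x,v}(\la v\ra^{\alpha}\nu)$, with the $\mu^{-\theta}$ gain absorbing the polynomial loss) together with the small-time regularisation $\|S_{\bar\BB}(\tau)\|_{L^2_{x,v}(\nu)\to H^1_{x,v}(m_1\la v\ra^{\gamma/2})}\lesssim \Theta_{\nu,m_1}(\tau)(\tau^{3/2}\wedge1)^{-1}$ from \eqref{eq:BB-H1toH2-nonH} with $n=1$; the other factor $\AA S_{\bar\BB}$ is bounded $L^2_{x,v}(\nu)\to L^2_{x,v}(\nu)$ with the plain decay $\Theta_{\nu,\nu'}$ — here one inserts an intermediate weight $\nu'$ with $\mu^{-1/2}\prec\nu'\prec\nu$ to produce a decay factor, using \eqref{eq:BBdecayH1-nonH}/\eqref{eq:BBdecayL2-nonH}. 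Taking the time convolution and bounding $\int_0^t \Theta(t-\sigma)\,\Theta(\sigma)(\sigma^{3/2}\wedge1)^{-1}\,d\sigma$ gives $\Theta(t)\cdot(t^{1/2}\wedge1)^{-1}$ after splitting the integral at $t/2$ (the singularity $\sigma^{-3/2}$ is not integrable, so one must arrange that the regularising factor carries only $(\sigma^{1/2}\wedge1)^{-1}$; this is done by first applying one copy of the $H^{-1}_{v,*}\to L^2$ smoothing estimate \eqref{eq:BB-H-1toL2-nonH}, which costs only $\tau^{-1/2}$, after using Corollary~\ref{cor:Q}/Lemma~\ref{lem:AA} to land $\AA$ in $H^{-1}_{v,*}$). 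The upshot is the stretched-exponential bound with the single $(t^{1/2}\wedge1)^{-1}$ prefactor.

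For \eqref{eq:SBA*2}: iterate four times the pair $(S_{\bar\BB},\AA)$. Using \eqref{eq:BB-H1toH2-nonH} with $n=2$, a single application $S_{\bar\BB}(\tau)$ gains two $x$-$v$ derivatives at the cost of $(\tau^{3}\wedge1)^{-1}$; but distributing the four $S_{\bar\BB}$ factors so that each gains only ``half a derivative'' with cost $(\tau^{1/2}\wedge1)^{-1}$ (again via \eqref{eq:BB-H-1toL2-nonH} for the negative-Sobolev step, plus one genuine $n=1$ step and $H^1_{x,v}\to H^2_{x,v}$ with a further $n=1$ step), the four resulting singularities $\prod(\tau_i^{1/2}\wedge1)^{-1}$ are jointly integrable over the simplex $\sum\tau_i=t$, and the product of the four decay functions $\prod\Theta$ collapses (by submultiplicativity) to a single $e^{-\lambda t^{2/|\gamma|}}$ up to a change of $\lambda$. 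Since $\AA$ is bounded on each Sobolev level with a $\mu^{-\theta}$ gain (Corollary~\ref{cor:AA}), the intermediate weights can be chosen $m\prec m^{(1)}\prec\cdots\prec\mu^{-1/2}$ so that every application of $\AA$ is absorbed. The main obstacle is bookkeeping: one must choose the chain of auxiliary weights and the allocation of regularity gains among the four convolution factors so that (a) each $\AA$ maps into a space on which the next $S_{\bar\BB}$-estimate is available, (b) the total derivative gain reaches $H^2_{x,v}$, and (c) the accumulated time singularities stay in $L^1$ near $0$; once that combinatorial choice is fixed, each individual estimate is a direct citation of Corollary~\ref{cor:SB-inhom}, Lemma~\ref{lem:reg}, or Corollary~\ref{cor:AA}, and the convolution bound is the elementary simplex integral $\int_{\Sigma_n(t)}\prod_i(\tau_i^{1/2}\wedge1)^{-1}\,d\tau \lesssim 1$ for $n=4$.
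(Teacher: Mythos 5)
Your overall architecture (Duhamel/convolution factorization, decay from Corollary~\ref{cor:SB-inhom}, regularisation from Lemma~\ref{lem:reg}, boundedness of $\AA$ from Corollary~\ref{cor:AA}, split of the time integral at $t/2$) is the paper's, but the step where you handle the non-integrable time singularity is wrong, and it is precisely the heart of the proof. For \eqref{eq:ASB*2} you correctly note that $\int_0^t \Theta(t-\sigma)\Theta(\sigma)(\sigma^{3/2}\wedge 1)^{-1}\,d\sigma$ diverges if the $n=1$ regularisation is always attached to the same factor, but your proposed repair --- replacing it by one application of the smoothing \eqref{eq:BB-H-1toL2-nonH}, ``after using Corollary~\ref{cor:Q}/Lemma~\ref{lem:AA} to land $\AA$ in $H^{-1}_{v,*}$'' --- cannot produce the target space: \eqref{eq:BB-H-1toL2-nonH} maps $H^\ell_x(H^{-1}_{v,*}(m))$ into $H^\ell_x L^2_v(m_1\la v\ra^{\gamma/2})$ only, so it gains no $x$-derivative and no positive $v$-derivative, and composing with $\AA$ (which acts only in $v$, so $\nabla_x(\AA f)=\AA(\nabla_x f)$) cannot yield an output controlled in $\HH^1_{x,v}(\nu)$; likewise the claimed mapping $\AA\in\BBB(L^2_x(H^1_{v,*}(m)),\HH^1_{x,v}(\nu))$ is not available, and Corollary~\ref{cor:Q} is a bilinear estimate with no role here. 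The paper's mechanism is different: in the half $s\le t/2$ the $n=1$ regularisation \eqref{eq:BB-H1toH2-nonH} is put on the \emph{outer} factor $S_{\bar\BB}(t-s)$ (whose argument is $\ge t/2$), with the inner $\AA S_{\bar\BB}(s)$ used only as a bounded, decaying operator on $L^2_{x,v}(\nu)$; in the half $s\ge t/2$ the roles are swapped, the regularisation goes on the \emph{inner} factor $S_{\bar\BB}(s)$ and the outer $\AA S_{\bar\BB}(t-s)$ is used as a bounded, decaying operator on $\HH^1_{x,v}(\nu)$ (via \eqref{eq:BBdecayH1-nonH} and Corollary~\ref{cor:AA}). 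In each half the singular factor is evaluated only at times $\ge t/2$, and integrating the bounded factor over an interval of length $t/2$ converts $t^{-3/2}$ into the claimed $(t^{1/2}\wedge1)^{-1}$; no $H^{-1}_{v,*}$ smoothing is needed.

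The same problem undermines your sketch of \eqref{eq:SBA*2}. The allocation ``each of the four $S_{\bar\BB}$ factors gains half a derivative at cost $(\tau_i^{1/2}\wedge1)^{-1}$'' is not backed by any estimate in the paper: the only positive-derivative gain available is \eqref{eq:BB-H1toH2-nonH}, whose cost is $(\tau^{3n/2}\wedge1)^{-1}$ (so $\tau^{-3}$ for $n=2$), there is no stated $H^1_{x,v}\to H^2_{x,v}$ regularisation lemma, and \eqref{eq:BB-H-1toL2-nonH} is irrelevant when the source is $L^2_{x,v}(m)$ and in any case gains no positive derivatives; so your simplex integral with four $\tau_i^{-1/2}$ singularities does not correspond to any admissible chain of estimates. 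The paper instead proves by induction the claim $\| (S_{\bar\BB}\AA)^{(*(j+1))}(t)\|_{L^2_{x,v}(m)\to H^n_{x,v}(m)}\lesssim \Theta_{\nu,m_1}(t)\,(t^{3n/2-j}\wedge1)^{-1}$: the base case $j=0$ is Lemma~\ref{lem:reg}-(i) plus Corollary~\ref{cor:AA}, and each additional convolution with the merely bounded, decaying factor $S_{\bar\BB}\AA$ lowers the singularity exponent by exactly one full power (again by the ``put the singular factor at time $\ge t/2$'' split), so that $n=2$, $j=3$ gives \eqref{eq:SBA*2} with no singularity left. If you want to salvage a ``distributed gain'' variant, you would have to first prove an $H^1\to H^2$ hypoelliptic regularisation for $S_{\bar\BB}$ (redoing the functional $\FF$ argument at that level), which is extra work the paper's convolution-induction avoids.
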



\begin{proof}
We define $m_1 := m \la v \ra^{|\gamma|/2} \succ \la v \ra^{3/2}$ and observe that with the choice of the weight $\nu$ we have $\Theta_{{ \nu }, m_1} (t)  = e^{-\lambda t^{2/|\gamma|}}$.

\smallskip\noindent
{\it Step 1.}
Thanks to Corollary~\ref{cor:SB-inhom} and Corollary~\ref{cor:AA}, we already have, 
$$
\| \AA S_{\bar \BB} (t) \|_{L^2_{x,v}(\nu) \to L^2_{x,v}(\nu)}
\lesssim \| \AA  \|_{L^2_{x,v}(m) \to L^2_{x,v}(\nu)} \, \| S_{\bar \BB} (t) \|_{L^2_{x,v}(\nu) \to L^2_{x,v}(m)}
\lesssim \Theta_{\nu,m}(t)
$$
and
$$
\| S_{\bar \BB} \AA (t)  \|_{L^2_{x,v}(m) \to L^2_{x,v}(m)}
\lesssim  \| S_{\bar \BB} (t) \|_{L^2_{x,v}(\nu) \to L^2_{x,v}(m)} \, \| \AA  \|_{L^2_{x,v}(m) \to L^2_{x,v}(\nu)}
\lesssim \Theta_{\nu,m}(t)
$$
so that, for any $j \in \N^*$,
$$
\ba
\| (\AA S_{\bar \BB})^{(*j)} (t) \|_{L^2_{x,v}(\nu) \to L^2_{x,v}(\nu)} , \;
\| (S_{\bar \BB} \AA)^{(*j)} (t) \|_{L^2_{x,v}(m) \to L^2_{x,v}(m)}
\lesssim \Theta_{\nu,m}(t) ,
\ea
$$
and similarly
$$
\ba
\| (\AA S_{\bar \BB})^{(*j)} (t) \|_{\HH^1_{x,v}(\nu) \to \HH^1_{x,v}(\nu)} , \;
\| (S_{\bar \BB} \AA)^{(*j)} (t) \|_{\HH^1_{x,v}(m) \to \HH^1_{x,v}(m)}
\lesssim \Theta_{\nu,m}(t) .
\ea
$$

\smallskip\noindent
{\it Step 2.} We prove \eqref{eq:ASB*2}.
We first write
$$
\ba
&\| (\AA S_{\bar \BB} )^{(*2)} (t) \|_{L^2_{x,v} (\nu) \to \HH^1_{x,v} (\nu)} \\
&\lesssim \int_0^{t/2} \| \AA S_{\bar \BB}(t-s)  \AA S_{\bar \BB} (s) \|_{L^2_{x,v} (\nu) \to \HH^1_{x,v} (\nu)} \, ds
\\
&\quad+ \int_{t/2}^t \| \AA S_{\bar \BB}(t-s)  \AA S_{\bar \BB} (s) \|_{L^2_{x,v} (\nu) \to \HH^1_{x,v} (\nu)} \, ds =: I_1 + I_2.
\ea
$$
Using Corollary~\ref{cor:SB-inhom}, \eqref{eq:BB-H1toH2-nonH} of Lemma~\ref{lem:reg}, Corollary~\ref{cor:AA} and Step 1, we have 
$$
\ba
I_1 & \lesssim 
\int_0^{t/2} \| \AA \|_{H^1_{x,v} (m_1 \la v \ra^{\gamma/2}) \to \HH^1_{x,v} (\nu)} \, \| S_{\bar \BB}(t-s) \|_{L^2_{x,v} (\nu) \to H^1_{x,v} (m_1 \la v \ra^{\gamma/2})} \,  \| \AA  S_{\bar \BB} (s) \|_{L^2_{x,v} (\nu) \to  L^2_{x,v} (\nu)} \, ds \\
&\lesssim \int_0^{t/2} \frac{\Theta_{\nu, m_1} (t-s)}{(t-s)^{3/2} \wedge 1} \, \Theta_{\nu ,m} (s) \, ds 
\lesssim \Theta_{\nu, m_1} (t/2)  \int_0^{t/2}  \frac{\Theta_{\nu,m}(s)}{(t-s)^{3/2} \wedge 1} \, ds  \lesssim \frac{\Theta_{\nu, m_1} (t)}{t^{1/2} \wedge 1} .
\ea
$$
For the other term $I_2$, we use again Corollary~\ref{cor:SB-inhom}, \eqref{eq:BB-H1toH2-nonH} of Lemma~\ref{lem:reg}, Corollary~\ref{cor:AA} and Step 1, but in a different order, to obtain
$$
\ba
I_2 &\lesssim
\int_{t/2}^t \| \AA  S_{\bar \BB}(t-s) \|_{\HH^1_{x,v} (\nu) \to \HH^1_{x,v} (\nu)} \,
\| \AA \|_{H^1_{x,v} (m_1 \la v \ra^{\gamma/2}) \to \HH^1_{x,v} (\nu)} \, \| S_{\bar \BB} (s) \|_{L^2_{x,v} (\nu) \to H^1_{x,v} (m_1 \la v \ra^{\gamma/2})} \, ds \\
&\lesssim 
\int_{t/2}^t \Theta_{\nu,m} (t-s) \, \frac{\Theta_{\nu,m_1} (s)}{s^{3/2} \wedge 1} \, ds 
\lesssim \Theta_{\nu,m_1} (t/2)
\int_{t/2}^t \frac{\Theta_{\nu,m} (t-s)}{s^{3/2} \wedge 1}  \, ds
\lesssim \frac{\Theta_{\nu,m_1} (t)}{t^{1/2} \wedge 1} ,
\ea
$$
and the proof of \eqref{eq:ASB*2} is complete.

\smallskip\noindent
{\it Step 3.}
We now turn to the proof of \eqref{eq:SBA*2}. 
We claim that, for any $j \in \N$, there holds
\be\label{SBA*j+1}
\| (S_{\bar \BB} \AA )^{(*(j+1))} (t) \|_{L^2_{x,v} (m) \to H^n_{x,v}(m)} \lesssim \frac{\Theta_{\nu,m_1} (t) }{t^{3n/2 - j} \wedge 1},
\ee
so that we can conclude to \eqref{eq:SBA*2} by choosing $j=3$ when $n=2$. 

The case $j=0$ follows directly from Lemma~\ref{lem:reg} and Corollary~\ref{cor:AA}, and we prove the claim by induction. Suppose that \eqref{SBA*j+1} holds for some $j$ then we compute, splitting again the integral into two parts,
$$
\ba
\| (S_{\bar \BB} \AA )^{(*(j+2))} (t) \|_{L^2_{x,v} (m) \to  H^n_{x,v} (m)} 
&\lesssim \int_0^{t/2} \| (S_{\bar \BB} \AA)^{(*(j+1))} (t-s)  S_{\bar \BB} \AA (s)  \|_{L^2_{x,v} (m) \to  H^n_{x,v} (m)} \, ds \\
&\quad
+ \int_{t/2}^t \| S_{\bar \BB}\AA (t-s)   (S_{\bar \BB} \AA)^{(*(j+1))} (s)\|_{L^2_{x,v} (m) \to  H^n_{x,v} (m)} \, ds \\
& =: T_1 + T_2.
\ea
$$
In a similar way as in Step 2, using Corollary~\ref{cor:SB-inhom}, \eqref{eq:BB-H1toH2-nonH} of Lemma~\ref{lem:reg}, Corollary~\ref{cor:AA} and Step~1, we obtain
$$
\ba
T_1 
&\lesssim
\int_0^{t/2} \| (S_{\bar \BB} \AA)^{(*(j+1))} (t-s) \|_{L^2_{x,v} (m) \to H^n_{x,v}(m)}  \,
 \| S_{\bar \BB} \AA (s) \|_{L^2_{x,v} (m) \to L^2_{x,v} (m)}  \, ds \\
&\lesssim \int_0^{t/2} \frac{\Theta_{\nu, m_1 } (t-s)}{(t-s)^{3n/2-j} \wedge 1} \, \Theta_{\nu , m} (s) \, ds \lesssim \frac{\Theta_{\nu , m_1} (t)}{t^{3n/2-(j+1)} \wedge 1}.
\ea
$$ 
Moreover, 
$$
\ba
T_2 
&\lesssim
\int_{t/2}^t \| S_{\bar \BB} \AA (t-s) \|_{L^2_{x,v}(m)  \to L^2_{x,v} (m)} 
\,  \| (S_{\bar \BB} \AA )^{(*(j+1))} (s) \|_{L^2_{x,v} (m) \to H^n_{x,v} (m)} \, ds \\
&\lesssim \int_{t/2}^t \Theta_{\nu, m}(t-s) \, \frac{\Theta_{\nu, m_1} (s)}{s^{3n/2-j} \wedge 1} \,  ds 
\lesssim \frac{\Theta_{\nu , m_1} (t)}{t^{3n/2-(j+1)} \wedge 1},
\ea
$$
which completes the proof.
\end{proof}

\subsection{Decay of the semigroup $S_{\bar \LL}$}
With the results above we obtain the decay of the semigroup $S_{\bar \LL} \bar \Pi$ in large spaces as considered in the statement of Theorem~\ref{thm:stabNL-inhom}.

\medskip
We first write a semigroup factorization. 
Recall that $\bar \LL = \AA + \bar \BB$ and that $\bar \Pi$ commutes with $\bar \LL$. 
For any $\ell,n \in \N^*$, we can write the iterated Duhamel formulas
$$
\bar \Pi S_{\bar \LL}
=  \sum_{0\le j \le \ell -1} \bar \Pi S_{\bar \BB}  * (\AA S_{\bar \BB})^{(*j)}
+ \bar\Pi S_{\bar \LL} * (\AA S_{\bar \BB})^{(*\ell)}
$$
$$
{ S_{\bar \LL} }\bar\Pi = \sum_{0 \le i \le n-1} (S_{\bar \BB} \AA)^{(*i)}*S_{\bar \BB} \bar \Pi 
+ (S_{\bar \BB}\AA)^{(*n)} * S_{\bar \LL} \bar \Pi,
$$
and then deduce  
\be\label{eq:SL=SB+SL*ASB}
\ba
S_{\bar \LL} \bar \Pi 
&= \sum_{0\le j \le \ell -1}  \bar \Pi S_{\bar \BB}  * (\AA S_{\bar \BB})^{(*j)}
+  \sum_{0 \le i \le n-1} (S_{\bar \BB} \AA)^{(*i)} *S_{\bar \BB} \bar \Pi * (\AA S_{\bar \BB})^{(*\ell)}\\
&\quad
+ (S_{ \BB}\AA)^{(*n)} * S_{\bar \LL} \bar \Pi  *  (\AA S_{\bar \BB})^{(*\ell)}.
\ea
\ee

\begin{thm}\label{thm:S_L-inhom}
Let $m_0,m_1$ be two admissible weight functions such that $ \la v \ra^{(\gamma+3)/2} \prec m_0 \prec m_1$ and $m_0 \preceq \mu^{-1/2}$. Then we have the uniform in time bound
\be\label{eq:SLboundLinfty}
t \mapsto \| S_{\bar \LL}(t) \bar \Pi \|_{H^2_x L^2_v(m_0) \to H^2_x L^2_v(m_0)} \in L^\infty(\R_+),
\ee
as well as the decay estimate
\be\label{eq:LLdecayL2-inhom}
\| S_{\bar \LL}(t) \bar \Pi \|_{H^2_x L^2_{v}(m_1) \to H^2_x L^2_{v}(m_0)} \lesssim \Theta_{m_1,m_0}(t) \quad \forall\, t \ge 0.
\ee
Let $m_0,m_1$ be admissible polynomial weight functions such that $\la v \ra^{3/2} \prec m_0 \prec m_1$. Then the following regularity estimate holds
\be\label{eq:LLreg-H-1-inhom}
\| S_{\bar \LL}(t) \bar \Pi \|_{H^2_x ( H^{-1}_{v,*}(m_1)) \to H^2_x L^2_{v}(m_0 \la v \ra^{\gamma/2})} \lesssim \frac{\Theta_{m_1,m_0}(t)}{t^{1/2} \wedge 1}, \quad \forall \, t > 0 .
\ee

\end{thm}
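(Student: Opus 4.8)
The plan is to follow the same enlargement strategy used for the homogeneous operator $\LL$ in Section~\ref{sec:semigroup}, but now built on the factorization identity \eqref{eq:SL=SB+SL*ASB} and on the refined building blocks proven above. The three assertions \eqref{eq:SLboundLinfty}, \eqref{eq:LLdecayL2-inhom}, \eqref{eq:LLreg-H-1-inhom} should all come out of the same scheme, by balancing the decay functions $\Theta_{m_1,m_0}$ and the small-time regularizing singularities in the time-convolution products. I will choose, as in Proposition~\ref{prop:decay-small}, an intermediate weight $\nu$ with $\mu^{-1/2} \prec \nu \prec \mu^{-1}$, $\nu \succ m_1$, and I will use the spaces $\HH^1_{x,v}(\mu^{-1/2})$ for the weak coercivity and $\HH^1_{x,v}(\nu)$, $H^2_xL^2_v(\nu)$, etc., for the enlargement.

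First I would establish the decay of $S_{\bar\LL}\bar\Pi$ in the \emph{small} reference space: exactly as in Proposition~\ref{prop:decay-small}, one uses the weak hypocoercivity estimate \eqref{eq:GenCoerc} from Lemma~\ref{lem:GenCoerc} together with the Duhamel formula $S_{\bar\LL}\bar\Pi = S_{\bar\BB}\bar\Pi + S_{\bar\BB}\AA * S_{\bar\LL}\bar\Pi$, the boundedness of $\AA$ from Corollary~\ref{cor:AA}, and the decay \eqref{eq:BBdecayH1-nonH} of $S_{\bar\BB}$, to obtain a stretched-exponential decay $\| S_{\bar\LL}(t)\bar\Pi \|_{\HH^1_{x,v}(\nu)\to\HH^1_{x,v}(\mu^{-1/2})} \lesssim e^{-\lambda t^{2/|\gamma|}}$ and in particular $t\mapsto \| S_{\bar\LL}(t)\bar\Pi\|_{\HH^1_{x,v}(\nu)\to\HH^1_{x,v}(\nu)} \in L^1(\R_+)$. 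The key point is that $\HH^1_{x,v}(\mu^{-1/2})$, despite its anisotropic $v$-weight $\la v\ra^\alpha$, is still embedded in $L^2_{x,v}(\mu^{-1/2})$, so the coercivity spills over to the whole hierarchy once we interpolate between $\mu^{-1/2}$ and $\nu$ using the last line of \eqref{eq:GenCoerc}. Then \eqref{eq:SLboundLinfty} for $m_0\preceq\mu^{-1/2}$ follows by a factorization argument as in Lemma~\ref{lem:SLinftyBIS}: write $S_{\bar\LL}\bar\Pi = \bar\Pi S_{\bar\BB} + S_{\bar\LL}\bar\Pi * \AA S_{\bar\BB}$, use \eqref{eq:BBdecayL2-nonH}, Corollary~\ref{cor:AA}, and the $L^1$-in-time bound just obtained, while the case $m_0 \succ \mu^{-1/2}$ is already contained in \eqref{eq:BBdecayL2-nonH} via a similar Duhamel bootstrap.

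Next I would prove the decay estimates \eqref{eq:LLdecayL2-inhom} and \eqref{eq:LLreg-H-1-inhom} by plugging the full factorization \eqref{eq:SL=SB+SL*ASB} (for well-chosen $\ell,n$) into the relevant operator norms and controlling each summand after multiplying by $\Theta_{m_1,m_0}^{-1}$ (resp. $\widetilde\Theta_{m_1,m_0}^{-1}$ with $\widetilde\Theta_{m_1,m_0}(t)=\Theta_{m_1,m_0}(t)/(t^{1/2}\wedge1)$), exactly in the spirit of the proof of Theorem~\ref{thm:S_L}. The terms $\bar\Pi S_{\bar\BB} * (\AA S_{\bar\BB})^{(*j)}$ are handled by \eqref{eq:BBdecayL2-nonH} and Corollary~\ref{cor:AA}; the middle terms $(S_{\bar\BB}\AA)^{(*i)} * S_{\bar\BB}\bar\Pi * (\AA S_{\bar\BB})^{(*\ell)}$ and the last term $(S_{\bar\BB}\AA)^{(*n)} * S_{\bar\LL}\bar\Pi * (\AA S_{\bar\BB})^{(*\ell)}$ require $\ell,n$ large enough so that the iterated convolutions regularize from $L^2_{x,v}$ (or $H^{-1}_{v,*}$) up to $H^2_xL^2_v$ and back, which is precisely what Corollary~\ref{cor:ASB*2} (estimates \eqref{eq:ASB*2} and \eqref{eq:SBA*2}) is designed to provide — taking $\ell=2$, $n=4$ should suffice. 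For \eqref{eq:LLreg-H-1-inhom} one additionally invokes the $H^{-1}_{v,*}$-regularization \eqref{eq:BB-H-1toL2-nonH} of Lemma~\ref{lem:reg} on the rightmost factor and keeps track of the single $t^{-1/2}$ singularity at the origin, using the sub-multiplicativity of $\Theta_{m_1,m_0}^{-1}$ and integrability of $(t\wedge1)^{-1/2}\Theta_{m_1,m_0}(t)$ as in the end of the proof of Corollary~\ref{cor:S_L-homBIS}.

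The main obstacle I anticipate is the bookkeeping in the convolution estimates: one must verify that after applying \eqref{eq:ASB*2}, \eqref{eq:SBA*2}, and the regularizations of Lemma~\ref{lem:reg}, every summand in \eqref{eq:SL=SB+SL*ASB} maps $H^2_xL^2_v(m_1)$ (or $H^2_x(H^{-1}_{v,*}(m_1))$) continuously into $H^2_xL^2_v(m_0)$ with the claimed decay, and that the various weights line up — in particular that the intermediate exponential weight $\nu$ can be inserted so that $\Theta_{\nu,m}(t) = e^{-\lambda t^{2/|\gamma|}}$ dominates the polynomial losses coming from the regularization exponents $t^{-3n/2+j}$, and that the borderline condition $m_0 \succ \la v\ra^{(\gamma+3)/2}$ (resp. $\la v\ra^{3/2}$) is exactly what makes the relevant time integrals converge. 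A secondary technical point is that $\bar\Pi$, being a \emph{global} (in $x$ and $v$) projection, commutes with $\bar\BB$ and $\bar\LL$ but not with the velocity weights, so one has to be slightly careful placing $\bar\Pi$ on the correct side of each convolution factor; this is handled just as in the homogeneous case and does not cause real trouble, since $S_{\bar\BB}$ already carries the decay uniformly. Once these verifications are done, \eqref{eq:SLboundLinfty}–\eqref{eq:LLreg-H-1-inhom} follow, and the structure is entirely parallel to Section~\ref{sec:semigroup}.
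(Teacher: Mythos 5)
Your overall scheme is the paper's: small-space hypocoercivity (Lemma~\ref{lem:GenCoerc}), the iterated factorization \eqref{eq:SL=SB+SL*ASB}, the hypoelliptic regularization of Lemma~\ref{lem:reg} and Corollary~\ref{cor:ASB*2}, and the $\Theta_{m_1,m_0}^{-1}$-weighted convolution bookkeeping. There is, however, a concrete gap in your treatment of the uniform bound \eqref{eq:SLboundLinfty}: you propose to copy Lemma~\ref{lem:SLinftyBIS} and close with the single Duhamel identity $S_{\bar\LL}\bar\Pi = \bar\Pi S_{\bar\BB} + S_{\bar\LL}\bar\Pi * \AA S_{\bar\BB}$. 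In the homogeneous case this works because the small space $L^2_v(\mu^{-1/2})$ embeds into $L^2_v(m_0)$ and $\AA S_{\BB}(t)$ is bounded $L^2_v(m_0)\to L^2_v(\nu)$ uniformly in time. Here the space where the hypocoercivity of Lemma~\ref{lem:GenCoerc} lives is $E_0=\HH^1_{x,v}(\mu^{-1/2})$, which carries only one $x$-derivative and hence does \emph{not} embed into the target $H^2_xL^2_v(m_0)$; and on the other side a single $\AA S_{\bar\BB}(t)$ does not map $H^2_xL^2_v(m_0)$ into $E_1=\HH^1_{x,v}(\nu)$ with a time-integrable bound, since gaining the $v$-derivative through \eqref{eq:BB-H1toH2-nonH} costs a factor $t^{-3/2}$ near $t=0$. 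So the one-iteration convolution cannot close, and the aside about "the case $m_0\succ\mu^{-1/2}$" is moot (it is excluded by hypothesis, and bounds on $S_{\bar\BB}$ alone never control $S_{\bar\LL}$). The paper proves \eqref{eq:SLboundLinfty} by exactly the same iterated factorization as the decay estimates, i.e.\ \eqref{eq:SL=SB+SL*ASB} with $n=4$ and $\ell=3$: the left block $(S_{\bar\BB}\AA)^{(*4)}$ lifts $E_0$ up to $H^2_{x,v}$ via \eqref{eq:SBA*2}, while the right block $(\AA S_{\bar\BB})^{(*3)}=(\AA S_{\bar\BB})^{(*2)}*(\AA S_{\bar\BB})$ first upgrades the weight to $\nu$ through one factor bounded in $L^\infty_t$ and then regularizes $H^2_xL^2_v(\nu)\to\HH^1_{x,v}(\nu)$ via \eqref{eq:ASB*2} with only an integrable $t^{-1/2}$ singularity.

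Relatedly, your choice $\ell=2$ for \eqref{eq:LLdecayL2-inhom}--\eqref{eq:LLreg-H-1-inhom} is too small as stated: with $\ell=2$ the rightmost block $(\AA S_{\bar\BB})^{(*2)}$ must act directly from $H^2_xL^2_v(m_1)$, whereas \eqref{eq:ASB*2} is established for the exponential source weight $\nu$ and the underlying regularization Lemma~\ref{lem:reg} is stated for polynomial weights only (while $m_1$ may be exponential in \eqref{eq:LLdecayL2-inhom}). The extra factor in $\ell=3$ is precisely the device that converts the source weight into $\nu$, in the $L^\infty_t$ slot of the $\Theta^{-1}$-weighted convolution, before the two regularizing factors are applied. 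Once you run all three estimates — including \eqref{eq:SLboundLinfty} — through the factorization with $\ell=3$, $n=4$, the rest of your plan (Step 1 in the small space; $\widetilde\Theta_{m_1,m_0}$ bookkeeping with Lemma~\ref{lem:reg}-(ii) on the rightmost factor for \eqref{eq:LLreg-H-1-inhom}) coincides with the paper's proof.
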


\begin{proof}   
We fix an admissible weight function $\nu$ such that $\mu^{-1/2} \prec \nu \prec \mu^{-1}$ with $\nu \succ m_1$, and split the proof into five steps.

\medskip\noindent
{\it Step 1. Decay in the small function space.} 
Let us denote $E_0 = \HH^1_{x,v}(\mu^{-1/2})$ and $E_1 = \HH^1_{x,v}( \nu )$. Arguing exactly as in Proposition~\ref{prop:decay-small}, using Lemma~\ref{lem:GenCoerc} and Lemma~\ref{lem:barBB1} we obtain
$$
\forall\, t \ge 0, \quad
\| S_{\bar \LL} (t) \bar \Pi \|_{E_1 \to E_0}
\lesssim e ^{- \lambda  t^{\frac{2}{|\gamma|}}} .
$$

\medskip\noindent
{\it Step 2. Factorization.} We write the factorization identity thanks to \eqref{eq:SL=SB+SL*ASB}
\be\label{eq:SL=S1S3}
\ba
S_{\bar \LL} \bar \Pi 
&=  \sum_{0\le j \le 2} \bar \Pi S_{\bar \BB}  * (\AA S_{\bar \BB})^{(*j)} 
+ \sum_{0\le i \le 3}  (S_{\bar \BB} \AA)^{(*i)}      *  S_{\bar \BB} \bar \Pi  * (\AA S_{\bar \BB})^{(*3)}  \\
&\quad 
+  (S_{\bar \BB} \AA)^{(*4)}      *  S_{\bar \LL} \bar \Pi  * (\AA S_{\bar \BB})^{(*3)} \\
&=: \sum_{0 \le j \le 2} S_1^j + \sum_{0 \le i \le 3} S_2^i + S_3.
\ea
\ee

\medskip\noindent
{\it Step 3. Proof of \eqref{eq:SLboundLinfty}.} 
Let us denote $X_0 = H^2_x L^2_v(m_0)$ and $X_2 = H^2_x L^2_v (\nu)$. Thanks to Corollary~\ref{cor:ASB*2}, we already have
$$
t \mapsto \| (\AA S_{\bar \BB})^{(*2)}(t) \|_{X_2  \to E_1 } \in L^1 (\R_+) , \quad
t \mapsto \|  (S_{\bar \BB} \AA)^{(*4)}(t) \|_{E_0 \to X_0} \in L^1(\R_+).
$$
From Corollary~\ref{cor:SB-inhom} and Corollary~\ref{cor:AA}, it also holds, for any $ i,j \ge 1 $,
$$
t \mapsto \| S_{\bar \BB} (t) \|_{X_2 \to X_0} , \; 
t \mapsto \| (\AA S_{\bar \BB})^{(*j)} (t) \|_{X_2 \to X_2} , \; 
t \mapsto \| (S_{\bar \BB} \AA)^{(*i)} (t) \|_{X_0 \to X_0}  \in L^1 (\R_+),
$$
moreover
$$
t \mapsto \| S_{\bar \BB}(t) \|_{X_0 \to X_0}, \; 
t \mapsto \| \AA S_{\bar \BB} \|_{X_0 \to X_2} \in L^\infty(\R_+).
$$
Gathering these previous estimates and using the factorization \eqref{eq:SL=S1S3}, we first get
$$
\| S_1^0(t) \|_{X_0 \to X_0} \in L^\infty_t(\R_+),
$$
Moreover, for $1 \le j \le 2$ and $0 \le i \le 3$, we also have
$$
\ba
\| S_1^j(t) \|_{X_0 \to X_0} 
&\lesssim  \| S_{\bar \BB}(t) \|_{X_2 \to X_0} * \| (\AA S_{\bar \BB})^{(*(j-1))} (t) \|_{X_2 \to X_2} * \| \AA S_{\bar \BB} (t)\|_{X_0 \to X_2} \\
&\quad \in L^1_t(\R_+) * L^1_t (\R_+)* L^\infty_t (\R_+) \subset  L^\infty_t(\R_+),
\ea
$$
and
$$
\ba
\| S_2^i(t) \|_{X_0 \to X_0} 
&\lesssim  \| (S_{\bar \BB} \AA)^{(*i)}  (t) \|_{X_0 \to X_0}   * \| S_{\bar \BB}(t) \|_{X_2 \to X_0}   * \| (\AA S_{\bar \BB})^{(*2)}(t) \|_{X_2 \to X_2} *  \| \AA S_{\bar \BB}(t) \|_{X_0 \to X_2} \\
&\quad\in L^1_t(\R_+) * L^1_t (\R_+) * L^1_t (\R_+)* L^\infty_t (\R_+) \subset  L^\infty_t(\R_+).
\ea
$$
Finally, using Step 1, it follows
$$
\ba
\| S_3(t) \|_{X_0 \to X_0} 
&\lesssim  \| (S_{\bar \BB}(t) \AA)^{(*4)} \|_{E_0 \to X_0}   * \| S_{\bar \LL}(t)  \bar \Pi \|_{E_1 \to E_0} *  \| (\AA S_{\bar \BB})^{(*2)} (t) \|_{X_2 \to E_1} * \| \AA S_{\bar \BB} (t) \|_{X_0 \to X_2} \\
&\quad\in L^1_t(\R_+) * L^1_t (\R_+) * L^1_t (\R_+)* L^\infty_t (\R_+) \subset  L^\infty_t(\R_+),
\ea
$$
which completes the proof of \eqref{eq:SLboundLinfty}.

\medskip\noindent
{\it Step 4. Proof of \eqref{eq:LLdecayL2-inhom}.} 
Let us denote $X_0 = H^2_x L^2_v(m_0)$, $X_1 = H^2_x L^2_v(m_1)$ and $X_2 = H^2_x L^2_v (\nu)$.
From Corollary~\ref{cor:ASB*2} it follows
$$
t \mapsto \Theta_{m_1,m_0}^{-1} (t) \| (\AA S_{\bar \BB})^{(*2)}(t) \|_{X_2  \to E_1 } \in L^1 (\R_+) , \quad
t \mapsto \Theta_{m_1,m_0}^{-1} (t) \|  (S_{\bar \BB} \AA)^{(*4)}(t) \|_{E_0 \to X_0} \in L^1(\R_+).
$$
Thanks to Corollary~\ref{cor:SB-inhom} and Corollary~\ref{cor:AA}, it also holds, for any $ i,j \ge 1 $,
$$
\ba
& t \mapsto \Theta_{m_1,m_0}^{-1} (t) \| S_{\bar \BB} (t) \|_{X_2 \to X_0}  \in L^1 (\R_+), \\
& t \mapsto \Theta_{m_1,m_0}^{-1} (t) \| (\AA S_{\bar \BB})^{(*j)} (t) \|_{X_2 \to X_2} \in L^1 (\R_+),\\
& t \mapsto \Theta_{m_1,m_0}^{-1} (t) \| (S_{\bar \BB} \AA)^{(*i)} (t) \|_{X_0 \to X_0} 
 \in L^1 (\R_+) ,
\ea
$$
and also
$$
t \mapsto \Theta_{m_1,m_0}^{-1} (t) \| S_{\bar \BB}(t) \|_{X_1 \to X_0}, \; 
t \mapsto \Theta_{m_1,m_0}^{-1} (t) \| \AA S_{\bar \BB} \|_{X_1 \to X_2} \in L^\infty(\R_+).
$$
We deduce \eqref{eq:LLdecayL2-inhom} by writing the factorization \eqref{eq:SL=S1S3} and using the above estimates.
Indeed, with $\Theta := \Theta_{m_1,m_0}$, we have 
$$
\ba
&\Theta^{-1} \| S_{\LL} \Pi \|_{X_1 \to X_0} \\
&\lesssim \Theta^{-1}  \| S_{\bar \BB} \|_{X_1 \to X_0} \\
&\quad + \sum_{1 \le j \le 2}  (\Theta^{-1}  \| S_{\bar \BB} \|_{X_2 \to X_0}) * (\Theta^{-1}  \| (\AA S_{\bar \BB})^{*(j-1)} \|_{X_2 \to X_2}) * (\Theta^{-1} \| \AA S_{\bar \BB} \|_{X_1 \to X_2}) \\
&\quad + \sum_{0 \le i \le 3}  (\Theta^{-1}  \| (S_{\bar \BB} \AA)^{*i} \|_{X_0 \to X_0}) * (\Theta^{-1}  \|  S_{\bar \BB} \|_{X_2 \to X_0} ) * (\Theta^{-1} \| (\AA S_{\bar \BB})^{*2} \|_{X_2 \to X_2} )
* (\Theta^{-1} \| \AA S_{\bar \BB} \|_{X_1 \to X_2}) \\
&\quad + (\Theta^{-1}  \| (S_{\bar \BB} \AA)^{*4} \|_{E_0 \to X_0}) * (\Theta^{-1}  \|  S_{\bar \LL}  \|_{E_1 \to E_0} )* (\Theta^{-1} \| (\AA S_{\bar \BB})^{*2} \|_{X_2 \to E_1} )
* (\Theta^{-1} \| \AA S_{\bar \BB} \|_{X_1 \to X_2} ).
\ea
$$

\medskip\noindent
{\it Step 5. Proof of \eqref{eq:LLreg-H-1-inhom}.}
Let us denote $Z_1 = H^2_x (H^{-1}_{v,*} (m_1))$, $\widetilde X_0 = H^2_x L^2_v(m_0 \la v \ra^{\gamma/2})$, and also $\widetilde \Theta_{m_1,m_0}(t) = \Theta_{m_1,m_0}(t)  / (t^{1/2} \wedge 1)$. 
From Corollary~\ref{cor:ASB*2} it follows
$$
t \mapsto \widetilde \Theta_{m_1,m_0}^{-1} (t) \| (\AA S_{\bar \BB})^{(*2)}(t) \|_{X_2  \to E_1 } \in L^1 (\R_+) , \quad
t \mapsto \widetilde \Theta_{m_1,m_0}^{-1} (t) \|  (S_{\bar \BB} \AA)^{(*4)}(t) \|_{E_0 \to \widetilde X_0} \in L^1(\R_+).
$$
Thanks to Corollary~\ref{cor:SB-inhom} and Corollary~\ref{cor:AA}, it also holds, for any $ i,j \ge 1 $,
$$
\ba
& t \mapsto \widetilde \Theta_{m_1,m_0}^{-1} (t) \| S_{\bar \BB} (t) \|_{X_2 \to \widetilde X_0}  \in L^1 (\R_+), \\
& t \mapsto \widetilde \Theta_{m_1,m_0}^{-1} (t) \| (\AA S_{\bar \BB})^{(*j)} (t) \|_{X_2 \to X_2} \in L^1 (\R_+),\\
& t \mapsto \widetilde \Theta_{m_1,m_0}^{-1} (t) \| (S_{\bar \BB} \AA)^{(*i)} (t) \|_{\widetilde X_0 \to \widetilde X_0} 
 \in L^1 (\R_+),
\ea
$$
and also, using Lemma~\ref{lem:reg}-(ii),
$$
t \mapsto \widetilde \Theta_{m_1,m_0}^{-1}(t)  \| S_{\bar \BB} (t) f \|_{Z_1 \to \widetilde X_0} \in L^\infty(\R_+) , \quad
t \mapsto \widetilde \Theta_{m_1,m_0}^{-1}(t)  \| \AA S_{\bar \BB} (t) \|_{Z_1 \to X_2} \in L^\infty (\R_+).
$$
We deduce \eqref{eq:LLreg-H-1-inhom} by writing the factorization \eqref{eq:SL=S1S3} and using the above estimates similarly as in Step 4.
\end{proof}

\subsection{Summary of the decay and dissipativity results for $\bar \LL$}
We introduce the appropriate functional spaces and we summarize the decay and dissipativity properties 
of the semigroup $S_{\bar\LL}$ which will be useful in the next section. 

\medskip
From now on, for a given admissible weight function $m$ such that $m \succ \langle v \rangle^{2+3/2}$, we define  
\be\label{eq:XX-YY-ZZ}
\ba
& \XX := H^2_x L^2_v(m) ,\quad \YY := H^2_x (H^1_{v,*}(m)), \quad \ZZ := H^2_x (H^{-1}_{v,*}(m)), 
\quad \XX_0 := H^2_x L^2_v .
\ea
\ee
We also define the norm $\Nt \cdot \Nt_\XX$ on $ \bar\Pi\XX$, and its associated scalar product $\la\!\la \cdot , \cdot \ra\!\ra_\XX$, given by 
\be\label{eq:NormT-XX}
\Nt g \Nt_{\XX}^2 := \eta \| g \|_{\XX}^2 + \int_0^\infty \| S_{\bar\LL} (\tau) g \|_{\XX_0}^2 \, d\tau,
\ee
for $\eta>0$ small enough.

\begin{thm}\label{thm:S_L-inhomBIS} 
Consider an admissible weight function $m$ such that $ m \succ \langle v \rangle^{2 + 3/2} $.
With the above assumptions and notations, the norm $\Nt \cdot \Nt_{\XX}$ is equivalent to the initial norm $\| \cdot \|_\XX$ on $ \bar \Pi \XX$, and moreover, 
there exists $\eta >0$ small enough such that 
\bear\label{eq:LLdecayL2-inhomBIS}
&& \la\!\la {\bar \LL} \bar \Pi f , \bar \Pi f \ra\!\ra_\XX \lesssim -  \|  \bar \Pi f \|_{\YY}^2, 
\quad \forall\, f \in \XX^{\bar \LL}_1, 
\\
\label{eq:LLregH-1-inhomBIS}
&&  t \mapsto \| S_{\bar\LL}(t) \bar \Pi   \|_{\YY \to \XX_0} \, \| S_{\bar\LL}(t) \bar \Pi  \|_{\ZZ \to \XX_0} \in L^1(\R_+) ,
\eear
 where we recall that $\XX^{\bar \LL}_1$ is the domain of $\bar \LL$ when acting on $\XX$.
\end{thm}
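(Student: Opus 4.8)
The statement is the spatially inhomogeneous counterpart of Corollary~\ref{cor:S_L-homBIS}, and the plan is simply to transcribe the proofs of Proposition~\ref{prop:Nt} and Corollary~\ref{cor:S_L-homBIS}, replacing each homogeneous ingredient by its inhomogeneous version established above: Lemma~\ref{lem:barBB} with $n=2$ (weak dissipativity of $\bar\BB$ in $\XX=H^2_xL^2_v(m)$), Corollary~\ref{cor:AA} (boundedness of $\AA$ on $\XX_0=H^2_xL^2_v$ into a $\mu^{-\theta}$-weighted space), and Theorem~\ref{thm:S_L-inhom} (the uniform bound \eqref{eq:SLboundLinfty} and the decay/regularity estimates \eqref{eq:LLdecayL2-inhom}--\eqref{eq:LLreg-H-1-inhom} for $S_{\bar\LL}\bar\Pi$). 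Throughout one uses that $\bar\Pi$ commutes with $\bar\LL$. First I would check that $\Nt\cdot\Nt_\XX$ is equivalent to $\|\cdot\|_\XX$ on $\bar\Pi\XX$: the lower bound $\Nt f\Nt_\XX^2\ge\eta\|f\|_\XX^2$ is built in, and for the upper bound one applies \eqref{eq:LLdecayL2-inhom} with $m_1=m$ and any admissible $m_0$ with $\langle v\rangle^{(\gamma+3)/2}\prec m_0\prec m$ and $m_0\preceq\mu^{-1/2}$; since $H^2_xL^2_v(m_0)\subset\XX_0$ this yields $\|S_{\bar\LL}(\tau)\bar\Pi f\|_{\XX_0}\lesssim\Theta_{m,m_0}(\tau)\,\|f\|_\XX$, with $\Theta_{m,m_0}\in L^2(\R_+)$ precisely because $m\succ\langle v\rangle^{2+3/2}$, hence $\int_0^\infty\|S_{\bar\LL}(\tau)\bar\Pi f\|_{\XX_0}^2\,d\tau\lesssim\|f\|_\XX^2$.

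Next I would prove the weak dissipativity estimate \eqref{eq:LLdecayL2-inhomBIS}. Setting $f_{\bar\LL}(t)=S_{\bar\LL}(t)\bar\Pi f$ and using $\bar\LL=\AA+\bar\BB$ together with the semigroup property, one gets
\[
\frac12\frac{d}{dt}\Nt f_{\bar\LL}(t)\Nt_\XX^2=\eta\la\bar\BB f_{\bar\LL}(t),f_{\bar\LL}(t)\ra_\XX+\eta\la\AA f_{\bar\LL}(t),f_{\bar\LL}(t)\ra_\XX-\frac12\|f_{\bar\LL}(t)\|_{\XX_0}^2,
\]
the last term being the derivative of $\tau\mapsto\int_\tau^\infty\|S_{\bar\LL}(s)\bar\Pi f\|_{\XX_0}^2\,ds$ and using $\|S_{\bar\LL}(\tau)f_{\bar\LL}(t)\|_{\XX_0}\to 0$ as $\tau\to\infty$, which follows from \eqref{eq:SLboundLinfty} and \eqref{eq:LLdecayL2-inhom}. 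By Lemma~\ref{lem:barBB} with $n=2$ the first term is $\le-\eta K'\|f_{\bar\LL}(t)\|_\YY^2$ (its dissipation is, up to a discarded nonnegative term, exactly the square of the $\YY=H^2_x(H^1_{v,*}(m))$-norm, since $\nabla_x$ commutes with $m$ and with $\widetilde\nabla_v$), and by Corollary~\ref{cor:AA} the second term is $\le\eta C\|f_{\bar\LL}(t)\|_{\XX_0}^2$. Choosing $\eta<1/(2C)$ absorbs the $\XX_0$-terms, giving $\frac{d}{dt}\Nt f_{\bar\LL}(t)\Nt_\XX^2\lesssim-\|f_{\bar\LL}(t)\|_\YY^2$; evaluating the identity $\frac12\frac{d}{dt}\Nt S_{\bar\LL}(t)\bar\Pi f\Nt_\XX^2=\la\!\la\bar\LL\bar\Pi f,\bar\Pi f\ra\!\ra_\XX$ at $t=0$ then yields \eqref{eq:LLdecayL2-inhomBIS} for every $f\in\XX_1^{\bar\LL}$.

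For the regularisation bound \eqref{eq:LLregH-1-inhomBIS} I would argue exactly as in Corollary~\ref{cor:S_L-homBIS}. Choosing admissible polynomial weights $\langle v\rangle^{(\gamma+3)/2}\prec m_0\prec m_1\preceq\langle v\rangle^{\gamma/2}m$, estimate \eqref{eq:LLdecayL2-inhom} and the embeddings $H^2_xL^2_v(m_0)\subset\XX_0$, $\YY\subset H^2_xL^2_v(m_1)$ give $\|S_{\bar\LL}(t)\bar\Pi\|_{\YY\to\XX_0}\lesssim\Theta_{m_1,m_0}(t)$; choosing admissible polynomial weights $\langle v\rangle^{3/2}\prec m_0'\prec m_1'\preceq m$, estimate \eqref{eq:LLreg-H-1-inhom} and the embeddings $H^2_xL^2_v(m_0'\langle v\rangle^{\gamma/2})\subset\XX_0$, $\ZZ\subset H^2_x(H^{-1}_{v,*}(m_1'))$ give $\|S_{\bar\LL}(t)\bar\Pi\|_{\ZZ\to\XX_0}\lesssim\Theta_{m_1',m_0'}(t)/(t^{1/2}\wedge 1)$. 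The product of the two bounds has only an integrable $t^{-1/2}$ singularity at the origin, and at infinity the intermediate weights can be selected so that the sum of the two polynomial decay exponents exceeds $1$ — the very bookkeeping carried out at the end of the proof of Corollary~\ref{cor:S_L-homBIS}, resting on $t\mapsto\langle t\rangle^{-(2k-3)/|\gamma|}(t^{1/2}\wedge 1)^{-1}\in L^1(\R_+)$ for $k>2+3/2$; and, as observed there, the polynomial estimate \eqref{eq:LLreg-H-1-inhom}, proved for polynomial weights, transfers through these embeddings to exponential $m$, so all admissible weights are covered. This gives \eqref{eq:LLregH-1-inhomBIS}.

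Finally, a word on where the work really lies. Every genuinely new analytic input — the weak hypocoercivity of $\bar\LL$ in $\widetilde\HH^1_{x,v}(\mu^{-1/2})$, the hypoelliptic regularisation of $S_{\bar\BB}$, the recovery of the full $x$- and $v$-derivatives via the iterated convolutions $(\AA S_{\bar\BB})^{(*n)}$ and $(S_{\bar\BB}\AA)^{(*\ell)}$ — has already been packed into Theorem~\ref{thm:S_L-inhom} via Lemmas~\ref{lem:GenCoerc}, \ref{lem:barBB1}, \ref{lem:reg} and Corollary~\ref{cor:ASB*2}; granted that, the present statement is a purely organisational corollary. The one point demanding genuine care is the joint choice of the intermediate weights that makes the short-time ($t^{-1/2}$) singularity and the long-time polynomial tails of the two semigroup factors in \eqref{eq:LLregH-1-inhomBIS} simultaneously integrable — which is exactly where the assumption $m\succ\langle v\rangle^{2+3/2}$ (strictly stronger than the $m\succ\langle v\rangle^{3/2}$ sufficient merely for the norm equivalence) is needed.
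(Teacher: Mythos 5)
Your proposal is correct and follows essentially the same route as the paper's proof: norm equivalence and the weak dissipativity estimate \eqref{eq:LLdecayL2-inhomBIS} by transcribing Proposition~\ref{prop:Nt} with Lemma~\ref{lem:barBB} ($n=2$), Corollary~\ref{cor:AA} and the decay/uniform bounds of Theorem~\ref{thm:S_L-inhom}, and the integrability \eqref{eq:LLregH-1-inhomBIS} via exactly the same choices of intermediate polynomial weights and embeddings as in Corollary~\ref{cor:S_L-homBIS}. You in fact spell out the dissipativity computation that the paper only cites by reference, so nothing is missing.
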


{ The same  remark as for  Corollary~\ref{cor:S_L-homBIS} also works here.}

\begin{proof}
The proof follows exactly the same arguments as in Proposition~\ref{prop:Nt} and Corollary~\ref{cor:S_L-homBIS}. First of all, the equivalence of the norms follows as in Proposition~\ref{prop:Nt} since $m \succ \la v \ra^{3/2}$.

Let us prove \eqref{eq:LLregH-1-inhomBIS}.
We fix admissible polynomial weight functions $m_0, m_1$ such that $\la v \ra^{(\gamma+3)/2} \prec m_0  \prec m_1 \preceq \la v \ra^{\gamma/2} m$. 
Thanks to estimate \eqref{eq:LLdecayL2-inhom} in Theorem~\ref{thm:S_L-inhom} together with the embeddings $H^2_x L^2_v(m_0) \subset \XX_0$ and $\YY \subset H^2_x L^2_v(m_1)$ we first obtain
$$
\| S_{\bar \LL}(t) \bar \Pi \|_{\YY \to \XX_0} \lesssim \Theta_{m_1,m_0}(t), 
\quad \forall\, t \ge 0.
$$ 
We know consider admissible polynomial weight functions $m'_0, m'_1$ so that $\la v \ra^{3/2} \prec m'_0 \prec m'_1 \preceq m$. Thanks to \eqref{eq:LLreg-H-1-inhom} in Theorem~\ref{thm:S_L-inhom} and the embeddings $H^2_x L^2_v(m'_0 \la v \ra^{\gamma/2}) \subset \XX_0$ and $\ZZ \subset H^2_x ( H^{-1}_{v,*}(m'_1))$, it follows
$$
\| S_{\bar\LL}(t) \bar \Pi  \|_{\ZZ \to \XX_0} \lesssim \frac{\Theta_{m'_1, m'_0}(t)}{t^{1/2} \wedge 1}, \quad \forall \, t >0.
$$
 We then deduce \eqref{eq:LLregH-1-inhomBIS} by arguing similarly as in the proof of Corollary~\ref{cor:S_L-homBIS}. 
%
\end{proof}

\subsection{Nonlinear estimate}
From the nonlinear estimate for the homogeneous case established in Lemma~\ref{lem:Q} and Corollary~\ref{cor:Q}, we deduce the following estimate.

\begin{lem}\label{lem:nonLnonHestimQ} 
Let $m$ be an admissible weight function such that $m\succ \la v \ra^{2+3/2}$. Then 
\beqn\label{eq:nonLnonHestimQ}
\la Q(f,g), h \ra_{\XX} \lesssim \Big( \| f \|_{\XX} \, \| g \|_{\YY} 
+ \| f \|_{\YY} \, \| g \|_{\XX} \Big) \| h \|_{\YY}. 
\eeqn
As a consequence 
\be\label{eq:nonLnonHestimQinZ}
\| Q(f,g) \|_{\ZZ} \lesssim   \| f \|_{\XX} \, \| g \|_{\YY} + \| f \|_{\YY} \, \| g \|_{\XX} .
\ee

\end{lem}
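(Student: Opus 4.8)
The plan is to deduce Lemma~\ref{lem:nonLnonHestimQ} directly from the spatially homogeneous nonlinear estimate in Lemma~\ref{lem:Q} (or Corollary~\ref{cor:Q}), exploiting the fact that the Landau operator $Q$ acts only in the velocity variable and commutes with $\nabla_x$. Since $\XX = H^2_x L^2_v(m)$ and the bracket $\la \cdot, \cdot \ra_\XX$ is the sum over $0 \le j \le 2$ of the $L^2_{x,v}$ brackets of $\nabla_x^j(m\,\cdot)$, the first move is to apply the Leibniz rule: $\nabla_x^j Q(f,g) = \sum_{j_1 + j_2 = j} Q(\nabla_x^{j_1} f, \nabla_x^{j_2} g)$, because $Q$ is bilinear and $x$-independent. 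Thus
\begin{equation*}
\la Q(f,g), h \ra_\XX = \sum_{0 \le j \le 2} \ \sum_{j_1 + j_2 = j} \ \int_{\T^3_x} \la Q(\nabla_x^{j_1} f, \nabla_x^{j_2} g), \nabla_x^j h \ra_{L^2_v(m)} \, dx .
\end{equation*}

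Next I would bound each inner velocity bracket pointwise in $x$ using \eqref{eq:Qfgh0} from Lemma~\ref{lem:Q}: for $\theta > 2+3/2$ and $\theta' > 9/4$ chosen (as in the proof of Corollary~\ref{cor:Q}) so that $L^2_v(m) \hookrightarrow L^2_v(\la v\ra^\theta)$ and $H^1_{v,*}(m) \hookrightarrow H^1_v(\la v\ra^{\theta'})$ — which is possible precisely because $m \succ \la v \ra^{2+3/2}$ — we get
\begin{equation*}
\la Q(\nabla_x^{j_1} f, \nabla_x^{j_2} g), \nabla_x^j h \ra_{L^2_v(m)} \lesssim \Big( \|\nabla_x^{j_1} f\|_{L^2_v(m)} \|\nabla_x^{j_2} g\|_{H^1_{v,*}(m)} + \|\nabla_x^{j_1} f\|_{H^1_{v,*}(m)} \|\nabla_x^{j_2} g\|_{L^2_v(m)} \Big) \|\nabla_x^j h\|_{H^1_{v,*}(m)} ,
\end{equation*}
using $H^1_{v,*}(m) \hookrightarrow H^1_v(\la v\ra^{\theta'})$ for the $f$-factor and $L^2_v(m) \hookrightarrow L^2_v(\la v\ra^\theta)$. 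Then I integrate in $x$ and apply a Sobolev embedding in $x$: since $\T^3$ is three-dimensional, $H^2_x \hookrightarrow L^\infty_x$, so in each product of three $x$-derivative orders $(j_1, j_2, j)$ with $j_1 + j_2 = j \le 2$, at least one factor has derivative order $\le 1$ and can be placed in $L^\infty_x H^\bullet_v$ (controlled by $H^2_x H^\bullet_v$), while the remaining two sit in $L^2_x H^\bullet_v$. Summing the finitely many terms and recognizing $\sum_{0\le j\le 2}\|\nabla_x^j \cdot\|_{L^2_x H^1_{v,*}(m)}^2 = \|\cdot\|_\YY^2$ etc., one arrives at \eqref{eq:nonLnonHestimQ}. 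Finally \eqref{eq:nonLnonHestimQinZ} follows immediately by the definition of $\ZZ = H^2_x(H^{-1}_{v,*}(m))$ as the dual of $\YY$ with respect to the $\XX$ pairing, taking the supremum over $\|h\|_\YY \le 1$.

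The only genuinely delicate point is the bookkeeping of the $x$-derivative distribution together with the $L^\infty_x$--$L^2_x$ splitting: one must check that in every triple $(j_1, j_2, j_3)$ with $j_1 + j_2 = j_3$ and $\max \le 2$, the factor chosen for $L^\infty_x$ always has order $\le 1$, so that the embedding $H^2_x \hookrightarrow L^\infty_x$ applies to the \emph{complementary} $H^1_x$- or $H^0_x$-valued norm and never needs three $x$-derivatives on a single factor. This is a routine case check (the cases are $j_3 = 0$: trivial; $j_3 = 1$: one of $j_1, j_2$ is $0$; $j_3 = 2$: the partition is $(0,2), (1,1), (2,0)$, and in $(1,1)$ put $h$ in $L^\infty$, in $(0,2)$ or $(2,0)$ put $f$ resp.\ $g$ in $L^\infty$), and it is exactly the same mechanism that makes $H^2_x$ an algebra. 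I expect no other obstacle, since all the velocity-variable analysis is already packaged in Lemma~\ref{lem:Q} and the weight condition $m \succ \la v\ra^{2+3/2}$ is what guarantees the embeddings needed to absorb the auxiliary polynomial weights $\la v\ra^\theta, \la v\ra^{\theta'}$.
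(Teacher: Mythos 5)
Your overall route is exactly the paper's: expand $\nabla_x^j Q(f,g)$ by the Leibniz rule (legitimate since $Q$ is bilinear and acts only in $v$), apply the homogeneous estimate \eqref{eq:Qfgh0} of Lemma~\ref{lem:Q} pointwise in $x$ (with the embeddings $L^2_v(m)\hookrightarrow L^2_v(\la v\ra^\theta)$, $H^1_{v,*}(m)\hookrightarrow H^1_v(\la v\ra^{\theta'})$ guaranteed by $m\succ\la v\ra^{2+3/2}$, as in Corollary~\ref{cor:Q}), then conclude by H\"older and Sobolev embeddings in $x$; the duality step for \eqref{eq:nonLnonHestimQinZ} is also the same.

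However, the final bookkeeping in $x$ contains a genuine error. Your general principle, ``a factor of derivative order $\le 1$ can be placed in $L^\infty_x$, controlled by $H^2_x$'', is off by one derivative: $\|\nabla_x^{j_1} f\|_{L^\infty_x L^2_v(m)}$ requires $\nabla_x^{j_1}f\in H^2_xL^2_v(m)$, i.e.\ $f\in H^{2+j_1}_x L^2_v(m)$, so only order-$0$ factors can be sent to $L^\infty_x$ within the $\XX,\YY$ framework. This kills your treatment of the $(1,1)$ term of the worst case $j=2$: there the three factors are $\nabla_x f$, $\nabla_x g$ and $\nabla^2_x h$, none of order $0$, and your explicit choice ``put $h$ in $L^\infty_x$'' would require $\nabla^2_x h\in L^\infty_x H^1_{v,*}(m)$, i.e.\ $h\in H^4_x$, which is not controlled by $\|h\|_{\YY}=\|h\|_{H^2_x(H^1_{v,*}(m))}$. (Integration by parts in $x$ to unload $\nabla^2_xh$ is not part of your argument and would change the structure.)

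The repair is exactly the extra ingredient the paper invokes: besides $H^2_x\subset L^\infty_x$ one also needs $H^1_x\subset L^4_x$ on $\T^3$ (in its Hilbert-space--valued form, which follows e.g.\ from $|\nabla_x\|u(x,\cdot)\|_{L^2_v}|\le\|\nabla_x u(x,\cdot)\|_{L^2_v}$). For the $(1,1)$ term one estimates, by H\"older with exponents $\tfrac14+\tfrac14+\tfrac12=1$,
\begin{equation*}
\int_{\T^3}\|\nabla_x f\|_{L^2_v(m)}\,\|\nabla_x g\|_{H^1_{v,*}(m)}\,\|\nabla^2_x h\|_{H^1_{v,*}(m)}\,dx
\lesssim \|f\|_{H^2_xL^2_v(m)}\,\|g\|_{H^2_x(H^1_{v,*}(m))}\,\|h\|_{\YY}
=\|f\|_{\XX}\,\|g\|_{\YY}\,\|h\|_{\YY},
\end{equation*}
and symmetrically for the term with the roles of the $f$- and $g$-norms exchanged, which yields $\|f\|_{\YY}\|g\|_{\XX}\|h\|_{\YY}$. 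With this modification (and your correct $L^\infty_x$ assignments in the $(0,2)$, $(2,0)$ and $j\le1$ cases, where an order-$0$ factor is available) the proof closes and coincides with the paper's argument.
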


\begin{proof} We proceed similarly as in \cite[Lemma 3.5]{CTW} and thus only sketch the proof.  We remark however that the estimates here are somewhat simpler than in \cite{CTW}, where the authors considered different spaces (with $3$ derivatives in $x$ and different weight functions in the $x$-derivatives) because there the weight function coming from the gain term of the linearized operator was weaker than the weight function appearing here in the loss term coming from the nonlinear estimates. 
For the most difficult term, we have thanks to Lemma~\ref{lem:Q},
\bean
&&\la \nabla^2_x Q(f,g), \nabla^2_x h \ra_{L^2_{x,v}(m)}
= \la Q(\nabla^2_xf,g) + 2 Q(\nabla_xf,\nabla_x g) + Q(f,\nabla^2_x g), \nabla^2_x h \ra_{L^2_{x,v}(m)}
\\
&&\qquad \lesssim \int_{\T^3} \Big( \| \nabla^2_x f \|_{L^2(m)} \, \| g \|_{H^1_{*}(m)} + \| \nabla^2_x f \|_{H^1_{*}(m)} \, \| g \|_{L^2(m)}  
\\
&&\qquad   + \| \nabla_x f \|_{L^2(m)} \, \| \nabla_x g \|_{H^1_{*}(m)} + \| \nabla_x f \|_{H^1_{*}(m)} \, \| \nabla_x g \|_{L^2(m)}  
\\
&&\qquad  + \| f \|_{L^2(m)} \, \| \nabla^2_x g \|_{H^1_{*}(m)} + \| f \|_{H^1_{*}(m)} \, \| \nabla^2_x g \|_{L^2(m)} \Big) \, \| \nabla^2_x h \|_{H^1_{*}(m)} \, dx.
\eean
Using first the Cauchy-Schwarz inequality in the $x$ variable and next the two Sobolev embeddings $ H^2_x \subset L^\infty_x $ and 
$ H^1_x \subset L^4_x$, we straightforwardly obtain that the above RHS term is bounded by the RHS term in \eqref{eq:nonLnonHestimQ}. The proof of \eqref{eq:nonLnonHestimQinZ} is then straightforward.
\end{proof}

\subsection{Proof of the main result}
For a solution $F$ to the inhomogeneous Landau equation \eqref{eq:landau-inhom}, we
consider the perturbation $f = F-\mu$ that verifies
\be\label{eq:pert2}
\left\{
\ba
& \partial_t f = \bar\LL f + Q(f,f) \\
&  f_0 = F_0 - \mu.
\ea
\right.
\ee
Observe that, thanks to the conservation laws, there holds $\bar\Pi_0 f(t) = \bar\Pi_0 f_0 = 0 $ and also that $\bar\Pi_0 Q(f(t),f(t)) = 0$ for any $t \ge 0$.

\begin{proof}[Proof of Theorem \ref{thm:stabNL-inhom}]
Consider the spaces and norms defined in \eqref{eq:XX-YY-ZZ} and \eqref{eq:NormT-XX}. 
The proof then follows the same arguments as in the proof of the spatially homogeneous version of Theorem~\ref{thm:stabNL-inhom} presented in Section~\ref{sec:stabNL}, by using the dissipative, decay and regularity estimates of Theorem~\ref{thm:S_L-inhomBIS} and the nonlinear estimates in Lemma~\ref{lem:nonLnonHestimQ}. 

For the sake of clarity we sketch the proof below.

Let $f$ satisfy \eqref{eq:pert2}. Thanks to Theorem~\ref{thm:S_L-inhomBIS} and Lemma~\ref{lem:nonLnonHestimQ}, arguing as in the proof of Proposition~\ref{prop:stab}, we obtain the following uniform in time a priori estimate
$$
\frac{d}{dt} \Nt f \Nt_{\XX}^2 \le (C \Nt f \Nt_{\XX} - K) \| f \|_{\YY}^2,
$$
for some constants $C,K >0$. For $\eps_0 >0$ small enough, the existence and uniqueness of  a solution $f$ for equation \eqref{eq:pert2} such that \eqref{eq:bound-g-inhom} holds are then a consequence of this last estimate together with standard arguments (as already presented in the proof of the spatially homogeneous version of Theorem~\ref{thm:stabNL-inhom} in Section~\ref{sec:stabNL}). Moreover, using the above estimate for different weight functions $\la v \ra^{2+3/2} \prec \tilde m \prec m$, the proof of the decay result \eqref{eq:decay-g-inhom} follows exactly as in the spatially homogeneous version of Theorem~\ref{thm:stabNL-inhom}.
\end{proof}

We conclude the section by presenting a proof of our improvement of the speed of convergence to the equilibrium for solutions to the spatially inhomogeneous Landau equation in a non perturbative framework. 

\begin{proof}[Proof of Corollary~\ref{cor:SpeedInhomogeneous}]
Under the assumptions \eqref{hyp:SpeedInhomo1} and \eqref{hyp:SpeedInhomo2}, \cite[Theorem 2 \& Section I.5]{DV-boltzmann} implies that
$$
\| f(t) \|_{L^1_{x,v}} \lesssim \la t \ra^{- \theta},
$$
for some explicit constant $\theta>0$. We then write the interpolation inequality
$$
\| f \|_{H^2_{x,v} (m^{\alpha})} \lesssim \| f \|_{H^3_{x,v}}^{\beta_1} \, \| f \|_{L^1_{x,v}}^{\beta_2} \, \| f \|_{L^1_{x,v}(m)}^{1-\beta_1-\beta_2},
$$
where $\alpha,\beta_1, \beta_2 \in (0,1)$ are explicit constants.  
We conclude taking $t_0 >0$ large enough so that $\| f(t_0) \|_{H^2_x L^2_v (m^\alpha)} \le \eps_0$, applying Theorem~\ref{thm:stabNL-inhom} and observing that $\Theta_{m^\alpha}(t) \simeq \Theta_m (t)$ (up to changing the constants in \eqref{eq:Theta_m}).
\end{proof}


\bibliographystyle{acm}
\bibliography{./bib-Landau}

\end{document}